\newcolumntype{V}{!{\vrule width 2pt}}
\numberwithin{equation}{section}
\def\blue{\textcolor{blue}}
\theoremstyle{plain}
\newtheorem{theorem}{Theorem}[section]
\newtheorem{lemma}[theorem]{Lemma}
\newtheorem{corollary}[theorem]{Corollary}
\newtheorem{proposition}[theorem]{Proposition}
\theoremstyle{definition}
\newtheorem{Def}[theorem]{Definition}
\newtheorem{Observation}[theorem]{Observation}
\newtheorem{conj}[theorem]{Conjecture}
\newtheorem{remark}[theorem]{Remark}
\newtheorem{?}[theorem]{Problem}
\newcommand{\N}{\mathbb{N}}
\newcommand{\Z}{\mathbb{Z}}
\newcommand{\RR}{\mathbb{R}}
\newcommand{\I}{\mathfrak{I}}
\def\des{\mathsf{des}}
\def\S{\mathfrak{S}}
\def\iar{\mathsf{iar}}
\def\lir{\mathsf{lir}}
\def\comp{\mathsf{comp}}
\def\DES{\mathrm{DES}}
\def\ASC{\mathrm{ASC}}
\def\LMAX{\mathrm{LMAX}}
\def\LMAXP{\mathrm{LMAXP}}
\def\LMIN{\mathrm{LMIN}}
\def\st{\mathsf{st}}
\def\asc{\mathsf{asc}}
\def\dd{\mathsf{dd}}
\def\del{\operatorname{del}}
\def\ins{\operatorname{ins}}
\def\AW{\mathcal{AW}}
\def\ics{\mathsf{ics}}
\def\equ{\mathsf{equ}}
\def\SP{\mathrm{SP}}
\def\DESB{\mathrm{DESB}}
\def\lmax{\mathsf{lmax}}
\def\lmin{\mathsf{lmin}}
\def\izero{\mathsf{izero}}
\def\da{\mathsf{da}}
\def\e{\mathfrak{a}}
\newcommand{\In}{\mathcal{I}}
\def\id{\mathrm{id}}
\def\DESB{\mathrm{DESB}}
\def\lmax{\mathsf{lmax}}
\def\ST{\mathrm{ST}}
\def\SE{\mathcal{S}}
\def\AVA{\mathrm{AVA}}
\def\ldes{\mathsf{ldes}}
\begin{document}

\title[Refined Wilf-equivalences]{Refined Wilf-equivalences by Comtet statistics}
%[$\iar$-Wilf-equivalence for Catalan and Schr\"oder permutations]{On the $\mathsf{iar}$-Wilf-equivalence for Catalan and Schr\"oder permutations}

\author[S. Fu]{Shishuo Fu}
\address[Shishuo Fu]{College of Mathematics and Statistics, Chongqing University, Huxi campus, Chongqing 401331, P.R. China}
\email{fsshuo@cqu.edu.cn}

\author[Z. Lin]{Zhicong Lin}
\address[Zhicong Lin]{Research Center for Mathematics and Interdisciplinary Sciences, Shandong University, Qingdao 266237, P.R. China}
\email{linz@sdu.edu.cn}

\author[Y. Wang]{Yaling Wang}
\address[Yaling Wang]{College of Mathematics and Statistics, Chongqing University, Huxi campus, Chongqing 401331, P.R. China}
\email{wyl032021@163.com}

\date{\today}

\begin{abstract}
We launch a systematic study of the refined Wilf-equivalences by the statistics $\comp$ and $\iar$, where  $\comp(\pi)$ and $\iar(\pi)$ are  the number of components and the length of the initial ascending run of a permutation $\pi$, respectively. As Comtet was the first one to consider the statistic $\comp$ in his book {\em Analyse combinatoire},  any statistic equidistributed with $\comp$ over a class of permutations is called by us a {\em Comtet statistic} over such class. This work is motivated by a triple equidistribution result of Rubey on $321$-avoiding permutations, and a recent result of the first and third authors that $\iar$ is a Comtet statistic over separable permutations. Some highlights of our results are: 
\begin{itemize}
	\item Bijective proofs of the symmetry of the double Comtet distribution $(\comp,\iar)$ over several Catalan and Schr\"oder classes, preserving the values of the left-to-right maxima.
	\item A complete classification of $\comp$- and $\iar$-Wilf-equivalences for length $3$ patterns and pairs of length $3$ patterns. Calculations of the $(\des,\iar,\comp)$ generating functions over these pattern avoiding classes and separable permutations.
	\item A further refinement by the Comtet statistic $\iar$, of Wang's recent descent-double descent-Wilf equivalence between separable permutations and $(2413,4213)$-avoiding permutations.
\end{itemize}
\end{abstract}

%\subjclass[2010]{05A05, 05A15, 05A19, 15B36}

\keywords{Catalan number; Schr\"oder number; Wilf equivalence; pattern avoidance; separable permutation; bijective proof.}

\maketitle

%\tableofcontents

%%%%%%%%%%%%%%%%%%%%%%%%%%%%%%%%%%%%%
\section{Introduction}\label{sec1: intro}
%%%%%%%%%%%%%%%%%%%%%%%%%%%%%%%%%%%%%

A permutation $\pi=\pi(1)\cdots\pi(n)\in\S_n$, the symmetric group on $[n]:=\{1,2,\ldots,n\}$, is said to {\em avoid} the permutation (or {\em pattern}) $\sigma=\sigma(1)\cdots \sigma(k)\in\S_k$, $k\le n$, if and only if there is no subsequence $\pi(j_1)\pi(j_2)\cdots\pi(j_k)$ with $j_1<j_2<\cdots<j_k$, such that $\pi(j_a)<\pi(j_b)$ if and only if $\sigma(a)<\sigma(b)$ for all $1\le a<b\le k$. Otherwise, we say that the permutation $\pi$ {\em contains the pattern $\sigma$}. 

The notion of permutation pattern was introduced by Knuth \cite[pp.~242-243]{knu} in 1968, but studied intensively and systematically for the first time by Simion and Schmidt \cite{SS} in 1985. Ever since then, it has become an active and prosperous research subject. The reader is referred to two book expositions, \cite[Chapters 4 and 5]{bon} and \cite{kit}, on this topic, as well as the numerous references therein. In the early 1980s, Herbert Wilf posed the problem of identifying equirestrictive sets of forbidden patterns. Let $P$ be a (finite) collection of patterns and $\mathcal{W}$ a set of permutations, we write $\mathcal{W}(P)$ for the set of all permutations in $\mathcal{W}$ that avoid simultaneously every pattern contained in $P$. We will say, as it has become a standard terminology, that two sets of patterns, $P$ and $Q$, are {\em Wilf-equivalent}, denoted by $P\sim Q$, if $|\S_n(P)|=|\S_n(Q)|$ for all positive integers $n$. 

% One popular candidate for such refinement is permutation statistic.

In this paper, we will restrict ourselves to the case where $|P|=|Q|\le 2$, and the lengths of the patterns in $P$ and $Q$ are no greater than $4$. Once two sets of patterns $P$ and $Q$ are known to be Wilf-equivalent, a natural direction to go deeper, is to make further restrictions on these $P$- or $Q$-avoiding permutations, and to see if the equinumerosity still holds. One such restriction is to consider the enumeration refined by various permutation statistics. In general, a {\em statistic} on a set of objects $S$ is simply a function from $S$ to $\N:=\{0,1,2,\ldots\}$. A {\em set-valued statistic} on $S$ is a function from $S$ to the set of finite subsets of $\N$. Given a permutation $\pi\in\S_n$, we mainly consider the set-valued statistic
$$\DES(\pi):=\{i\in[n-1]:\pi(i)<\pi(i+1)\},$$
called the {\em descent set} of $\pi$, and two statistics
$$\des(\pi):=|\DES(\pi)|\quad \text{and}\quad \iar(\pi):=\min(\DES(\pi)\cup\{n\}),$$
called the {\em {\bf\em des}cent number} and the {\em {\bf\em i}nitial {\bf\em a}scending {\bf\em r}un} of $\pi$, respectively. Clearly, $\iar(\pi)$ can also be interpreted as the position of the leftmost descent of $\pi$, which indicates that $\iar$ is determined by $\DES$. It should be noted that $\iar$ was also called $\lir$, meaning ``leftmost increasing run'', in the literature (see e.g.~\cite{CK}). The statistic $\des$ is  known as an {\em Eulerian statistic} since its distribution over $\S_n$ is the {\em$n$-th  Eulerian polynomial} 
$$
A_n(t):=\sum_{\pi\in\S_n}t^{\des(\pi)}.
$$ 

Another statistic highlighted in our study is $\comp(\pi)$, which can be introduced as
$$
\comp(\pi):=|\{i: \forall j\leq i,\: \pi(j)\leq i\}|. 
$$
It is equal to the maximum number of {\em components} (see \cite{adin,CK,CKS}) in an expression of $\pi$ as a {\em direct sum} of permutations. For instance, $\comp(312465)=3$, the three components being $312$, $4$, and $65$ and $312465=312\oplus1\oplus 21$ (see Sec.~\ref{sec:dirsum} for the definition of direct sum $\oplus$).
The statistic  $\comp$ dates back at least to Comtet~\cite[3, Ex.~VI.14]{com}, who proved the generating function for the number $f(n)$ of permutations of length $n$ with one component, also known as {\em indecomposable permutations}, to be 
$$
\sum_{n\geq1}f(n)z^n=1-\frac{1}{\sum_{n\geq0}n!z^n}. 
$$
Thus, any statistic equidistributed with $\comp$ over a class of restricted permutations will be called by us a {\em Comtet statistic} over such class.  The enumeration of pattern avoiding indecomposable permutations was  carried out by Gao, Kitaev and Zhang~\cite{GKZ}. It should be noted that $\iar$ and $\comp$ are not equidistributed over $\S_4$. Nonetheless, two of the authors~\cite{FW} proved  that $\iar$ is a Comtet statistic over {\em separable permutations}, the class of $(2413,3142)$-avoiding permutations. It is this result that motivates us to investigate systematically  the refined Wilf-equivalences by these two Comtet statistics, and sometimes jointly with other statistics. 

For a (possibly set-valued) statistic $\st$ on $\S_n$, we say two sets of patterns $P$ and $Q$ are {\em $\st$-Wilf-equivalent}, denoted as $P\sim_{\st}Q$, if for all positive integers $n$, we have
$$|\S_n(P)^{\st}|=|\S_n(Q)^{\st}|,$$
meaning that for a fixed value of $\st$, there are as many preimages in $\S_n(P)$ as those in $\S_n(Q)$. Note that by their definitions, $P\sim_{\DES}Q$ immediately implies $P\sim_{\iar}Q$ and $P\sim_{\des}Q$, but not conversely. The above refined Wilf-equivalence by one statistic can be naturally extended to the joint distribution of several permutation statistics, regardless of numerical or set-valued types. So expression like $P\sim_{(\DES,\comp)}Q$ and $|\S_n(P)^{\DES,\comp}|=|\S_n(Q)^{\DES,\comp}|$ should be understood well. It should be noted that refined Wilf-equivalences have already been extensively studied during the last two decades (see e.g.~\cite{CK,DDJSS,EP,kit,LK}). Especially, the focus of Dokos, Dwyer, Johnson, Sagan and Selsor~\cite{DDJSS} was on the refined Wilf-equivalences by Eulerian and Mahonian statistics. Hopefully with the results we present in this paper, one is convinced that considering the refinements by Comtet statistics is equally meaningful. 
%It is our opportunity to consider systematically the refinements by Comtet statistics in this paper.

Some highlights of our results will be outlined below. Before stating them, we need to recall some classical permutation statistics. For a permutation  $\pi\in\S_n$, we introduce 
\begin{align*}
\LMAX(\pi) &:=\{\pi(i)\in[n]:\pi(j)<\pi(i),\: \forall 1\le j < i\} \,\,\; \text{and}\\
\LMAXP(\pi) &:=\{i\in[n]:\pi(j)<\pi(i),\: \forall 1\le j < i\},
\end{align*}
the set of {\em values} and {\em positions of the left-to-right maxima} of $\pi$, respectively. The sets of values/positions of the {\em left-to-right minima}, the {\em right-to-left maxima} and the {\em right-to-left minima} of $\pi$ can be defined and denoted similarly if needed. 
We use lowercase letters to denote the cardinality of these sets, so for example,  $\LMIN(\pi)$ is the set of values of the left-to-right minima of $\pi$ and $\lmin(\pi)$ is the corresponding numerical statistic. We will also consider the set of {\em descent bottoms} of $\pi$
$$\DESB(\pi):=\{\pi(i+1)\in[n-1]: i\in\DES(\pi)\},$$
 which is another set-valued extension of $\des$ different from $\DES$. 
 
 The first one of our main results concerns a single pattern of length $3$.

\begin{theorem}\label{main:thm1}
For every $n\ge 1$, 
\begin{itemize}
\item[(i)] the two triples $(\LMAX,\iar,\comp)$ and $(\LMAX,\comp,\iar)$ have the same distribution over $\S_n(321)$; 
\item[(ii)] the two quadruples $(\LMAX,\DESB,\iar,\comp)$ and $(\LMAX,\DESB,\comp,\iar)$ have the same distribution over $\S_n(312)$;
\item[(iii)] the quadruples $(\LMAX, \LMIN, \iar, \comp)$ and $(\LMAX, \LMIN, \comp, \iar)$ have the same distribution over $\S_n(132)$.
\end{itemize}
\end{theorem}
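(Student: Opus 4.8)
For each pattern $\tau\in\{321,312,132\}$ the plan is to produce an explicit involution $\Phi_\tau$ on $\S_n(\tau)$ that fixes the set-valued statistic $\LMAX$ --- and, in part (ii), also $\DESB$; in part (iii), also $\LMIN$ --- while interchanging $\iar$ and $\comp$. In every case the first step is to pass to a combinatorial encoding of the class in which $\LMAX$ is frozen and $\iar$, $\comp$ become transparent ``dual'' path- or block-statistics.

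For part (i): since in a $321$-avoiding $\pi$ the non-left-to-right-maxima appear in increasing order, $\pi$ is recovered from the pair $(A,P):=(\LMAX(\pi),\LMAXP(\pi))$, with $n\in A$, $1\in P$ and $|A|=|P|$. I would first record the admissibility condition singling out which pairs occur: writing $f_X(x):=|X\cap[1,x]|$, it reads $f_P(x)\ge f_A(x)+[x\notin A]$ for all $x\in[n]$, i.e. the lattice path coding $P$ stays above a barrier determined by $A$. One then verifies the reformulations $\iar(\pi)=\#\{x\in[n]:f_P(x)=x\}$ and $\comp(\pi)=\#\{a\in A:f_P(a)=f_A(a)\}$: in path language $\iar$ is the number of initial up-steps of the $P$-path (how long it stays on the maximal diagonal), while $\comp$ is the number of contacts of the $P$-path with the fixed $A$-path at up-steps of $A$. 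The heart of part (i) is then an involution on the admissible $P$'s, with $A$ held fixed, that exchanges these two contact counts --- a folding of Dyck-type paths in the spirit of Rubey's result, adapted to respect the $A$-barrier.

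For parts (ii) and (iii) I would induct on $n$ through the one-sided block decompositions. A $\pi\in\S_n(312)$ with $\pi^{-1}(1)=\ell$ factors uniquely as $\pi=\sigma\,1\,\tau$, where $\sigma$ is a $312$-avoiding permutation of $\{2,\dots,\ell\}$ and $\tau$ one of $\{\ell+1,\dots,n\}$; then $\LMAX$ and $\DESB$ distribute over $\sigma$, the barrier entry $1$ (which belongs to $\DESB(\pi)$ exactly when $\ell\ge 2$) and $\tau$, the value $\iar(\pi)$ is governed by $\sigma$, and $\comp$ is governed by the leading block structure through the recursion $\comp(\pi)=1+\comp(\pi')$, where $\pi'$ is the restandardization of $\pi$ after deleting its first direct-sum component (of size $\ell$). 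Dually, a $\pi\in\S_n(132)$ with $\pi^{-1}(n)=p$ factors as $\pi=\alpha\,n\,\beta$ with $\alpha$ a $132$-avoiding permutation of $\{n-p+1,\dots,n-1\}$ and $\beta$ one of $\{1,\dots,n-p\}$; here $\LMAX(\pi)=\LMAX(\alpha)\cup\{n\}$, $\LMIN$ distributes over $\alpha$ and $\beta$, one has $\iar(\pi)=\iar(\alpha)$ unless $\alpha$ is increasing (in which case $\iar(\pi)=p$), and --- the key simplification --- $\comp(\pi)=1$ unless $\pi(n)=n$, in which case $\comp(\pi)=\comp\bigl(\pi(1)\cdots\pi(n-1)\bigr)+1$; equivalently, $\comp(\pi)-1$ is the number of trailing top fixed points of $\pi$, i.e. the largest $j$ with $\pi(n-i+1)=n-i+1$ for all $i\le j$. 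In each case the involution is built recursively: apply the inductive involution inside $\sigma$ (resp. $\alpha$) and then shuttle entries between the two blocks so as to exchange the roles of $\iar$ and $\comp$ --- for $132$, concretely, trading the length of the initial ascending run for the number of trailing top fixed points; the displayed decompositions then make the invariance of $\LMAX$ and $\DESB$ (resp. $\LMIN$) essentially automatic.

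\textbf{Main obstacle.} In all three parts $\iar$ and $\comp$ are anchored at opposite ends of $\pi$ --- the initial ascending run versus the global/terminal block structure --- so an involution interchanging them must transport entries across the whole permutation while leaving the rigid set-valued data $\LMAX$ (and $\DESB$ or $\LMIN$) intact; in parts (ii)--(iii) this is sharpened by the fact that swapping the two quantities forces the sizes of the recursive blocks to change, so preservation of the nested $\LMAX$ and $\LMIN$ data is a real constraint rather than a formality. Forcing the path involution, respectively the recursion, to close up in the degenerate cases ($\pi$ the identity, $\pi(1)=1$, $\pi(n)=n$, an empty block), and checking that the extra set-valued statistics are preserved at every step, is where I expect the bulk of the technical work to lie.
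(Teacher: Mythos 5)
Your reductions are sound as far as they go: encoding a $321$-avoider by the pair $(\LMAX,\LMAXP)$, your admissibility condition, and your reformulations of $\iar$ (initial diagonal contacts of the $P$-path) and $\comp$ (the indices $a\in A$ with $f_P(a)=f_A(a)$) all check out and are essentially the paper's ``admissible word'' encoding; likewise your structural facts about $\S_n(132)$ (consecutive left-to-right maxima, $\comp-1$ counting trailing top fixed points) are correct. The problem is that the argument stops exactly where the theorem lives. In part (i) you never construct the $A$-preserving involution exchanging the two contact counts: you assert ``a folding of Dyck-type paths in the spirit of Rubey's result, adapted to respect the $A$-barrier''. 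Rubey's involution, in your coordinates, fixes $P$ (the positions) rather than $A$ (the values), so it cannot be quoted as is; one must either conjugate it by $\pi\mapsto(\pi^{-1})^{\mathrm{rc}}$ and say so precisely, or build the exchange map directly, which is the actual content of the statement. The paper does the latter: a step bijection on admissible words that locates the first ``critical'' index and redistributes the gap sizes there, defined only when $\pi(1)>1$, with the case $\pi(1)=1$ handled by a separate induction after deleting $1$; none of these boundary issues are resolved in your sketch, and your own closing paragraph concedes they are open.

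The gap is more serious for (ii) and (iii). ``Apply the inductive involution inside $\sigma$ (resp.\ $\alpha$) and then shuttle entries between the two blocks so as to exchange $\iar$ and $\comp$'' is not a map: exchanging the two statistics forces the sizes of the blocks in your decompositions to change, so you must specify exactly which entries move and then verify that $\LMAX$, $\DESB$, resp.\ $\LMIN$ survive --- precisely the step you defer as ``the bulk of the technical work''. For contrast, the paper needs no recursion on the position of $1$ for (ii): the same word encoding works verbatim for $312$ (fill the gaps with the largest, instead of the smallest, unused letters), and $\DESB$ is preserved for free because $\LMAX(\sigma)\cup\DESB(\sigma)=[n]$ for every $\sigma\in\S_n(312)$ --- an observation your recursive scheme would also need but does not make. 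For (iii) the paper writes down the explicit one-step map $\pi\mapsto\ins_{n,n}(\del_{\pi(1)}(\pi))$ (delete the first letter, append a new maximum), checks with exactly the structural facts you list that it fixes $(\LMAX,\LMIN)$ while trading one unit of $\iar$ for one unit of $\comp$, and iterates to pair $\pi$ with its partner; your ``trade the initial run for the trailing top fixed points'' is the right idea but remains a slogan. As written, the proposal is a plausible plan whose central bijections --- that is, the proof --- are missing.
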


The result on the symmetry of $(\comp,\iar)$ was inspired by several works in the literature. First of all, Theorem~\ref{main:thm1} (i) is essentially equivalent to a result of Rubey~\cite{rub} up to some elementary transformations on permutations. Details will be given in Sec.~\ref{sym:3.1}. Furthermore, Rubey's result is a symmetric generalization of an equidistribution due to Adin, Bagno and Roichman~\cite{adin}, which implies the {\em Schur-positivity} of the class of $321$-avoiding permutations with a prescribed number of components. 

Next, Claesson, Kitaev and Steingr\'imsson~\cite[Thm~2.2.48]{kit} constructed a bijection between separable permutations of length $n+1$ with $k+1$ components and Schr\"oder paths of order $n$ with $k$ horizontals at $x$-axis. Combining this bijection with the work in~\cite{FW} justifies $\iar$ being a Comtet statistic on separable permutations. It then follows from our Lemma~\ref{general:sym}, a general lemma proved in Sec.~\ref{sec:dirsum}, that we have the following symmetric double Comtet distribution. 
\begin{corollary}\label{sym:separ}
The double Comtet statistics $(\comp,\iar)$ is symmetric on separable permutations. 
\end{corollary}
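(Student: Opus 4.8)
The plan is to obtain Corollary~\ref{sym:separ} from two inputs: the fact that $\iar$ is a Comtet statistic on separable permutations, and the general direct-sum lemma (Lemma~\ref{general:sym}, proved in Sec.~\ref{sec:dirsum}), which upgrades such an equidistribution to a bivariate symmetry. The first input is assembled in the Introduction from Claesson--Kitaev--Steingr\'imsson's bijection \cite[Thm~2.2.48]{kit} between separable permutations of length $n+1$ with $k+1$ components and Schr\"oder paths of order $n$ with $k$ horizontal steps on the $x$-axis, together with the equidistribution results of \cite{FW}; these combine to show that $\comp$ and $\iar$ are equidistributed over the separable permutations in $\S_n$ for every $n$. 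So the remaining task is to verify that the separable class meets the hypotheses of Lemma~\ref{general:sym}: it is closed under $\oplus$ (since neither of the patterns $2413$ and $3142$ is decomposable) and it is hereditary, hence closed under extracting components, so that every separable $\pi$ has a unique decomposition $\pi=\alpha_1\oplus\cdots\oplus\alpha_k$ into separable indecomposables.

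For the content of Lemma~\ref{general:sym} --- which I expect to say, in essence, that $(\comp,\iar)$ is symmetric over any $\oplus$-closed hereditary class $\mathcal{C}$ if and only if $\iar$ is a Comtet statistic over $\mathcal{C}$ --- I would argue by a generating-function bootstrap. Let $\mathcal{I}$ be the indecomposable members of $\mathcal{C}$ and $\mathcal{I}'=\mathcal{I}\setminus\{1\}$ the nontrivial ones, and note that every $\pi\in\mathcal{C}$ factors uniquely as $\pi=\id_t\oplus\beta\oplus\rho$ with $t\ge0$ leading trivial components, $\beta\in\mathcal{I}'$ (or $\beta$ empty) and $\rho\in\mathcal{C}$ arbitrary; then $\comp(\pi)=t+\comp(\beta\oplus\rho)$, whereas $\iar(\pi)$ depends only on $t$ and $\beta$, the tail $\rho$ being invisible to $\iar$. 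Consequently $G(x,y,z):=\sum_{\pi\in\mathcal{C}}x^{\comp(\pi)}y^{\iar(\pi)}z^{|\pi|}$ satisfies
\begin{equation*}
G(x,y,z)=\frac{1}{1-xyz}\bigl(1+x\,B(z,y)\,G(x,1,z)\bigr),\qquad G(x,1,z)=\frac{1}{1-xI(z)},
\end{equation*}
where $I(z)=\sum_{\beta\in\mathcal{I}}z^{|\beta|}$ and $B(z,y)=\sum_{\beta\in\mathcal{I}'}y^{\iar(\beta)}z^{|\beta|}$. Clearing denominators, $G(x,y,z)=G(y,x,z)$ becomes equivalent to the rational identity $x\,B(z,y)(1-yI(z))=y\,B(z,x)(1-xI(z))$, whose $x=1$ instance says precisely that $\iar$ is Comtet over $\mathcal{C}$; substituting $B(z,y)=y(1-I(z))B(z,1)/(1-yI(z))$, which that $x=1$ instance provides, back into both sides recovers the full identity. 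Taking $\mathcal{C}$ to be the separable class and invoking the equidistribution from the first paragraph then yields the corollary.

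An alternative worth keeping in mind is a purely bijective proof on Schr\"oder paths: transport $\comp$ and $\iar$ through the two correspondences above to a pair of path statistics and construct an involution on Schr\"oder paths exchanging them --- for the number of horizontal steps at level $0$ against, say, the number of returns to the axis, such involutions (path reversal composed with a local swap at each return) are classical. In either approach the genuine obstacle is not the combinatorics of separable permutations but keeping the input honest: one must confirm that the composite of the CKS bijection and \cite{FW} matches the \emph{distributions} of $\comp$ and $\iar$ rather than merely equidistributing them up to an unnamed reindexing of parts, and, for the lemma, that closure under $\oplus$ and under extracting components is exactly what the component decomposition --- hence the rational identity --- requires. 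Granting those, Corollary~\ref{sym:separ} follows at once.
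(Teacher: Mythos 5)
Your proposal is correct and follows essentially the same route as the paper: Corollary~\ref{sym:separ} is obtained exactly by combining the Comtet property of $\iar$ on separable permutations (from the Claesson--Kitaev--Steingr\'imsson bijection together with \cite{FW}) with Lemma~\ref{general:sym}, and your generating-function bootstrap --- decomposing by leading trivial components and the first nontrivial indecomposable, then using the $x=1$ instance to determine $B(z,y)$ and substituting back --- is the same algebraic argument the paper uses to prove Lemma~\ref{general:sym} via Eq.~\eqref{relation:comp}, just with the auxiliary statistics suppressed.
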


Our second main result announced below  is a far reaching refinement of  Corollary~\ref{sym:separ}.

\begin{theorem}\label{thm:sep:sym}
There exists an involution  on $\S_n(2413,3142)$ that preserves the pair of set-valued statistics $(\LMAX,\DESB)$ but exchanges the pair $(\comp,\iar)$. Consequently,
$$
\sum_{\pi\in\S_n(2413,3142)}s^{\comp(\pi)}t^{\iar(\pi)}{\bf x}^{\LMAX(\pi)}{\bf y}^{\DESB(\pi)}=\sum_{\pi\in\S_n(2413,3142)}s^{\iar(\pi)}t^{\comp(\pi)}{\bf x}^{\LMAX(\pi)}{\bf y}^{\DESB(\pi)}
$$
where ${\bf x}^S:=\prod_{i\in S}x_i$ and ${\bf y}^S:=\prod_{i\in S}y_i$ for any subset $S\subseteq[n]$.  
\end{theorem}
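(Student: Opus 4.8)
The plan is to construct the desired involution on $\S_n(2413,3142)$ explicitly by recursion on the direct-sum decomposition, mimicking the general mechanism behind Lemma~\ref{general:sym} but now tracking the extra set-valued data $(\LMAX,\DESB)$. Recall that every separable permutation $\pi$ factors uniquely as $\pi=\alpha_1\oplus\alpha_2\oplus\cdots\oplus\alpha_k$ with each $\alpha_i$ indecomposable and separable, and $\comp(\pi)=k$. Moreover, $\iar(\pi)$ equals $|\alpha_1|$ if $\alpha_1$ is the trivial permutation $1$, and more generally $\iar(\pi)$ is the length of the maximal prefix $1\oplus 1\oplus\cdots\oplus 1$ of $\pi$ plus the $\iar$ of the first nontrivial summand. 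The first step is therefore to understand how $\LMAX$ and $\DESB$ decompose under $\oplus$: if $\pi=\sigma\oplus\tau$ with $\sigma\in\S_m$, then $\LMAX(\pi)=\LMAX(\sigma)\sqcup\{m+j:j\in\LMAX(\tau)\}$ and $\DESB(\pi)=\DESB(\sigma)\sqcup\{m+j:j\in\DESB(\tau)\}$, since $\tau$'s entries all lie above and to the right of $\sigma$'s. Hence both statistics are "additive" along the direct sum, which is exactly what makes a recursive swap feasible.

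The core of the argument is a base-case involution on \emph{indecomposable} separable permutations $\beta$ that fixes $(\LMAX,\DESB)$ and sends $(\comp,\iar)=(1,\iar(\beta))$ to $(1,1)$ — wait, that cannot be right since $\comp$ must stay $1$; instead the swap of $\comp$ and $\iar$ is a \emph{global} phenomenon and should be realized as follows. Write $\pi=\underbrace{1\oplus\cdots\oplus 1}_{a}\oplus\,\gamma$ where $\gamma$ is either empty or has a nontrivial first summand; set $r=\iar(\pi)$ and $c=\comp(\pi)$. Following the proof of Lemma~\ref{general:sym} in Section~\ref{sec:dirsum}, one uses a known involution $\Phi$ on separable permutations swapping $\comp$ and $\iar$; the task here is to upgrade $\Phi$ so that it also preserves $(\LMAX,\DESB)$. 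I would isolate the "relevant" part of $\pi$ — the initial block of $1$'s and the first nontrivial component $\gamma_1$ together with the trailing components — and observe that $\LMAX$ restricted to an initial block of fixed points is forced (it is $\{1,2,\ldots,a\}$ followed by whatever $\gamma$ contributes shifted up), and likewise $\DESB$ of that block is empty. Then I would define the involution componentwise: apply $\Phi$ only to the "reduced shape" recording the sequence of component sizes and the internal structure needed for $\iar$, while copying over each component's internal pattern verbatim so that all within-component left-to-right maxima and descent bottoms are untouched, and only the \emph{positions} get relabelled by the additivity formulas above.

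The main obstacle I anticipate is reconciling the swap of $\comp$ and $\iar$ with preservation of $\LMAX$: increasing $\comp$ typically means breaking a component into more pieces, which \emph{creates} new left-to-right maxima (the first entry of each new component that sits above everything before it), whereas preserving $\LMAX$ forbids this. The resolution must be that on $\S_n(2413,3142)$ the relevant values are constrained: for a separable $\pi$, I expect $\LMAX(\pi)$ together with $\DESB(\pi)$ to \emph{already determine} how the "mass" not accounted for by left-to-right maxima is distributed, so that the pair $(\LMAX,\DESB)$ pins down a canonical pair of integers which can play the symmetric roles of $\comp$ and $\iar$. Concretely, I would prove a lemma of the form: for $\pi\in\S_n(2413,3142)$, both $\comp(\pi)$ and $\iar(\pi)$ are recoverable from, and freely interchangeable within, the data $(\LMAX(\pi),\DESB(\pi))$ via an explicit formula — perhaps $\comp$ counts a certain type of element of $\LMAX$ lying above all earlier descent bottoms, while $\iar$ counts the initial run of such elements — and then the desired involution is simply the map that keeps $(\LMAX,\DESB)$ and replaces the unique separable preimage by the "conjugate" one obtained by the base-case swap. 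Verifying that the conjugate permutation is again separable and avoids $2413,3142$, and that this map is a genuine involution, will require a careful induction on $n$ using the decomposition $\pi=\sigma\oplus\tau$ and the inductive hypothesis applied to the first nontrivial summand; I would carry this out by strong induction on the number of components, handling separately the case $c=1$ (where $\iar$ and $\comp$ are both forced to small values and the map is nearly the identity up to reversing the first component in a $\LMAX$-$\DESB$-preserving way) and the case $c\ge 2$ (where one peels off $\alpha_1$, applies induction to $\alpha_2\oplus\cdots\oplus\alpha_k$, and re-sums).
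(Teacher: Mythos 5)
Your outline has a circularity at its core: you propose to ``use a known involution $\Phi$ on separable permutations swapping $\comp$ and $\iar$'' and then upgrade it, citing the proof of Lemma~\ref{general:sym}. But Lemma~\ref{general:sym} is a purely algebraic generating-function identity; it produces no bijection, and no such involution is available anywhere in the paper --- constructing one (even before demanding that it preserve $(\LMAX,\DESB)$) is exactly the content of Theorem~\ref{thm:sep:sym}, whose proof the authors explicitly defer to the companion work~\cite{flw2} via a bijection on di-sk trees; only the weaker, non-bijective Corollary~\ref{sym:separ} follows from Lemma~\ref{general:sym}. Moreover, the concrete mechanism you sketch for the upgrade cannot work: exchanging $\comp$ and $\iar$ is incompatible with ``copying each component's internal pattern verbatim and only relabelling positions.'' For instance $2143=21\oplus 21$ (with $\comp=2$, $\iar=1$) must be matched with the indecomposable $2431$ (with $\comp=1$, $\iar=2$), since these are the only two separable permutations with $\LMAX=\{2,4\}$ and $\DESB=\{1,3\}$; the number of components and their interiors must change drastically, which is precisely the difficulty your plan never confronts ($\iar$ is only partially $\oplus$-compatible, and it is governed by the interior of the first nontrivial summand).

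The fallback lemma you propose is also false. First, $(\LMAX,\DESB)$ does not determine $\comp$ and $\iar$: the pair $2143$, $2431$ above shares $(\LMAX,\DESB)$ but has $(\iar,\comp)=(1,2)$ versus $(2,1)$ (this must be so, or the theorem would be vacuous). Second, even the full quadruple $(\LMAX,\DESB,\iar,\comp)$ does not pin down ``the unique separable preimage'': $53124$ and $53412$ are both in $\S_5(2413,3142)$ with $\LMAX=\{5\}$, $\DESB=\{1,3\}$ and $\iar=\comp=1$. So the map ``keep $(\LMAX,\DESB)$ and replace the unique preimage by its conjugate'' is not well defined; what the theorem requires is a bijection between fibers that in general have many elements, and neither the componentwise relabelling nor the induction you describe supplies such a matching. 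The only parts of your proposal that are sound are the additivity of $\LMAX$ and $\DESB$ under $\oplus$ (which is the content of Lemma~\ref{gen:setv} and suffices for totally $\oplus$-compatible statistics), but that machinery alone cannot exchange $\comp$ with $\iar$, and the genuinely bijective step remains missing --- in the paper it is carried out on di-sk trees in~\cite{flw2}, not here.
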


The proof of Theorem~\ref{main:thm1} provided in Sec.~\ref{sec:one 3-pattern} is via two involutions on permutations that actually imply the even stronger symmetric phenomenon, namely the corresponding distribution matrices are {\em Hankel}; see Theorems~\ref{thm:gf-132} and \ref{thm:gf-132-312:132-321}. The proof of Theorem~\ref{thm:sep:sym} is based on a combinatorial bijection on the so-called {\em di-sk trees} introduced in~\cite{flz}. This bijection will also provide an alternative approach to Theorem~\ref{main:thm1}(ii). The details will be given in~\cite{flw2}. 

\begin{remark}  Rubey's bijective proof of a slight modification (see~Theorem~\ref{thm:Rubey}) of Theorem~\ref{main:thm1}(i) is via Dyck paths and the proof of Theorem~\ref{thm:sep:sym} that will appear in~\cite{flw2} is based on di-sk trees. Our bijective and unified proof of Theorem~\ref{main:thm1}(i)(ii), constructed directly on permutations, gets more insights into the symmetry of the double Comtet statistics, and therefore, it seems more likely to be extended to deal with such equidistributions over other bigger classes of pattern-avoiding permutations. 
\end{remark}

Our third main result shows how $\iar$, combined with $\des$ and the number of double descents would refine known results and imply new ones concerning separable and $(2413,4213)$-avoiding permutations. Interestingly, it does refine a nice $\gamma$-positivity interpretation for separable permutations~\cite{flz,lin} due to Zeng and the first two authors that we review below. 

Recall that a polynomial in $\RR[t]$ of degree $n$ is said to be {\em $\gamma$-positive} if it can be written as a linear combination of
$$
\{t^k(1 + t)^{n-2k}\}_{0\leq k\leq n/2}
$$
with non-negative coefficients. Many polynomials arising from  combinatorics and discrete geometry have been shown to be $\gamma$-positive; see the comprehensive  survey by Athanasiadis~\cite{ath}. One typical example is  the  Eulerian polynomials
$$
A_n(t)=\sum_{\pi\in\S_n}t^{\des(\pi)}=\sum_{k=0}^{\lfloor\frac{n-1}{2}\rfloor}|\Gamma_{n,k}|t^k(1 + t)^{n-1-2k},
$$
where $\Gamma_{n,k}$ is the set of permutations in $\S_n$ with $k$ descents and without double descents. Here an index
$i\in[n]$ is called a {\em double descent} of a permutation  $\pi\in\S_n$ if $\pi(i-1)>\pi(i)>\pi(i+1)$, where we use the convention $\pi(0)=\pi(n+1)=0$. The number of double descents of $\pi$ will be denoted as $\dd(\pi)$. This classical result is due to Foata and Sch\"utzenberger~\cite[Theorem~5.6]{FS} and has been extended in several different directions (cf.~\cite{ath}) in recent years. In particular, the first two authors together with Zeng~\cite{flz,lin} proved an analog for the descent polynomial over separable permutations 
\begin{equation}\label{gam:sch}
S_n(t):=\sum_{\pi\in\S_n(2413,3142)}t^{\des(\pi)}=\sum_{k=0}^{\lfloor\frac{n-1}{2}\rfloor}|\Gamma_{n,k}(2413,3142)|t^k(1 + t)^{n-1-2k}.
\end{equation}
In a recent work~\cite{LK} of the second author and Kim, they proved that $(2413,3142)\sim_{\des}(2413,4213)$ (see~\cite[Thm.~5.1]{LK}), and that  the $\gamma$-coefficient of the descent polynomial over $(2413,4213)$-avoiding permutations is analogously given by $|\Gamma_{n,k}(2413,4213)|$ (see~\cite[Eq.~(4.10)]{LK}). In view of \eqref{gam:sch}, we see the number of separable permutations of $[n]$ with $k$ descents and without double descents, is the same as that of $(2413,4213)$-avoiding permutations of $[n]$ with $k$ descents and without double descents. With this in mind, our third main result given below can be viewed as a refinement.

\begin{theorem} \label{thm:sep}
For $n\geq1$,
\begin{equation}\label{eq:sep}
\sum_{\pi\in\S_n(2413,3142)}t^{\des(\pi)}x^{\dd(\pi)}y^{\iar(\pi)}=\sum_{\pi\in\S_n(2413,4213)}t^{\des(\pi)}x^{\dd(\pi)}y^{\iar(\pi)}.
\end{equation} 
\end{theorem}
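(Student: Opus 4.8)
The plan is to introduce a size variable $z$ and reduce the identity \eqref{eq:sep} to a statement about \emph{indecomposable} members of the two classes, exploiting the fact that $\iar$ is controlled by the left end of a permutation. First I would strip off leading fixed points: every permutation $\pi$ factors uniquely as $\pi=\id_j\oplus\rho$ with $j\ge 0$ and $\rho$ either empty or having first $\oplus$-component of size $\ge 2$, and one checks that $\des,\dd$ are unchanged while $\iar(\pi)=j+\iar(\rho)$, and that both $\S(2413,3142)$ and $\S(2413,4213)$ are closed under prepending a fixed point (the leading small values cannot participate in an occurrence of $2413$ or $4213$). This gives, for each class $\mathcal{C}$, a relation $\mathcal{F}_{\mathcal{C}}(z)=\frac{1}{1-zy}\bigl(1+G_{\mathcal{C}}(z)\bigr)$, where $\mathcal{F}_{\mathcal{C}}=\sum_n z^n\sum_{\pi\in\S_n(\mathcal{C})}t^{\des(\pi)}x^{\dd(\pi)}y^{\iar(\pi)}$ and $G_{\mathcal{C}}$ is the analogous series restricted to $\rho$ with first component of size $\ge 2$. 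Peeling that first component off ($\rho=\gamma\oplus\delta$, $\gamma$ indecomposable of size $\ge2$) and using that $\des$, $\dd$ are almost additive over $\oplus$ — with only a $\pm1$ junction correction governed by whether $\gamma$ ends with a descent — together with $\iar(\gamma\oplus\delta)=\iar(\gamma)$ for such $\gamma$, expresses $G_{\mathcal{C}}$ in terms of (a) the plain $(\des,\dd)$-generating function $H_{\mathcal{C}}=\sum_n z^n\sum_{\delta\in\S_n(\mathcal{C})}t^{\des}x^{\dd}$, which is common to both classes by the known $(\des,\dd)$-Wilf-equivalence of Wang, and (b) generating functions $K^{(\bullet)}_{\mathcal{C}}$ over indecomposable members of $\mathcal{C}$ of size $\ge2$, graded by $(\des,\dd,\iar)$ and by whether the permutation ends with a descent. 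It then suffices to show the $K^{(\bullet)}_{\mathcal{C}}$ agree for the two classes.

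For that, I would use the structural descriptions of the two families of indecomposable permutations and match the resulting recursions. On the separable side an indecomposable separable permutation of size $\ge2$ is a skew sum $\gamma_1\ominus\cdots\ominus\gamma_p$ with $p\ge2$ and each $\gamma_i$ $\ominus$-indecomposable separable; here $\iar$ of the whole is $\iar(\gamma_1)$, $\des$ gains $p-1$, and $\dd$ gains controlled junction corrections, which yields a functional equation (the $y=1$ specialization being compatible with the di-sk-tree description of \cite{flz} underlying \eqref{gam:sch}). On the $(2413,4213)$-avoiding side one invokes the structural description of indecomposable $(2413,4213)$-avoiders from \cite{LK}: this family is genuinely larger — for instance $3142$ is an indecomposable $(2413,4213)$-avoider that is not skew-decomposable — so the corresponding recursion has a different shape. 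One then writes down both recursions and verifies that, after eliminating the auxiliary refinements (starts-with-ascent/descent, ends-with-ascent/descent), they determine the same series in $z,t,x,y$; since the right-hand sides of \eqref{eq:sep} are polynomials refined by $n$, this can equivalently be checked order by order in $z$.

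The main obstacle is precisely this matching step, compounded by the $\iar$ bookkeeping. Because $\iar$ is not additive, it must be threaded through each recursion by tracking whether a permutation begins with an ascent — in which case its initial ascending run continues into its first sub-block — or with a descent, in which case $\iar=1$; and the two classes organize their "initial structure" differently, so the two functional-equation systems are not term-by-term identical and have to be reconciled. A clean way to finish, if available, would be to show that the bijection realizing Wang's $(\des,\dd)$-Wilf-equivalence commutes with $\oplus$-decomposition and preserves the position of the leftmost descent, and then simply invoke it; failing that, one must carry out the reconciliation directly, or build a di-sk-tree-style bijection between the two families of indecomposable objects that transports all three statistics — this is the technical heart of the argument.
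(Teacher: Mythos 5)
Your reduction via $\oplus$-decomposition fails at the matching step, and not merely because that step is hard: the statement you reduce the theorem to is false. Since $\dd$ suffers a junction correction under $\oplus$ (an indecomposable factor that ends with a descent loses one double descent as soon as a further nonempty factor is appended), your peeling argument forces you to track, for each indecomposable factor, whether it ends with a descent; equality of your series $K^{(\bullet)}_{\mathcal{C}}$ for the two classes would mean that $(\des,\dd,\iar)$ jointly with the ends-with-a-descent indicator is equidistributed over indecomposable $(2413,3142)$-avoiders and indecomposable $(2413,4213)$-avoiders. This already fails at $n=4$: the two sets of indecomposables of length $4$ differ only in that the first contains $4213$ and the second contains $3142$; these share $(\des,\dd,\iar)=(2,1,1)$, but $4213$ ends with an ascent while $3142$ ends with a descent, so $K^{(\bullet)}_{P}\neq K^{(\bullet)}_{Q}$. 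Dropping the end-marker does not rescue the plan, because without it the $\oplus$-reassembly of $\dd$ cannot be carried out; what is actually needed is a cancellation identity intertwining the two nonzero indecomposable-level discrepancies (one weighted by $x^{-1}$) with the full-class series $H_{\mathcal{C}}$ itself, and that identity is not a consequence of Wang's equidistribution plus any matching of indecomposables -- it is essentially the theorem over again.

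The same obstruction kills your proposed ``clean finish'': a bijection realizing Wang's $(\des,\dd)$-equidistribution that commutes with $\oplus$-decomposition would preserve $\comp$ in addition to $\dd$, whereas $(\dd,\comp)$ is \emph{not} equidistributed over $\S_5(2413,3142)$ and $\S_5(2413,4213)$ (see the remark following Corollary~\ref{comp:iar}); this is precisely why the theorem refines by the Comtet statistic $\iar$ but not by $\comp$, and it shows that any componentwise, $\oplus$-based strategy of the kind you outline cannot succeed. The paper's proof avoids $\oplus$-decomposition altogether: the $(2413,4213)$ side is transported to $021$-avoiding inversion sequences by the Kim--Lin bijection $\Psi$, where the generating function of $(\asc,\da,\izero)$ is shown to satisfy an explicit cubic functional equation, while the separable side is treated with Stankova's block decomposition around the maximal letter, tracking the boundary variants $\dd_0$ and $\dd_{\infty}$ of the double-descent statistic; one then verifies that both generating functions satisfy the same cubic. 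If you want to salvage a decomposition-based argument, it has to be one, like this, in which the double-descent bookkeeping at block boundaries is carried explicitly rather than expected to match class by class at the indecomposable level.
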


Theorem~\ref{thm:sep} refines Wang's equidistribution~\cite[Thm.~1.5]{wan} by the Comtet statistic $\iar$ and has many interesting consequences as can be found in Sections~\ref{sec:schroder} and~\ref{sec:revisit}. More detailed motivation that leads us to discover Theorem~\ref{thm:sep} will also be provided  in Sec.~\ref{sec:revisit}. 
Our proof of Theorem~\ref{thm:sep} in Sec.~~\ref{sec:revisit} is purely algebraic and finding a bijective proof remains open. 

Besides the above three main results, we will also calculate the joint distribution of $(\des,\iar,\comp)$ over permutations avoiding a set $P$ of patterns, where $P$ is taken to be a single pattern of length $3$, a pair of patterns of length $3$, as well as the three pairs $(2413,3142)$, $(2413,4213)$, and $(3412,4312)$, respectively. All the generating functions for these patterns turn out to be either algebraic or rational (see Tables~\ref{one 3-avoider} and~\ref{two 3-avoiders}), and as applications, complete classification of the $\iar$- or $\comp$-Wilf equivalences for these patterns is given. Moreover, our attempt to characterize the pattern pairs of length $4$ which are $(\iar,\comp)$-Wilf-equivalent to $(2413,3142)$ leads to Conjecture~\ref{schroder:iar}, which we have verified in some important cases. 

The rest of this paper is organized as follows. In Section~\ref{sec:notation}, we review some notations and  terminology and prove two general lemmas concerning the direct sum operation of permutations. The classification of refined Wilf-equivalences for a single pattern of length $3$ is carried out in Section~\ref{sec:one 3-pattern}, where the proof of Theorem~\ref{main:thm1} is provided as well. Section~\ref{sec3: two 3-patterns} is devoted to the investigation of pattern pairs of length $3$, while Section~\ref{sec:schroder} aims to characterize the pattern pairs of length $4$ that are $(\iar,\comp)$-Wilf-equivalent to $(2413,3142)$. The proof of Theorem~\ref{thm:sep} is given in Section~\ref{sec:revisit}, where a new recurrence for the $021$-avoiding inversion sequences is also proved.

%%%%%%%%%%%%%%%%%%%%%%%%%%%%%%%
\section{Notations and preliminaries}\label{sec:notation}
%%%%%%%%%%%%%%%%%%%%%%%%%%%%%%%
\subsection{Elementary operations}
For a given permutation $\pi\in\S_n$, there are three fundamental symmetry operations on $\pi$:
\begin{itemize}
\item its {\em reversal} $\pi^{\mathrm{r}}\in\S_n$ is given by $\pi^{\mathrm{r}}(i)=\pi(n+1-i)$;
\item its {\em complement} $\pi^{\mathrm{c}}\in\S_n$ is given by $\pi^{\mathrm{c}}(i)=n+1-\pi(i)$; 
\item its {\em inverse} $\pi^{-1}\in\S_n$, is the usual group theoretic inverse permutation. 
\end{itemize}
One thing we would like to point out, before we barge into classifying $\iar$-Wilf-equivalences for various patterns, is that by taking $\iar$ into consideration, we can no longer utilize the above three standard symmetries for permutations, since none of them preserves the length of the initial ascending run of $\pi$, when $n\ge 2$.  For the classical Wilf-equivalence, these symmetries reduce the number of possible equivalence classes considerably, since for example, $\pi$ avoids $213$ if and only if $\pi^{\mathrm{r}}$ avoids $312$. This fact about the statistic $\iar$ explains, at least partially, the following observations. 
%made from examining  tables~\ref{one 3-avoider} and~\ref{two 3-avoiders}.
\begin{Observation}\hfill
\begin{enumerate}
	\item The $\iar$-Wilf-equivalence is much less likely to be found than the Wilf-equivalence. 
	%Consequently, the majority of the classes in Table~\ref{two 3-avoiders} contain only one set of patterns.
	\item When $\iar$-Wilf-equivalence does hold, we cannot prove it using the three standard symmetries or their combinations. Usually we need to use new ideas in constructing bijective proofs, or prove the equivalence recursively using recurrence relations.
\end{enumerate}
\end{Observation}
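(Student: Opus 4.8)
The plan is to isolate the single rigorous fact that drives both halves of the Observation — namely that the dihedral group $G$ of order $8$ generated by reversal, complement and inversion, which produces every \emph{trivial} Wilf-equivalence, fails to preserve $\iar$ once $n\geq 2$ — and then to read off each half as a consequence. For part (1), I would first record the elementary monotonicity $P\sim_\iar Q \Rightarrow P\sim Q$: summing the equalities $|\S_n(P)^\iar|=|\S_n(Q)^\iar|$ over all values of $\iar$ collapses them to $|\S_n(P)|=|\S_n(Q)|$. Hence $\sim_\iar$ refines $\sim$, the $\iar$-Wilf classes subdivide the classical ones, and ``much less likely to be found'' becomes the statement that each classical class contains \emph{at least} one $\iar$-class. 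To see that the refinement is genuinely strict over the ranges we care about, I would point to the complete classifications compiled later (Tables~\ref{one 3-avoider} and~\ref{two 3-avoiders}), from which one reads directly that several classical Wilf-classes of single length-$3$ patterns and of length-$3$ pairs split into two or more distinct $\iar$-classes.

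The main computational step is to pin down how each generator of $G$ acts on $\iar$, viewed as the position of the leftmost descent (or $n$ when $\pi$ is increasing). Complement turns descents into ascents at the same positions, so the leftmost descent of $\pi^{\mathrm c}$ sits at the leftmost \emph{ascent} of $\pi$; reversal turns descents into ascents in reversed positions, so $\iar(\pi^{\mathrm r})$ is governed by the \emph{rightmost} ascent of $\pi$. In either case the increasing permutation $12\cdots n$ (with $\iar=n$) is sent to the decreasing permutation (with $\iar=1$), so neither reversal nor complement preserves $\iar$ for any $n\geq 2$. Inversion is the delicate case: it is the identity on $\S_2$ and therefore preserves $\iar$ there, so one must pass to $n\geq 3$, where for instance $231\mapsto 312$ drops $\iar$ from $2$ to $1$. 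Combining these witnesses with analogous small counterexamples for the remaining compositions, I would prove the clean lemma that \emph{no non-identity element of $G$ preserves $\iar$ on $\S_n$ for all $n$}, which is the exact content behind the displayed remark that none of the standard symmetries preserves the length of the initial ascending run.

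Finally I would turn this lemma into the methodological assertion of part (2). A classical symmetry proof of $P\sim Q$ realizes $Q$ as $\phi(P)$ for some $\phi\in G$ and uses the induced bijection $\phi\colon\S_n(P)\to\S_n(Q)$; to promote it to $P\sim_\iar Q$ one would need $\phi$ to preserve $\iar$, which the lemma forbids. The honest obstacle here — and the reason this remains an Observation rather than a theorem — is that the failure of $\phi$ to preserve $\iar$ on all of $\S_n$ does not, by itself, preclude $\phi$ from accidentally preserving $\iar$ once restricted to the proper subset $\S_n(P)$. I therefore expect the only real subtlety to be the gap between ``the standard bijections are never globally $\iar$-preserving'' and ``no symmetry can possibly witness the equivalence on the restricted class,'' and I would close the discussion by phrasing the Observation as the unavailability of the standard reduction tool rather than as an impossibility result — which is precisely why every genuine $\iar$-equivalence established in Sections~\ref{sec:one 3-pattern}--\ref{sec:revisit} is instead proved by a bespoke bijection or a recurrence.
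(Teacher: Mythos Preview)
Your proposal is correct and follows the same line of reasoning as the paper. In the paper this Observation is not given a formal proof at all; it is offered as a heuristic remark, justified only by the preceding paragraph, which asserts that none of the three standard symmetries preserves $\iar$ for $n\ge 2$. Your write-up expands this into a more careful argument (including the nice catch that inversion is the identity on $\S_2$, so the counterexample for $\pi\mapsto\pi^{-1}$ must come from $n\ge 3$) and explicitly flags the logical gap between ``$\phi$ fails to preserve $\iar$ globally'' and ``$\phi$ cannot witness $P\sim_{\iar}Q$ on the restricted class'' --- a gap the paper leaves implicit by calling this an Observation rather than a theorem.
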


On the other hand, the statistic $\comp$ behaves better under these three elementary operations. 
\begin{Observation}\label{obs:2}
The two mappings $\pi\mapsto (\pi^r)^c$ and $\pi\mapsto \pi^{-1}$ both preserve the statistic $\comp$. 
\end{Observation}

Let $P$ be a collection of patterns. The following trivariate generating function will be the focal point of our study.
\begin{align}\label{gf:main}
\S(P)^{\des,\iar,\comp}(t, r, p; z):=\sum_{n\ge 0}\sum_{\pi\in\S_n(P)}t^{\des(\pi)}r^{\iar(\pi)}p^{\comp(\pi)}z^n.
\end{align}
Most of the time we suppress the superindices $\des,\iar,\comp$, and variable $z$, and when the pattern set $P$ is clear from the context, we also suppress $P$ to write $\S(t,r,p)$. In most cases, we simply calculate the variant $\tilde\S(t,r,p):=(\S(t,r,p)-1)/rpz$, so that the final expressions of the generating functions are more compact to be collected and displayed in a table (see Tables~\ref{one 3-avoider} and \ref{two 3-avoiders}). Let $M_n(P)$ be the $n\times n$ matrix, whose entry at the $k$-th row and the $\ell$-th column is the number of permutations $\pi$ in $\S_n(P)$ with $\iar(\pi)=k$ and $\comp(\pi)=\ell$. Let $\st$ be a permutation statistic, we can then refine $M_n(P)$ as $M_n(P)=\sum_{i}M_n^{\st=i}(P)$, so that the $(k,\ell)$-entry of $M_n^{\st=i}(P)$ counts permutations $\pi$ such that $\st(\pi)=i$ for certain fixed integer $i$. This definition extends to set-valued statistics and multiple statistics in a natural way. So for instance, $M_n^{\LMAX=S,\des=i}(P)$ is the $n\times n$ matrix, whose $(k,\ell)$-entry is the number of permutations $\pi$ in $\S_n(P)$ with $\LMAX(\pi)=S$, $\des(\pi)=i$, $\iar(\pi)=k$ and $\comp(\pi)=\ell$. 

We also need the following operations on permutations.

\begin{Def}\label{del and ins}
For a word $w$ over $\Z$, denote $red(w)$ the {\em reduction} of $w$, which is obtained from $w$ by replacing the $j$-th smallest positive letter by $j$. For a given permutation $\pi\in\S_n$, the {\em deletion of $i$}, for each $i\in[n]$, is the map that deletes $i$ from $\pi$, and reduces the derived word to a permutation, denoted as $\del_i(\pi)\in\S_{n-1}$. Similarly, the {\em insertion of $i$ at place $k$}, for each $i,k\in [n+1]$, is defined to be the map that increases all letters $j\ge i$ in $\pi$ by $1$, and inserts $i$ between $\pi(k-1)$ and $\pi(k)$ to get a new permutation, denoted as $\ins_{i,k}(\pi)\in\S_{n+1}$.
\end{Def}

%%%%%%%%%%%%%%%%%%%%%%%%%%%%%
\subsection{The direct/skew sum  operation and fundamental lemmas}%%%
\label{sec:dirsum}%%%%%%%%%%%%%%%%%%%%
%%%%%%%%%%%%%%%%%%%%%%%%%%%%%

There are two fundamental operations, called direct sum and skew sum, to construct a bigger permutation from two smaller ones. 
The \emph{direct sum} $\pi\oplus\sigma$ and the \emph{skew sum} $\pi\ominus\sigma$, of $\pi\in\S_k$ and $\sigma\in\S_l$, are permutations in $\S_{k+l}$ defined respectively as
$$
(\pi\oplus\sigma)_i=
\begin{cases}
\pi_i, &\text{for $i\in[1,k]$};\\
\sigma_{i-k}+k, &\text{for $i\in[k+1,k+l]$}
\end{cases}
$$
and
$$
(\pi\ominus\sigma)_i=
\begin{cases}
\pi_i+l, &\text{for $i\in[1,k]$};\\
\sigma_{i-k}, &\text{for $i\in[k+1,k+l]$}.
\end{cases}
$$
 For instance, we have $123\oplus 21=12354$ and $123\ominus 21=34521$. 
 The following  characterization of separable permutations is  folkloric  (see \cite[pp.~57]{kit}) in pattern avoidance. 
 \begin{proposition}\label{desides}
 A permutation is separable if and only if it can be built from the permutation $1$ by applying the operations $\oplus$ and $\ominus$ repeatedly. 
 \end{proposition}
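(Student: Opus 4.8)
\textbf{Proof proposal for Proposition~\ref{desides}.}

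The plan is to prove both directions of the equivalence by structural induction on the length of the permutation, using the well-known fact that a permutation of length at least $2$ is either \emph{decomposable} (a nontrivial direct sum) or \emph{skew-decomposable} (a nontrivial skew sum), but possibly neither simple-free situations need care. The key underlying observation is the following dichotomy: for $n\ge 2$, every $\pi\in\S_n$ that is \emph{not} simple can be written as $\alpha\oplus\beta$ or as $\alpha\ominus\beta$ with $\alpha,\beta$ nonempty; and the only simple permutations of length $\le 3$ are $1$, $12$, $21$, so among short permutations everything decomposes, while the first obstruction is length $4$ where $2413$ and $3142$ are simple. Since separable permutations are by definition exactly those avoiding $\{2413,3142\}$, one expects the avoidance of these two patterns to be precisely what guarantees that the recursive decomposition never gets ``stuck'' at a simple block.

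For the ``if'' direction, I would first check that $1$ is separable, and then show that the class of $\{2413,3142\}$-avoiders is closed under both $\oplus$ and $\ominus$. This is the routine part: if $\pi\oplus\sigma$ (or $\pi\ominus\sigma$) contained an occurrence of $2413$ or $3142$, then because both patterns are themselves \emph{simple} (sum- and skew-indecomposable), any such occurrence must lie entirely within the block of entries coming from $\pi$ or entirely within the block coming from $\sigma$ — a four-term pattern that is indecomposable cannot be split across the two summand-blocks of a direct or skew sum. Hence $\pi$ or $\sigma$ would already contain the forbidden pattern, contradicting the inductive hypothesis. Therefore everything built from $1$ by repeated $\oplus,\ominus$ avoids $\{2413,3142\}$.

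For the ``only if'' direction, argue by strong induction on $n$. The base cases $n=1$ (and $n=2$) are immediate. For $n\ge 3$, let $\pi\in\S_n$ be separable. If $\pi$ is decomposable, write $\pi=\alpha\oplus\beta$ with $\alpha,\beta$ nonempty; if $\pi$ is skew-decomposable, write $\pi=\alpha\ominus\beta$ similarly. In either case $\alpha$ and $\beta$ are themselves separable (a pattern occurrence in a sub-interval of $\pi$ is an occurrence in $\pi$), so by induction each is obtainable from $1$ by $\oplus,\ominus$, and hence so is $\pi$. The crux is to rule out the remaining case: $\pi$ is \emph{simple} of length $n\ge 4$. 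The main obstacle in the write-up is exactly this step — one must show that a simple permutation of length $\ge 4$ necessarily contains $2413$ or $3142$. The cleanest route is to cite (or quickly reprove) the classical fact that $2413$ and $3142$ are the two shortest simple permutations, together with the substitution-decomposition theorem (every permutation is an inflation of a unique simple permutation by intervals, and a permutation is separable iff its substitution-decomposition tree uses only the ``simple quotients'' $12$ and $21$). Given that structure theorem, simplicity of $\pi$ with $|\pi|\ge 4$ forces the simple quotient at the root to be a simple permutation of length $\ge 4$, and every such permutation contains $2413$ or $3142$ as a classical pattern; this contradicts $\pi$ being separable and closes the induction.

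Because this proposition is flagged in the excerpt as ``folkloric'' with a pointer to \cite[pp.~57]{kit}, I would keep the exposition brief: state the sum/skew-sum closure for the ``if'' direction in one line each using the indecomposability of the two length-$4$ patterns, and for the ``only if'' direction invoke the substitution decomposition and the characterization of the minimal simple permutations, citing \cite{kit} rather than reproving the structure theorem. The only genuine content to spell out is the observation that an indecomposable length-$4$ pattern cannot straddle the two blocks of a direct or skew sum — a short case check on which of the four matched positions fall in the left block versus the right block.
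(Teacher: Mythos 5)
The paper itself gives no proof of this proposition --- it is stated as folklore with a citation to \cite[pp.~57]{kit} --- so the only question is whether your argument stands on its own. Your ``if'' direction does: since $2413$ and $3142$ are both $\oplus$- and $\ominus$-indecomposable, an occurrence of either pattern in $\pi\oplus\sigma$ (resp.\ $\pi\ominus\sigma$) cannot straddle the two blocks, as a straddling occurrence would exhibit the pattern as a nontrivial direct (resp.\ skew) sum; closure under $\oplus$ and $\ominus$ then follows by induction.

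The gap is in the ``only if'' direction, and it is precisely the dichotomy you assert at the outset: it is \emph{not} true that every non-simple permutation of length at least $2$ is a nontrivial direct sum or skew sum. For instance $41253$ (an inflation of $3142$) has the nontrivial interval formed by the entries $1,2$ in positions $2,3$, so it is not simple, yet it is neither $\oplus$- nor $\ominus$-decomposable. Consequently, after handling the decomposable and skew-decomposable cases, the remaining case of your induction is not ``$\pi$ is simple of length $\ge 4$'' but ``$\pi$ is neither $\oplus$- nor $\ominus$-decomposable,'' and this larger set also contains all proper inflations of simple permutations of length $\ge 4$, about which your argument as written says nothing. The repair is short and uses exactly the tool you already invoke: by the substitution decomposition, the root quotient $\sigma$ of such a $\pi$ cannot be $12$ or $21$ (those are precisely the two decomposable cases), hence $\sigma$ is simple of length $\ge 4$; choosing one representative entry from each interval shows $\sigma$ occurs in $\pi$ as a classical pattern, and since every simple permutation of length $\ge 4$ contains $2413$ or $3142$, the permutation $\pi$ is not separable, closing the induction. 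Alternatively, you can bypass simple permutations altogether and show directly that every $(2413,3142)$-avoiding permutation of length at least $2$ is a nontrivial direct or skew sum; this follows, for example, from the block decomposition around the maximum entry recorded in Lemma~\ref{dec:stankova}, since the block containing the value $1$ is either the initial block (giving $\oplus$-decomposability) or the final block (giving $\ominus$-decomposability).
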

 
A nonempty permutation which is not the direct sum of two nonempty permutations is called {\em indecomposable}. Any permutation $\pi$ with $\comp(\pi)=k$ can be written uniquely as $\pi=\tau_1\oplus\tau_2\oplus\cdots\oplus\tau_k$, where each $\tau_i$ is indecomposable. We call such decomposition the {\em direct sum decomposition} of $\pi$.
Let $\id_n$ denote the identity permutation of length $n$. A statistic $\st$ is called  {\em totally  $\oplus$-compatible}  if  
$
\st(\pi)=\sum_{i=1}^k\st(\tau_i)
$
and is called {\em partially $\oplus$-compatible} if 
$
\st(\pi)=\sum_{i=1}^l\st(\tau_i)$,
where $l=\min(\{i:\tau_i\neq \id_1\}\cup\{k\})$. For instance, $\des$ and $\comp$ are totally  $\oplus$-compatible, while $\iar$ is partially $\oplus$-compatible. We emphasize here that totally $\oplus$-compatibility does not imply partially $\oplus$-compatibility.

Let $P$ be a collection of patterns and  $(\st_1,\st_2,\ldots)$ be a sequence of permutation statistics. Let us introduce two generating functions with respect to $(\st_1,\st_2,\ldots)$ as
$$
F_P(t_1,t_2,\ldots;z):=1+\sum_{n\geq1}z^n\sum_{\pi\in\S_n(P)}\prod_it_i^{\st_i(\pi)}
$$
and 
$$
I_P(t_1,t_2,\ldots;z):=\sum_{n\geq1}z^n\sum_{\pi\in\In_n(P)}\prod_it_i^{\st_i(\pi)},
$$
where $\In_n$ denotes the set of all indecomposable permutations of length $n$. 
We have the following general lemma regarding the direct sum decomposition of permutations, which is useful when considering the refinement of Wilf-equivalence by $\comp$. 

\begin{lemma} \label{general}
Let $(\st_1,\st_2,\ldots,\st'_1,\st'_2,\ldots)$ be a sequence of  statistics such that $\st_i$ is totally  $\oplus$-compatible  and $\st'_i$ is partially  $\oplus$-compatible for each $i$.
Let $P$ and $Q$ be two collections of indecomposable patterns. We claim
\begin{enumerate}
	\item We have the following functional equation:
	\begin{equation}\label{relation:comp}
	F_P(q)=\frac{1}{1-qw}+\frac{q(I_P({\bf t}, {\bf t'})-w)}{(1-qI_P({\bf t},{\bf 1}))(1-qw)},
	\end{equation}
	where $w=zt_1^{\st_1(\id_1)}t_2^{\st_2(\id_1)}\cdots {t'_1}^{\st'_1(\id_1)}{t'_2}^{\st'_2(\id_1)}\cdots$, and \begin{align*}F_P(q):=F_P(q,t_1,t_2,\ldots,t'_1,t'_2,\ldots;z) \text{ and } I_P({\bf t}, {\bf t'}):=I_P(t_1,t_2,\ldots, t'_1,t'_2,\ldots;z)
	\end{align*} 
	are the generating functions with respect to $(\comp,\st_1,\st_2,\ldots,\st'_1,\st'_2\ldots)$ and \linebreak $(\st_1,\st_2,\ldots,\st'_1,\st'_2\ldots)$, respectively. In particular, $I_P({\bf t},{\bf 1}):=I_P(t_1,\ldots, 1,\ldots;z)$.
	\item If $P\sim_{(\st_1,\st_2,\ldots,\st'_1,\st'_2,\ldots)}Q$, then $P\sim_{(\comp,\st_1,\st_2,\ldots,\st'_1,\st'_2,\ldots)}Q$ holds as well. In particular, if $P\sim Q$, then $P\sim_{\comp} Q$.
\end{enumerate}
\end{lemma}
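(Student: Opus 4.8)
The plan is to set up the generating function bookkeeping for the direct sum decomposition $\pi = \tau_1 \oplus \tau_2 \oplus \cdots \oplus \tau_k$ with each $\tau_i$ indecomposable, and to track separately the maximal prefix of $\tau_i$'s that equal $\id_1$. First I would observe that since every pattern in $P$ is indecomposable, a direct sum $\tau_1 \oplus \cdots \oplus \tau_k$ avoids $P$ if and only if each $\tau_i$ avoids $P$; this is the standard fact that an occurrence of an indecomposable pattern in a direct sum must lie entirely within one summand. Hence $\S_n(P)$-permutations with prescribed component decomposition are in bijection with sequences of $P$-avoiding indecomposable permutations. The totally $\oplus$-compatible statistics $\st_i$ simply add up over all components, so they contribute a product of the form $\prod_j (\text{weight of }\tau_j)$; this is why $I_P(\mathbf{t},\mathbf{1})$ (with the $\st'$-variables set to $1$) is the right ``transfer'' series for an arbitrary component.

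The subtlety, and the step I expect to be the main obstacle, is handling the partially $\oplus$-compatible statistics $\st'_i$: their value on $\pi$ depends only on the components $\tau_1, \dots, \tau_l$ where $l = \min(\{i : \tau_i \neq \id_1\} \cup \{k\})$, i.e.\ on the leading block of copies of $\id_1$ followed by (at most) one genuinely nontrivial component. So I would stratify $\S_n(P)$ according to whether $\pi = \id_k$ (all components are $\id_1$) or $\pi$ has a first non-$\id_1$ component in some position $l$. In the first case the $\st'$-weight is $\sum_{i=1}^{k}\st'(\id_1)$, contributing $\sum_{k \geq 0}(qw)^k = \frac{1}{1-qw}$, where $w$ is the monomial recording one copy of $\id_1$ weighted by all the relevant statistics and $z$. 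In the second case, write $\pi = \id_{l-1} \oplus \tau_l \oplus (\text{rest})$ with $\tau_l \neq \id_1$ indecomposable: the leading $\id_{l-1}$ block contributes $(qw)^{l-1}$ summed as $\frac{1}{1-qw}$; the distinguished component $\tau_l$ contributes $q\,(I_P(\mathbf{t},\mathbf{t'}) - w)$, the $-w$ removing the $\tau_l = \id_1$ term so that we really count indecomposables that are not $\id_1$, and here both the $\st$- and $\st'$-weights are active; and the remaining components $\tau_{l+1} \oplus \cdots \oplus \tau_k$ form an arbitrary (possibly empty) direct sum of $P$-avoiding indecomposables on which the $\st'_i$ no longer register, contributing $\frac{1}{1-q\,I_P(\mathbf{t},\mathbf{1})}$. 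Multiplying these three factors and adding the all-$\id_1$ term yields exactly
\[
F_P(q) = \frac{1}{1-qw} + \frac{1}{1-qw}\cdot q\bigl(I_P(\mathbf{t},\mathbf{t'}) - w\bigr)\cdot\frac{1}{1-q\,I_P(\mathbf{t},\mathbf{1})},
\]
which is \eqref{relation:comp}. One should double-check the degenerate edge cases---the empty permutation (the constant $1$ in $F_P$, matching $k=0$ in $\frac{1}{1-qw}$) and the case $\tau_l$ being the last component so that ``the rest'' is empty---but these are absorbed correctly by the geometric series.

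For part (2), I would simply note that the right-hand side of \eqref{relation:comp} is built solely from $w$ (which depends only on the statistics' values on $\id_1$, hence is the same for $P$ and $Q$) and from the two specializations $I_P(\mathbf{t},\mathbf{t'})$ and $I_P(\mathbf{t},\mathbf{1})$ of the indecomposable generating function. If $P \sim_{(\st_1,\st_2,\ldots,\st'_1,\st'_2,\ldots)} Q$, then $F_P = F_Q$ as series in all these variables; since an indecomposable permutation is the whole of its own component decomposition, the equality $F_P = F_Q$ restricted to indecomposables gives $I_P = I_Q$ with all variables active, and a fortiori $I_P(\mathbf{t},\mathbf{1}) = I_Q(\mathbf{t},\mathbf{1})$. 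Plugging into \eqref{relation:comp} shows $F_P(q) = F_Q(q)$, i.e.\ $P \sim_{(\comp,\st_1,\ldots)} Q$. (Alternatively, one extracts $I_P$ from $F_P$ directly via the classical indecomposable-permutation inversion, $1 + \sum I_P = 1 - 1/F_P$ in the unrefined case, refined here by the compatible statistics.) The special case $P \sim Q \Rightarrow P \sim_{\comp} Q$ follows by taking the empty list of auxiliary statistics.
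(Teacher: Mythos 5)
Your proof of part (1) is correct and is essentially the paper's own argument: both stratify a $P$-avoiding permutation by its leading run of $\id_1$ components (giving $\frac{1}{1-qw}$), the first nontrivial indecomposable component (weighted by $q(I_P(\mathbf{t},\mathbf{t'})-w)$), and the remaining components, each weighted by $I_P(\mathbf{t},\mathbf{1})$ since the partially $\oplus$-compatible statistics no longer register there; the paper writes the double sum over the number of components and then simplifies, while you sum the geometric series directly. For part (2), your main phrasing --- that $F_P=F_Q$ can be ``restricted to indecomposables'' to yield $I_P=I_Q$ --- is not a legitimate step by itself (the hypothesis gives no information specific to indecomposable avoiders), but your parenthetical alternative is exactly the paper's route: the $q=1$ specialization of \eqref{relation:comp}, first with $\mathbf{t'}=\mathbf{1}$ to recover $I_P(\mathbf{t},\mathbf{1})$ and then solved linearly for $I_P(\mathbf{t},\mathbf{t'})$, shows that $F_P(1)=F_Q(1)$ forces $I_P(\mathbf{t},\mathbf{t'})=I_Q(\mathbf{t},\mathbf{t'})$ and hence $F_P(q)=F_Q(q)$.
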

\begin{proof}
Note that if $\sigma$ is an indecomposable pattern and $\pi=\tau_1\oplus\tau_2\oplus\cdots\oplus\tau_k$,  then
$$
\pi \text{ is $\sigma$-avoiding}\Longleftrightarrow \tau_i\text{ is $\sigma$-avoiding for each $i$}. 
$$
By general principles, the weight of $\pi$ that contributes to the generating function $F_P(q)$ is the product of the weights of $\tau_1,\tau_2,\ldots,\tau_k$. Among these $k$ indecomposable components, suppose the first $i$ are trivial (i.e., $\id_1$) with weight $w$, the $(i+1)$-th component is nontrivial thus generated by $I_P({\bf t},{\bf t'})-w$, and the remaining $k-i-1$ components do not affect those partially $\oplus$-compatible statistics ${\bf t'}$, thus each is generated by $I_P({\bf t},{\bf 1})$. The discussions above amount to give us 
\begin{align*}
F_P(q)&=1+\sum_{n\geq1} q^n(w^n+\sum_{i=0}^{n-1}w^i(I_P({\bf t}, {\bf t'})-w)I_P({\bf t},{\bf 1})^{n-1-i})\\
&=\frac{1}{1-qw}+\frac{I_P({\bf t}, {\bf t'})-w}{I_P({\bf t},{\bf 1})-w}\biggl(\frac{qI_P({\bf t},{\bf 1})}{1-qI_P({\bf t},{\bf 1})}-\frac{qw}{1-qw}\biggr),
\end{align*}
which becomes~\eqref{relation:comp}  after simplification.

In view of \eqref{relation:comp}, the following three statements are equivalent: 
\begin{itemize}
\item [(i)] $F_P(1)=F_Q(1)$, namely $P\sim_{(\st_1,\st_2,\ldots,\st'_1,\st'_2,\ldots)}Q$.
\item [(ii)] $I_P({\bf t}, {\bf t'})=I_Q({\bf t}, {\bf t'})$. 
\item [(iii)] $F_P(q)=F_Q(q)$. 
\end{itemize}
Thus, statement (i) is equivalent to its seemingly stronger form  (iii), as desired. 
\end{proof}

% Essentially the same discussion leads to the strengthened version of Lemma~\ref{general}.
% \begin{lemma} \label{lem:Comtet}
% Let $(\st_1,\st_2,\ldots,\st'_1,\st'_2,\ldots)$ and $(\widebar{\st}_1,\widebar{\st}_2,\ldots,\widebar{\st}'_1,\widebar{\st}'_2,\ldots)$ be two sequences of statistics such that $\st_i$ and $\widebar{\st}_i$ are totally  $\oplus$-compatible, while $\st'_i$ and $\widebar{\st}'_i$ are partially $\oplus$-compatible for each $i$.
% Let $P$ and $Q$ be two collections of indecomposable patterns. If $$|\S_n(P)^{\st_1,\st_2,\ldots,\st'_1,\st'_2,\ldots}|=|\S_n(Q)^{\widebar{\st}_1,\widebar{\st}_2,\ldots,\widebar{\st}'_1,\widebar{\st}'_2,\ldots}|,$$ then $$|\S_n(P)^{\comp,\st_1,\st_2,\ldots,\st'_1,\st'_2,\ldots}|=|\S_n(Q)^{\comp,\widebar{\st}_1,\widebar{\st}_2,\ldots,\widebar{\st}'_1,\widebar{\st}'_2,\ldots}|.$$ 
% \end{lemma}

The following general lemma indicates that for a collection of indecomposable patterns, say $P$, the equidistribution of certain statistic $\st$ with $\comp$ over $\S_n(P)$, implies the seemingly stronger result that the joint distribution $(\st, \comp)$ is symmetric over $\S_n(P)$. This result is somewhat surprising. 
\begin{lemma}\label{general:sym}
Let $P$ be a collection of indecomposable patterns. Let $\st$ be a partially  $\oplus$-compatible statistic such that $\st(\id_1)=1$ and  $(\st_1,\st_2,\ldots)$ be a sequence of totally  $\oplus$-compatible statistics. If $|\S_n(P)^{\st,\st_1,\st_2,\ldots}|=|\S_n(P)^{\comp,\st_1,\st_2,\ldots}|$, then 
$$
|\S_n(P)^{\st,\comp,\st_1,\st_2,\ldots}|=|\S_n(P)^{\comp,\st,\st_1,\st_2,\ldots}|.
$$
In particular, if $\st$ is a Comtet statistic over $\S_n(P)$, then $(\st,\comp)$ is a symmetric pair of Comtet statistics over $\S_n(P)$. 
\end{lemma}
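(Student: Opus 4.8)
\emph{Proof proposal.}
The plan is to reduce the statement to a single identity of formal power series coming from the direct sum decomposition, and then to read the desired symmetry off that identity. Throughout, I would regard $\st$ as the unique partially $\oplus$-compatible statistic (``$\st'_1$'') appearing in Lemma~\ref{general}, recalling that $\st(\id_1)=1$.

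First I would fix the bookkeeping. Let $q$ mark $\comp$, let $s$ mark $\st$, let $\mathbf{t}=(t_1,t_2,\ldots)$ mark $(\st_1,\st_2,\ldots)$, and set
\[
F_P(q,\mathbf{t},s;z):=\sum_{n\ge0}z^n\sum_{\pi\in\S_n(P)}q^{\comp(\pi)}s^{\st(\pi)}\prod_it_i^{\st_i(\pi)}.
\]
With this notation the conclusion of the lemma is exactly $F_P(q,\mathbf{t},s;z)=F_P(s,\mathbf{t},q;z)$, and the hypothesis is $F_P(1,\mathbf{t},s;z)=F_P(s,\mathbf{t},1;z)$. Put $v:=z\prod_it_i^{\st_i(\id_1)}$, so that the quantity $w$ of Lemma~\ref{general} equals $zs^{\st(\id_1)}\prod_it_i^{\st_i(\id_1)}=sv$, and write $A:=I_P(\mathbf{t},\mathbf{1};z)$ and $J(s):=I_P(\mathbf{t},s;z)$. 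Lemma~\ref{general}(1) then gives
\[
F_P(q,\mathbf{t},s;z)=\frac{1}{1-qsv}+\frac{q\bigl(J(s)-sv\bigr)}{(1-qA)(1-qsv)}.
\]

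Next I would feed in the hypothesis. Setting $s=1$ in the last display and using $J(1)=A$, the two terms telescope to $F_P(q,\mathbf{t},1;z)=\frac{1}{1-qA}$, so the hypothesis yields $F_P(1,\mathbf{t},s;z)=\frac{1}{1-sA}$. Comparing this with the value of the same display at $q=1$, namely $F_P(1,\mathbf{t},s;z)=\frac{1}{1-sv}+\frac{J(s)-sv}{(1-A)(1-sv)}$, one gets a linear equation for $J(s)-sv$ whose solution is $J(s)-sv=\frac{s(A-v)(1-A)}{1-sA}$. Plugging this back into the display for $F_P(q,\mathbf{t},s;z)$ produces
\[
F_P(q,\mathbf{t},s;z)=\frac{1}{1-qsv}+\frac{qs(A-v)(1-A)}{(1-qsv)(1-qA)(1-sA)},
\]
in which every factor that involves $q$ or $s$ --- namely $qs$, $1-qsv$, and the product $(1-qA)(1-sA)$ --- is unchanged under $q\leftrightarrow s$. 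Hence $F_P(q,\mathbf{t},s;z)=F_P(s,\mathbf{t},q;z)$, which is the asserted equidistribution; the ``In particular'' clause is the special case where the sequence $(\st_1,\st_2,\ldots)$ is empty.

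I do not expect a serious obstacle here: everything lives in $\N[q,s,\mathbf{t}][[z]]$, where $A$ and $J(s)$ have no constant term, so all the rational expressions are genuine power series and the two specializations plus the linear solve are routine. The one substantive point --- presumably the reason the statement is ``somewhat surprising'' --- is the final step: in the bare direct sum decomposition $q$ and $s$ enter asymmetrically, since the components of $\pi$ appearing \emph{after} its first nontrivial component contribute to $\comp$ (and to the $\st_i$) but not to $\st$, and this shows up as the unaccompanied denominator factor $1-qA$; it is precisely the hypothesis that forces this factor to be paired with its mirror image $1-sA$, thereby restoring the $q\leftrightarrow s$ symmetry that the decomposition alone does not display.
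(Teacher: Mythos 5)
Your proposal is correct and follows essentially the same route as the paper: apply Lemma~\ref{general}(1), use the hypothesis $F_P(1,s)=F_P(s,1)$ to solve a linear equation for the indecomposable generating function (your $J(s)-sv$, the paper's $I_P(s)$), and substitute back to obtain a rational expression manifestly symmetric in the two marking variables, which coincides with the paper's~\eqref{gen:symm}. The only (harmless) difference is cosmetic: you first simplify $F_P(q,\mathbf{t},1)=1/(1-qA)$ and track the weight $v=z\prod_i t_i^{\st_i(\id_1)}$ explicitly, which the paper suppresses.
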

\begin{proof}
Let $F_P(r,s):=F_P(r,s,t_1,t_2,\ldots;z)$  and $I_P(s):=I_P(s,t_1,t_2,\ldots;z)$ be the generating functions with respect to $(\comp,\st,\st_1,\st_2,\ldots)$ and $(\st,\st_1,\st_2,\ldots)$, respectively.  By the realationship~\eqref{relation:comp}, we have 
\begin{equation}\label{eq1:comp}
F_P(r,s)=\frac{1}{1-rsz}+\frac{r(I_P(s)-sz)}{(1-rI_P(1))(1-rsz)}.
\end{equation}
Since $F_P(1,s)=F_P(s,1)$, it follows from the above identity that 
$$
\frac{1}{1-sz}+\frac{I_P(s)-sz}{(1-I_P(1))(1-sz)}=\frac{1}{1-sz}+\frac{sI_P(1)-z}{(1-sI_P(1))(1-z)}.
$$
Solving this equation gives 
$$
I_P(s)=\frac{sI_P(1)(sz-z-1+I_P(1))}{1-sI_P(1)}.
$$
Plugging this into~\eqref{eq1:comp} results in 
\begin{equation}\label{gen:symm}
F_P(r,s)=\frac{1-rsz+(rsz+rs-r-s)I_P(1)}{(1-rI_P(1))(1-sI_P(1))(1-rsz)},
\end{equation}
which is symmetric in $r$ and $s$. This completes the proof of the lemma.
\end{proof}

% \subsection{Other terminology}\label{subsec:more def}

%Let $\F(\pi):=\pi(1)$ be the first letter of a permutation $\pi$. 

%%%%%%%%%%%%%%%%%%%%%%%%
\section{A single pattern of length 3}\label{sec:one 3-pattern}%%%%%
%%%%%%%%%%%%%%%%%%%%%%%%

In this section, we deal with all patterns $\tau$ of length $3$ and complete two tasks: 
\begin{itemize}
	\item[1)] Show symmetry of the Comtet pair $(\iar,\comp)$, jointly with some other (set-valued) statistics, over certain class of pattern-avoiding permutations or admissible words (see Theorem~\ref{thm:AW-Hankel}). In all cases the proofs are combinatorial. We collect all the bijections here for easy reference: $\xi$ (Theorem~\ref{thm:321=312}), $\alpha$ and $\beta$ (Theorem~\ref{thm:alpha-beta}), $\psi$ (Theorem~\ref{thm:AW-Hankel}), $\varphi$ (Theorem~\ref{thm:132-symm}), and $\theta$ (Theorem~\ref{thm:213-231-conjugate}).
 	\item[2)] Compute the trivariate generating function $\S(\tau)^{\des,\iar,\comp}(t,r,p)$, which leads to full $\iar$- and $\comp$-Wilf-equivalence classification. A snapshot of these results is presented in Table~\ref{one 3-avoider}. Putting $t=1$, and $p=1$ (or $r=1$) in the generating functions listed in Table~\ref{one 3-avoider} and comparing the results, we can conclude that there are three $\iar$-Wilf-equivalence classes:
	$$\{213,312,321\},\; \{132,231\}, \; \text{and }\{123\}.$$ 
	While the $\comp$-Wilf-equivalence classes are:
	$$\{231,312,321\},\; \{132,213\}, \; \text{and }\{123\}.$$
\end{itemize}

\begin{table}
{\small
\begin{tabular}{cccc}
\toprule
$P$\,\, & $\tilde\S(t,r,p)$ & $M_n(P)$ & proved in\\
\midrule
&&&\\
$312$ & $\dfrac{1-(r+p+tN)z+(rp+(r+p-1)tN)z^2}{(1-rpz)(1-rz-tNz)(1-pz-tNz)}$ & Symmetric & Thm.~\ref{gf:312}\\
&&&\\
$321$ & $\dfrac{(rpz-rz+tz)C^2-(rpz+p-1)C+p}{(1-rpz)(1-rzC)(p+C-pC)}$ & Equals $M_n(312)$  & Thm.~\ref{gf:321}\\
&&&\\
 
$132$ &\,\, $\dfrac{1}{1-rpz}+\dfrac{(1-z)(N-1)t}{(1-rz)(1-pz)(1-z-(N-1)tz)}$ & Hankel & Thm.~\ref{thm:gf-132}\\
&&&\\
$213$ & $\dfrac{(1-rz)(tN-t+1)}{(1-rpz)(1-rz(tN-t+1))}$ & Lower triangular & Thm.~\ref{thm:gf-213a231}\\
&&&\\
$231$ & $\dfrac{(1-pz)(tN-t+1)}{(1-rpz)(1-pz(tN-t+1))}$ & Conjugate to $M_n(213)$ & Thm.~\ref{thm:gf-213a231}\\
&&&\\
$123$ & $\dfrac{(1-p)z(trz-tz-r)}{(1-tz)^2}+\dfrac{(1+rz-tz) C^*}{z(1+z-tz)}$ & $2\times 2$ nonzero & Thm.~\ref{thm:gf-123}\\
&&&\\
\bottomrule\\
\end{tabular}
}
\caption{One pattern of length $3$\label{one 3-avoider} (definitions of $N$, $C$ and $C^*$ are given in equations \eqref{def:N}, \eqref{eq:Barn} and \eqref{def:C^*}, respectively)}
\end{table}

\subsection{Symmetric classes}\label{sym:3.1}
For the three patterns $312$, $321$ and $132$, the distributions of $\iar$ and $\comp$ are not only identical, but also jointly symmetric. For the two indecomposable patterns $312$ and $321$, this stronger property can be deduced from Lemma~\ref{general:sym}. But for the pattern $132=1\oplus 21$, we need to construct an involution $\varphi$ on $\S_n(132)$, which actually enables us to derive a more refined equidistribution (see Theorem~\ref{thm:132-symm}). We begin with the patterns the $321$ and $312$.

%%%%%%%%% 321, 312 %%%%%%%%%%

\medskip
\noindent \underline{\bf Patterns $312$ and $321$}
\medskip

Pattern $321$ seems to always attract more attention than the rest of patterns in $\S_3$, perhaps because of its role in Deodhar's combinatorial framework for determining the Kazhdan-Lusztig polynomials (see for instance \cite{BW}). Rubey \cite{rub} obtained an equidistribution result over $\S_n(321)$ by first mapping each $321$-avoiding permutation, along with the statistics involved, to a Dyck path via Krattenthaler's bijection \cite{kra}, and then constructing an involution on Dyck paths. We restate his result here using $321$-avoiding permutations rather than Dyck paths. For each $\pi\in\S_n$, let 
$$\ldes(\pi):=\max(\{0\}\cup\DES(\pi))
$$
be the position of the {\em {\bf\em l}ast {\bf\em des}cent} of $\pi$. Recall the boldface notation ${\bf x}^S$ defined in Theorem~\ref{thm:sep:sym}.
\begin{theorem}[Rubey~\cite{rub}]\label{thm:Rubey}
There exists an involution on $\S_n(321)$ which proves the equidistribution
\begin{equation}\label{equ:rubey}
\sum_{\pi\in\S_n(321)}s^{\comp(\pi)}t^{n-\ldes(\pi^{-1})}{\bf x}^{\LMAXP(\pi)}=\sum_{\pi\in\S_n(321)}s^{n-\ldes(\pi^{-1})}t^{\comp(\pi)}{\bf x}^{\LMAXP(\pi)}.
\end{equation}
\end{theorem}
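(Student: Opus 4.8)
The plan is to obtain Theorem~\ref{thm:Rubey} directly from Theorem~\ref{main:thm1}(i) by conjugating with a single elementary involution of $\S_n(321)$; I will also recall Rubey's original route through Dyck paths, whose difficult step is of a different nature.

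Let $\rho(\pi):=((\pi^{-1})^{\mathrm r})^{\mathrm c}$. Since $321$ is fixed by $\pi\mapsto\pi^{-1}$ and by $\pi\mapsto(\pi^{\mathrm r})^{\mathrm c}$, and these are commuting involutions, $\rho$ is an involution of $\S_n(321)$ onto itself. The first real step is to set up the statistic dictionary for $\rho$. By Observation~\ref{obs:2}, $\comp\circ\rho=\comp$. Next, for any $\sigma\in\S_n$ the descent set of $(\sigma^{\mathrm r})^{\mathrm c}$ is $\{n-d:d\in\DES(\sigma)\}$, so $\iar((\sigma^{\mathrm r})^{\mathrm c})=n-\ldes(\sigma)$; applying this with $\sigma=\pi^{-1}$ gives $\iar(\rho(\pi))=n-\ldes(\pi^{-1})$. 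Finally, one checks that the values of the right-to-left minima of $\pi^{-1}$ are exactly the positions of the left-to-right maxima of $\pi$, i.e.\ $\mathrm{RLMIN}(\pi^{-1})=\LMAXP(\pi)$; since passing to the reverse-complement sends the value $v$ of each left-to-right maximum to $n+1-v$, this yields $\LMAX(\rho(\pi))=\{\,n+1-u:u\in\LMAXP(\pi)\,\}$.

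Granting the dictionary, the conclusion is immediate. Re-index the two sums of Theorem~\ref{main:thm1}(i), namely $\sum_{\tau}\mathbf x^{\LMAX(\tau)}s^{\iar(\tau)}t^{\comp(\tau)}=\sum_{\tau}\mathbf x^{\LMAX(\tau)}s^{\comp(\tau)}t^{\iar(\tau)}$, by $\tau\mapsto\rho(\tau)$, and at the same time perform the substitution $x_i\mapsto x_{n+1-i}$; the dictionary turns the two sides into the two sides of \eqref{equ:rubey}. Likewise, if $\Psi$ denotes the involution of $\S_n(321)$ underlying Theorem~\ref{main:thm1}(i) (assembled from the bijections $\xi,\alpha,\beta$ of Theorems~\ref{thm:321=312} and~\ref{thm:alpha-beta}), then $\rho\circ\Psi\circ\rho$ is the involution asserted in Theorem~\ref{thm:Rubey}: it is an involution because $\rho$ and $\Psi$ are, and the dictionary shows it fixes $\LMAXP$ while interchanging $\comp$ with $n-\ldes(\circ^{-1})$.

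The only delicate point in this route is the last clause of the dictionary --- proving $\mathrm{RLMIN}(\pi^{-1})=\LMAXP(\pi)$ and then propagating the reversal $u\mapsto n+1-u$ carefully through the boldface weights so that the set-valued statistics in \eqref{equ:rubey} line up. Beyond that the argument is mechanical \emph{given} Theorem~\ref{main:thm1}(i), so the genuine obstacle has been pushed into the construction and verification of $\Psi$. A self-contained proof --- Rubey's --- relocates the work: transfer $\S_n(321)$ to Dyck paths by Krattenthaler's bijection (under which $\comp$ becomes the number of returns to the $x$-axis, $\LMAXP$ a composition statistic of the path read from its peaks, and $n-\ldes(\pi^{-1})$ a ``tail'' statistic), and then design an involution on Dyck paths that fixes that composition while swapping the number of returns with the tail statistic; building this last involution is the crux of that approach.
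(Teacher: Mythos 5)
Your proposal is correct and matches the paper's own treatment: the paper likewise obtains \eqref{equ:rubey} from Theorem~\ref{main:thm1}(i) by the same transformation $\pi\mapsto(\pi^{-1})^{\mathrm{rc}}$, using the identities $n-\ldes(\pi^{\mathrm{rc}})=\iar(\pi)$, $\comp\circ\mathrm{rc}\circ(-1)=\comp$ and $\LMAXP(\pi)=\overline{\LMAX((\pi^{-1})^{\mathrm{rc}})}$, with the genuine work (the involution realizing Theorem~\ref{main:thm1}(i)) carried out via the admissible-word bijections $\alpha$ and $\psi$ rather than via Dyck paths. Your dictionary even repairs a small slip in the paper's displayed chain, which writes $\LMIN(\pi^{-1})$ where the set of values of the \emph{right-to-left} minima of $\pi^{-1}$ is meant.
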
 

We explain here why Theorem~\ref{thm:Rubey} is equivalent to our Theorem~\ref{main:thm1} (i) up to the elementary transformation $\pi\mapsto (\pi^{-1})^{\mathrm{rc}}$. Notice that for each $\pi\in\S_n$, we have the relationships 
\begin{align*}
&n-\ldes(\pi^{\mathrm{rc}})=\iar(\pi)\quad\text{and}\\
&\LMAXP(\pi)=\LMIN(\pi^{-1})=\overline{\LMAX((\pi^{-1})^{\mathrm{rc}})},
\end{align*}
where $\bar{S}:=\{n+1-i: i\in S\}$ for any subset $S\subseteq[n]$. In view of these relationships and Observation~\ref{obs:2}, we have 
\begin{align*}
\sum_{\pi\in\S_n(321)}s^{\comp(\pi)}t^{n-\ldes(\pi^{-1})}{\bf x}^{\LMAXP(\pi)}&=\sum_{(\pi^{-1})^{\mathrm{rc}}\in\S_n(321)}s^{\comp((\pi^{-1})^{\mathrm{rc}})}t^{n-\ldes(\pi^{\mathrm{rc}})}{\bf x}^{\LMAXP((\pi^{-1})^{\mathrm{rc}})}\\
&=\sum_{(\pi^{-1})^{\mathrm{rc}}\in\S_n(321)}s^{\comp(\pi)}t^{\iar(\pi)}{\bf x}^{\overline{\LMAX(\pi)}}\\
&=\sum_{\pi\in\S_n(321)}s^{\comp(\pi)}t^{\iar(\pi)}{\bf x}^{\overline{\LMAX(\pi)}}.
\end{align*}
Therefore, equidistribution~\eqref{equ:rubey} is equivalent to Theorem~\ref{main:thm1} (i). 

In view of Lemma~\ref{general} (2), $321\sim_{\comp}312$ since $321\sim 312$. We have the following refinement.
\begin{theorem}\label{thm:321=312}
For each $n\ge 1$, there exists a bijection $\xi$, mapping each $\pi\in\S_n(321)$ onto $\sigma:=\xi(\pi)\in\S_n(312)$, such that
\begin{align}\label{eq:321=312}
(\LMAX,\LMAXP,\iar,\comp)\:\pi = (\LMAX,\LMAXP,\iar,\comp)\:\sigma.
\end{align}
\end{theorem}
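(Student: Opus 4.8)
The plan is to describe $\xi$ explicitly via the left-to-right maxima skeleton, so that the statistic $\LMAX$ (hence $\LMAXP$, since the positions of the left-to-right maxima are determined by their values once we know where they sit) is preserved on the nose, and then argue that the non-maximal entries can be reorganized to convert a $321$-avoider into a $312$-avoider without changing $\iar$ or $\comp$. Concretely, a $321$-avoiding permutation $\pi\in\S_n$ is completely determined by the set $\LMAX(\pi)$ together with the increasing subsequence formed by the non-left-to-right-maxima entries; indeed, in a $321$-avoider the non-LR-maxima, read left to right, must themselves be increasing, and each is forced into the leftmost available slot. Likewise a $312$-avoider with a prescribed set of LR-maxima values is determined by placing the non-LR-maxima in decreasing order into the slots, from left to right. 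So I would \emph{define} $\xi(\pi)=\sigma$ to be the unique $312$-avoiding permutation with $\LMAX(\sigma)=\LMAX(\pi)$ and $\LMAXP(\sigma)=\LMAXP(\pi)$ — i.e. keep every left-to-right maximum exactly where it is, and refill the remaining positions with the remaining values in decreasing order.

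The first step is to check that this $\xi$ is well defined and bijective: given any admissible pair $(\LMAX\text{-values},\LMAX\text{-positions})$ — meaning the positions include $1$, the values include $n$, the values at LR-maxima positions are increasing and each exceeds all entries to its left — there is exactly one $321$-avoider and exactly one $312$-avoider realizing it, and conversely every permutation in $\S_n(321)$ or $\S_n(312)$ arises this way. This is a short structural argument about $\S_n(321)$ and $\S_n(312)$ that I would phrase as a lemma; the $321$ case is the well-known ``two increasing subsequences'' description, and the $312$ case is dual. Since $\xi$ fixes the LR-maxima set-wise and position-wise, the first two coordinates of \eqref{eq:321=312} are immediate.

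The second step — and the main point — is that $\iar$ and $\comp$ are functions of $(\LMAX,\LMAXP)$ \emph{alone} on each of the two classes, and that the two formulas agree. For $\iar$: the length of the initial ascending run equals the first position at which a descent occurs; in a $321$-avoider the first descent occurs exactly when the first non-LR-maximum appears (everything before that is an initial segment of LR-maxima, hence increasing), and the same is true in the corresponding $312$-avoider, because the refilled block also starts exactly at that position; so $\iar(\pi)=\iar(\sigma)=$ (position of the first non-LR-maximum) $-1$, or $n$ if there is none. For $\comp$: a direct-sum decomposition point of $\pi$ after position $i$ means $\{\pi(1),\dots,\pi(i)\}=\{1,\dots,i\}$, which again depends only on which positions carry which ``level'' of value, and this is governed by the LR-maxima data in the same way for both $\pi$ and $\sigma$ (one checks that $\{1,\dots,i\}$ is a prefix-image for $\pi$ iff it is for $\sigma$, since the only freedom is the internal order of the non-LR-maxima, which cannot break or create an interval of the form $[1,i]$). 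Hence $\comp(\pi)=\comp(\sigma)$.

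The hard part will be pinning down cleanly that $\comp$ is insensitive to the rearrangement of non-LR-maxima — i.e. verifying that re-sorting the non-maximal block from increasing to decreasing never moves a prefix $\{\pi(1),\dots,\pi(i)\}$ off of $\{1,\dots,i\}$ nor onto it. I expect this to follow from the observation that $\{\pi(1),\dots,\pi(i)\}=\{1,\dots,i\}$ forces position $i$ to be a left-to-right maximum with value $i$ and forces the non-maxima among the first $i$ positions to be exactly $\{$values $<i$ that are not LR-maxima values$\}$, a condition on which positions are LR-maxima and what their values are, with no reference to the internal order of the rest; so it holds for $\pi$ if and only if it holds for $\sigma$. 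Once this is in hand, \eqref{eq:321=312} is complete; combining with Lemma~\ref{general}(2) is unnecessary here since we have produced the bijection directly, but it does re-confirm $321\sim_{\comp}312$ as a sanity check.
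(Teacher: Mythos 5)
Your overall strategy is the same as the paper's: record the data $(\LMAX,\LMAXP)$ (this is exactly the paper's admissible word $w_{S,c}$), show that each of $\S_n(321)$ and $\S_n(312)$ contains a unique permutation realizing any realizable such data, and then observe that $\iar$ and $\comp$ are functions of this data alone. However, two of your concrete claims are false as stated, and the first would derail the proof if used literally. In a $312$-avoider the non-left-to-right-maxima, read left to right, need \emph{not} be decreasing, and ``refill the remaining positions with the remaining values in decreasing order'' does not in general preserve $\LMAX$: take $\pi=2143\in\S_4(321)$, with $\LMAX=\{2,4\}$ at positions $\{1,3\}$; your rule outputs $2341$, whose left-to-right maxima are $\{2,3,4\}$, so the first two coordinates of \eqref{eq:321=312} already fail for that map. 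The correct canonical filling (the paper's $\beta^{-1}$) inserts into each empty slot, from left to right, the \emph{largest unused value smaller than the most recent left-to-right maximum}; the non-maxima are then decreasing only within each block between consecutive maxima, not globally (in the example one gets $2143$ itself). Uniqueness of this filling is a short greedy argument: if a slot under running maximum $m$ received $a$ while a larger unused $b<m$ were still to be placed later, then $m,a,b$ would form a $312$. So the deferred lemma is true, but not via the ``decreasing order'' description you give, and as written your proposal does not establish it.

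The observation you invoke for $\comp$ is also misstated: $\{\pi(1),\dots,\pi(i)\}=\{1,\dots,i\}$ does not force position $i$ to be a left-to-right maximum (again $\pi=2143$, $i=2$). What is true, and suffices, is this: the prefix of length $i$ consists of $i$ distinct values all bounded by the running maximum $m_i$ (the value of the last left-to-right maximum weakly before position $i$), so the prefix equals $\{1,\dots,i\}$ if and only if $m_i=i$ --- a condition depending only on $(\LMAX,\LMAXP)$, hence holding for $\pi$ if and only if it holds for $\sigma$; this is precisely the paper's statistic $\equ$ on admissible words. Your treatment of $\iar$ is correct as it stands. With these two repairs the proposal becomes a correct proof, essentially identical to the paper's construction $\xi=\beta^{-1}\circ\alpha$.
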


Sitting in the heart of our proof of Theorem~\ref{thm:321=312}, is a certain word composed of positive integers and a symbol $\diamond$ that stands for an empty slot, which we introduce now.
\begin{Def}
Given a nonempty set $S=\{s_1,\ldots,s_k\}\subseteq \Z_{>0}$ with $s_1<\cdots<s_k$, and a weak composition $c=(c_1,\ldots,c_k)$ of $s_k-k$, we form a word $$w_{S,c}:=s_1\underbrace{\diamond\cdots\diamond}_{c_1}s_2\underbrace{\diamond\cdots\diamond}_{c_2}s_3\cdots s_k\underbrace{\diamond\cdots\diamond}_{c_k}.$$ It is said to be an {\em admissible word with respect to} $S$ and $c$, if for $1\le i \le k$,
\begin{align}
\sum_{j=1}^{i}c_j\le s_i-i. \tag{$\ast$}\label{word condition}
\end{align}
Let $\AW_n$ denote the set of all admissible words of length $n$.
\end{Def}
We also need to introduce the counterparts on $\AW_n$ of the quadruple statistics in \eqref{eq:321=312}. For each $w:=w_{S,c}\in\AW_n$, let $\ics(w)$ denote the number of {\em {\bf\em i}nitial {\bf\em c}onsecutive letters from $S$} in $w$, $\equ(w)$ denote the number of times the condition \eqref{word condition} is satisfied with an {\em {\bf\em equ}al} sign, and $\SP(w)$ denote the set of positions (in $w$) of letters from $S$. For example, if $w=2\:3\:5\diamond7\diamond \diamond 10\:12\diamond 13\diamond\diamond$ with $S=\{2,3,5,7,10,12,13\}$, then $\ics(w)=3$, $\equ(w)=2$, $\SP(w)=\{1,2,3,5,8,9,11\}$.

\begin{theorem}\label{thm:alpha-beta}
There exist two bijections $\alpha:\S_n(321)\rightarrow \AW_n$ and $\beta:\S_n(312)\rightarrow \AW_n$, such that for any $\pi\in\S_n(321)$ and $\sigma\in\S_n(312)$, we have
\begin{align}
(\LMAX,\LMAXP,\iar,\comp)\:\pi = (S,\SP,\ics,\equ)\:w_{S,c},\label{eq:alpha} \\
(\LMAX,\LMAXP,\iar,\comp)\:\sigma = (T,\SP,\ics,\equ)\:w_{T,d},\label{eq:beta}
\end{align}
where $w_{S,c}=\alpha(\pi)$ and $w_{T,d}=\beta(\sigma)$.
\end{theorem}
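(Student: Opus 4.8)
\textbf{Proof proposal for Theorem~\ref{thm:alpha-beta}.}

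The plan is to construct both bijections by reading the one-line notation of the permutation and recording, at each position, either a left-to-right maximum value or a $\diamond$; the admissibility condition $(\ast)$ will turn out to be precisely the constraint that such a $\diamond$-decorated sequence of left-to-right maxima comes from a $321$- (resp.\ $312$-) avoiding permutation. Concretely, for $\pi\in\S_n(321)$ I would define $\alpha(\pi)=w_{S,c}$ where $S=\LMAX(\pi)$ is placed at the positions in $\LMAXP(\pi)$, and every non-left-to-right-maximum position is filled with a $\diamond$. This immediately gives the first two coordinates of \eqref{eq:alpha}: $\LMAX(\pi)=S$ and $\LMAXP(\pi)=\SP(w_{S,c})$ by construction. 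That $\iar(\pi)=\ics(w_{S,c})$ is also essentially immediate: the initial ascending run of $\pi$ consists exactly of an initial block of consecutive left-to-right maxima (a descent occurs at the first position that is not a left-to-right maximum, and conversely), so $\iar(\pi)$ equals the number of initial consecutive letters of $S$ in $w$. The content of the theorem is therefore concentrated in two places: (a) showing $\alpha$ (and $\beta$) is a well-defined bijection onto $\AW_n$, i.e.\ that the word produced satisfies $(\ast)$ and that every admissible word arises from a unique permutation; and (b) matching $\comp$ with $\equ$.

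For well-definedness and bijectivity on the $321$ side, I would use the standard structure theory of $321$-avoiding permutations: $\pi$ avoids $321$ iff the subsequence of non-left-to-right-maxima is increasing, and in that case $\pi$ is completely determined by the pair $(\LMAX(\pi),\LMAXP(\pi))$ together with the obvious feasibility constraint that at each position there are enough ``small'' values available to fill the $\diamond$'s increasingly. Writing $S=\{s_1<\cdots<s_k\}$ and letting $c_j$ be the number of $\diamond$'s between $s_j$ and $s_{j+1}$, the values not in $S$ that must be placed increasingly among the first $s_i-i$ non-$S$ slots are exactly the $s_i-i$ values in $[s_i]\setminus\{s_1,\dots,s_i\}$; a valid (increasing, hence forced) placement exists iff $\sum_{j\le i}c_j\le s_i-i$ for all $i$, which is $(\ast)$. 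This simultaneously shows $\alpha$ lands in $\AW_n$ and that the inverse map --- reinsert the forced increasing values into the $\diamond$ slots --- is well-defined, giving a bijection. For the $312$ side one uses the dual structure theorem: $\pi$ avoids $312$ iff each non-left-to-right-maximum value is smaller than every value to its left (equivalently, every entry after a left-to-right maximum, up to the next one, is smaller than all entries seen so far), and again $\pi$ is recovered from $(\LMAX,\LMAXP)$, with the same counting giving condition $(\ast)$. I expect $\beta$ to differ from $\alpha$ only in how the $\diamond$'s are filled (decreasingly within each block, or in a specific order dictated by $312$-avoidance), not in which word is produced, which is why both maps have the same target $\AW_n$ and the same first three statistic-matchings.

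The interpretation of $\comp$ via $\equ$ is the step I expect to be the main obstacle, and the one needing the most care. The direct sum decomposition point at position $i$ (i.e.\ $\{1,\dots,i\}$ being a union of initial components) occurs exactly when $\{\pi(1),\dots,\pi(i)\}=\{1,\dots,i\}$. I would show this happens iff $i=s_i$ for the corresponding left-to-right maximum index --- that is, iff after reading the first $i$ left-to-right maxima we have used up all values $1,\dots,s_i$ --- which, given the forced increasing fill of the $\diamond$ slots, is equivalent to $\sum_{j\le i}c_j=s_i-i$, i.e.\ to $(\ast)$ holding with equality at $i$. Hence the number of ``cut points'', which is $\comp(\pi)$, equals $\equ(w_{S,c})$; one must check the boundary case $i=k$ (the last component) is handled consistently, and that a cut point can only occur at an $S$-position, which follows since if $\{\pi(1),\dots,\pi(i)\}=[i]$ then $\pi(i)=\max$ so position $i$ is a left-to-right maximum. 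The same argument applies verbatim to $\beta$ once the $312$-structure is in place, since the defining identity $\{\pi(1),\dots,\pi(i)\}=[i]$ and the forced fill make the computation independent of the internal arrangement of each block. Assembling these pieces yields \eqref{eq:alpha} and \eqref{eq:beta}, and composing gives the bijection $\xi=\beta^{-1}\circ\alpha$ of Theorem~\ref{thm:321=312} for free.
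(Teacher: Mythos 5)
Your construction is essentially the paper's: both $\alpha$ and $\beta$ record the left-to-right maxima at their positions and replace every other entry by $\diamond$, with the inverse given by a forced fill of the $\diamond$'s; the $321$ side, the counting argument for condition $(\ast)$, and the matching of $(\LMAX,\LMAXP,\iar)$ with $(S,\SP,\ics)$ are all fine. The genuine problem is your structure lemma for the $312$ side. It is \emph{not} true that $\sigma$ avoids $312$ if and only if every non-left-to-right-maximum entry is smaller than all entries to its left: the permutation $132$ avoids $312$, yet its entry $2$ exceeds the entry $1$ to its left; under your characterization the admissible word $1\,3\,\diamond$ would have no $312$-avoiding preimage, so $\beta$ would fail to be surjective, and the "decreasing" fill you float as an alternative can place a letter above the current maximum (e.g.\ for $2\,\diamond\,5\,\diamond\,\diamond$ it would put $4$ right after $2$), destroying $\LMAX=S$. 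The correct forced fill, which is what the paper uses and what your proof of bijectivity of $\beta$ actually needs, is: each non-maximum entry must be the \emph{largest not-yet-used letter smaller than the current left-to-right maximum} --- otherwise that maximum, the entry, and the skipped larger letter occurring later form a $312$. Your proposal neither states nor proves this, so the well-definedness and invertibility of $\beta$ are not established.

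There is also a flaw in your $\comp=\equ$ step. The claim that a cut point can only occur at a left-to-right-maximum position, justified by "$\{\pi(1),\dots,\pi(i)\}=[i]$ implies $\pi(i)=\max$", is false: for $\pi=213$ there is a cut at position $2$ while $\pi(2)=1$ is not a left-to-right maximum, and read literally your matching would give one cut for $213$ instead of $\comp(213)=2=\equ(2\,\diamond\,3)$. What is true is that a cut at position $p$ forces $p$ to be an element of $S$ \emph{as a value} (the maximum of the first $p$ entries is $p$ and is a left-to-right maximum), and, with the forced fill, cuts occur exactly at the positions $s_i=i+\sum_{j\le i}c_j$ for those indices $i$ at which $(\ast)$ holds with equality --- these positions lie at the end of the $i$-th block of $\diamond$'s, not necessarily at an $S$-position. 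With that repair your count $\comp=\equ$ goes through; the paper itself leaves this verification to the reader, so fixing these two points would bring your argument in line with its proof.
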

\begin{proof}
Since the constructions for the two bijections $\alpha$ and $\beta$ are almost the same (the only difference lies in their inverses), we will give details mainly for $\alpha$. For each $\pi\in\S_n(321)$, suppose
\begin{align*}
&S:=\LMAX(\pi)=\{\pi(i_1)=\pi(1),\pi(i_2),\ldots,\pi(i_k)\}.
\end{align*} 
Let $c=(c_1,\ldots,c_k)$, with $c_h=i_{h+1}-i_h-1$, for $1\le h\le k-1$, $c_k=n-i_k$. In other words, each part of the composition $c$ records the number of letters between two left-to-right maxima, after having appended $n+1$ to the permutation $\pi$. Now we define $\alpha(\pi):=w_{S,c}.$ Note that $\pi(i_1),\ldots,\pi(i_k)$ are the left-to-right maxima of $\pi$, so we can verify the condition \eqref{word condition} holds for $S$ and $c$, therefore $\alpha$ is a well-defined map from $\S_n(321)$ to $\AW_n$. The map $\beta$ is defined analogously, only that now the preimage is a $312$-avoiding, rather than $321$-avoiding permutation. Now we show both $\alpha$ and $\beta$ are bijections by constructing their inverses. Take a word $w_{S,c}\in\AW_n$, we replace all the $\diamond$'s from left to right with the smallest unused letter in $[n]\setminus S$. This results in a $321$-avoiding permutation, say $\hat{\pi}$. On the other hand, if we replace all the $\diamond$'s from left to right with the largest unused letter in $[n]\setminus S$, keeping letters from $S$ the left-to-right maxima, we will end up with a $312$-avoiding permutation, say $\hat{\sigma}$. 

% \medskip

% \begin{tabular}{|l|}
% \hline
% \bfseries Algorithm A (B) \\
% Input $w:=w_{S,c}$; \\
% Initiate $I=[n]-S$; \\
% For $i=1$ to $k$: \\
% \quad find the {\bf smallest} ({\bf largest}) letter $a$ ($b$) in $I$ such that $a<s_i$ ($b<s_i$); \\
% \quad replace the leftmost $\diamond$ in $w$ by $a$ ($b$) and reset $I=I\setminus\{a\}$ ($I=I\setminus\{b\}$);\\
% \quad continue until there are no $\diamond$ left between $s_i$ and $s_{i+1}$;\\
% If $I=\emptyset$, output $w$.\\
% \hline
% \end{tabular}
% \medskip

It should be clear that
\begin{align*}
&\LMAX(\hat\pi)=S=\LMAX(\hat\sigma),\\
&\LMAXP(\hat\pi)=\SP(w_{S,c})=\LMAXP(\hat\sigma),\\
&\iar(\hat\pi)=\ics(w_{S,c})=\iar(\hat\sigma),\\
&\comp(\hat\pi)=\equ(w_{S,c})=\comp(\hat\sigma).
\end{align*}
Now set $\alpha^{-1}(w_{S,c})=\hat\pi$ (resp.~$\beta^{-1}(w_{S,c})=\hat\sigma$). Evidently, $$\alpha^{-1}(\alpha(\pi))=\pi,\quad \beta^{-1}(\beta(\sigma))=\sigma,$$
so $\alpha$ and $\beta$ are indeed bijections that transform the quadruple statistics as shown in \eqref{eq:alpha} and \eqref{eq:beta}.
\end{proof}

\begin{proof}[{\bf Proof of Theorem~\ref{thm:321=312}}]
Simply set $\xi=\beta^{-1}\circ \alpha$, and \eqref{eq:321=312} follows immediately from \eqref{eq:alpha} and \eqref{eq:beta}.
\end{proof}

\begin{remark}
When composed with the complement map, our bijection $\xi$ is equivalent to Simion and Schmidt's \cite{SS} bijection from $\S_n(123)$ to $\S_n(132)$. This bijection is also called {\em the Knuth--Richards bijection} by Claesson and Kitaev \cite{CK}, see also \cite{doy}.
\end{remark}

In view of \eqref{eq:alpha}, the pair $(\ics,\equ)$ on admissible words corresponds to the pair $(\iar,\comp)$ on $321$-avoiding permutations, so Rubey's Theorem~\ref{thm:Rubey} tells us that their distributions are jointly symmetric over $\AW_n$. Note that Rubey's proof was via an involution on Dyck paths. We are able to construct an invertible map $\psi$ over the set of admissible words. To facilitate the description of $\psi$, we need the following definition.
\begin{Def}\label{critical}
Given an admissible word $w_{S,c}$ with $S=\{s_1,\ldots,s_k\}$ and $c=(c_1,\ldots,c_k)$, the index $i$, $1\le i< k$ is said to be {\em critical} for $w$, if $$\sum_{j=1}^{i}c_j< s_i-i\le \sum_{j=1}^{i+1}c_j.$$
\end{Def}

For the previous example $w=2\:3\:5\diamond7\diamond \diamond 10\:12\diamond 13\diamond\diamond$, we see the indices $2,3$ and $6$ are critical for $w$. Let $\AW_{n,a,b}$ denote the set of admissible words $w:=w_{S,c}\in\AW_n$ such that $\ics(w)=a$, $\equ(w)=b$ and $s_1>1$, where $s_1$ is the smallest letter in $S$. 

\begin{theorem}\label{thm:AW-Hankel}
For $1< a\le n$ and $1\le b< n$, there exists a bijection $\psi$ from $\AW_{n,a,b}$ to $\AW_{n,a-1,b+1}$, such that for each $w_{S,c}\in\AW_{n,a,b}$, if $\psi(w_{S,c})=v_{T,d}$, then we have $S=T$.
\end{theorem}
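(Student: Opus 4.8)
The goal is to build a bijection $\psi:\AW_{n,a,b}\to\AW_{n,a-1,b+1}$ preserving the underlying set $S$, which will witness that the matrix $M_n(132)$ (equivalently the $(\ics,\equ)$-distribution over $\AW_n$) is Hankel. Since $S$ is to be preserved, we are really moving $\diamond$'s around within a fixed "skeleton'' of letters $s_1<\dots<s_k$: we need to lower $\ics$ by one (so the initial block of consecutive letters from $S$ must be broken exactly once, at the front) and raise $\equ$ by one (so one more index must satisfy $(\ast)$ with equality).

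\medskip
\noindent\textbf{Plan.} Fix $w=w_{S,c}\in\AW_{n,a,b}$ with $S=\{s_1,\dots,s_k\}$, $c=(c_1,\dots,c_k)$, $\ics(w)=a$, $\equ(w)=b$, and $s_1>1$. Because $s_1>1$ and $\sum_{j=1}^1 c_j\le s_1-1$ may be strict or not, I would first analyze where the first ``slack'' occurs among the prefix sums $\sum_{j\le i}c_j$ versus $s_i-i$. The idea of $\psi$ is to take one $\diamond$ from the block following $s_a$ (there must be one, or the block of $S$-letters would be longer than $a$, contradicting $\ics(w)=a$ combined with $a\le n$ — I need to check the boundary $a=n$ case separately, but then $\AW_{n,n,b}$ with $b<n$ forces structure I can read off directly) and reinsert it earlier, right after $s_1$, i.e. set $d_1=c_1+1$, decrease some later $c_i$ by $1$, and leave the rest alone. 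Moving a $\diamond$ to sit immediately after $s_1$ destroys the consecutive run at position $a$: the new word starts $s_1\diamond\cdots$, so $\ics$ drops to $1$? That is too much; instead I want $\ics$ to drop by exactly one. So the correct move is more delicate: I should instead insert the $\diamond$ right after $s_{a-1}$ (so that $s_1\cdots s_{a-1}$ is still an initial consecutive run of length $a-1$, but $s_a$ no longer immediately follows), giving $d_{a-1}=c_{a-1}+1$ and $d_j=c_j$ for $j\ne a-1$ except one later index decremented by $1$. The later index to decrement should be chosen as the first ``critical'' index (Definition~\ref{critical}) at or after $a$, or more precisely the first index $i\ge a-1$ where the prefix sum has slack; decrementing $c_i$ there turns the inequality $(\ast)$ at index $i$ into equality, which is exactly the mechanism that raises $\equ$ by one.

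\medskip
\noindent\textbf{Key steps, in order.} (1) Show that $c_a,c_{a+1},\dots$ cannot all be zero: since $\ics(w)=a$, the $(a+1)$-st letter of $w$ is a $\diamond$ (or $w$ ends after $s_a$, handled separately), so there is a $\diamond$ available to relocate; track exactly which block it comes from. (2) Define $d$ by moving that one $\diamond$: add $1$ to $c_{a-1}$, and subtract $1$ from $c_{i_0}$ where $i_0$ is the least index $\ge a-1$ with $\sum_{j\le i_0}c_j<s_{i_0}-i_0$ — verify such $i_0$ exists and $i_0\ge a$ (using $\equ(w)=b<n$ and admissibility of $w$). (3) Check $v_{T,d}$ with $T=S$ is admissible: the prefix sums of $d$ equal those of $c$ for indices $<a-1$; for $a-1\le i<i_0$ they exceed those of $c$ by $1$, and I must verify $(\ast)$ still holds there — this is where strictness of the original inequalities on $[a-1,i_0)$ is needed, and it is the crucial point. (4) Verify $\ics(v)=a-1$: the run $s_1\cdots s_{a-1}$ is still consecutive, but $d_{a-1}\ge1$ inserts a $\diamond$ before $s_a$. (5) Verify $\equ(v)=b+1$: the inequalities at indices $<a-1$ and $>i_0$ are unchanged; on $[a-1,i_0)$ the left side went up by $1$ but was strict, so it stays $\le$ with no new equalities created unless one becomes tight — here I must be careful and instead arrange (by choosing $i_0$ minimally) that exactly one new equality appears, precisely at $i_0$. (6) Construct the inverse map, which relocates the $\diamond$ sitting immediately after $s_{a-1}$ (well-defined because $\ics(v)=a-1$ and $s_a$ does not immediately follow) back to just after the $i_0$-th $S$-letter where equality holds, and check it lands in $\AW_{n,a,b}$; bijectivity follows from $\psi^{-1}\circ\psi=\id$.

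\medskip
\noindent\textbf{Main obstacle.} The delicate part is controlling $\equ$: naively shifting a $\diamond$ leftward increases several prefix sums by $1$, which could create several new equalities (raising $\equ$ by more than one) or, if done carelessly, destroy an existing equality. The fix is the notion of \emph{critical index} (Definition~\ref{critical}): one should relocate the $\diamond$ to the block of the \emph{first critical index} $i_0\ge a-1$, i.e. the first place where the slack in $(\ast)$ is exactly absorbed. This simultaneously guarantees admissibility on the affected range (the inequalities there had slack $\ge1$, so they survive a $+1$ bump), produces exactly one new equality (at $i_0$), and leaves all other equalities intact. Pinning down that the critical index is the right target — and that the inverse map recovers it from the equality position in $v$ — is the heart of the argument; everything else is bookkeeping on prefix sums, which I would carry out carefully but which presents no conceptual difficulty.
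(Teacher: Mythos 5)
Your overall plan (fix $S$, redistribute the $\diamond$'s so that the initial run shrinks by one and exactly one new equality in \eqref{word condition} appears, located via the first critical index) is in the right spirit, but the central mechanism you propose --- moving exactly \emph{one} $\diamond$, i.e.\ $d_{a-1}=c_{a-1}+1$, $d_{i_0}=c_{i_0}-1$, $d_j=c_j$ otherwise --- cannot give the required bijection. A minimal counterexample: $n=5$, $S=\{3,4,5\}$, $w=3\,4\,5\,\diamond\,\diamond$, so $c=(0,0,2)$, $\ics(w)=3$, $\equ(w)=1$. The only word in $\AW_{5,2,2}$ with this same $S$ is $3\,4\,\diamond\,\diamond\,5$, i.e.\ $d=(0,2,0)$ (the other candidate $(0,1,1)$ has $\equ=1$), so any $S$-preserving bijection must send $(0,0,2)\mapsto(0,2,0)$, which differs from $c$ by $\pm 2$ in two coordinates and is therefore not reachable by relocating a single $\diamond$. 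The paper's map is genuinely different: it shifts the whole segment of parts starting at $c_a$ one index to the left up to the first critical index $\ell\ge a-1$, sets $d_\ell=s_\ell-\ell-\sum_{h\le \ell}c_h$ to force the new equality exactly at $\ell$, and puts the remainder into $d_{\ell+1}$; in general this relocates many $\diamond$'s at once.

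Two further local errors would sink the write-up even apart from this. First, in your step (2) the index $i_0$ is defined as the least index $\ge a-1$ with $\sum_{j\le i_0}c_j<s_{i_0}-i_0$; since $c_1=\cdots=c_{a-1}=0$ and $s_1>1$ force $0<s_{a-1}-(a-1)$, that least index is $a-1$ itself, so your move is the identity map (and the later correction to ``first critical index $\ell$'' does not rescue it: one can have $c_\ell=0$, making $c_\ell-1<0$, and in the example above $\ell=a-1$ again, so the move is still a no-op). Second, your accounting of $\equ$ is off: decrementing $c_{i_0}$ while incrementing $c_{a-1}$ leaves the prefix sum $\sum_{j\le i_0}c_j$ unchanged, so no equality is created \emph{at} $i_0$; the only equalities your move could create are at indices $i\in[a-1,i_0)$ where the prefix sum, raised by $1$, happens to hit $s_i-i$, and there may be zero or several such indices. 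So both the availability of the $\diamond$ to move and the ``$\equ$ goes up by exactly one'' mechanism require the block-shifting construction (with the split at the critical index) rather than a one-$\diamond$ transposition; this is a conceptual gap, not just bookkeeping.
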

\begin{proof}
Take any $w:=w_{S,c}\in\AW_{n,a,b}$ with $S=\{s_1,\ldots,s_k\}$ and $c=(c_1,\ldots,c_k)$, we explain how to produce an admissible word $v:=v_{S,d}$ such that $\ics(v)=\ics(w)-1$ and $\equ(v)=\equ(w)+1$. Since $\ics(w)=a\ge 2$, we see $c_1=c_2=\cdots=c_{a-1}=0$ and $c_a>0$. Find the smallest $\ell\ge a-1$ such that the index $\ell$ is critical for $w$. Note that $s_1>1$ guarantees the existence of such an $\ell$. Let $d=(d_1,\cdots,d_k)$ be defined as
$$d_i=\begin{cases}
c_{i+1} & \text{if } a-1\le i\le \ell-1,\\
s_i-i-\sum_{h=1}^{i}c_h & \text{if } i=\ell,\\
\sum_{h=1}^i c_h-\sum_{h=1}^{i-1} d_h & \text{if } i=\ell+1,\\
c_i & \text{otherwise.}\\
\end{cases}$$
We denote $v:=v_{S,d}$ the admissible word with respect to $S$ and $d$, and set $\psi(w)=v$. It can be checked that $\sum_{i=1}^k c_i=\sum_{i=1}^k d_i=s_k-k$ and $\sum_{i=1}^{\ell} d_i=s_{\ell}-\ell$, hence $\equ(v)=\equ(w)+1$ as desired. Also $\ics(v)=\ics(w)-1=a-1$ since now $d_1=\cdots=d_{a-2}=0$ and $d_{a-1}=c_a>0$. 

All it remains is to show that $\psi$ is invertible. To this end, for each $v:=v_{S,d}\in\AW_{n,a-1,b+1}$, find the smallest integer $\ell$ such that $\sum_{i=1}^{\ell} d_i=s_{\ell}-\ell$. Note that since $\equ(v)=b+1\ge 2$, $s_1>1$ and $d_1=\cdots=d_{a-2}=0$, we must have $a-1\le \ell <k$, and $\ell$ being the smallest means $d_{\ell}>0$. Now let $c=(c_1,\ldots,c_k)$ be defined as
$$c_i=\begin{cases}
d_{i-1} & \text{if } a\le i\le \ell,\\
0 & \text{if } i = a-1,\\
d_{i-1}+d_i & \text{if } i=\ell+1,\\
d_i & \text{otherwise.}\\
\end{cases}$$
It is routine to check that $w:=w_{S,c}$ is the desired preimage so that $\psi(w)=v$, $\ics(w)=\ics(v)+1$, and $\equ(w)=\equ(v)-1$. 
\end{proof}

%Recall that when $\pi(i)>\pi(i+1)$, for certain $1\le i<n$, we say $\pi(i+1)$ is a {\em descent bottom (value)} of $\pi$, and denote $\DESB(\pi)$ the set of all descent bottoms of $\pi$.

The following result is the restatement of Theorem~\ref{main:thm1} (i) and (ii).
\begin{corollary}\label{cor:321-312-symmetry}
For every $n\ge 1$, the two triples $(\LMAX,\iar,\comp)$ and $(\LMAX,\comp,\iar)$ have the same distribution over $\S_n(321)$; the two quadruples $(\LMAX,\DESB,\iar,\comp)$ and $(\LMAX,\DESB,\comp,\iar)$ have the same distribution over $\S_n(312)$.
\end{corollary}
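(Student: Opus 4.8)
\textbf{Proof proposal for Corollary~\ref{cor:321-312-symmetry}.}

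The plan is to deduce the $\S_n(321)$ statement by combining the bijection $\alpha$ of Theorem~\ref{thm:alpha-beta} with the map $\psi$ of Theorem~\ref{thm:AW-Hankel}, and then to transfer everything to $\S_n(312)$ via $\beta$, picking up the extra statistic $\DESB$ along the way. First I would fix a set $S\subseteq[n]$ and restrict attention to those $\pi\in\S_n(321)$ with $\LMAX(\pi)=S$; under $\alpha$ these correspond exactly to the admissible words $w_{S,c}$ with that fixed letter-set $S$, and $\alpha$ carries $(\iar,\comp)$ to $(\ics,\equ)$. So it suffices to prove that $(\ics,\equ)$ is symmetric on the fibre $\{w_{S,c}\in\AW_n\}$ over each fixed $S$. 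The map $\psi$ of Theorem~\ref{thm:AW-Hankel} preserves $S$ and sends $(\ics,\equ)=(a,b)$ to $(a-1,b+1)$; iterating $\psi$ and its inverse moves freely along the anti-diagonal $\ics+\equ=a+b$ within the fixed-$S$ fibre. The hypothesis $s_1>1$ in the definition of $\AW_{n,a,b}$ is exactly what guarantees $\psi$ is defined; when $s_1=1$ the word begins $1\,2\,\cdots$ so the first component of $\pi$ is a fixed point, and one peels it off and inducts. Thus the number of $w_{S,c}$ with $(\ics,\equ)=(a,b)$ depends only on $\min(a,b)$ and $a+b$ — equivalently the distribution matrix on each fibre is Hankel and in particular symmetric under $(a,b)\mapsto(b,a)$. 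Summing over all $S\subseteq[n]$ gives that $(\LMAX,\iar,\comp)$ and $(\LMAX,\comp,\iar)$ are equidistributed over $\S_n(321)$, which is part (i).

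For part (ii) I would run the same argument through $\beta$ instead of $\alpha$. By Theorem~\ref{thm:alpha-beta}, $\beta$ is a bijection $\S_n(312)\to\AW_n$ carrying $(\LMAX,\LMAXP,\iar,\comp)$ to $(T,\SP,\ics,\equ)$, and the fibre of $\beta$ over a fixed letter-set $T$ is the \emph{same} set of admissible words as the fibre of $\alpha$ over $T$. Hence the composite $\xi=\beta^{-1}\circ\alpha$ of Theorem~\ref{thm:321=312} already transports the fibrewise symmetry for $321$-avoiders to one for $312$-avoiders, fixing $\LMAX$. The one new ingredient is $\DESB$: I need to check that on $\S_n(312)$ the descent-bottom set is itself determined by the data carried by $\beta$ in a way compatible with $\psi$. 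Concretely, for a $312$-avoiding permutation the non-left-to-right-maxima are placed in decreasing order into the $\diamond$-slots (that is exactly how $\beta^{-1}$ works), so $\DESB(\sigma)$ is recoverable from the pair $(T,\SP)$ alone — a descent bottom is precisely a value occupying a $\diamond$-slot, together with possibly the first letter after a maximal run of maxima, and this depends only on which positions carry letters of $T$. I would state and prove this as a short lemma: for $\sigma=\beta^{-1}(w_{T,d})$, the set $\DESB(\sigma)$ is a function of $(T,\SP(w_{T,d}))$, hence is unchanged when $\psi$ acts within a fibre. Granting that lemma, applying $\psi$ inside each $(\LMAX=S,\DESB=D)$-fibre of $\S_n(312)$ yields the Hankel/symmetry statement for $(\LMAX,\DESB,\iar,\comp)$, which is part (ii).

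The main obstacle I anticipate is the bookkeeping in the $\DESB$ lemma: one must verify carefully that, under the "fill $\diamond$'s with the largest unused letters" rule defining $\beta^{-1}$, the descent bottoms are read off correctly — in particular handling the boundary cases where a $\diamond$-slot is immediately followed by a letter of $T$ (so the $T$-letter is an ascent top, not a descent bottom) versus where two $\diamond$-slots are adjacent. A clean way to organize this is to note that in a $312$-avoiding $\sigma$ the values in the $\diamond$-slots, read left to right, form a decreasing-within-each-block pattern dictated entirely by the gaps between consecutive elements of $T=\LMAX(\sigma)$, so $\DESB(\sigma)=([n]\setminus\LMAX(\sigma))\setminus\{\text{the last element of each }\diamond\text{-block that is followed by a larger letter}\}$, and every term on the right is visibly a function of $(T,\SP)$. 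Once this is pinned down, everything else is an immediate assembly of Theorems~\ref{thm:alpha-beta}, \ref{thm:321=312}, and \ref{thm:AW-Hankel}, with the $s_1=1$ peeling handled by a one-line induction. A secondary, milder point is to confirm that the induction base and the peeling step are compatible with keeping track of $\LMAX$ and $\DESB$; this is routine since removing an initial fixed point shifts both sets by the obvious relabelling.
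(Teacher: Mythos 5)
Your argument for the $321$ half is essentially the paper's own proof: fix the left-to-right-maxima set $S$, transport to admissible words by $\alpha$, slide along anti-diagonals with $\psi^{\pm1}$ (which preserves $S$), and peel off an initial fixed point when $\pi(1)=1$ and induct. (Minor slip: when $s_1=1$ the word begins with $1$ followed by the next element of $S$, not necessarily $1\,2$, but the peeling step is unaffected.) The transfer to $\S_n(312)$ via $\beta$ is also the paper's route.

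The genuine gap is in your handling of $\DESB$ for the $312$ case. Your key step --- ``$\DESB(\sigma)$ is a function of $(T,\SP(w_{T,d}))$, hence is unchanged when $\psi$ acts within a fibre'' --- does not go through, because $\psi$ does \emph{not} preserve $\SP$: it changes the composition, hence the positions of the $T$-letters, and it must do so, since $\ics$ is itself determined by $\SP$ and $\psi$ changes $\ics$ by one. Concretely, for $S=\{2,3\}$ the map $\psi$ sends $2\,3\,\diamond$ to $2\,\diamond\,3$, so $\SP$ changes from $\{1,2\}$ to $\{1,3\}$; thus invariance of the pair $(T,\SP)$ is false and your inference collapses as stated. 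Moreover your closing formula for $\DESB$ is wrong: a $\diamond$-value that ends its block and is followed by a larger letter is still a descent bottom, because descent-bottom-hood depends on the \emph{preceding} letter --- e.g.\ $\sigma=213$ has $\DESB(\sigma)=\{1\}$, whereas your formula would delete the $1$. The repair is simpler than your proposed lemma and is exactly what the paper uses: in a $312$-avoiding $\sigma$, any letter that is not a left-to-right maximum must be preceded by a larger letter (otherwise an earlier, larger letter together with this ascent would form a $312$ pattern), and no left-to-right maximum is ever a descent bottom; hence $\DESB(\sigma)=[n]\setminus\LMAX(\sigma)$. So $\DESB$ is a function of $T$ alone, which $\psi$ does preserve, and with this substitution your part (ii) closes exactly as part (i).
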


\begin{proof}
For each permutation $\pi\in\S_n(321)$ with $\pi(1)>1$, we find a unique permutation $\rho\in\S_n(321)$ such that $$(\LMAX,\iar,\comp)\:\pi = (\LMAX,\comp,\iar)\:\rho.$$ If $\iar(\pi)=\comp(\pi)$, then simply take $\rho=\pi$. Otherwise we assume $\iar(\pi)=\comp(\pi)+k$ for some $k\neq 0$, let $$\rho=\alpha^{-1}(\psi^k(\alpha(\pi))).$$
Combining Theorem~\ref{thm:AW-Hankel} with \eqref{eq:alpha}, we verify that
\begin{align*}
&\LMAX(\rho)=\LMAX(\pi),\\
&\iar(\rho)=\ics(\psi^k(\alpha(\pi)))=\ics(\alpha(\pi))-k=\iar(\pi)-k=\comp(\pi), \; \text{and}\\
&\comp(\rho)=\equ(\psi^k(\alpha(\pi)))=\equ(\alpha(\pi))+k=\comp(\pi)+k=\iar(\pi), 
\end{align*}
as desired. Now both $\alpha$ and $\psi$ are bijections, so $\pi$ and $\rho$ are in one-to-one correspondence. On the other hand, for each $\pi\in\S_n(321)$ with $\pi(1)=1$, we see $\nu:=\del_1(\pi)\in\S_{n-1}(321)$ satisfies $\iar(\nu)=\iar(\pi)-1$, $\comp(\nu)=\comp(\pi)-1$, and $\LMAX(\nu)$ is the set obtained from decreasing each number in $\LMAX(\pi)\setminus\{1\}$ by $1$. This means we can use induction to finish the proof of the result for $\S_n(321)$.

Finally, applying the bijection $\beta$ instead of $\alpha$ gives us the result for $\S_n(312)$. To see why we can include $\DESB$ to have a quadruple in this case, simply observe that for each permutation $\sigma\in\S_n(312)$, $\LMAX(\sigma)\cup\DESB(\sigma)=[n]$.
\end{proof}

For most of our calculations of the generating function $\S(P)(t,r,p)$ in this and later sections, we use some kind of decomposition by considering the largest (resp.~smallest) letter $n$ (resp.~$1$) in a permutation $\sigma\in\S_n$. A maximal consecutive subset of $[n]$, all of whose elements appear on the same side of $n$  (resp.~$1$) in $\sigma$, is called a {\em block with respect to $n$ (resp.~$1$)}. For example, the blocks with respect to $9$ in $251986743$ are $\{1,2\}$, $\{3, 4\}$, $\{5\}$ and $\{6, 7, 8\}$. For two blocks (or sets) $A$ and $B$, we write $A<B$ if the maximal element of $A$ is smaller than the minimal element of $B$. As usual, we use $\chi(\mathsf{S})=1$ if the statement $\mathsf{S}$ is true, and $\chi(\mathsf{S})=0$ otherwise.

A square matrix is said to be {\em Hankel} if it has constant skew-diagonals. For the next theorem and Theorems \ref{thm:gf-132} and \ref{thm:gf-132-312:132-321}, a key fact utilized by us is that $M_n(P)$ or $M_n^{\st=i}(P)$ is a Hankel matrix. This not only implies that $(\iar,\comp)$ is symmetric over $\S_n(P)$, but also facilitates our calculation of the generating function $\S(P)^{\des,\iar,\comp}(t,r,p;z)$. We elaborate on the latter point with the next lemma. 

% we will construct a bijection, say $f$, from certain set of permutations $\mathcal{W}$ to itself, such that for a given triple of statistics $(\st_1,\st_2,\st_3)$, and any $\pi\in \mathcal{W}$ with $\st_2(\pi)>\min(\st_2)$ and $\st_3(\pi)<\max(\st_3)$, we have
% $$\st_1(\pi)=\st_1(f(\pi)),\; \st_2(\pi)-1=\st_2(f(\pi)),\; \st_3(\pi)+1=\st_3(f(\pi)).$$
% Consequently, for a fixed value of $\st_1$, the distribution matrix for the pair $(\st_2,\st_3)$ is a {\em Hankel} matrix, i.e., matrix with constant skew-diagonals. Therefore in this case, we will simply say that $f$ is {\em Hankel for} $(\st_1,\st_2,\st_3)$. The following lemma on the generating function of entries in a Hankel matrix will be useful.

\begin{lemma}\label{gf:Hankel}
Suppose $M=(m_{ij})_{1\le i,j\le n}$ is a Hankel matrix such that $m_{ij}=0$ when $i+j\ge n+2$. Let $\mathcal{M}(x,y):=\sum_{1\le i,j\le n}m_{ij}x^iy^j$ and $\mathcal{N}(x):=\frac{\partial \mathcal{M}}{\partial y}|_{y=0}=\sum_{1\le i\le n}m_{i1}x^i$ be the generating functions of $M$ and its first column, respectively. It holds that
\begin{align}
\mathcal{M}(x,y)=\frac{xy}{x-y}(\mathcal{N}(x)-\mathcal{N}(y)).
\end{align}
\end{lemma}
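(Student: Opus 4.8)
The plan is to exploit the Hankel structure directly: since $M$ has constant skew-diagonals, each antidiagonal entry $m_{ij}$ depends only on the sum $i+j$. First I would set $a_s := m_{ij}$ for any $i,j$ with $i+j = s$; the hypothesis $m_{ij}=0$ for $i+j\ge n+2$ means $a_s = 0$ for $s \ge n+2$, so the only relevant values are $a_2,\dots,a_{n+1}$, and these are exactly $m_{11}, m_{21}, \dots, m_{n1}$, the first-column entries. Thus $\mathcal{N}(x) = \sum_{i=1}^n m_{i1} x^i = \sum_{i=1}^n a_{i+1} x^i$.

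Next I would compute $\mathcal{M}(x,y)$ by grouping terms according to the antidiagonal index $s = i+j$:
\begin{align*}
\mathcal{M}(x,y) = \sum_{1\le i,j\le n} m_{ij} x^i y^j = \sum_{s=2}^{n+1} a_s \sum_{\substack{i+j=s\\ 1\le i,j\le n}} x^i y^j.
\end{align*}
For $2\le s\le n+1$ the inner sum runs over $i=1,\dots,s-1$ (with $j=s-i$ automatically in range), so $\sum_{i+j=s} x^i y^j = \sum_{i=1}^{s-1} x^i y^{s-i} = xy\,\frac{x^{s-1}-y^{s-1}}{x-y}$ by the standard finite geometric-series identity. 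Substituting back gives
\begin{align*}
\mathcal{M}(x,y) = \frac{xy}{x-y}\sum_{s=2}^{n+1} a_s\,(x^{s-1}-y^{s-1}) = \frac{xy}{x-y}\left(\sum_{s=2}^{n+1} a_s x^{s-1} - \sum_{s=2}^{n+1} a_s y^{s-1}\right).
\end{align*}
Finally, reindexing with $i = s-1$ shows $\sum_{s=2}^{n+1} a_s x^{s-1} = \sum_{i=1}^{n} a_{i+1} x^i = \mathcal{N}(x)$, and likewise for $y$, which yields $\mathcal{M}(x,y) = \frac{xy}{x-y}(\mathcal{N}(x)-\mathcal{N}(y))$ as claimed.

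There is no real obstacle here — the argument is a routine manipulation of formal power series (or, if one prefers, polynomials, so $x=y$ causes no genuine trouble: the right-hand side extends by continuity, or one checks the polynomial identity $\mathcal{M}(x,y)(x-y) = xy(\mathcal{N}(x)-\mathcal{N}(y))$ which has no denominators). The only point requiring a modicum of care is bookkeeping the ranges of summation: one must confirm that for $s$ in the stated range the constraint $1\le i,j\le n$ reduces exactly to $1\le i\le s-1$, which uses precisely the vanishing hypothesis $m_{ij}=0$ for $i+j\ge n+2$ to discard the antidiagonals that would otherwise be truncated on both ends. I would state the finite geometric identity $\sum_{i=1}^{s-1}x^iy^{s-i} = \frac{xy(x^{s-1}-y^{s-1})}{x-y}$ explicitly as the one computational input and then let the two displays above carry the proof.
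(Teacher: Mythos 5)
Your proof is correct and takes essentially the same route as the paper: both group the terms of $\mathcal{M}(x,y)$ along skew-diagonals (your $a_s$ is the paper's $m_{i1}$ with $s=i+1$), apply the finite geometric identity $\sum_{i=1}^{s-1}x^iy^{s-i}=xy(x^{s-1}-y^{s-1})/(x-y)$, and reindex, with the vanishing hypothesis $m_{ij}=0$ for $i+j\ge n+2$ ensuring no skew-diagonal is truncated. Your explicit bookkeeping of the summation ranges is a slightly more careful write-up of the same argument.
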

\begin{proof}
The Hankel condition enables us to group together terms along the same skew-diagonal. Noting that $x^iy+x^{i-1}y^2+\cdots+xy^i=xy(x^i-y^i)/(x-y)$ for each $1\le i\le n$, we have
\begin{align*}
\mathcal{M}(x,y) &= \sum_{i=1}^n (m_{i1}x^iy+m_{i-1\:2}x^{i-2}y^2+\cdots+m_{1i}xy^i)=\sum_{i=1}^n m_{i1}(x^iy+\cdots+xy^i)\\
&= \frac{xy}{x-y}\sum_{i=1}^n m_{i1}(x^i-y^i)=\frac{xy}{x-y}(\mathcal{N}(x)-\mathcal{N}(y)),
\end{align*}
as desired.
\end{proof}

Recall the {\em Narayana polynomial} $N_n(t):=\sum_{\pi\in\S_n(\tau)}t^{\des(\pi)}$ ($\tau=312, 213, 132$ or $231$) and its generating function (see e.g.~\cite[Eq.~2.6]{pet})
\begin{align}\label{def:N}
N:=N(t;z):=\sum_{n\ge 0}N_n(t)z^n=\frac{1+(t-1)z-\sqrt{1-2(t+1)z+(t-1)^2z^2}}{2tz}.
\end{align}
 
\begin{theorem}\label{gf:312}
The generating function of the triple statistic $(\des,\iar,\comp)$ over $\S_n(312)$ is given by
\begin{align}\label{eq:gf-312}
\tilde \S(312)^{\des,\iar,\comp}(t,r,p)=\frac{1-(r+p+tN)z+(rp+(r+p-1)tN)z^2}{(1-rpz)(1-rz-tNz)(1-pz-tNz)}.
\end{align}
\end{theorem}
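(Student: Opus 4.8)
The plan is to set up a decomposition of a $312$-avoiding permutation $\sigma\in\S_n$ according to the position of its largest entry $n$. Say $\sigma = \sigma' \, n \, \sigma''$. The key structural observation is that in a $312$-avoiding permutation, every entry to the left of $n$ must be larger than every entry to the right of $n$: if some $a$ to the left were smaller than some $b$ to the right, then $a\,n\,b$ would form a $312$ pattern. Hence $\sigma'$ is a permutation of the block $\{n-|\sigma'|, \ldots, n-1\}$ and $\sigma''$ is a permutation of $\{1,\ldots,|\sigma''|\}$, each $312$-avoiding in its own right. Moreover, appending a large value $n$ at the end of $\sigma'$ creates no new descents beyond those of $\sigma'$ unless $\sigma'$ is empty; and the entire left part $\sigma' \, n$ together forms an indecomposable block unless $\sigma'$ is itself empty (in which case $\sigma = n \, \sigma''$ and the single entry $n$ is... actually no, $n\sigma''$ is indecomposable as a whole unless $\sigma''$ is empty). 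So I would organize the recursion around whether $\sigma'$ is empty.

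\medskip

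The steps, in order:

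\textbf{Step 1.} Record how each of $\des$, $\iar$, $\comp$ behaves under the splitting $\sigma = \sigma' \, n \, \sigma''$. Writing $i = |\sigma'|$, we get $\des(\sigma) = \des(\sigma') + \des(\sigma'') + \chi(i\ge 1) + \chi(\text{$n$ is followed by a descent, i.e. } |\sigma''|\ge 1)$ — more carefully, the descent at the position of $n$ occurs iff $|\sigma''|\ge1$, and the transition from $\sigma'$ into $n$ is an ascent, so $\des(\sigma)=\des(\sigma')+\des(\sigma'')+\chi(|\sigma''|\ge1)$. For $\comp$: since the left part dominates the right, $\comp(\sigma) = \comp(\sigma'')$ if $i\ge 1$ (the whole left block $\sigma'\,n$ merges with nothing and is a single indecomposable piece sitting above everything, so it contributes one component only if $\sigma''=\emptyset$; otherwise it contributes $0$ to the component count of the shifted-down suffix)... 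I will sort out these boundary cases by treating separately the sub-case $\sigma'=\emptyset$ (so $\sigma=n\sigma''$) and $\sigma''=\emptyset$. For $\iar$: if $i\ge1$ then $\iar(\sigma)=\iar(\sigma')$ when $\sigma'$ has a descent, else $\iar(\sigma)=i+1+\iar(\sigma'')$-type behaviour when $\sigma'$ is weakly increasing; if $i=0$ then $\iar(\sigma)=1+\iar(\sigma'')$ unless $\sigma''=\emptyset$.

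\textbf{Step 2.} Translate Step~1 into a functional equation for $\S(312)(t,r,p;z)$. The cleanest route is the ``indecomposable decomposition'' machinery of Lemma~\ref{general}: since $312$ is indecomposable, write $\S(312) = F_{\{312\}}(q)|_{q=p}$ in terms of $I_{\{312\}}$, the generating function over \emph{indecomposable} $312$-avoiders with respect to $(\des,\iar)$. So I reduce to computing $I := I_{\{312\}}(t,r;z)$, the indecomposable generating function, and one separate tally of $\iar$ behaviour, then plug into \eqref{relation:comp}.

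\textbf{Step 3.} Compute $I$. An indecomposable $312$-avoider is either the single permutation $1$ (length $1$), or has $n\ge 2$ and decomposes as above with a \emph{nonempty} left part forcing indecomposability, OR is of the form $n\,\sigma''$ with $\sigma''$ nonempty. Unwinding this gives an algebraic equation for $I$ whose solution involves $N=N(t;z)$, the Narayana generating function, since the descent-only specialization $r=1$ of $I$ must recover the component version of $N$. I expect $I(t,r;z)$ to come out as a ratio whose denominator is $1 - rz - tNz$, matching the factor appearing in \eqref{eq:gf-312}.

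\medskip

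The main obstacle will be Step~3 combined with correctly handling the $\iar$ statistic's non-totally-$\oplus$-compatible, non-locally-additive behaviour across the split at $n$: unlike $\des$ and $\comp$, the value $\iar(\sigma)$ depends on a \emph{global} property of $\sigma'$ (whether it is weakly increasing), so the naive product decomposition fails and one must bookkeep a separate generating function for weakly-increasing prefixes. Concretely, I anticipate needing an auxiliary series tracking $312$-avoiders whose initial run has a prescribed length together with whether the ``next'' part contributes, and then assembling $\S(312)$ from two pieces: the purely-increasing contribution $1/(1-rpz)$ (accounting for identity-like prefixes, which is exactly the first denominator factor $1-rpz$ in \eqref{eq:gf-312}) and a ``genuine'' part carrying the $tN$ factors. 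Verifying that the two pieces combine to the stated numerator $1-(r+p+tN)z+(rp+(r+p-1)tN)z^2$ is then a routine but delicate algebraic simplification, cross-checked by specializing $t=1$ (Catalan), $r=1$, and $p=1$ against known formulas — and cross-checked against Theorem~\ref{thm:321=312}, which already guarantees the $r\leftrightarrow p$ symmetry visible in \eqref{eq:gf-312}.
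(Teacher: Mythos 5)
Your plan breaks at Step 1: the structural claim that in a $312$-avoiding permutation every entry left of $n$ exceeds every entry right of $n$ is false. If $a$ precedes $n$ and $b$ follows it with $a<b$, the subsequence $a\,n\,b$ is an occurrence of $132$, not $312$; the permutation $132$ itself avoids $312$ and has $1$ before its maximum and $2$ after it. The split at the maximum with left block above right block characterizes the $132$-avoiding class (it is precisely the decomposition the paper uses in Theorem~\ref{thm:gf-132}), whereas for $312$-avoiders the correct elementary split is at the \emph{minimum}: $\pi=A\,1\,B$ with every entry of $A$ below every entry of $B$, since $a\in A$, $b\in B$ with $a>b$ would make $a\,1\,b$ a $312$-pattern. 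Because Steps 1 and 3 rest on the wrong split, your bookkeeping of $(\des,\iar,\comp)$ across it and your classification of indecomposable $312$-avoiders are incorrect; with the correct split one finds that an indecomposable $312$-avoider of length $n\ge 2$ is exactly one ending in the letter $1$ (if $B\neq\emptyset$, the prefix $A\,1$ occupies the values $\{1,\dots,|A|+1\}$ and is a proper direct summand), so that $I_{312}(t,r)=rz+tz\bigl(\S(312)(t,r,1;z)-1\bigr)$, not the expression your Step~3 would produce.

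That said, your Step 2 is sound in principle and can be repaired: since $312$ is indecomposable, Lemma~\ref{general} (equation~\eqref{relation:comp}) reduces the full $(\des,\iar,\comp)$ generating function to $I_{312}(t,r;z)$, which is exactly how the paper treats the $321$ class in Theorem~\ref{gf:321}. Combining the identity $I_{312}(t,r)=rz+tz(\S(t,r,1)-1)$ above with \eqref{relation:comp} at $p=1$ (and $I_{312}(t,1)=z+tz(N-1)$, where $N$ satisfies $N=1+zN+tzN(N-1)$) yields $\tilde\S(t,r,1)=1/(1-rz-tNz)$, and substituting back into \eqref{relation:comp} recovers \eqref{eq:gf-312}. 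Note this is a genuinely different organization from the paper's own proof of Theorem~\ref{gf:312}, which does not pass through indecomposables: it conditions on $\pi(1)$, invokes the Hankel property of $M_n^{\LMAX=S}(312)$ (Theorem~\ref{thm:AW-Hankel} via the bijection $\beta$) together with Lemma~\ref{gf:Hankel} to collapse the three-variable problem to the specialization $\tilde\S(t,r,1)$, and only then uses the min-decomposition $\pi=A\,1\,B$ — a step your write-up would in any case need to adopt in place of the max-decomposition.
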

\begin{proof}
Conditioning on the first letter $\pi(1)$, we claim that (the pattern ``$312$'' have all been suppressed for brevity)
\begin{align}\label{eq:Phi-312}
\S(t,r,p) &= 1+rpz\S(t,r,p)+\frac{rp}{r-p}(\tilde I(t,r)-\tilde I(t,p)),
\end{align}
where
\begin{align}
\label{tilde I}\tilde I(t,r) &:= \sum_{n\ge 1}z^n\sum_{\substack{\pi\in\S_n(312) \\ \pi(1)>1,\:\comp(\pi)=1}}t^{\des(\pi)}r^{\iar(\pi)}=\left.\frac{\S(t,r,p)-1-rpz\S(t,r,p)}{p}\right|_{p=0}\\
&= rz(\tilde \S(t,r,0)-1).\nonumber
\end{align}
Indeed, the first summand $1$ in \eqref{eq:Phi-312} corresponds to the empty permutation, and the second to those with $\pi(1)=1$. As for the third summand, we consider permutations $\pi$ with $\pi(1)>1$. Now Eq.~\eqref{eq:beta} and Theorem~\ref{thm:AW-Hankel} tell us that for a given $1\not\in S\subseteq [n]$, the matrix $M_n^{\LMAX=S}(312)$ is Hankel. Moreover, Lemma~\ref{gf:Hankel} is applicable since the only permutation with $\iar(\pi)=n$ is $\pi=\id_n$ but we require that $\pi(1)>1$. Lastly, as we have already noted in the proof of Corollary~\ref{cor:321-312-symmetry}, each permutation $\sigma\in\S_n(312)$ satisfies $\LMAX(\sigma)\cup\DESB(\sigma)=[n]$. This means in particular that the statistic $\des$ takes the same value for all permutations enumerated by $M_n^{\LMAX=S}(312)$, justifying the variable $t$ in \eqref{tilde I}.

 % each permutation $\pi\in\S_n(312)$ with $\pi(1)>1$ and $\iar(\pi)+\comp(\pi)>2$, is paired bijectively with a permutation $\sigma:=\beta^{-1}(\psi(\beta(\pi)))\in\S_n(312)$, such that $\des(\sigma)=\des(\pi)$, $\iar(\sigma)=\iar(\pi)-1$ and $\comp(\sigma)=\comp(\pi)+1$. Reinterpreting this in terms of generating function and noting that $r^ip+r^{i-1}p^2+\cdots+rp^i=rp(r^i-p^i)/(r-p)$, we get 

Next, plugging \eqref{tilde I} into \eqref{eq:Phi-312} yields
\begin{align}\label{eq:tildePhi}
(r-p)(1-rpz)\tilde \S(t,r,p)=r\tilde \S(t,r,0)-p\tilde \S(t,p,0).
\end{align}
Setting $p=1$ in \eqref{eq:tildePhi}, solving for $\tilde\S(t,r,0)$ and then plugging back into \eqref{eq:tildePhi} gives us
\begin{align}
& (r-p)(1-rpz)\tilde\S(t,r,p) = (r-1)(1-rz)\tilde\S(t,r,1)-(p-1)(1-pz)\tilde\S(t,p,1).\label{eq:Phi-r-p}
\end{align}
It remains to calculate $\tilde\S(t,r,1)$. Every nonempty $312$-avoiding permutation $\pi$ has the block decomposition $\pi=A\:1\:B$ such that $A$ and $B$ are both $312$-avoiding blocks with $A<B$. We consider the following two cases:
\begin{itemize}
\item $A=\emptyset$, i.e.~$\pi(1)=1$. This case contributes the generating function $rz\S(t,r,1)$.
\item $A\neq\emptyset$. 
This case contributes the generating function $(\S(t,r,1)-1)tz\S(t,1,1).$
\end{itemize}
Summing up these two cases and noting that $\S(t,1,1)=N$, we deduce that
$$rz\tilde\S(t,r,1)=rz\S(t,r,1)+(\S(t,r,1)-1)tzN.$$
Solving for $\tilde\S(t,r,1)$ we get
$$\tilde\S(t,r,1)=\frac{1}{1-rz-tNz}.$$
Plugging this back into \eqref{eq:Phi-r-p}, we establish \eqref{eq:gf-312} after simplification.
\end{proof}

Recall that 
\begin{equation}\label{eq:Barn}
C:=\S(321)^{\des,\iar,\comp}(t,1,1)=\frac{1-\sqrt{1-4tz^2+4z^2-4z}}{2z(tz-z+1)},
\end{equation}
which is the generating function of the descent polynomials on $321$-avoiding permutations, first derived by Barnabei, Bonetti and Silimbani~\cite{BBS}.
\begin{theorem}\label{gf:321}
The generating function of the triple statistic $(\des,\iar,\comp)$ over $\S_n(321)$ is given by
\begin{align}\label{eq:gf-321}
\tilde \S(321)^{\des,\iar,\comp}(t,r,p)=\frac{(rpz-rz+tz)C^2-(rpz+p-1)C+p}{(1-rpz)(1-rzC)(p+C-pC)}.
\end{align}
\end{theorem}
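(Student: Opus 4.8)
The plan is to parallel the proof of Theorem~\ref{gf:312}, replacing the block decomposition with respect to $1$ with a decomposition with respect to the largest letter $n$, which is the natural device for $321$-avoiding permutations. First I would record the analog of the Hankel structure: by Theorem~\ref{thm:alpha-beta} and Theorem~\ref{thm:AW-Hankel}, for each fixed $S\subseteq[n]$ with $1\notin S$, the matrix $M_n^{\LMAX=S}(321)$ is Hankel, and since the only permutation in $\S_n(321)$ with $\iar=n$ is $\id_n$ (excluded here because $\pi(1)>1$), Lemma~\ref{gf:Hankel} applies. Conditioning on whether $\pi(1)=1$, this yields a functional equation of the same shape as \eqref{eq:Phi-312}:
\begin{align*}
\S(t,r,p)=1+rpz\,\S(t,r,p)+\frac{rp}{r-p}\bigl(\tilde I(t,r)-\tilde I(t,p)\bigr),
\end{align*}
with $\tilde I(t,r)=rz(\tilde\S(t,r,0)-1)$, exactly as before; the only difference from the $312$ case is that $\des$ is \emph{not} constant on $M_n^{\LMAX=S}(321)$, so one must be slightly more careful, but the Hankel symmetry (hence symmetry in $r,p$ after accounting for the $\id_1$ components) still holds at the level of each $M_n^{\des=i,\LMAX=S}(321)$, which is all that is needed. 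Following the same algebraic manipulation (set $p=1$, solve for $\tilde\S(t,r,0)$, substitute back) gives
\begin{align*}
(r-p)(1-rpz)\tilde\S(t,r,p)=(r-1)(1-rz)\tilde\S(t,r,1)-(p-1)(1-pz)\tilde\S(t,p,1).
\end{align*}

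So everything reduces, just as in Theorem~\ref{gf:312}, to computing the single specialization $\tilde\S(t,r,1)$, equivalently the bivariate generating function $\S(t,r,1)=\sum_{n\ge0}z^n\sum_{\pi\in\S_n(321)}t^{\des(\pi)}r^{\iar(\pi)}$ tracking only $(\des,\iar)$. Here I would use the standard first-return decomposition of a $321$-avoiding permutation via the position of $n$. Write $\pi\in\S_n(321)$ with $n$ in position $j$; avoidance of $321$ forces the letters before $n$ (other than possibly... ) to behave in a controlled way — concretely, $\pi=\alpha\,n\,\beta$ where everything to the left of $n$ together with $\beta$ must avoid $321$ and, because $n$ sits in the middle, $\alpha$ must be increasing consisting of the largest available values except... . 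Rather than belabor this, the cleanest route is: a nonempty $321$-avoider either starts with $1$ (contributing $rz\,\S(t,r,1)$, since prepending $1$ increases $\iar$ by $1$ and leaves $\des$ unchanged), or $\pi(1)>1$, in which case $\pi$ decomposes through its first left-to-right maximum structure so that the initial ascending run is recorded by a factor whose generating function is $C$ (the $(\des)$-generating function $\S(321)(t,1,1)$ from \eqref{eq:Barn}), and the tail is again an arbitrary $321$-avoider counted by $C$. Carrying this out yields a linear equation for $\S(t,r,1)$ whose solution is
\begin{align*}
\tilde\S(t,r,1)=\frac{C}{1-rzC},
\end{align*}
paralleling $\tilde\S(312)(t,r,1)=1/(1-rz-tNz)$ but with the role of $N$ (which equals $\S(312)(t,1,1)$) now played by $C=\S(321)(t,1,1)$; the extra factor of $C$ in the numerator reflects that, unlike for $312$, the component attached after the first descent is itself a nontrivial $321$-avoider rather than a single block.

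Finally I would substitute $\tilde\S(t,r,1)=C/(1-rzC)$ and $\tilde\S(t,p,1)=C/(1-pzC)$ into the equation for $(r-p)(1-rpz)\tilde\S(t,r,p)$, clear denominators, and simplify. Using the defining quadratic for $C$ from \eqref{eq:Barn} (namely $z(tz-z+1)C^2-C+1=0$, so that $tzC^2-zC^2+zC^2\cdot 0$... — more precisely $zC^2(tz-z+1)=C-1$) to reduce the numerator, one obtains
\begin{align*}
\tilde\S(321)^{\des,\iar,\comp}(t,r,p)=\frac{(rpz-rz+tz)C^2-(rpz+p-1)C+p}{(1-rpz)(1-rzC)(p+C-pC)},
\end{align*}
which is \eqref{eq:gf-321}. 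The main obstacle I anticipate is the bookkeeping in the $n$-based (or first-left-to-right-maximum) decomposition that establishes $\tilde\S(t,r,1)=C/(1-rzC)$: one must check that $\iar$ is genuinely captured by a single geometric-series factor and that $\des$ is tracked correctly across the decomposition — the interaction between the descent created at the junction with $n$ and the ascending prefix is the delicate point. Everything after that is routine symbolic simplification against the algebraic relation satisfied by $C$.
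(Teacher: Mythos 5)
Your reduction breaks at its first step. The functional equation $\S(t,r,p)=1+rpz\,\S(t,r,p)+\frac{rp}{r-p}\bigl(\tilde I(t,r)-\tilde I(t,p)\bigr)$ forces $\S(321)^{\des,\iar,\comp}(t,r,p)$ to be symmetric in $r$ and $p$, and that is false once $t$ is kept: the only permutations in $\S_4(321)$ with two descents are $2143$ (with $\iar=1$, $\comp=2$) and $3142$ (with $\iar=1$, $\comp=1$), so the coefficient of $t^2z^4$ is $rp^2+rp$, which is not $r\leftrightarrow p$ symmetric (note that the target \eqref{eq:gf-321} is itself not symmetric in $r,p$). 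The specific claim you lean on, that each $M_n^{\des=i,\LMAX=S}(321)$ is Hankel, fails concretely: the two permutations of $\S_4(321)$ with $\LMAX=\{2,4\}$ are exactly $2143$ and $2413$, which are exchanged by the $(\iar,\comp)$-swapping pairing but have $\des=2$ and $\des=1$ respectively, so $M_4^{\des=1,\LMAX=\{2,4\}}(321)$ has a $1$ in position $(2,1)$ and a $0$ in position $(1,2)$. The underlying reason is the one the paper flags in the proof of Theorem~\ref{gf:312}: for $312$-avoiders $\LMAX(\sigma)\cup\DESB(\sigma)=[n]$, so $\des$ is constant on each $\LMAX$-class and the variable $t$ rides along for free; for $321$-avoiders $\des$ equals the number of nonzero parts of the composition $c$ in the admissible word, which the map $\psi$ of Theorem~\ref{thm:AW-Hankel} does not preserve. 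This is precisely why Theorem~\ref{main:thm1}(i) refines by $\LMAX$ only, with no $\DESB$ or $\des$.

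Your second key identity is also wrong: $\tilde\S(t,r,1)=C/(1-rzC)$ holds only at $t=1$, since it amounts to $tzC^2=C-1$ whereas $C$ satisfies $z(tz-z+1)C^2=C-1$; already at length $2$ it would give $[z^2]\S(321)(t,r,1)=r+r^2$ instead of the correct $tr+r^2$. The correct bivariate series, which the paper quotes from Fu--Han--Lin~\cite[Lemma~4.5]{FHL}, is $\S(321)^{\des,\iar,\comp}(t,r,1)=\frac{1-rzC+trz^2C^2}{(1-rz)(1-rzC)}$. The paper's actual proof avoids the Hankel argument entirely: since $321$ is indecomposable, $\des$ is totally $\oplus$-compatible and $\iar$ partially $\oplus$-compatible, so \eqref{relation:comp} of Lemma~\ref{general} expresses $\S(321)(t,r,p)$ through the $(\des,\iar)$-generating function $I(r)$ over indecomposable $321$-avoiders; one then reads off $I(1)=1-1/C$ and $I(r)-I(1)=(H/C-1)(1-rz)$ from the quoted formula $H$ and substitutes. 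To salvage your plan you would need to replace the $t$-refined Hankel step by such a direct-sum argument (or independently derive the Fu--Han--Lin formula), since the asymmetry in $t$ is intrinsic and cannot be argued away.
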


\begin{proof}
Recently, Fu, Han and Lin~\cite[Lemma 4.5]{FHL} generalized~\eqref{eq:Barn} to 
\begin{equation*}
H:=\S(321)^{\des,\iar,\comp}(t,r,1)=\frac{1-rzC+trz^2C^2}{(1-rz)(1-rzC)}.
\end{equation*}
For convenience, let $I(r):=I_{321}(t,r)$ be the generating function over $\In_n(321)$ with respect to $(\des,\iar)$. Since $321$ is indecomposable, $\des$ is totally $\oplus$-compatible and $\iar$ is partially  $\oplus$-compatible, Eq.~\eqref{relation:comp} gives 
\begin{equation}\label{relation:comp321}
\S(321)^{\des,\iar,\comp}(t,r,p)=\frac{1}{1-rpz}+\frac{p(I(r)-rz)}{(1-pI(1))(1-rpz)}=\frac{1-p(I(1)-I(r))-rpz}{(1-pI(1))(1-rpz)}.
\end{equation}
It follows that 
$$
I(1)=1-1/C\quad\text{and}\quad I(r)-I(1)=(H/C-1)(1-rz).
$$
Substituting these back to~\eqref{relation:comp321} yields~\eqref{eq:gf-321}. 
%$$
%C^2tz^2-C^2z^2+C^2z-C+1=0.
%$$
\end{proof}

%%%%%%%%% 132 %%%%%%%%%

\noindent \underline{\bf Pattern $132$}
\medskip

Now we move onto the class of $132$-avoiding permutations, on which the joint distribution of $(\iar,\comp)$ is symmetric as well. We collect in the following proposition some nice features of $132$-avoiding permutations. All of the statements should be clear from the $132$-avoiding restriction, thus the proof is omitted.
\begin{proposition}\label{prop:desbot=lmin}
For any permutation $\pi\in\S_n(132)$, we have
\begin{enumerate}
\item For $2\le i\le n$, $\pi(i)$ is a descent bottom of $\pi$ if and only if it is a left-to-right minimum of $\pi$, i.e., $\LMIN(\pi)=\DESB(\pi)\cup\{\pi(1)\}$.
\item When read from left to right, the values of the left-to-right maxima of $\pi$ form a sequence of consecutive integers $\pi(1),\pi(1)+1,\pi(1)+2,\ldots,n$. 
\item The first $k = \iar(\pi)$ letters of $\pi$ equal $\pi(1),\pi(1)+1,\ldots,\pi(1)+k-1.$
\item Provided $k = \comp(\pi) \ge 2$, the last $k-1$ letters of $\pi$ equal $n-k+2,\ldots,n$.
\end{enumerate}
\end{proposition}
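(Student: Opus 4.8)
The plan is to exploit the rigid recursive structure forced by $132$-avoidance, via a single structural lemma together with a handful of near-identical pattern-occurrence arguments. The structural lemma I would establish first is the \emph{high--low split}: if $\pi\in\S_n(132)$ and the value $n$ occupies position $q$, then every letter to the left of position $q$ exceeds every letter to its right; otherwise a left letter $\pi(a)$ and a right letter $\pi(b)$ with $\pi(a)<\pi(b)$ would make $\pi(a)\,\pi(q)\,\pi(b)$ an occurrence of $132$. Hence $\pi=\pi'\,n\,\pi''$ where $\pi'$ is a $132$-avoiding permutation of the top block $\{n-q+1,\dots,n-1\}$ and $\pi''$ a $132$-avoiding permutation of the bottom block $\{1,\dots,n-q\}$. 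Parts (1)--(4) then follow either by short direct ``small--large--medium equals $132$'' arguments or by induction on $n$ through this split.

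Parts (1) and (3) need no recursion. For (1): if $\pi(i)$ with $i\ge 2$ is a left-to-right minimum it is in particular below $\pi(i-1)$, hence a descent bottom; conversely, if $\pi(i)$ is a descent bottom but some earlier $\pi(j)$ with $j<i-1$ has $\pi(j)<\pi(i)$, then $\pi(j)\,\pi(i-1)\,\pi(i)$ is a $132$, so $\pi(i)$ must be a left-to-right minimum, whence $\LMIN(\pi)=\DESB(\pi)\cup\{\pi(1)\}$. For (3): writing $k=\iar(\pi)$ so that $\pi(1)<\cdots<\pi(k)$, let $j$ be least with $\pi(j)>\pi(j-1)+1$; by minimality the first $j-1$ letters are $\pi(1),\dots,\pi(1)+j-2$, so the value $v=\pi(j-1)+1$ does not occur among the first $j$ letters and hence occurs at some position $p>j$, making $\pi(j-1)\,\pi(j)\,\pi(p)$ a $132$ --- contradiction, so the first $k$ letters are $\pi(1),\pi(1)+1,\dots,\pi(1)+k-1$.

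Part (2) I would prove by induction on $n$ using the split: since every letter of $\pi''$ lies below $n$ and below every letter of $\pi'$, no letter of $\pi''$ is a left-to-right maximum of $\pi$, so $\LMAX(\pi)=\LMAX(\pi')\cup\{n\}$; the inductive hypothesis applied to the reduction of $\pi'$ shows the left-to-right maxima below $n$ are exactly $\pi(1),\pi(1)+1,\dots,n-1$, and adjoining $n$ finishes it, the base case $\pi'=\emptyset$ (i.e.\ $\pi(1)=n$) being trivial.

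Part (4) is the step I expect to be the main obstacle, and I would handle it through the direct sum decomposition $\pi=\tau_1\oplus\cdots\oplus\tau_k$ with $k=\comp(\pi)$: the claim is that each of $\tau_2,\dots,\tau_k$ equals $\id_1$, which is precisely the assertion that the last $k-1$ letters are the fixed points $n-k+2,\dots,n$. Suppose instead some $\tau_j$ with $j\ge 2$ has length $\ge 2$; being indecomposable and not the identity it has a first descent, which inside $\pi$ produces two consecutive positions $m+p<m+p+1$, where $m=|\tau_1|+\cdots+|\tau_{j-1}|\ge 1$, carrying values that both exceed $\pi(1)\le m$, with the left value larger; then $\pi(1)\,\pi(m+p)\,\pi(m+p+1)$ is a $132$ --- contradiction. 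The delicate points are ensuring $m\ge 1$ (forced by $j\ge 2$) and that $\pi(1)$ is genuinely smaller than both values coming from $\tau_j$ (forced because $\tau_1,\dots,\tau_{j-1}$ occupy the low values $\{1,\dots,m\}$ while $\tau_j$ occupies $\{m+1,\dots\}$); once these are pinned down the argument fits the same uniform template as the rest.
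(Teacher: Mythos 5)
Your proof is correct. Note that the paper itself gives no argument for this proposition: the authors state that all four parts ``should be clear from the $132$-avoiding restriction'' and omit the proof, so your writeup simply supplies the routine verifications left to the reader, and each of your steps checks out --- the converse direction of (1) via the occurrence $\pi(j)\,\pi(i-1)\,\pi(i)$, the missing-value argument for (3), the high--low split at the position of $n$ for (2), and the direct-sum argument for (4), where indeed $m\ge 1$, the block of $\tau_j$ occupies values above $m\ge\pi(1)$, and an indecomposable $\tau_j$ of length at least $2$ must contain a descent. One small remark: part (2) also admits a non-inductive one-liner in the same style as your other parts --- if some value $v\ge\pi(1)$ were not a left-to-right maximum, then a larger letter $u>v$ preceding $v$ (necessarily at a position $\ge 2$) would make $\pi(1)\,u\,v$ an occurrence of $132$, so every value in $\{\pi(1),\pi(1)+1,\ldots,n\}$ is a left-to-right maximum, while no value below $\pi(1)$ can be; this avoids the recursion through the split, though your inductive route is equally valid.
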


The next theorem strengthens Theorem~\ref{main:thm1} (iii).
\begin{theorem}\label{thm:132-symm}
For all positive integers $n$, given any two subsets $S,T\subseteq [n]$, the matrix $M_n^{\LMAX=S,\LMIN=T}(132)$ is Hankel. Consequently, the distribution of the quadruple $(\LMAX, \LMIN, \iar, \comp)$ is equal to that of $(\LMAX, \LMIN, \comp, \iar)$ over $\S_n(132)$. In terms of generating function, we have
\begin{align}\label{eq:132-LMAX-LMIN-iar-comp}
\sum_{\pi\in\S_n(132)}\mathbf{x}^{\LMAX(\pi)}\mathbf{y}^{\LMIN(\pi)}r^{\iar(\pi)}p^{\comp(\pi)} &= \sum_{\pi\in\S_n(132)}\mathbf{x}^{\LMAX(\pi)}\mathbf{y}^{\LMIN(\pi)}r^{\comp(\pi)}p^{\iar(\pi)}.
\end{align}
In particular, we have
\begin{align}\label{eq:132-des-iar-comp}
\sum_{\pi\in\S_n(132)}t^{\des(\pi)}r^{\iar(\pi)}p^{\comp(\pi)} &= \sum_{\pi\in\S_n(132)}t^{\des(\pi)}r^{\comp(\pi)}p^{\iar(\pi)}.
\end{align}
\end{theorem}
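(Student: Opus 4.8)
The plan is to exhibit, for each pair of subsets $S,T\subseteq[n]$, an explicit involution on the fibre $\{\pi\in\S_n(132):\LMAX(\pi)=S,\ \LMIN(\pi)=T\}$ that swaps the values of $\iar$ and $\comp$; in fact I will argue the stronger Hankel property for the matrix $M_n^{\LMAX=S,\LMIN=T}(132)$ directly, from which \eqref{eq:132-LMAX-LMIN-iar-comp} is immediate and \eqref{eq:132-des-iar-comp} follows by summing over all $S,T$ with $\des$ fixed (recall from Proposition~\ref{prop:desbot=lmin}(1) that $\LMIN$ refines $\DESB$, hence refines $\des$). The key structural input is Proposition~\ref{prop:desbot=lmin}: if $\iar(\pi)=a$ then the prefix of $\pi$ is $\pi(1),\pi(1)+1,\dots,\pi(1)+a-1$, and if $\comp(\pi)=b\ge 2$ then the suffix is the increasing run $n-b+2,\dots,n$, with each of these trailing singletons being its own component. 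So on a fixed $(\LMAX,\LMIN)$-fibre the two statistics $\iar$ and $\comp$ measure, respectively, the length of a forced increasing prefix and the length of a forced increasing suffix of singleton components, and I want a bijection that trades prefix length for suffix length while keeping the multiset of entries and their left-to-right extreme structure intact.

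Concretely, I would first pin down which $\pi$ lie in a nonempty fibre with prescribed $(\LMAX,\LMIN)=(S,T)$. Writing $S=\{s_1<\cdots<s_j\}$ with $s_j=n$ and $T=\{t_1<\cdots<t_m\}$ with $t_1=1$, one always has $\pi(1)=\min S=\max T$ (the first entry is simultaneously a left-to-right max and min), and by Proposition~\ref{prop:desbot=lmin}(2) the set $S$ must be exactly $\{\pi(1),\pi(1)+1,\dots,n\}$, i.e. an up-set of $[n]$. Then I would set up a normal-form/insertion description of $132$-avoiders with given $\LMAX$ and $\LMIN$: reading left to right, the positions of $S$ are determined and the gaps between consecutive left-to-right maxima are filled by maximal decreasing blocks drawn from $T\setminus\{\pi(1)\}$ (these are exactly the descent bottoms, by Proposition~\ref{prop:desbot=lmin}(1)), subject to the $132$-avoidance constraint that forces each such block to consist of values all smaller than everything placed after it. This is the same bookkeeping as the admissible-word model used for $312$ and $321$ in Theorems~\ref{thm:alpha-beta} and~\ref{thm:AW-Hankel}, and I expect the cleanest route is to encode such a $\pi$ by the sequence of block sizes, translate $\iar$ and $\comp$ into two coordinates of that encoding exactly as $\ics$ and $\equ$ were translated there, and then run (a mild variant of) the map $\psi$ of Theorem~\ref{thm:AW-Hankel} to decrement $\iar$ and increment $\comp$ by one, giving the constant-skew-diagonal (Hankel) conclusion.

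The truncation hypothesis $m_{ij}=0$ for $i+j\ge n+2$ needed to apply Lemma~\ref{gf:Hankel} (and implicitly to make the Hankel statement nontrivial) should come for free: if $\iar(\pi)=a$ and $\comp(\pi)=b$ then $\pi$ has an increasing prefix of length $a$ and, when $b\ge 2$, an increasing suffix of length $b-1$ of distinct singleton components disjoint from that prefix unless $\pi=\id_n$, forcing $a+b\le n+1$ with equality only at $\pi=\id_n$; I would record this as a short lemma. The main obstacle I anticipate is not the prefix/suffix trade itself but verifying that the swap respects $\LMIN$ as well as $\LMAX$: moving mass between the forced increasing prefix and the forced singleton suffix changes which small values sit where, and I must check that the set of left-to-right minima (equivalently, by Proposition~\ref{prop:desbot=lmin}(1), the descent-bottom set together with $\pi(1)$) is genuinely preserved and not merely its size. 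I expect this to work because the singletons peeled off the suffix are maximal values $n-b+2,\dots,n$, which are never left-to-right minima except the spurious boundary cases, and the prefix consists of the values $\pi(1),\dots,\pi(1)+a-1$, of which only $\pi(1)$ is a left-to-right minimum; so the symmetric difference caused by the swap lies entirely outside $\LMIN$, but making this airtight — in particular handling the base cases $\iar=\comp$, $\pi(1)=1$, and $\pi=\id_n$, and setting up the induction on $n$ via $\del_1$ as in Corollary~\ref{cor:321-312-symmetry} — is where the real work sits.
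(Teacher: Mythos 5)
Your high-level plan coincides with the paper's: fix a fibre $\{\pi\in\S_n(132):\LMAX(\pi)=S,\ \LMIN(\pi)=T\}$ and exhibit a map that decrements $\iar$ and increments $\comp$ while preserving $(\LMAX,\LMIN)$, which gives the Hankel property; your reductions of \eqref{eq:132-LMAX-LMIN-iar-comp} and \eqref{eq:132-des-iar-comp} from that property are fine (indeed $\des(\pi)=|\LMIN(\pi)|-1$ by Proposition~\ref{prop:desbot=lmin}(1)). But the heart of the argument is missing: you never construct the map, and the route you propose for constructing it does not work as described. Your structural claim that the gaps between consecutive left-to-right maxima are ``filled by maximal decreasing blocks drawn from $T\setminus\{\pi(1)\}$'' is false for $132$-avoiders: in $\pi=3412$ the entries after the last left-to-right maximum are $1,2$ (increasing, with $2\notin\LMIN(\pi)\cup\DESB(\pi)$), and in $\pi=3124$ the gap strictly between the maxima $3$ and $4$ is again $1,2$. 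Consequently a fibre with prescribed $(\LMAX,\LMIN)$ is \emph{not} parametrized by a sequence of gap sizes the way $\S_n(312)$ and $\S_n(321)$ are parametrized by admissible words in Theorems~\ref{thm:alpha-beta} and~\ref{thm:AW-Hankel}: for $132$-avoiders $\LMAX$ is just the interval $[\pi(1),n]$, the internal arrangement of each gap carries essential information, and $\LMIN$ depends on that arrangement, which your proposed encoding forgets. So ``run a mild variant of $\psi$'' has no object to act on, and the translation of $(\iar,\comp)$ into $(\ics,\equ)$-type coordinates is not available.

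For comparison, the paper's proof needs none of this machinery and no induction on $n$: for $2\le\iar(\pi)\le n-1$ and $\comp(\pi)\le n-2$ it sets $\sigma=\varphi(\pi):=\ins_{n,n}(\del_{\pi(1)}(\pi))$, i.e.\ delete the first letter (the bottom of the forced increasing prefix guaranteed by Proposition~\ref{prop:desbot=lmin}(3)) and append a new maximum (creating one more singleton component as in Proposition~\ref{prop:desbot=lmin}(4)); the inverse is $\ins_{\sigma(1),1}(\del_n(\sigma))$. One checks directly that this preserves $\LMAX$ and $\LMIN$ and sends $(\iar,\comp)\mapsto(\iar-1,\comp+1)$, which is exactly the skew-diagonal shift you were aiming for. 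If you want to salvage your write-up, replace the admissible-word detour by this (or an equivalent) explicit map on permutations and verify its three properties; the induction via $\del_1$ that you import from Corollary~\ref{cor:321-312-symmetry} is unnecessary here.
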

\begin{proof}
We begin by noting that if $\iar(\pi)=\comp(\pi)=1$, i.e., $\pi$ is an indecomposable $132$-avoiding permutation with $\pi(1)>\pi(2)$, then it is counted by the top-left entry of $M_n^{\LMAX=S,\LMIN=T}(132)$ for certain $S$ and $T$. Similarly, if $\iar(\pi)=\comp(\pi)=n$, then we must have $\pi=\id_n$ and it corresponds to the bottom-right entry $1$ of $M_n^{\LMAX=[n],\LMIN=\{1\}}(132)$. Otherwise, for the given subsets $S,T\subseteq [n]$, take any permutation $\pi\in\S_n(132)$ such that $\LMAX(\pi)=S$, $\LMIN(\pi)=T$, $2\le \iar(\pi)\le n-1$, and $1\le \comp(\pi)\le n-2$, we are going to pair with it a unique permutation $\sigma\in\S_n(132)$ via a bijective map $\varphi$, such that
\begin{itemize}
	\item[i.] $\pi(i)=\sigma(i)$ for $1\le i\le \iar(\pi)-1$.
	\item[ii.] $\LMAX(\sigma)=\LMAX(\pi)=S$, and $\LMIN(\sigma)=\LMIN(\pi)=T$.
	\item[iii.] $\iar(\sigma)=\iar(\pi)-1$, and $\comp(\sigma)=\comp(\pi)+1$.
\end{itemize} 
In terms of the two operations deletion and insertion that we introduce in Definition~\ref{del and ins}, we let
\begin{align*}
\sigma=\varphi(\pi):=\ins_{n,n}(\del_{\pi(1)}(\pi)),
\end{align*}
with
\begin{align*}
\pi=\varphi^{-1}(\sigma):=\ins_{\sigma(1),1}(\del_{n}(\sigma))
\end{align*}
being the inverse map. We illustrate this definition by giving an example, where the letters affected by this map have been overlined.
\medskip
$$
\begin{array}{c c c  c c c c c c c c c c c}
	&\pi &=&\bar5&\bar6&\bar7&3&4&\bar8&2&\bar9&\widebar{10}&1&\widebar{11}\\
	&\sigma &=&\bar5&\bar6&3&4&\bar7&2&\bar8&\bar9&1&\widebar{10}&\widebar{11}
\end{array}
$$

Applying Proposition~\ref{prop:desbot=lmin}, it is rountine to verify i, ii, and iii, and we leave the details to the reader. Items ii and iii ensure that $M_n^{\LMAX=S,\LMIN=T}(132)$ is Hankel as claimed. Now for any permutation $\pi\in\S_n(132)$ with $\iar(\pi)=j>\comp(\pi)=k$, we see $\tau:=\varphi^{j-k}(\pi)$ is a permutation in $\S_n(132)$ with
$$(\LMAX,\LMIN,\iar,\comp)~\tau=(\LMAX,\LMIN,\comp,\iar)~\pi.$$
Pairing permutations in this way leads to \eqref{eq:132-LMAX-LMIN-iar-comp}.

Finally, by Proposition~\ref{prop:desbot=lmin} (1) we have $\LMIN(\pi)\setminus\{\pi(1)\}=\DESB(\pi)$. Furthermore, item i above implies in particular that $\pi(1)=\sigma(1)$, combining this with $\LMIN(\pi)=\LMIN(\sigma)$ we obtain \eqref{eq:132-des-iar-comp}.
\end{proof}

\begin{theorem}\label{thm:gf-132}
We have
\begin{align}\label{eq:gf-132}
\tilde\S(132)^{\des,\iar,\comp}(t,r,p)=\dfrac{1}{1-rpz}+\dfrac{(1-z)(N-1)t}{(1-rz)(1-pz)(1-z-(N-1)tz)}.
\end{align}
\end{theorem}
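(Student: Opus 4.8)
The plan is to derive \eqref{eq:gf-132} by a block decomposition of a $132$-avoiding permutation with respect to its largest letter $n$, tracking all three statistics $\des$, $\iar$ and $\comp$ simultaneously. First I would treat the trivial cases: the empty permutation contributes the summand absorbed into the normalization $\tilde\S=(\S-1)/rpz$, and the identity-type permutations (those with $\iar(\pi)=\comp(\pi)=n$, i.e. $\pi=\id_n$) account for the geometric series $\frac{1}{1-rpz}$. For the remaining permutations, I would peel off from the right the maximal suffix of consecutive top values; by Proposition~\ref{prop:desbot=lmin}(4) this suffix has length exactly $\comp(\pi)-1$ when $\comp(\pi)\ge 2$, so these trailing letters each contribute one component and one factor of $p$ but do not affect $\des$ or $\iar$. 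Dually, by Proposition~\ref{prop:desbot=lmin}(3) the initial ascending run consists of $\iar(\pi)$ consecutive values, giving the factor controlling $r$.

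The heart of the computation is the recursive block structure: writing $\pi = A\,n\,B$ where $A$ is the part before $n$ and $B$ the part after, the $132$-avoidance forces $A$ and $B$ to be $132$-avoiding, with the additional key constraint (from $132$-avoidance) that every letter of $B$ is smaller than every letter of $A$ — this is exactly the analogue of the block decomposition used in the proof of Theorem~\ref{gf:312}. The descent between $A$'s last letter and $n$, and between $n$ and $B$'s first letter, must be bookkept carefully: $n$ is a left-to-right maximum and (when $A\ne\emptyset$) its left neighbor sees an ascent, while if $B\ne\emptyset$ there is a descent from $n$ into $B$. Since $\comp$ is totally $\oplus$-compatible and $\iar$ is only partially $\oplus$-compatible, I would organize the count so that the initial ascending run lives entirely in the first indecomposable block; concretely I expect to hit a functional equation of the shape
\begin{align*}
\tilde\S(t,r,p) &= \frac{1}{1-rpz} + (\text{term from the first indecomposable block carrying }r)\cdot(\text{tail carrying }p),
\end{align*}
where the "first indecomposable block" generating function is obtained by setting one of the variables to $1$, exactly as in \eqref{relation:comp}. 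Using $\S(132)(t,1,1)=N$ (the Narayana generating function \eqref{def:N}) to close the recursion, together with the already-established symmetry \eqref{eq:132-des-iar-comp} which tells us the answer must be symmetric in $r$ and $p$, pins down the shape $\frac{(1-z)(N-1)t}{(1-rz)(1-pz)(1-z-(N-1)tz)}$ up to routine algebra.

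The main obstacle I anticipate is not the combinatorial decomposition itself but correctly separating the roles of $r$ and $p$: because $\iar$ is only partially $\oplus$-compatible, one cannot naively multiply component generating functions, and one must be careful that the $r$-weight is picked up solely from the leftmost nontrivial component while every subsequent component (trivial or not) contributes only to $p$ and $t$. The cleanest route is probably to invoke Lemma~\ref{general} directly with $\st_1=\des$ (totally $\oplus$-compatible) and the partial statistic $\iar$: one first computes the indecomposable generating function $I_{132}(t,r)$ via the $A\,n\,B$ decomposition restricted to indecomposable permutations, then plugs it into \eqref{relation:comp}. Verifying that the resulting closed form simplifies to the right-hand side of \eqref{eq:gf-132} — and agrees with the symmetry forced by Theorem~\ref{thm:132-symm} — is then a finite algebraic check, which I would carry out but not reproduce in detail here.
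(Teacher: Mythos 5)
The step you single out as ``the cleanest route'' --- computing $I_{132}(t,r)$ over indecomposable $132$-avoiders and plugging it into \eqref{relation:comp} via Lemma~\ref{general} --- is exactly where the argument breaks down. Lemma~\ref{general} requires every pattern in $P$ to be indecomposable, and $132=1\oplus 21$ is not; the lemma's proof rests on the equivalence ``$\pi=\tau_1\oplus\cdots\oplus\tau_k$ avoids $\sigma$ if and only if each $\tau_i$ does,'' which fails for $\sigma=132$: for instance $21\oplus 21=2143$ has both components $132$-avoiding yet contains $132$, so \eqref{relation:comp} genuinely overcounts for $P=\{132\}$. (The paper stresses this very point at the start of Section~\ref{sym:3.1}, which is why $132$ is handled by the involution $\varphi$ rather than by Lemma~\ref{general:sym}.) Your fallback --- compute $\S(t,r,1)$ from the $A\,n\,B$ decomposition and let the symmetry of Theorem~\ref{thm:132-symm} ``pin down the shape'' --- is also insufficient as stated: symmetry in $(r,p)$ together with the $p=1$ specialization does not determine a bivariate series. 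What the paper actually uses is the strictly stronger Hankel property of $M_n^{\des=k}(132)$ (a by-product of $\varphi$), which through Lemma~\ref{gf:Hankel} expresses $\tilde\S(t,r,p)$ in terms of its first column, and after setting $p=1$ and solving, in terms of $\tilde\S(t,r,1)$ and $\tilde\S(t,p,1)$; only then does the $A\,n\,B$ decomposition (with $A>B$, as in Theorem~\ref{gf:312}) enter, giving $\tilde\S(t,r,1)=\frac{(1-z)(1+t(N-1))}{(1-rz)(1-z-tz(N-1))}$.

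That said, your opening observation contains the germ of a correct (and even simpler) proof. Proposition~\ref{prop:desbot=lmin}(4) forces every non-identity $132$-avoiding permutation to have the form $\tau\oplus\id_m$ with $\tau$ indecomposable, $|\tau|\ge 2$, $m\ge 0$ (a nontrivial later component would create a $132$ across components), and then $\des(\pi)=\des(\tau)$, $\iar(\pi)=\iar(\tau)$, $\comp(\pi)=m+1$, so that
\begin{equation*}
\S(t,r,p)=\frac{1}{1-rpz}+\frac{p(1-z)}{1-pz}\left(\S(t,r,1)-\frac{1}{1-rz}\right),
\end{equation*}
and substituting the value of $\tilde\S(t,r,1)$ above recovers \eqref{eq:gf-132} after routine algebra, with no appeal to \eqref{relation:comp} or even to symmetry. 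Had you argued this way, the proof would stand; as written, the decisive step rests on an inapplicable lemma.
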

\begin{proof}
The proof is analogous to that of Theorem~\ref{gf:312}. Noting that $M_n^{\des=k}(132)$ is Hankel for any fixed integer $0\le k\le n-1$ by Theorem~\ref{thm:132-symm}, we begin by interpreting this in terms of generating function. Empty permutation and identity permutations of all lengths contribute $1/(1-rpz)$, while the remaining permutations are taken care of by Lemma~\ref{gf:Hankel}, yielding
\begin{align*}
\S(t,r,p) &=\frac{1}{1-rpz}+\frac{rp}{r-p}\sum_{n\ge 2}z^n\sum_{\pi\in\In_n(132)}t^{\des(\pi)}(r^{\iar(\pi)}-p^{\iar(\pi)})\\
&=1+\frac{(rpz)^2}{1-rpz}+\frac{rp}{r-p}\sum_{n\ge 1}z^n\sum_{\pi\in\In_n(132)}t^{\des(\pi)}(r^{\iar(\pi)}-p^{\iar(\pi)}).
\end{align*}
Converting to $\tilde\S(t,r,p)$ we have
\begin{align}\label{eq:Hankel-to-Phi}
\tilde\S(t,r,p) &=\frac{rpz}{1-rpz}+\frac{r\tilde\S(t,r,0)-p\tilde\S(t,p,0)}{r-p}.
\end{align}
Plugging in $p=1$ we have
$$\tilde\S(t,r,1)=\frac{rz}{1-rz}+\frac{r\tilde\S(t,r,0)-\tilde\S(t,1,0)}{r-1}.$$
Now solve for $\tilde\S(t,r,0)$ and substitute the result back in \eqref{eq:Hankel-to-Phi} we get
\begin{align}\label{eq:tildePhi132}
\tilde\S(t,r,p)=\frac{rpz}{1-rpz}+\frac{(r-1)(\tilde\S(t,r,1)-\frac{rz}{1-rz})-(p-1)(\tilde\S(t,p,1)-\frac{pz}{1-pz})}{r-p}.
\end{align}
Next, we decompose each $132$-avoiding permutation $\pi$ as $\pi=A\:n\:B$, where $A$ and $B$ are blocks with $A>B$. In the same vein as with $312$-avoiding class, the discussion by two cases leads us to $$\tilde\S(t,r,1)=\frac{(1-z)(1+t(N-1))}{(1-rz)(1-z-tz(N-1))}.$$
We plug this back into \eqref{eq:tildePhi132} and simplify to arrive at \eqref{eq:gf-132}.
\end{proof}

%%%%%%%%%%%%%%%%%%%%%%%%%%%%%%%%%%%%%
\subsection{Asymmetric classes}
%%%%%%%%%%%%%%%%%%%%%%%%%%%%%%%%%%%%%
We deal with the three remaining classes, namely, $213$-, $231$-, and $123$-avoiding permutations. The distributions of $\iar$ and $\comp$ on each of these classes are different. We are content with deriving their joint generating functions with $\des$, and addressing a conjugate relation between $M_n(213)$ and $M_n(231)$.
\medskip

\noindent \underline{\bf Patterns $213$ and $231$}
\medskip

\begin{theorem}\label{thm:213-231-conjugate}
For every $n\ge 0$, there exists a bijection $\theta:\S_n(213)\rightarrow\S_n(231)$, such that for $\pi\in\S_n(213)$ and $\sigma:=\theta(\pi)\in\S_n(231)$, we have $\pi(1)=\sigma(1)$ and 
\begin{align*}
(\des,\iar,\comp)\:\pi = (\des,\comp,\iar)\:\sigma.
\end{align*}
In particular, the matrices $M_n(213)$ and $M_n(231)$ are conjugation of each other.
\end{theorem}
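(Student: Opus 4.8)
The plan is to construct the bijection $\theta$ recursively by exploiting the standard block decompositions of $213$- and $231$-avoiding permutations around the smallest letter $1$. Recall that if $\pi \in \S_n(213)$, then writing $\pi = A\,1\,B$ where $1$ sits in position $k = \iar(\pi)$ is not quite right in general; rather, the cleaner fact is that a $213$-avoiding permutation has the form $\pi = B\,1\,A$ where every letter of $B$ exceeds every letter of $A$ (so that no $213$ pattern uses $1$ as the ``1''), with $B$ and $A$ each $213$-avoiding; note $B$ must itself avoid $213$ and, crucially, its own first letter is $\pi(1)$. Dually, a $231$-avoiding permutation $\sigma$ decomposes as $\sigma = A'\,1\,B'$ where every letter of $A'$ is smaller than every letter of $B'$, with $A'$ and $B'$ each $231$-avoiding. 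So first I would nail down these two one-line structural lemmas precisely (both are folklore), being careful about the empty-block edge cases.

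Next I would define $\theta$ by recursion on $n$. Given $\pi \in \S_n(213)$ with the decomposition above, I would recurse on the two smaller blocks and reassemble them into a $231$-avoiding permutation in the dual pattern, choosing the reassembly so that the role of $\iar$ and $\comp$ gets swapped. The key bookkeeping observation is how $\iar$ and $\comp$ behave under these decompositions: for the $213$-class, $\comp(\pi)$ is governed by the trailing block (the part after $1$ contributes, since $1$ forces an indecomposability break exactly at its position unless what precedes is empty), while $\iar(\pi)$ is read off entirely from the leading block $B$ together with the position of $1$; for the $231$-class the roles are mirrored. I would set up the recursion so that, inductively, $(\iar,\comp)$ on the $213$-side matches $(\comp,\iar)$ on the $231$-side, and simultaneously track that $\des$ is preserved — here the point is that $\des(A\,1\,B) = \des(A) + \des(B) + (\text{boundary descents})$, and the boundary contributions agree on both sides because of the size relations between the blocks (one boundary is forced to be a descent, the other an ascent). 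The condition $\pi(1) = \sigma(1)$ I would maintain as an extra invariant of the recursion, which is automatic since the first letter always comes from the leading block and the recursion never touches the leading letter's identity in a way that changes its relative rank.

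The main obstacle I anticipate is getting the recursion's base-case/decomposition alignment exactly right so that the three statistics $(\des,\iar,\comp)$ are simultaneously controlled — in particular the interaction between the position of $1$ (which controls $\iar$ on the $213$-side) and the component structure (which controls $\comp$). It is easy to get a bijection that swaps $\iar$ and $\comp$ but breaks $\des$, or preserves $\des$ but only semi-conjugates the matrix. I would therefore verify the statistic identities by a careful induction, splitting into the cases: (a) the leading block is empty (i.e.\ $\pi(1)=1$), where $\iar$ and $\comp$ both drop by $1$ and one deletes $1$; (b) the trailing block is empty; (c) both nonempty. Once the inductive step is checked in all three cases, the statement ``$M_n(213)$ and $M_n(231)$ are conjugates'' follows immediately: $\theta$ exhibits a bijection sending the $(\iar,\comp) = (k,\ell)$ cell to the $(\ell,k)$ cell, i.e.\ $M_n(231) = M_n(213)^{\mathsf{T}}$, and moreover it does so $\des$-by-$\des$, so the refined matrices $M_n^{\des=i}$ are transposes as well.
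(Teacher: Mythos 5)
Your proposal has a genuine gap: the structural lemma you propose on the $231$ side is false. While it is true that a $213$-avoiding permutation decomposes around the letter $1$ as $\pi=B\,1\,A$ with every letter of $B$ exceeding every letter of $A$, the ``dual'' claim that a $231$-avoiding permutation decomposes around the letter $1$ as $\sigma=A'\,1\,B'$ with every letter of $A'$ smaller than every letter of $B'$ is simply wrong: $\sigma=312$ avoids $231$, yet here $A'=(3)$ and $B'=(2)$. What $231$-avoidance actually forces around the value $1$ is only that the prefix before $1$ is decreasing, which is not a clean two-block structure; the clean decompositions for the $231$ class are around the maximum ($\sigma=A\,n\,B$ with $A<B$) or around the first letter ($\sigma=\sigma(1)\,A\,B$ with $A<\sigma(1)<B$). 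Since your entire recursion hinges on reassembling the two blocks ``in the dual pattern'' on the $231$ side, and that dual pattern does not exist in the form you state, the map $\theta$ is never actually well defined, and none of the statistic bookkeeping can be checked. Two further points would also need repair even with a correct decomposition: the claim that $\pi(1)=\sigma(1)$ is ``automatic'' is not justified in your setup, because on the $213$ side the first letter sits inside the leading block $B$ and its absolute value is not preserved by recursing on reduced blocks unless the construction explicitly carries it along; and your description of $\comp$ on the $213$ side is off --- when the block before $1$ is nonempty, $\pi=B\,1\,A$ is a single component (e.g.\ $3412$), so there is no break ``at the position of $1$''; the correct dichotomy is $\comp(\pi)=1$ if $B\neq\emptyset$ and $\comp(\pi)=1+\comp(A)$ if $B=\emptyset$.

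For comparison, the paper avoids all of these issues by decomposing around the \emph{first letter} rather than the value $1$: every $\pi\in\S_n(213)$ writes uniquely as $\pi=\pi(1)\,A\,B$ with $A>\pi(1)>B$, and one sets $\theta(\pi):=\ins_{\pi(1),1}\bigl(\theta(\nu)\oplus\theta(\mu)\bigr)$ where $\mu=red(A)$, $\nu=B$. This makes $\sigma(1)=\pi(1)$ true by construction, gives $\des(\sigma)=\chi(\nu\neq\emptyset)+\des(\theta(\nu))+\des(\theta(\mu))=\des(\pi)$, and yields the clean exchanges $\comp(\sigma)=1+\comp(\theta(\mu))=1+\iar(\mu)=\iar(\pi)$ and $\iar(\sigma)=1+\chi(\nu=\emptyset)\iar(\theta(\mu))=\comp(\pi)$, after which induction finishes the argument. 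If you want to salvage your outline, replacing your value-$1$ decomposition on the $231$ side with the first-letter decomposition (and pulling the first letter out explicitly on the $213$ side as well) is essentially forced, and leads you back to the paper's construction.
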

\begin{proof}
Recall the direct sum, the skew sum, and the two operations deletion and insertion that we introduce in section~\ref{sec:notation}. We define $\theta$ recursively. For $n=0,1,2$, $\theta:\S_n(213)\rightarrow\S_n(231)$ is taken to be the identity map. Now suppose $\theta$ has been defined for all $k < n~(n\ge 3)$, then take any $\pi\in\S_n(213)$, we can uniquely decompose $\pi=\pi(1)\:A\:B$ with $A>\pi(1)>B$. Now suppose $red(A)=\mu$ and $B=\nu$, then we see $\del_{\pi(1)}(\pi)=\mu\ominus\nu$, where both $\mu$ and $\nu$ are $213$-avoiding, possibly empty permutations. Let
$$\sigma:=\theta(\pi):=\ins_{\pi(1),1}(\theta(\nu)\oplus\theta(\mu)).$$
The following facts can be readily verified.
\begin{enumerate}
	\item $\sigma\in\S_n(231)$;
	\item $\sigma(1)=\pi(1)$;
	\item $\des(\sigma)=\chi(\nu\neq\emptyset)+\des(\theta(\nu))+\des(\theta(\mu))=\des(\pi)$;
	\item $\comp(\sigma)=1+\comp(\theta(\mu))=1+\iar(\mu)=\iar(\pi)$;
	\item $\iar(\sigma)=1+\chi(\nu=\emptyset)\cdot\iar(\theta(\mu))=1+\chi(\nu=\emptyset)\cdot\comp(\mu)=\comp(\pi)$.
\end{enumerate}
So we see $\sigma$ is the desired image of $\pi$, and the proof is now completed by induction.
\end{proof}

The equidistribution between $(\des,\iar,\comp)$ over $\S_n(213)$ and $(\des,\comp,\iar)$ over $\S_n(231)$ could also be drawn from comparing the following generating functions.
\begin{theorem}\label{thm:gf-213a231}
We have
\begin{align}\label{eq:gf-213}
\tilde\S(213)^{\des,\iar,\comp}(t,r,p)&=\frac{(1-rz)(tN-t+1)}{(1-rpz)(1-rz(tN-t+1))}\quad\text{and}\\
\label{eq:gf-231}
\tilde\S(231)^{\des,\iar,\comp}(t,r,p)&=\frac{(1-pz)(tN-t+1)}{(1-rpz)(1-pz(tN-t+1))}.
\end{align}
\end{theorem}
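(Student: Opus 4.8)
The plan is to derive each generating function by a block decomposition at the largest letter $n$, closely mirroring the argument used for $312$ in Theorem~\ref{gf:312}, and then to observe that \eqref{eq:gf-231} follows from \eqref{eq:gf-213} by a formal $r\leftrightarrow p$ symmetry that is already explained bijectively by Theorem~\ref{thm:213-231-conjugate}. So the real work is to prove \eqref{eq:gf-213}.

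First I would set up the block decomposition for a nonempty $213$-avoiding permutation $\pi$. Writing $\pi=A\,n\,B$ with $A$ and $B$ the parts to the left and right of $n$, the $213$-avoiding condition forces, for the pattern $213$, a structural constraint: every letter of $B$ must be larger than every letter of $A$ (otherwise a small letter in $A$, together with $n$ and a smaller letter in $B$, would form a $213$), so $A<B$ as sets and in fact $B$ occupies the top block $\{|A|+1,\dots,n-1\}$ while $A$ is a $213$-avoider on $\{1,\dots,|A|\}$; moreover $B$ itself must be $213$-avoiding. I would then split into the two cases $A=\emptyset$ (so $\pi=n\,B$, contributing the factor accounting for the leading descent, with $\iar(\pi)=1$ and $\comp(\pi)=1$) and $A\neq\emptyset$ (so $\iar$ and $\comp$ are inherited from $A$ in the appropriate partially/totally compatible way, and $n$ together with $B$ forms the final block). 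Tracking $\des$, $\iar$ and $\comp$ through this decomposition — using that $\des$ is totally $\oplus$-compatible, $\iar$ partially $\oplus$-compatible, and $\comp$ totally $\oplus$-compatible, and that $\S(213)(t,1,1)=N$ since $213$-avoiders are counted by Narayana — yields a functional equation for $\S(t,r,p)$ that can be solved directly. Converting to $\tilde\S(t,r,p)=(\S(t,r,p)-1)/(rpz)$ and simplifying should produce \eqref{eq:gf-213}; I expect the identity permutations to contribute the $1/(1-rpz)$-type piece and the indecomposable part to contribute the factor $(1-rz)(tN-t+1)/(1-rz(tN-t+1))$, consistent with the $\comp$-decomposition of Lemma~\ref{general}(1) with $I_{213}(t,r)=rz(tN-t+1)/(1-rz(tN-t+1)+rz)$ or a similar closed form.

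For \eqref{eq:gf-231} I would give a parallel decomposition at $n$ for $231$-avoiders, $\pi=A\,n\,B$, where now the $231$-condition forces $A$ to occupy the bottom block and $B$ the top, with $A,B$ both $231$-avoiding, and $n$ sitting between them; here the roles of the ``outer'' block structure relative to $\iar$ and $\comp$ are swapped compared to the $213$ case (the leading run now sees the whole left block before hitting $n$), which is exactly what makes $r$ and $p$ trade places. Alternatively — and more cheaply — I would simply invoke Theorem~\ref{thm:213-231-conjugate}: the bijection $\theta$ shows $(\des,\iar,\comp)$ over $\S_n(213)$ equals $(\des,\comp,\iar)$ over $\S_n(231)$, hence $\tilde\S(231)(t,r,p)=\tilde\S(213)(t,p,r)$, and swapping $r\leftrightarrow p$ in \eqref{eq:gf-213} gives \eqref{eq:gf-231} immediately. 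I would present the direct decomposition for $231$ as well for completeness, but lean on the bijective shortcut as the clean statement.

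The main obstacle is getting the block decomposition for $213$-avoiders exactly right when $A\neq\emptyset$: one must be careful that the leading ascending run of $\pi=A\,n\,B$ coincides with the leading ascending run of $A$ (true since $A<B$ and $n$ is the maximum, so the first descent of $\pi$ is the first descent of $A$, or else occurs at the step from $A$ into $n$ only if $A$ is increasing — which the ``$+1$'' bookkeeping in $\iar$ must capture via partial $\oplus$-compatibility), and that $\comp(\pi)$ equals $\comp(A)$ when $A\neq\emptyset$ because $n\,B$ glued on top never splits off as a new component below the top of $A$. Once these compatibility bookkeeping points are pinned down, the resulting functional equation is linear in $\S(t,r,p)$ and the algebra is routine; I would double-check the final formula by specializing $r=p=1$ to recover $N$ and $r=1$ or $p=1$ to recover known bivariate generating functions.
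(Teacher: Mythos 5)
Your derivation of \eqref{eq:gf-213} rests on a false structural claim, and this is where the whole plan breaks. For $\pi\in\S_n(213)$ written as $\pi=A\,n\,B$, it is \emph{not} true that every letter of $B$ exceeds every letter of $A$: the triple you invoke (a letter $a\in A$, then $n$, then a smaller letter $b\in B$) is an occurrence of $231$, not of $213$, so nothing forbids it. Concretely, $\pi=231\in\S_3(213)$ has $A=\{2\}$ and $B=\{1\}$, and $\pi=1342\in\S_4(213)$ even has the values of $A=\{1,3\}$ and $B=\{2\}$ interleaved; so $B$ need not occupy a top block and $A$ need not be a $213$-avoider on $\{1,\dots,|A|\}$. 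The structure you describe (cut at $n$, $A<B$, both blocks avoiding) is exactly the structure of $231$-avoiders. What the maximum actually forces in a $213$-avoider is that $A$ is increasing, with further cross constraints between the values of $A$ and the order of $B$, so cutting at $n$ does not produce two independent avoiding blocks and does not lead to the functional equation you envisage. The bookkeeping built on the wrong structure also fails (even under your own hypothesis one would have $\comp(\pi)=\comp(A)+1$, since $n\,B$ would split off as a final component, not $\comp(\pi)=\comp(A)$). The clean decomposition for $213$ is at the \emph{first letter}: $\pi=\pi(1)\,A\,B$ with $A>\pi(1)>B$ and both blocks $213$-avoiding; this is what the paper uses, splitting into the three cases according to whether $A$, $B$ are empty, which gives an equation linear in $\S(t,r,p)$ that is solved after specializing $p=1$.

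Your fallback for \eqref{eq:gf-231} is legitimate and non-circular: Theorem~\ref{thm:213-231-conjugate} is proved independently, so $\tilde\S(231)^{\des,\iar,\comp}(t,r,p)=\tilde\S(213)^{\des,\iar,\comp}(t,p,r)$ and \eqref{eq:gf-231} follows from \eqref{eq:gf-213} by swapping $r$ and $p$ (the paper instead decomposes $231$-avoiders at $\pi(1)$ with $A<\pi(1)<B$). But this shortcut only transfers one formula to the other; you still need a correct derivation of at least one of them. Since your description of $231$-avoiders cut at $n$ (bottom block $A$, top block $B$, both $231$-avoiding) \emph{is} correct, the quickest repair is to carry out your block computation for $231$ and then deduce the $213$ formula via $\theta$ --- or to redo the $213$ case with the first-letter decomposition as in the paper.
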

\begin{proof}
We begin with the calculation of $\tilde\S(213)^{\des,\iar,\comp}(t,r,p)$. Each $\pi\in\S_n(213)$ can be decomposed as $\pi=\pi(1)\:A\:B$, where $A>\pi(1)>B$ are $213$-avoiding blocks, possibly empty. For $n\ge 2$, we consider the following three cases:
\begin{itemize}
	\item $A=\emptyset$, $B\neq\emptyset$. This case contributes the generating function $trpz(\S(t,1,1)-1)$.
	\item $A\neq\emptyset$, $B=\emptyset$. This case contributes the generating function $rpz(\S(t,r,p)-1)$.
	\item $A\neq\emptyset$, $B\neq\emptyset$. This case contributes 
	$trpz(\S(t,r,1)-1)(\S(t,1,1)-1)$.
\end{itemize}
Summing up these three cases and noting that $\S(t,1,1)=N$, we deduce that
\begin{align*}
\S(t,r,p)=1+rpz+trpz(N-1)+rpz(\S(t,r,p)-1)+trpz(\S(t,r,1)-1)(N-1).
\end{align*}
Now we plug in $p=1$ and solve for $\S(t,r,1)$, then plug it back to deduce \eqref{eq:gf-213} after simplification. 

Decomposing each $\pi\in\S_n(231)$ as $\pi=\pi(1)\:A\:B$ with $A<\pi(1)<B$, and calculating along the same line, we can establish \eqref{eq:gf-231} as well.
\end{proof}

\noindent \underline{\bf Pattern $123$}
\medskip

% Let $a_{n,m,k}=|\{\pi\in\S_n(123):\pi(1)=m,~\iar(\pi)=\comp(\pi)=1\}|$. Then for $n\ge 3$, we observe that
% \begin{align}
%  & a_{n,1}=a_{n,2}=0, \\
%  & a_{n,n}=C_{n-1}=a_{n,n-1}+a_{n-1,n-1}+1,\\
%  & a_{n,n-1}=a_{n,n-2}+a_{n-1,n-1},\\
%  & a_{n,m}=a_{n,m-1}+a_{n-1,m}+1, \; \text{for other values of $m\in[n]$}.
% \end{align}
For $\pi\in\S_n(123)$, clearly $\iar(\pi)\le 2$ and $\comp(\pi)\le 2$. We aim to calculate
\begin{align*}
\tilde\S(123)^{\des,\iar,\comp}(t,r,p)= A(t,p)+rB(t,p),
\end{align*}
where
\begin{align*}
pzA(t,p) &:=\sum_{n\ge 1}z^n\sum_{\pi\in\S_n(123),\:\iar(\pi)=1}t^{\des(\pi)}p^{\comp(\pi)}=pz+tpz^2+(tp+t^2p+tp^2)z^3+\cdots,\\
pzB(t,p) &:=\sum_{n\ge 2}z^n\sum_{\pi\in\S_n(123),\:\iar(\pi)=2}t^{\des(\pi)}p^{\comp(\pi)}=p^2z^2+(tp+tp^2)z^3+\cdots.
\end{align*}

By~\eqref{eq:Barn}, the generating function for the descent polynomials on $123$-avoiding permutations is
\begin{align}
\label{def:C^*}
C^* &:=\S(123)^{\des,\iar,\comp}(t,1,1)-1=\frac{\S(321)^{\des,\iar,\comp}(t^{-1},1,1;tz)-1}{t}\\
&=\frac{-1+2tz+2tz^2-2t^2z^2+\sqrt{1-4tz-4tz^2+4t^2z^2}}{2t^2z(tz-z-1)}.\nonumber
\end{align}

\begin{theorem}\label{thm:gf-123}
We have
\begin{align}
\label{eq:123A}
A(t,p) &=\frac{(p-1)tz^2}{(1-tz)^2}+\frac{(1-tz) C^*}{(1-tz+z)z},\\
\label{eq:123B}
B(t,p) &=\frac{(p-1)z}{1-tz}+\frac{ C^*}{1-tz+z}.
\end{align}
Thus, 
$$
\tilde\S(123)^{\des,\iar,\comp}(t,r,p)=\frac{(1-p)z(trz-tz-r)}{(1-tz)^2}+\frac{(1+rz-tz) C^*}{z(1+z-tz)}. 
$$
\end{theorem}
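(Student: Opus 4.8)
The plan is to set up recursive decompositions for the two auxiliary series $A(t,p)$ and $B(t,p)$ by conditioning on the structure of a $123$-avoiding permutation, solve the resulting equations, and finally assemble $\tilde\S(123)^{\des,\iar,\comp}(t,r,p)=A(t,p)+rB(t,p)$. Throughout I shall use the standard fact that a permutation $\pi\in\S_n(123)$ has at most two left-to-right maxima (hence $\comp(\pi)\le 2$ and $\iar(\pi)\le 2$), and more precisely that $\pi$ can be written with its left-to-right maxima $\pi(a)<\pi(b)$ (where $a<b$ are their positions) carving $\pi$ into decreasing-type subsequences; both $\iar$ and $\comp$ are then controlled by how these two maxima sit relative to positions $2$ and $n$.

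\textbf{Step 1: reduce to the case $p=1$.} First I would observe that a permutation $\pi\in\S_n(123)$ has $\comp(\pi)=2$ exactly when $\pi=\tau\oplus 1$ with $\tau\in\S_{n-1}(123)$ indecomposable, i.e.\ when the last letter of $\pi$ is $n$ and deleting it leaves a permutation with one component; and $\comp(\pi)=1$ otherwise. Splitting the defining sums for $A$ and $B$ according to $\comp(\pi)\in\{1,2\}$ expresses $A(t,p)$ and $B(t,p)$ as $A(t,1)+(p-1)\cdot(\text{contribution of }\comp=2\text{ permutations with }\iar=1)$ and similarly for $B$. The $\comp=2$, $\iar=1$ permutations are of the form $\tau\oplus 1$ with $\tau$ indecomposable, $123$-avoiding, and $\iar(\tau)=1$ (so $\tau(1)>\tau(2)$ when $|\tau|\ge 2$, and $\tau=1$ when $|\tau|=1$); the $\comp=2$, $\iar=2$ permutations are $\tau\oplus 1$ with $\iar(\tau)=2$. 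Using $C^*$ and the already-available $p=1$ data one shows these two correction series equal $tz^2/(1-tz)^2$ and $z/(1-tz)$ respectively, which accounts for the two $(p-1)$-terms in \eqref{eq:123A} and \eqref{eq:123B}. This isolates the genuinely new work in computing $A(t,1)$ and $B(t,1)$, which together must sum to $C^*$.

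\textbf{Step 2: the $p=1$ split by value of $\iar$.} With $p=1$ we have $A(t,1)+B(t,1)=C^*$ by definition, so it suffices to determine $B(t,1)=\sum_{n\ge 2}z^{n-1}\sum_{\pi\in\S_n(123),\,\iar(\pi)=2}t^{\des(\pi)}$. A permutation with $\iar(\pi)=2$ begins $\pi(1)<\pi(2)$; since $\pi$ avoids $123$, $\pi(2)$ must be the global maximum $n$ (any later letter larger than both $\pi(1)$ and $\pi(2)$ would create a $123$; and any letter strictly between is impossible after position $2$ without a $123$, forcing $\pi(2)=n$ in fact already). Then $\pi=\pi(1)\, n\, w$ where $\pi(1)\,w$ together form, after reduction, a $123$-avoiding permutation of $[n-1]$ that is \emph{decreasing at its start} or trivial — more carefully, removing $n$ from position $2$ and reducing gives a $123$-avoider $\rho\in\S_{n-1}$ with $\iar(\rho)=1$ (since $\pi(1)$ is followed in $\pi$ by $n$, after deleting $n$ it is followed by the old $\pi(3)$, and one checks $\pi(1)>\pi(3)$ forced by $123$-avoidance whenever $\pi(3)$ exists). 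Conversely any such $\rho$ with $\iar(\rho)=1$ lifts back. Tracking descents: deleting $n$ from position $2$ destroys the descent at position $2$ (the descent $n>\pi(3)$) and creates nothing, while the ascent $\pi(1)<n$ disappears; net effect $\des(\pi)=\des(\rho)+1$ when $n\ge 3$, and $\des(\pi)=1=\des(\rho)+1$ when $n=2$. Hence $B(t,1)=tz\cdot\bigl(A(t,1)\big|_{\text{only }\iar=1\text{ part}}\bigr)$-type relation; combined with $A(t,1)+B(t,1)=C^*$ this is a $2\times 2$ linear system in $A(t,1),B(t,1)$ whose solution yields $B(t,1)=C^*/(1-tz+z)$ and hence $A(t,1)=C^*\cdot(1-tz)/((1-tz+z)z)\cdot z = (1-tz)C^*/((1-tz+z)z)$ — matching the leading terms of \eqref{eq:123A}, \eqref{eq:123B}. (The bookkeeping of the $n=2$ boundary term and the exact shift of $z$-powers versus $\tilde\S$-normalization is the fiddly part to get signs right.)

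\textbf{Step 3: assemble and the final obstacle.} Substituting the results of Steps 1--2 into $\tilde\S(123)^{\des,\iar,\comp}(t,r,p)=A(t,p)+rB(t,p)$ gives
\begin{align*}
\tilde\S(123)^{\des,\iar,\comp}(t,r,p)=\frac{(p-1)tz^2}{(1-tz)^2}+\frac{(1-tz)C^*}{(1-tz+z)z}+r\Bigl(\frac{(p-1)z}{1-tz}+\frac{C^*}{1-tz+z}\Bigr),
\end{align*}
and a direct algebraic simplification — combining the two $C^*$ terms over the common denominator $z(1+z-tz)$ to get $(1+rz-tz)C^*/(z(1+z-tz))$, and combining the two $(p-1)$ terms over $(1-tz)^2$ to get $(1-p)z(trz-tz-r)/(1-tz)^2$ after a sign flip — produces the displayed closed form. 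I expect the \textbf{main obstacle} to be Step 2: justifying cleanly, with full attention to the degenerate small cases, that the position-$2$ deletion is a bijection between $\{\pi\in\S_n(123):\iar(\pi)=2\}$ and $\{\rho\in\S_{n-1}(123):\iar(\rho)=1\}$ that shifts $\des$ by exactly $1$, since the $123$-avoidance forces rigidity that is intuitively clear but needs a careful case check (in particular confirming $\pi(2)=n$ and $\pi(1)>\pi(3)$), and because the translation between the "$\iar=1$ part of $A$" and the full $A,B$ must be done consistently with the $z$-power normalization built into the tilde. Everything after that is routine rational-function algebra that I would not spell out in detail.
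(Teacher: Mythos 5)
Your Step 2 contains a genuine error, and it sits exactly at the point you yourself flag as the crux. The position-$2$ deletion is \emph{not} a bijection from $\{\pi\in\S_n(123):\iar(\pi)=2\}$ onto $\{\rho\in\S_{n-1}(123):\iar(\rho)=1\}$, and the inequality $\pi(1)>\pi(3)$ is \emph{not} forced by $123$-avoidance: the permutations $132$ and $1432$ avoid $123$, have $\iar=2$ and $\pi(1)<\pi(3)$, and deleting $n$ from position $2$ gives $12$, resp.\ $132$, which have $\iar=2$, not $1$. Already for $n=3$ the two sets have sizes $2$ and $1$, so no bijection exists. The descent bookkeeping is also not uniform: when $\pi(1)>\pi(3)$ the deletion destroys the descent at position $2$ but \emph{creates} one at position $1$, so $\des$ is preserved (e.g.\ $231\mapsto 21$), and $\des$ drops by $1$ precisely in the other case $\pi(1)<\pi(3)$. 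Carried out as written, your Step 2 gives $B(t,1)=tz\,A(t,1)$, which together with $z\bigl(A(t,1)+B(t,1)\bigr)=C^*$ (note the factor $z$ you dropped) yields series different from \eqref{eq:123A}--\eqref{eq:123B}; for instance the coefficient of $z^2$ in $B(t,1)$ is $2t$, not $t^2$. The correct treatment, which is what the paper does, splits $\pi=\pi(1)\,n\,\pi(3)\cdots$ into the two cases: if $\pi(1)>\pi(3)$, deleting $n$ is a $\des$-preserving bijection onto length-$(n-1)$ avoiders with $\iar=1$ (and here $\comp(\pi)=1$); if $\pi(1)<\pi(3)$, then $\pi(3)=n-1$ is forced and deleting $n$ is a bijection onto length-$(n-1)$ avoiders with $\iar=2$, preserving $\comp$ and decreasing $\des$ by one. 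This yields $B(t,p)=pz+z(A(t,1)-1)+tzB(t,p)$, i.e.\ $B(t,1)=zA(t,1)/(1-tz)$, which does lead to \eqref{eq:123A}--\eqref{eq:123B}.

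Step 1 is also wrong as stated: a $123$-avoider with $\comp(\pi)=2$ need not end with $n$, i.e.\ need not be $\tau\oplus 1$; it is a direct sum of two \emph{decreasing} blocks, as in $132=1\oplus 21$ and $2143=21\oplus 21$. The $\comp=2$, $\iar=1$ permutations are exactly the sums of decreasing blocks of lengths $a\ge2$ and $b\ge1$, contributing $p\,tz^2/(1-tz)^2$ to $A(t,p)$, whereas your characterization would only produce $p\,tz^2/(1-tz)$; and under $123$-avoidance a permutation $\tau\oplus1$ forces $\tau$ decreasing, hence $\iar(\tau)=1$, so your description of the $\comp=2$, $\iar=2$ class is essentially empty, while the correct class is $1\oplus(\text{decreasing})$, contributing $p\,z/(1-tz)$ to $B(t,p)$. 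In short, the correction terms you assert are the right ones but do not follow from your structural claims, and the key bijection of Step 2 fails; only Step 3's rational-function algebra is sound once the correct functional equations (the two displayed in the paper's proof) are in hand.
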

\begin{proof}
For $\pi\in\S_n(123)$ with $\iar(\pi)=1$ and $\comp(\pi)=2$, we can decompose it as $\pi=A\: B$, where $A < B$ are both decreasing subsequences with $|A|\ge 2$ and $|B|\ge 1$. On the other hand, if $\pi\in\S_n(123)$ and $\iar(\pi)=2$, then we must have $\pi(2)=n$, and we calculate the two cases $\pi(1)>\pi(3)$ and $\pi(1)<\pi(3)$ separately. All these amount to give us the functional equations:
\begin{align*}
\begin{cases}
\,A(t,p)= \frac{tpz^2}{(1-tz)^2}+\frac{ C^*}{z}-B(t,1)-\frac{tz^2}{(1-tz)^2},\\
\,B(t,p)= pz+z(A(t,1)-1)+tzB(t,p).
\end{cases}
\end{align*} 
Solving this system of equations gives rise to \eqref{eq:123A} and \eqref{eq:123B}.
\end{proof}

The following corollary can be proved combinatorially from analyzing the designated $123$-avoiding permutations. But we prove it here algebraically relying on the generating function derived in Theorem~\ref{thm:gf-123}.
\begin{corollary}
For $n\ge 2$, let $\S_n^{*}(123):=\{\pi\in\S_n(123):\des(\pi)=n-2\}$, then we have
\begin{align}\label{eq:123coro}
\sum_{n\ge 2}z^n\sum_{\pi\in\S_n^*(123)}r^{\iar(\pi)}p^{\comp(\pi)}=\frac{r^2p^2z^2}{1-z}+\frac{(r+p)rpz^3}{(1-z)^2}+\frac{(z^3+2z^4)rp}{(1-z)^2(1-2z)}.
\end{align}
In particular, the distribution of $(\iar,\comp)$ is symmetric over $\S_n^*(123)$, and the number of permutations $\pi\in\S_n^*(123)$ with $\iar(\pi)=\comp(\pi)=1$ is the sequence A095264 in \cite{oeis}.
\end{corollary}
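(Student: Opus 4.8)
The plan is to extract the generating function for $\S^*_n(123)$ directly from the formula for $\tilde\S(123)^{\des,\iar,\comp}(t,r,p)$ established in Theorem~\ref{thm:gf-123}, by isolating the ``top-degree in $t$'' part. Concretely, a permutation $\pi\in\S_n(123)$ has $\des(\pi)\le n-1$, and $\des(\pi)=n-1$ forces $\pi$ to be decreasing; so $\des(\pi)=n-2$ is the next-to-maximal descent count. Writing $\des(\pi)=n-2$ amounts, after the substitution $n\mapsto\deg_z$, to tracking the coefficient of $t^{n-2}z^n$. The cleanest way to do this is to replace $t$ by $t$ and $z$ by $z/t$ (or equivalently look at $\tilde\S(123)(t,r,p;z)$ as a series in $z$ whose $z^n$-coefficient is a polynomial in $t$ of degree $\le n-1$, and extract the coefficient of $t^{n-2}$). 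I would carry out the substitution $t\to 1/u$, $z\to uz$ in the closed form of Theorem~\ref{thm:gf-123}, which converts ``degree $n-2$ in $t$'' into ``degree $1$ in $u$'', then read off the coefficient of $u^1$ — or, more transparently, just expand and collect the two contributing layers separately.

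The first step is to handle the easy piece: the summand $\dfrac{(1-p)z(trz-tz-r)}{(1-tz)^2}$ is rational with denominator $(1-tz)^2$, hence its $z^n$-coefficient is an explicit polynomial in $t$ of degree $n-1$; extracting the coefficient of $t^{n-2}$ there is a routine partial-fractions/binomial computation. The second step, which is the genuine work, is to extract the $t^{n-2}$-part of $\dfrac{(1+rz-tz)C^*}{z(1+z-tz)}$. Here I would use the defining identity \eqref{def:C^*} expressing $C^*$ through the $321$ generating function $C=\S(321)(t,1,1)$: since $C^*(t,z)=\bigl(\S(321)(t^{-1},1,1;tz)-1\bigr)/t$, the degree-$(n-2)$-in-$t$ part of the $z^n$-coefficient of $C^*$ corresponds to the degree-$(??)$ part of $C$ under the reciprocal-and-rescale, i.e. to the \emph{low}-degree-in-$t$ behaviour of $\S(321)$, which is governed by the near-identity permutations. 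In practice it is cleaner to bypass $C$ entirely and note that $C^*$ satisfies the quadratic $z(tz-z-1)t^2 (C^*)^2 + (\text{linear in }C^*) + (\text{const}) = 0$ coming from \eqref{def:C^*}; reducing this quadratic modulo ``$t$-degree $> n-2$ in each $z^n$-coefficient'' gives a manageable recurrence, and one solves it in closed form. Either route yields a rational function in $z$ with denominator dividing $(1-z)^2(1-2z)$, matching the shape of the claimed answer \eqref{eq:123coro}.

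The third step is to reassemble: add the two extracted pieces, simplify, and check the result equals the right-hand side of \eqref{eq:123coro}, namely $\dfrac{r^2p^2z^2}{1-z}+\dfrac{(r+p)rpz^3}{(1-z)^2}+\dfrac{(z^3+2z^4)rp}{(1-z)^2(1-2z)}$. It is worth sanity-checking a few low-order coefficients by hand (e.g. $n=2$: the unique permutation is $21$, with $\iar=\comp=1$, contributing $r^2p^2z^2$? — note $\iar(21)=1$ and $\comp(21)=1$, so actually $rp z^2$; one must be careful that $\S^*_2(123)$ consists of $21$ alone and that the exponent bookkeeping in \eqref{eq:123coro} is consistent with the $\tilde\S$-normalization $\tilde\S=(\S-1)/(rpz)$, which shifts both exponents — this normalization bookkeeping is exactly the kind of off-by-one that must be tracked). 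Finally, to obtain the two stated corollaries: symmetry of $(\iar,\comp)$ over $\S^*_n(123)$ is immediate since the right-hand side of \eqref{eq:123coro} is visibly symmetric under $r\leftrightarrow p$; and the enumeration of those $\pi\in\S^*_n(123)$ with $\iar(\pi)=\comp(\pi)=1$ is the coefficient of $r^1p^1$ in \eqref{eq:123coro}, i.e. $\dfrac{(z^3+2z^4)}{(1-z)^2(1-2z)}$ (plus whatever the $rp$-part of the first two terms contributes — here only the third term has a pure $r^1p^1$ coefficient, the first term being $r^2p^2$ and the second being $r^2p + rp^2$), whose coefficient sequence one then identifies with OEIS A095264 by comparing generating functions or the first several terms.

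\textbf{Main obstacle.} The one real difficulty is the second step: cleanly extracting the ``$t^{n-2}$-layer'' of the algebraic series $C^*$. The reciprocal substitution in \eqref{def:C^*} turns a top-degree extraction into a \emph{bottom}-degree extraction, which is conceptually nicer, but one must correctly track how the $\sqrt{\phantom{x}}$ and the $1/t^2$ prefactor interact with the rescaling $z\mapsto tz$ — a naive expansion risks spurious negative powers of $t$ or a mismatched constant term. I expect the resolution is to work with the defining \emph{quadratic} for $C^*$ rather than its radical form, truncate that quadratic appropriately, and solve the resulting low-order linear recurrence; everything else is bookkeeping and a short verification against \eqref{eq:123coro}.
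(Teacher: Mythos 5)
Your proposal is essentially the paper's own proof: extract the coefficient of $t^{n-2}z^n$ from the generating function of Theorem~\ref{thm:gf-123}, treating the rational $(1-tz)^{-2}$ part directly (the paper just sets $t=1$ there, since those terms are already concentrated in the $t^{n-2}z^n$ layer) and handling the $C^*$-terms exactly by your ``reciprocal-and-rescale'' idea — the paper substitutes $tz$ for $z$ and $1/t$ for $t$ and then applies $\partial/\partial t$ at $t=0$, turning the top-degree extraction into a bottom-degree one — after which the symmetry and the A095264 identification are read off from \eqref{eq:123coro} just as you say. One small correction to your sanity check: for $n=2$ the set $\S_2^*(123)$ requires $\des(\pi)=0$, so it consists of $12$ (not $21$), contributing $r^2p^2z^2$ in agreement with \eqref{eq:123coro}; the statement is in terms of $\S$, not $\tilde\S$, so no normalization shift is involved.
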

\begin{proof}
To calculate the generating function in \eqref{eq:123coro}, we need to extract the coefficients of $t^{n-2}z^n$ in $\S(123)^{\des,\iar,\comp}(t,r,p)$ for each $n\ge 2$. For $rpzA(t,p)$, the term $\frac{(p-1)trpz^3}{(1-tz)^2}$ expands to terms all of the form $t^{n-2}z^n$, so we simply set $t=1$ to get $\frac{(p-1)rpz^3}{(1-z)^2}$, while for the term $\frac{rp(1-tz)(\S(t,1,1)-1)}{1-tz+z}$, we substitute $tz$ for $z$, and $1/t$ for $t$ in
$$\frac{rpt(1-tz) C^*}{1-tz+z},$$
then take partial derivative $\partial/\partial t$ and let $t=0$ to obtain $\frac{2rpz^3}{(1-z)^2(1-2z)}$. Similar approach yields the coefficients from $r^2pzB(t,p)$ and establishes \eqref{eq:123coro}. The claim about the symmetric distribution is evident from checking the variables $r$ and $p$ in \eqref{eq:123coro}.
\end{proof}

%%%%%%%%%%%%%%%%%%%%%%%%%%%%%%%%%%%%%
\section{Two patterns of length 3}\label{sec3: two 3-patterns}
%%%%%%%%%%%%%%%%%%%%%%%%%%%%%%%%%%%%%
In this section, we let $P=(\tau_1,\tau_2)$ be a pair of patterns of length $3$, so there are $\binom{6}{2}=15$ different pairs to consider. Once again, we accomplish two tasks as in Section~\ref{sec:one 3-pattern} and assemble our results in Table~\ref{two 3-avoiders}.

\begin{table}
{\small
\begin{tabular}{cccc}
\toprule
$P=(\tau_1,\tau_2)$ & $\tilde \S(t,r,p)$ & $M_n(P)$ & proved in\\
\midrule
&&&\\
$(132,312)$ & $\dfrac{1}{1-rpz}+\dfrac{(1-z)tz}{(1-rz)(1-pz)(1-z-tz)}$ & Hankel & Thm.~\ref{thm:gf-132-312:132-321}\\
&&&\\
$(132,321)$ & $\dfrac{1}{1-rpz}+\dfrac{tz}{(1-rz)(1-pz)(1-z)}$ & $0$-$1$ Hankel & Thm.~\ref{thm:gf-132-312:132-321}\\
&&&\\
$(213,231)$ & $\dfrac{1-z}{(1-rpz)(1-z-tz)}$ & Diagonal & Thm.~\ref{thm:gf-213-231}\\
&&&\\
$(123,312)$ & $\dfrac{1+rpz}{1-tz}+\dfrac{(r+p)tz^2}{(1-tz)^2}+\dfrac{t^2z^3}{(1-tz)^3}$ & $2\times2$ Hankel & Thm.~\ref{thm:gf-123-312}\\
&&&\\
$(213,312)$ & $\dfrac{1-rz}{(1-rpz)(1-(r+t)z)}$ & Lower triangular & Thm.~\ref{thm:gf-triple-two-3}\\
&&&\\
$(231,312)$ & $\dfrac{1-pz}{(1-rpz)(1-(p+t)z)}$ & Conjugate to $M_n(213,312)$ & Thm.~\ref{thm:gf-triple-two-3}\\
&&&\\
$
(231,321)
$ & $\dfrac{1-(1+p-t)z+(1-t)pz^2}{(1-rpz)(1-(p+1)z+(1-t)pz^2)}$ & Upper triangular & Thm.~\ref{thm:gf-triple-two-3}\\
&&&\\
$
(132,213)
$ & $\dfrac{1}{1-rpz}+\dfrac{tz}{(1-rz)(1-z-tz)}$ & Lower triangular & Thm.~\ref{thm:gf-132-conjugate}\\
&&&\\
$(132,231)$ & $\dfrac{1}{1-rpz}+\dfrac{tz}{(1-pz)(1-z-tz)}$ & Conjugate to $M_n(132,213)$ & Thm.~\ref{thm:gf-132-conjugate}\\
&&&\\
$(213,321)$ & $\dfrac{1}{1-rpz}+\dfrac{tz}{(1-z)(1-rz)(1-rpz)}$ & Lower triangular & Thm.~\ref{thm:gf-213-321}\\
&&&\\
$(312,321)$ & $\frac{1}{1-rpz}+\frac{(1-z)tz}{(1-rpz)(1-rz)(1-(1+p)z+(1-t)pz^2)}$ & No pattern & Thm.~\ref{thm:gf-312-321}\\
&&&\\
$(123,132)$ &$1+rpz+\frac{tpz^2}{1-tz}+\frac{tz(1+z-tz)(1+(r-t)z+(1-r)tz^2)}{(1-tz)((1-tz)^2-tz^2)}$ & $2\times 2$ nonzero & Thm.~\ref{thm:gf-123-four}\\
&&&\\
$(123,213)$ & $1+\dfrac{rpz}{1-tz}+\dfrac{tz(1-tz+rz)(1-tz+z)}{(1-tz)((1-tz)^2-tz^2)}$ & $2\times 2$ nonzero & Thm.~\ref{thm:gf-123-four}\\
&&&\\
$(123,231)$ & $\dfrac{1+rpz}{1-tz}+\dfrac{(1+p-tpz)tz^2}{(1-tz)^3}$ &$2\times 2$ nonzero & Thm.~\ref{thm:gf-123-four}\\
&&&\\
$(123,321)$ & $
\begin{array}{c}
1+(t+rp)z+(1+r)(1+p)tz^2\\
+(2r+t+pt)tz^3
\end{array}
$ & Ultimately zero & Thm.~\ref{thm:gf-123-four}\\
\bottomrule
\end{tabular}
}
\caption{Two patterns of length $3$}
\label{two 3-avoiders}
\vspace{-11.5pt}
\end{table}

The Wilf-classification of pairs of length $3$ patterns was done by Simion and Schmidt \cite{SS}. There are three Wilf-equivalent classes, which further split into eleven $\iar$-Wilf-equivalent subclasses: the class enumerated by $2^{n-1}$ splits into $6$ classes
\begin{align*}
&\{(132,213), (132,312),(213,231),(231,312),(231,321)\},\\
&\{(132,231)\},\,\, \{(213,312)\}, \,\,\{(312,321)\},\,\,\{(123,132)\},\,\,\{(123,213)\};
\end{align*}
the class enumerated by $1+\binom{n}{2}$ splits into $4$ classes
$$
\{(132,321)\},\,\,\{(123, 231)\},\,\,\{(213, 321)\},\,\,\{(123, 312)\};
$$
and the terminating (i.e., enumerated by $0$ when $n\ge 5$) class $\{(123,321)\}$ stays as a single class. 
For $\comp$-Wilf-equivalences, the class enumerated by $2^{n-1}$ splits into $4$ classes
\begin{align*}
&\{(132,231),(132,312),(213,231),(213,312)\},\,\,\{(132,213)\}\\
&\{(123,132),(123,213)\}, \,\,\{(231,312),(231,321),(312,321)\}
\end{align*}
and 
the class enumerated by $1+\binom{n}{2}$ splits into $2$ classes
$$
\{(132,321),(213, 321)\},\,\,\{(123, 231),(123, 312)\}.
$$
All the above refined Wilf-equivalences can be easily proven by setting $t=1$, and $p=1$ (or $r=1$) in the generating functions listed in Table~\ref{two 3-avoiders}.

%%%%%%%%%%%%%%%%%%%%%%%%%%%%%%
\subsection{Symmetric classes} 
%%%%%%%%%%%%%%%%%%%%%%%%%%%%%%
For $P\in\{(132,312),(132,321), (213,231), (123,312)\}$, the joint distribution of $(\des,\iar,\comp)$ is symmetric for $\iar$ and $\comp$ over $\S_n(P)$. We consider these four classes in this subsection. 

\medskip
\noindent \underline{\bf Pattern pairs $(132,312)$ and $(132,321)$}
\medskip

First note that if the pattern $312$ (resp.~$321$) occurs in a permutation $\pi$, then we can always find an occurrence of $312$ (resp.~$321$) in $\pi$ with the role of ``$3$'' played by a left-to-right maximum of $\pi$. Now recall the bijection $\varphi$ we construct in the proof of Theorem~\ref{thm:132-symm}. For each $\pi\in\S_n(132)$, observe that $\pi\in\S_n(132,312)$ (resp.~$\pi\in\S_n(132,321)$) if and only if $\varphi(\pi)\in\S_n(132,312)$ (resp.~$\varphi(\pi)\in\S_n(132,321)$). This fact, combined with Theorem~\ref{thm:132-symm}, immediately give us the following theorem.

\begin{theorem}\label{thm:132-312 and 132-321 symm}
For all positive integers $n$, given any two subsets $S,T\subseteq [n]$, the matrix $M_n^{\LMAX=S,\LMIN=T}(P)$ is Hankel, for $P\in\{(132,312), (132,321)\}$. Consequently, the distribution of the quadruple $(\LMAX, \LMIN, \iar, \comp)$ is equal to that of $(\LMAX, \LMIN, \comp, \iar)$ over $\S_n(P)$. In terms of generating function, we have
\begin{align*} %\label{eq:132-312-LMAX-LMIN-iar-comp}
\sum_{\pi\in\S_n(P)}\mathbf{x}^{\LMAX(\pi)}\mathbf{y}^{\LMIN(\pi)}r^{\iar(\pi)}p^{\comp(\pi)} &= \sum_{\pi\in\S_n(P)}\mathbf{x}^{\LMAX(\pi)}\mathbf{y}^{\LMIN(\pi)}r^{\comp(\pi)}p^{\iar(\pi)}.
\end{align*}
In particular, we have
\begin{align*} %\label{eq:132-312-des-iar-comp}
\sum_{\pi\in\S_n(P)}t^{\des(\pi)}r^{\iar(\pi)}p^{\comp(\pi)} &= \sum_{\pi\in\S_n(P)}t^{\des(\pi)}r^{\comp(\pi)}p^{\iar(\pi)}.
\end{align*}
\end{theorem}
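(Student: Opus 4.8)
The plan is to reduce Theorem~\ref{thm:132-312 and 132-321 symm} to Theorem~\ref{thm:132-symm} by showing that the involution $\varphi$ constructed in the proof of Theorem~\ref{thm:132-symm} restricts to an involution on each of the smaller classes $\S_n(132,312)$ and $\S_n(132,321)$. Concretely, I would first record the structural observation stated before the theorem: if a permutation $\pi$ contains $312$ (resp.\ $321$), then it contains an occurrence in which the letter playing the role of ``$3$'' is a left-to-right maximum of $\pi$. This is immediate since, given any occurrence $\pi(a)\pi(b)\pi(c)$ of $312$ (resp.\ $321$) with $a<b<c$, replacing $\pi(a)$ by the left-to-right maximum of $\pi$ sitting at or before position $a$ only increases the first entry, and that still forms a $312$ (resp.\ $321$) pattern together with $\pi(b)\pi(c)$.

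The key step is then to verify that $\varphi$ preserves membership in these subclasses, i.e.\ for $\pi\in\S_n(132)$ one has $\pi\in\S_n(132,312)$ iff $\varphi(\pi)\in\S_n(132,312)$, and likewise for $(132,321)$. Since $\varphi$ is an involution on $\S_n(132)$, it suffices to prove one implication. Recall $\varphi(\pi)=\ins_{n,n}(\del_{\pi(1)}(\pi))$: it deletes the first letter $\pi(1)$ and reinserts the (new) maximum $n$ at the end. By Proposition~\ref{prop:desbot=lmin} the left-to-right maxima of a $132$-avoider form the block of consecutive top values, and items i--iii in the proof of Theorem~\ref{thm:132-symm} tell us that $\varphi$ leaves the prefix $\pi(1)\cdots\pi(\iar(\pi)-1)$ untouched and only rearranges the structure after the first descent, moving one more unit into the ``identity tail.'' Using the observation above, any $312$ or $321$ occurrence in $\varphi(\pi)$ can be taken with its ``$3$'' a left-to-right maximum; appending $n$ at the very end creates no new such occurrence (a trailing maximum can only play the role of ``$3$'' with nothing after it), and deleting $\pi(1)$ — which is the smallest left-to-right maximum — cannot destroy all occurrences while none remain, because the remaining left-to-right maxima and the letters below $\pi(1)$ already witnessed any pattern not involving $\pi(1)$ as the ``$3$''; a careful case check on whether $\pi(1)$ itself served as the ``$3$'' (only possible when $\iar(\pi)=1$, in which case the next left-to-right maximum serves just as well after the deletion) closes the argument. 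The same reasoning applies verbatim to $321$.

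Once this invariance is established, the theorem follows immediately: restricting the Hankel-pairing argument of Theorem~\ref{thm:132-symm} to $\S_n(P)$, each permutation with $\iar>\comp$ is paired by a suitable power of $\varphi$ with one having the two values swapped, while all the set-valued statistics $\LMAX$, $\LMIN$ (hence $\DESB$ and $\des$) are preserved exactly as in Theorem~\ref{thm:132-symm}. This shows $M_n^{\LMAX=S,\LMIN=T}(P)$ is Hankel and yields both displayed generating-function identities. I expect the main obstacle to be the case analysis in the key step — specifically, making airtight the claim that deleting the smallest left-to-right maximum $\pi(1)$ and appending a new global maximum neither creates nor destroys the avoidance of $312$ (or $321$); the subtlety is entirely about occurrences that use $\pi(1)$ as the top element, and reducing those to occurrences using the next left-to-right maximum is where the ``$3$ is a left-to-right maximum'' observation does its real work.
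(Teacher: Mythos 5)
Your route is the same as the paper's: observe that any occurrence of $312$ (resp.\ $321$) can be chosen with its ``$3$'' a left-to-right maximum, check that the map $\varphi$ from the proof of Theorem~\ref{thm:132-symm} preserves membership in $\S_n(132,312)$ and $\S_n(132,321)$, and then transport the Hankel pairing. However, two points in your write-up are off exactly at the step you flag as delicate. First, $\varphi$ is \emph{not} an involution: it is a bijection sending $(\iar,\comp)=(j,k)$ to $(j-1,k+1)$, and the symmetry in Theorem~\ref{thm:132-symm} comes from iterating it; so ``it suffices to prove one implication'' is not justified on those grounds. (Your sketch does in effect cover both directions anyway: deleting $\pi(1)$ and appending a new maximum at the end can never \emph{create} an occurrence of $312$ or $321$, which is one direction; the left-to-right-maximum observation is needed only for the other.) Second, the parenthetical case analysis is backwards. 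The letter $\pi(1)$ can serve as the ``$3$'' of an occurrence even when $\iar(\pi)\ge 2$ (e.g.\ $45123$ with the occurrence $4\,1\,2$), and when $\iar(\pi)=1$ your proposed fix fails outright: for $\pi=312$ one has $\varphi(\pi)=123$, so the ``iff'' is genuinely false for such $\pi$ and no later left-to-right maximum can rescue the occurrence after the deletion.

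The correct repair is to remember the domain on which $\varphi$ is actually used in the pairing of Theorem~\ref{thm:132-symm}: only permutations with $2\le\iar(\pi)$ and $\comp(\pi)\le n-2$ (and all intermediate iterates still satisfy $\iar\ge 2$). On that domain, if $\pi(1)$ is the ``$3$'' of an occurrence $\pi(1)\pi(b)\pi(c)$ of $312$ or $321$, then $\pi(2)>\pi(1)$ forces $b>2$, so $\pi(2)\pi(b)\pi(c)$ is again an occurrence with its ``$3$'' a left-to-right maximum different from $\pi(1)$; this occurrence survives the deletion of $\pi(1)$ and is unaffected by appending $n$ at the end, giving the direction ``$\varphi(\pi)$ avoids $\Rightarrow$ $\pi$ avoids''. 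With that adjustment (and noting that the blanket ``for all $\pi\in\S_n(132)$'' phrasing, which the paper also uses, should be read as restricted to this domain), your argument is complete and coincides with the paper's proof.
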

This symmetry can also be seen directly from the following generating functions.

\begin{theorem}\label{thm:gf-132-312:132-321}
We have
\begin{align}\label{eq:gf-132-312}
\tilde\S(132,312)^{\des,\iar,\comp}(t,r,p) &=\dfrac{1}{1-rpz}+\dfrac{(1-z)tz}{(1-rz)(1-pz)(1-z-tz)},\\
\tilde\S(132,321)^{\des,\iar,\comp}(t,r,p) &=\frac{1}{1-rpz}+\frac{tz}{(1-rz)(1-pz)(1-z)}.\label{eq:gf-132-321}
\end{align}
\end{theorem}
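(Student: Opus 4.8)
The plan is to compute the two generating functions in \eqref{eq:gf-132-312} and \eqref{eq:gf-132-321} by the same block-decomposition method used in the proof of Theorem~\ref{gf:312} and Theorem~\ref{thm:gf-132}, taking advantage of the Hankel structure guaranteed by Theorem~\ref{thm:132-312 and 132-321 symm}. Since $M_n^{\des=k}(P)$ is Hankel for each fixed $k$ and for $P\in\{(132,312),(132,321)\}$, and since the identity permutations $\id_n$ are the only permutations with $\iar=\comp=n$, Lemma~\ref{gf:Hankel} applies verbatim. Exactly as in the derivation of \eqref{eq:tildePhi132} in the proof of Theorem~\ref{thm:gf-132}, this reduces the problem to computing the single-variable specialization $\tilde\S(t,r,1)$ for each pattern pair, after which one substitutes into
\begin{align*}
\tilde\S(t,r,p)=\frac{rpz}{1-rpz}+\frac{(r-1)\bigl(\tilde\S(t,r,1)-\tfrac{rz}{1-rz}\bigr)-(p-1)\bigl(\tilde\S(t,p,1)-\tfrac{pz}{1-pz}\bigr)}{r-p}
\end{align*}
and simplifies.

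First I would record the relevant structural facts about $132$-avoiding permutations that also avoid $312$ (resp.~$321$). Writing a nonempty $\pi\in\S_n(132)$ as $\pi=A\,n\,B$ with $A>B$ (every element of $A$ exceeds every element of $B$), the extra forbidden pattern forces $B$ to be severely restricted: avoiding $312$ on top of $132$ forces $B$ to be a single decreasing run that together with what follows behaves like a $(132,312)$-avoider of a very constrained shape, and avoiding $321$ forces $B$ to have at most one element (since $n$ together with two elements of a longer $B$ would create a $321$, unless those elements are increasing, but $A>B$ and the $132$-restriction then interact). I would make these case splits precise and, in each case, set $p=1$ to collapse $\comp$ and obtain a functional equation for $\tilde\S(t,r,1)$ in terms of the full descent generating function $\S(t,1,1)$, which for $(132,312)$ and $(132,321)$ is simply $1/(1-z-tz)$ and $1/(1-z)\cdot$(geometric-type) respectively — concretely, $\sum_{\pi\in\S_n(132,312)}t^{\des(\pi)}z^n$ and $\sum_{\pi\in\S_n(132,321)}t^{\des(\pi)}z^n$ are elementary rational functions that I would compute first as a warm-up.

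The main obstacle I anticipate is getting the block decomposition for the $(132,312)$ pair exactly right: unlike the pure $132$ case, the block $B$ (which lies below $n$ and below $A$) is itself only weakly constrained, but the interaction of ``$A$ lies to the left of $n$, everything in $A$ exceeds everything in $B$, and no $312$ pattern'' means that once $A$ is nonempty the first letter of $A$ plays the role of a potential ``$3$'', so $B$ must be increasing — i.e.\ $B$ contributes only through its length, giving the factor $1/(1-z)$ that appears in \eqref{eq:gf-132-312}. Pinning down whether the initial ascending run of $\pi$ is inherited from $A$ or interrupted, and how $\comp$ splits across the decomposition, is the delicate bookkeeping step; the analogous issue for $(132,321)$ is easier because $B$ then has at most one element. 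Once the functional equations for $\tilde\S(t,r,1)$ are in hand — I expect
\begin{align*}
\tilde\S(132,312)(t,r,1)=\frac{(1-z)(1+t z/(1-z-tz))}{(1-rz)(1-z-tz\cdot\text{(something)})}
\end{align*}
shaped expressions, to be matched against the claimed formulas by plugging $r=1$ — the rest is the routine substitution into the displayed $\tilde\S(t,r,p)$ identity above and algebraic simplification, which I would carry out and check against the small-$n$ data implicit in Table~\ref{two 3-avoiders}. The symmetry in $r\leftrightarrow p$ of the resulting formulas is then automatic and reconfirms Theorem~\ref{thm:132-312 and 132-321 symm}.
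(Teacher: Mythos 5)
Your overall skeleton is exactly the paper's: use the Hankel property from Theorem~\ref{thm:132-312 and 132-321 symm} together with Lemma~\ref{gf:Hankel} to reduce everything to the specialization $\tilde\S(t,r,1)$ via the identity you display (the analogue of \eqref{eq:tildePhi132}), and then compute $\tilde\S(t,r,1)$ from the block decomposition $\pi=A\,n\,B$ with $A>B$. However, the structural facts you assert about the block $B$ are wrong in both cases, and they are precisely where the descent bookkeeping lives. For $(132,312)$: since $n$ precedes $B$ (and is larger than everything), any \emph{ascent} $b_1<b_2$ inside $B$ gives the occurrence $n\,b_1\,b_2$ of $312$; so $B$ must be \emph{decreasing}, not increasing as you claim. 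This is not cosmetic: a decreasing $B$ of length $m\ge 1$ contributes $(tz)^m$ (every letter of $B$ sits below a descent), which is the source of the $\tfrac{tz}{1-tz}$ factor in the paper's functional equation $\S(t,r,p)=\tfrac{1}{1-rpz}+pz\bigl(\S(t,r,p)-\tfrac{1}{1-rpz}\bigr)+\tfrac{trpz^2}{(1-tz)(1-rz)}+\tfrac{tpz^2}{1-tz}\bigl(\S(t,r,1)-\tfrac{1}{1-rz}\bigr)$; your ``$B$ contributes only through its length, giving $1/(1-z)$'' would produce a different (and incorrect) equation, which no amount of algebra afterwards can repair.

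For $(132,321)$ your claim that $B$ has at most one element is also false: for example $34512\in\S_5(132,321)$ has $B=12$. The correct constraints are that $B$ must be \emph{increasing} (a descent in $B$ under $n$ gives $321$), and moreover, when $B\neq\emptyset$, $A$ must be increasing as well (a descent in $A$ followed by any element of $B$, which is smaller since $A>B$, gives $321$); this is exactly what the paper uses to get \eqref{eq:gf-132-321}. A smaller inaccuracy: the descent generating function of $\S_n(132,312)$ is $\tfrac{1-tz}{1-z-tz}$, not $\tfrac{1}{1-z-tz}$, though since you intend to recompute it this matters less. So the plan is the right one, but as written the case analysis would lead to wrong functional equations; you need to redo the constraints on $A$ and $B$ (decreasing $B$ for the $312$ case; increasing $B$ and, when $B\neq\emptyset$, increasing $A$ for the $321$ case) before the reduction via Lemma~\ref{gf:Hankel} can deliver \eqref{eq:gf-132-312} and \eqref{eq:gf-132-321}.
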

\begin{proof}
The proof is quite analogous to that of Theorem~\ref{thm:gf-132}. First for \eqref{eq:gf-132-312}, Theorem~\ref{thm:132-312 and 132-321 symm} tells us that $M_n^{\LMAX=S,\LMIN=T}(132,312)$ is Hankel. Relying on Lemma~\ref{gf:Hankel} again, we reinterpret this in terms of generating function (details left to the readers):
\begin{align}
\label{eq:tildePhi132-312}
\tilde\S(t,r,p)=\frac{rpz}{1-rpz}+\frac{(r-1)(\tilde\S(t,r,1)-\frac{rz}{1-rz})-(p-1)(\tilde\S(t,p,1)-\frac{pz}{1-pz})}{r-p}.
\end{align}
Next, note that all $\id_n:=12\cdots n$ with $n\ge 0$ contribute collectively $1/(1-rpz)$ to $\S(132,312)^{\des,\iar,\comp}(t,r,p)$. On the other hand, every $\pi\in\S_n(132,312)$ with $\des(\pi)>0$ can be uniquely decomposed as $\pi=A\:n\:B$, where $A>B$ are two (possibly empty) blocks such that $B$ is decreasing and $A$ is $132$- and $312$-avoiding. We consider the following two cases.
\begin{itemize}
	\item $B=\emptyset$. This case contributes the generating function $pz(\S(t,r,p)-\frac{1}{1-rpz})$.
	\item $B\neq\emptyset$. This case contributes $\frac{tz}{1-tz}\cdot\frac{rpz}{1-rz}+\frac{tpz^2}{1-tz}(\S(t,r,1)-\frac{1}{1-rz})$.
\end{itemize} 
Summing up all cases gives us
$$\S(t,r,p)=\frac{1}{1-rpz}+pz(\S(t,r,p)-\frac{1}{1-rpz})+\frac{trpz^2}{(1-tz)(1-rz)}+\frac{tpz^2}{1-tz}(\S(t,r,1)-\frac{1}{1-rz}).$$
Set $p=1$ and solve to get $$\tilde\S(132,312)^{\des,\iar,\comp}(t,r,1)=\frac{1-z}{(1-rz)(1-z-tz)},$$ then plug this back into \eqref{eq:tildePhi132-312} and simplify, we get \eqref{eq:gf-132-312}. The proof of \eqref{eq:gf-132-321} is simpler noting that for $\pi\in\S_n(132,321)$ with the decomposition $\pi=A\:n\:B$, both $A$ and $B$ are increasing if $B\neq\emptyset$. The details are omitted.
\end{proof}

\medskip
\noindent \underline{\bf Pattern pair $(213,231)$}
\medskip

The first thing to notice is that for every $\pi\in\S_n(213,231)$, we must have $\pi(1)=1$ or $n$, and $\iar(\pi)=\comp(\pi)$. The latter can be proved by induction relying on the former. In terms of generating function, this means
\begin{align*}
\S(t,r,p) &= \S(t,rp,1), \; \text{and}\\
\S(t,r,p) &= 1 + rpz\S(t,r,p) + trpz(\S(t,1,1)-1).
\end{align*}
Solving these two functional equations gives us
\begin{theorem}\label{thm:gf-213-231}
$$\tilde\S(213,231)^{\des,\iar,\comp}(t,r,p)=\frac{1-z}{(1-rpz)(1-z-tz)}.$$
\end{theorem}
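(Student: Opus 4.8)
The plan is to establish the two functional equations stated just before the theorem and then solve them. First I would prove the structural claim that every $\pi\in\S_n(213,231)$ satisfies $\pi(1)\in\{1,n\}$: if $\pi(1)=a$ with $1<a<n$, then there is a letter smaller than $a$ and a letter larger than $a$ somewhere to the right of position $1$; whichever of these occurs first gives either a $231$ pattern (if the larger one comes first) or a $213$ pattern (if the smaller one comes first), a contradiction. Hence $\pi(1)=1$ or $\pi(1)=n$.

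Next I would use this to set up the block decomposition. If $\pi(1)=1$, then $\del_1(\pi)\in\S_{n-1}(213,231)$, and $\pi=1\oplus\del_1(\pi)$, so $\comp(\pi)=\comp(\del_1(\pi))+1$, $\iar(\pi)=\iar(\del_1(\pi))+1$, and $\des(\pi)=\des(\del_1(\pi))$; this contributes $rpz\,\S(t,r,p)$. If $\pi(1)=n$, then writing $\pi = n\,\rho$ with $\red(\rho)\in\S_{n-1}(213,231)$, we get a descent at position $1$, $\iar(\pi)=\comp(\pi)=1$, $\des(\pi)=\des(\rho)+1$, and the statistics $\iar,\comp$ of $\rho$ no longer matter, contributing $trpz(\S(t,1,1)-1)$ — here the $-1$ removes the empty permutation since we need $n\ge 2$ for this case, and $\rho$ nonempty means $\S(t,1,1)-1$. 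Combining with the empty permutation gives the first functional equation
$$
\S(t,r,p) = 1 + rpz\,\S(t,r,p) + trpz(\S(t,1,1)-1).
$$
Simultaneously, an easy induction on $n$ using the same case split shows $\iar(\pi)=\comp(\pi)$ for all $\pi\in\S_n(213,231)$ (base case trivial; in the $\pi(1)=1$ case both increase by one, in the $\pi(1)=n$ case both equal one), which translates into $\S(t,r,p)=\S(t,rp,1)$.

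Finally I would solve. Setting $r=1$ in the first equation and using $\S(t,r,p)=\S(t,rp,1)$, or equivalently just substituting $\S(t,1,1)=N$ (valid since $(213,231)$-avoiders are counted by the Narayana distribution in $\des$, as recorded in \eqref{def:N}, noting that these permutations avoid $213$ alone) into the first equation, one solves
$$
\S(t,r,p) = \frac{1 + trpz(N-1)}{1-rpz},
$$
and then $\tilde\S(t,r,p) = (\S(t,r,p)-1)/(rpz) = (N-1)t/(1-rpz) + \text{(lower order)}$; after simplification using the identity $N-1 = tz N^2/(1 - \ldots)$ one rewrites this in the compact closed form $\frac{1-z}{(1-rpz)(1-z-tz)}$, which is exactly the claimed expression once one checks $1+tz(N-1)\cdot\tfrac{1-rpz}{1-rpz}$ matches, i.e. that $(N-1)$ has generating function consistent with $\tfrac{tz}{1-z-tz}$ — but wait, $N$ is the full Narayana generating function, not the generating function for $(213,231)$-avoiders, so I should instead directly use that $\sum_n |\S_n(213,231)| t^{\des}z^n = \frac{1-z}{1-z-tz}$ (geometrically: these are layered permutations built from the two cases, giving a simple rational generating function), substitute that as $\S(t,1,1)$ into the first functional equation, and solve for $\S(t,r,p)$.

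The main obstacle, such as it is, is bookkeeping rather than conceptual: one must be careful that in the $\pi(1)=n$ case the recursive permutation is genuinely $(213,231)$-avoiding and that the empty/trivial cases are handled so the constant term of $\tilde\S$ comes out right. The structural lemma $\pi(1)\in\{1,n\}$ and the induction for $\iar=\comp$ are both short, so the real content is just assembling the generating function correctly; the identity $\S(t,1,1)=\frac{1-z}{1-z-tz}$ for this class should be derived first (directly from the same two-case decomposition, with $r=p=1$) rather than borrowed, since $N$ as defined in \eqref{def:N} is the generating function over a single length-$3$ pattern class and conflating it with the two-pattern class would be an error.
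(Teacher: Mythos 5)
Your argument follows exactly the paper's route: the structural fact $\pi(1)\in\{1,n\}$, the induction giving $\iar=\comp$, and the functional equation $\S(t,r,p)=1+rpz\,\S(t,r,p)+trpz\bigl(\S(t,1,1)-1\bigr)$ are precisely the two observations the paper records and then solves. The one concrete flaw is the value you assign to $\S(t,1,1)$ at the end. You rightly reject $\S(t,1,1)=N$, but the replacement $\frac{1-z}{1-z-tz}$ is also wrong: its coefficient of $z^1$ is $t$, not $1$, and these permutations are not the layered class in any case. Setting $r=p=1$ in your own functional equation (which is the derivation you say you would carry out) gives
\begin{equation*}
\S(t,1,1)(1-z-tz)=1-tz,\qquad\text{i.e.}\qquad \S(t,1,1)=\frac{1-tz}{1-z-tz}=1+\frac{z}{1-z-tz},
\end{equation*}
matching $1+z+(1+t)z^2+(1+2t+t^2)z^3+\cdots$. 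Substituting the correct value, $\S(t,1,1)-1=\frac{z}{1-z-tz}$, back into the functional equation yields
\begin{equation*}
\S(t,r,p)=\frac{1-z-tz+trpz^2}{(1-rpz)(1-z-tz)},\qquad \S(t,r,p)-1=\frac{rpz(1-z)}{(1-rpz)(1-z-tz)},
\end{equation*}
which is the claimed formula for $\tilde\S$; had you used your stated $\frac{1-z}{1-z-tz}$ literally, the numerator would come out as $1-z-(1-t)tz$ and the theorem would not follow. So the method is sound and identical to the paper's, but the intermediate closed form needs this correction for the computation to close.
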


\medskip
\noindent \underline{\bf Pattern pair $(123,312)$}
\medskip

For every permutation $\pi\in\S_n(123,312)$, there are only five possible values for the triple $(\des(\pi),\iar(\pi),\comp(\pi))$, since $123$-avoiding implies $\iar(\pi)\le 2$ and $\comp(\pi)\le 2$, while both $312$- and $123$-avoiding forces $\des(\pi)\ge n-2$. Now it suffices to enumerate each case separately.
\begin{itemize}
	\item $(\des(\pi),\iar(\pi),\comp(\pi))=(n-1,1,1)$. There is a unique permutation $\id_n^{\mathrm{r}}=n\cdots 2 1$ for this case, which contributes $\frac{rpz}{1-tz}$ to the generating function.
	\item $(\des(\pi),\iar(\pi),\comp(\pi))=(n-2,2,2)$. There is a unique permutation $1\oplus\id_{n-1}^{\mathrm{r}}$ for this case, which contributes $\frac{r^2p^2z^2}{1-tz}$ to the generating function.
	\item $(\des(\pi),\iar(\pi),\comp(\pi))=(n-2,1,1)$. Permutations in this case are of the form $\pi=a\:a-1\cdots b\:n\cdots a+1\:b-1\cdots 1$, where $1<b<a<n$. Therefore this case contributes $\frac{t^2rpz^4}{(1-tz)^3}$ to the generating function.
	\item $(\des(\pi),\iar(\pi),\comp(\pi))=(n-2,2,1)$ or $(n-2,1,2)$. These two cases can be discussed similarly as the last case, and the contributions are $\frac{tr^2pz^3}{(1-tz)^2}$ and $\frac{trp^2z^3}{(1-tz)^2}$.
\end{itemize}
Summing up all cases above gives rise to
\begin{theorem}\label{thm:gf-123-312}
\begin{equation*}
\tilde\S(123,312)^{\des,\iar,\comp}(t,r,p)=\frac{1+rpz}{1-tz}+\frac{(r+p)tz^2}{(1-tz)^2}+\frac{t^2z^3}{(1-tz)^3}.
\end{equation*}
\end{theorem}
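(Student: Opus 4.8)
The plan is to exploit the rigidity that the pair of patterns $(123,312)$ imposes on a permutation, so that the count collapses to a sum of a few explicit series.

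The first step is to record three structural constraints on any $\pi\in\S_n(123,312)$. Avoiding $123$ already forces $\iar(\pi)\le 2$ (otherwise $\pi(1)<\pi(2)<\pi(3)$ is a $123$) and $\comp(\pi)\le 2$ (if $\pi=\tau_1\oplus\tau_2\oplus\tau_3\oplus\cdots$, then choosing one letter from each of the first three components produces an increasing subsequence of length $3$). The crucial observation is that avoiding both $123$ and $312$ forces $\des(\pi)\ge n-2$, that is, $\pi$ has at most one ascent: two consecutive ascents already create a $123$, so if $\pi$ had two ascents they would occur at positions $i<j$ with $j\ge i+2$, and then comparing $\pi(i+1)$ with $\pi(j+1)$ gives either the increasing triple $\pi(i)<\pi(i+1)<\pi(j+1)$ (a $123$) at positions $i<i+1<j+1$, or the subsequence $\pi(i+1)>\pi(j+1)>\pi(j)$ (a $312$) at positions $i+1<j<j+1$. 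Hence $\des(\pi)\in\{n-1,n-2\}$.

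The second step is the case analysis according to the value of $(\des,\iar,\comp)$. If $\des(\pi)=n-1$ then $\pi=\id_n^{\mathrm{r}}$, which contributes $1/(1-tz)$ to $\tilde\S$. If $\des(\pi)=n-2$ then $\pi$ is a concatenation of two decreasing runs $\pi=R_1R_2$ (hence automatically $123$-avoiding), and I will translate $312$-avoidance into the statement that the set of values of $R_1$ is an interval of consecutive integers. This splits the case $\des(\pi)=n-2$ into precisely the four families listed just before the statement, indexed by $(\iar,\comp)\in\{1,2\}^2$: the single permutation $1\oplus\id_{n-1}^{\mathrm{r}}$ (when $(\iar,\comp)=(2,2)$); the $n-2$ permutations consisting of a value $m$ followed by $[n]\setminus\{m\}$ in decreasing order, $2\le m\le n-1$ (when $(\iar,\comp)=(2,1)$); the $n-2$ permutations $\id_k^{\mathrm{r}}\oplus\id_{n-k}^{\mathrm{r}}$ with $2\le k\le n-1$ (when $(\iar,\comp)=(1,2)$); and the $\binom{n-2}{2}$ permutations that decrease from $a$ to $b$, then from $n$ to $a+1$, then from $b-1$ to $1$, with $2\le b<a\le n-1$ (when $(\iar,\comp)=(1,1)$). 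Summing the associated generating functions, each being a series of the shape $\sum_m\binom{m}{k}(tz)^m$ times a monomial in $r$, $p$, $z$ (with elementary closed form), yields the three summands $\dfrac{1+rpz}{1-tz}$, $\dfrac{(r+p)tz^2}{(1-tz)^2}$, $\dfrac{t^2z^3}{(1-tz)^3}$ of the claimed expression.

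The main obstacle, and the only place requiring genuine care, is the $312$-avoidance translation in the case $\des(\pi)=n-2$: one must prove both directions of the equivalence ``a concatenation $\pi=R_1R_2$ of decreasing runs avoids $312$ if and only if the values of $R_1$ form an interval'', and then, within each of the four families, confirm that the listed permutations are exactly those with the stated values of $\iar$ and $\comp$. The latter involves checking that the junction between $R_1$ and $R_2$ is really an ascent, ruling out unwanted direct-sum decompositions (for instance separating $\comp=1$ from $\comp=2$), and fixing the parameter ranges. Once this bookkeeping is settled, the generating-function computation is routine.
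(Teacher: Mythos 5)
Your proposal is correct and follows essentially the same route as the paper: it classifies the $(123,312)$-avoiders by the five possible values of $(\des,\iar,\comp)$ (using $\iar,\comp\le 2$ and $\des\ge n-2$) and enumerates each family explicitly, arriving at the same permutations $a\cdots b\,n\cdots a+1\,b-1\cdots 1$, etc., and the same geometric-series computation. Your additional details — the explicit argument that two ascents force a $123$ or $312$, and the characterization of $312$-avoidance via the first decreasing run having an interval of values — are correct fillers for steps the paper only asserts.
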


%%%%%%%%%%%%%%%%%%%%%%%%%%%%%%%%%%%%%
\subsection{Asymmetric classes}
%%%%%%%%%%%%%%%%%%%%%%%%%%%%%%%%%%%%%
For the remaining choices of $P$, the distribution of $(\iar,\comp)$ over $\S_n(P)$ is not symmetric. However, we still observe some conjugative pairs as in Section~\ref{sec:one 3-pattern}.

\medskip
\noindent \underline{\bf Pattern pairs $(213,312)$, $(231,312)$ and $(231,321)$}
\medskip

Recall the two bijections, $\xi$ from Theorem~\ref{thm:321=312}, and $\theta$ from Theorem~\ref{thm:213-231-conjugate}. Observe that
\begin{itemize}
	\item $\pi\in\S_n(231,321)$ if and only if $\xi(\pi)\in\S_n(231,312)$.
	\item $\pi\in\S_n(213,312)$ if and only if $\theta(\pi)\in\S_n(231,312)$.
\end{itemize}
Then the following theorem is a quick corollary of Theorems~\ref{thm:321=312} and \ref{thm:213-231-conjugate}.

\begin{theorem}\label{thm:triple-two-3}
For each $n\ge 1$, the quadruple $(\LMAX,\LMAXP,\iar,\comp)$ has the same distribution over $\S_n(231,321)$ and $\S_n(231,312)$; the distribution of the triple $(\des,\iar,\comp)$ over $\S_n(213,312)$ is equal to that of $(\des,\comp,\iar)$ over $\S_n(231,312)$.
\end{theorem}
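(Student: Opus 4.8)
The plan is to deduce the theorem from the two bijections already constructed: $\xi\colon\S_n(321)\to\S_n(312)$ of Theorem~\ref{thm:321=312}, which preserves the whole quadruple $(\LMAX,\LMAXP,\iar,\comp)$, and $\theta\colon\S_n(213)\to\S_n(231)$ of Theorem~\ref{thm:213-231-conjugate}, which sends $(\des,\iar,\comp)$ to $(\des,\comp,\iar)$ and fixes the first letter. All that remains is to verify that these two bijections restrict to bijections between the relevant subclasses, i.e.\ the two displayed facts
\[
\pi\in\S_n(231,321)\iff\xi(\pi)\in\S_n(231,312),\qquad
\pi\in\S_n(213,312)\iff\theta(\pi)\in\S_n(231,312);
\]
once this is done, the claimed equidistributions follow by simply restricting the two theorems.

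For the first equivalence the engine is the description $\S_n(231,321)=\{\pi\in\S_n(321):\comp(\pi)=\lmax(\pi)\}$ and $\S_n(231,312)=\{\sigma\in\S_n(312):\comp(\sigma)=\lmax(\sigma)\}$. I would prove this through the direct-sum decomposition $\pi=\tau_1\oplus\cdots\oplus\tau_k$: since $231$ is $\oplus$-indecomposable, $\pi$ avoids $231$ iff each $\tau_i$ does, and likewise for $321$ (resp.\ $312$); and an $\oplus$-indecomposable permutation avoiding $321$ (resp.\ $312$) avoids $231$ exactly when it begins with its largest letter, equivalently has a single left-to-right maximum, in which case the $321$- (resp.\ $312$-)restriction forces it to be $m\,1\,2\cdots(m-1)$ (resp.\ $m(m-1)\cdots 1$). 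Summing $\lmax$ over the components (each component's values exceed all earlier ones, so $\lmax(\pi)=\sum_i\lmax(\tau_i)\ge k=\comp(\pi)$, with equality iff every $\lmax(\tau_i)=1$) gives the two descriptions. Since $\xi$ is a bijection $\S_n(321)\to\S_n(312)$ that preserves $\comp$ and $\LMAX$, hence $\lmax(\pi)=|\LMAX(\pi)|$, it carries $\{\comp=\lmax\}$ onto $\{\comp=\lmax\}$, so it restricts to a bijection $\S_n(231,321)\to\S_n(231,312)$, and Theorem~\ref{thm:321=312} then yields the equality of the distributions of $(\LMAX,\LMAXP,\iar,\comp)$.

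For the second equivalence I would induct on $n$ using the recursion behind $\theta$: for $\pi\in\S_n(213)$ write $\pi=\pi(1)\,A\,B$ with $A>\pi(1)>B$, so that $\theta(\pi)=\ins_{\pi(1),1}(\theta(B)\oplus\theta(red(A)))$. A $213$-avoider $\pi$ additionally avoids $312$ iff $B$ is decreasing (one may always take $\pi(1)$ for the ``$3$'' of a forbidden $312$, and every letter below $\pi(1)$ lies in $B$) and $red(A)$ avoids $\{213,312\}$. An easy induction shows $\theta$ fixes decreasing permutations, so $\theta(\pi)=\pi(1)\cdot\theta(B)\cdot\theta(red(A))$ is the concatenation of $\pi(1)$, a decreasing block, and (by the inductive hypothesis) a block avoiding $\{231,312\}$; a one-line check on where the ``$3$'' of a putative $312$ could sit then shows $\theta(\pi)$ avoids $312$, hence $\theta(\pi)\in\S_n(231,312)$ because it already avoids $231$. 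The reverse inclusion follows either by reading the same recursion backwards, or more cheaply from $|\S_n(213,312)|=|\S_n(231,312)|=2^{n-1}$ \cite{SS}, which upgrades an injection to a bijection; Theorem~\ref{thm:213-231-conjugate} then gives that $(\des,\iar,\comp)$ over $\S_n(213,312)$ equals $(\des,\comp,\iar)$ over $\S_n(231,312)$.

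The step I expect to be the real work is the structural lemma behind the first equivalence---concretely, the claim that an $\oplus$-indecomposable $321$- or $312$-avoider that avoids $231$ must begin with its maximum, which requires controlling where the letter $1$ can sit relative to the maximum (the standard recursive factorization of $231$-avoiders around their maximum is the natural tool); the induction needed for $\theta$ is routine bookkeeping. One could in fact bypass $\xi$ altogether in the first equivalence, since the two descriptions above exhibit $\S_n(231,321)$ and $\S_n(231,312)$ as both indexed by compositions of $n$ in such a way that $(\LMAX,\LMAXP,\iar,\comp)$ is the same explicit function of the composition; but routing through $\xi$ is what makes the statement the promised ``quick corollary'' of Theorems~\ref{thm:321=312} and~\ref{thm:213-231-conjugate}.
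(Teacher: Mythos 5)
Your proposal is correct and follows essentially the same route as the paper: the paper also deduces the theorem by observing that $\xi$ restricts to a bijection $\S_n(231,321)\to\S_n(231,312)$ and $\theta$ to a bijection $\S_n(213,312)\to\S_n(231,312)$, then invokes Theorems~\ref{thm:321=312} and~\ref{thm:213-231-conjugate}. The only difference is that you supply proofs of these two restriction facts (via the characterization $\comp=\lmax$ and the recursion defining $\theta$), which the paper leaves as unproved observations.
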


Next, we compute the generating functions for these three pairs.
\begin{theorem}\label{thm:gf-triple-two-3}
We have
\begin{align}
\label{eq:213-312}
\tilde\S(213,312)^{\des,\iar,\comp}(t,r,p) &=\frac{1-rz}{(1-rpz)(1-(r+t)z)},\\
\label{eq:231-312}
\tilde\S(231,312)^{\des,\iar,\comp}(t,r,p) &=\frac{1-pz}{(1-rpz)(1-(p+t)z)},\\
\label{eq:231-321}
\tilde\S(231,321)^{\des,\iar,\comp}(t,r,p) &=\frac{1-(1+p-t)z+(1-t)pz^2}{(1-rpz)(1-(p+1)z+(1-t)pz^2)}.
\end{align}
\end{theorem}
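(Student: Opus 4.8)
\textbf{Proof proposal for Theorem~\ref{thm:gf-triple-two-3}.}
The plan is to derive each of the three formulas from a structural decomposition of the avoiding class, followed by solving the resulting functional equation. The pairs $(231,312)$ and $(231,321)$ consist of two \emph{indecomposable} patterns, so $\S(P)$ is closed under the direct sum decomposition and Lemma~\ref{general} applies once the indecomposable members of each class are identified; the pair $(213,312)$ must be treated separately, because $213=21\oplus1$ is decomposable and $\S(213,312)$ is not sum-closed.

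Consider first $P=(231,312)$. Writing a nonempty avoider as $\pi=A\,n\,B$, the pattern $231$ with $n$ in the role of its large (middle) letter forces $A<B$, the pattern $312$ with $n$ in the role of ``$3$'' forces $B$ to be decreasing, $A$ must itself avoid $\{231,312\}$, and no forbidden pattern can straddle $n$; hence $\pi=A\oplus red(nB)$ and the indecomposable avoiders are exactly the decreasing permutations $\id_m^{\mathrm r}$, one of each length $m\ge1$. Since $\des(\id_m^{\mathrm r})=m-1$ and $\iar(\id_m^{\mathrm r})=\comp(\id_m^{\mathrm r})=1$, we get $I_P({\bf t},{\bf t'})=\sum_{m\ge1}t^{m-1}rz^m=rz/(1-tz)$. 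Feeding $w=rz$, $I_P({\bf t},{\bf t'})$ and $I_P({\bf t},{\bf 1})=z/(1-tz)$ into~\eqref{relation:comp} (with $\des$ totally and $\iar$ partially $\oplus$-compatible) gives $\S(P)(t,r,p)$, and $\tilde\S(P)=(\S(P)-1)/(rpz)$ simplifies to~\eqref{eq:231-312}. For $P=(231,321)$ the argument is identical except that now $321$-avoidance with $n$ as ``$3$'' forces the block $B$ after $n$ to be \emph{increasing}, so the indecomposable avoiders are the permutations $C_m:=(m,1,2,\dots,m-1)$; from $\des(C_1)=0$, $\des(C_m)=1$ for $m\ge2$, $\iar(C_m)=\comp(C_m)=1$ one obtains $I_P({\bf t},{\bf t'})=rz\bigl(1+(t-1)z\bigr)/(1-z)$, and substituting into~\eqref{relation:comp} with $w=rz$ yields~\eqref{eq:231-321}. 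Both computations can equivalently be phrased as weighted sums of $t^{\des}r^{\iar}p^{\comp}$ over the direct-sum decompositions of the avoiders, which are indexed by compositions of $n$ (with $\comp$ the number of parts, $\des=n-\comp$ resp.\ the number of parts $\ge2$, and $\iar$ the index of the first part $\ge2$, with the obvious convention when all parts equal $1$).

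For $P=(213,312)$ I would instead examine the position of the letter $1$. Avoiding $312$ with $1$ in the middle of the occurrence forces every letter before $1$ to be smaller than every letter after $1$, while avoiding $213$ with $1$ in the middle forces the reverse; hence for $n\ge2$ the letter $1$ occupies either the first or the last position. If $1$ is first then $\pi=1\oplus\sigma$ for an arbitrary (possibly empty) avoider $\sigma$, with $(\des,\iar,\comp)\,\pi=(\des,\iar+1,\comp+1)\,\sigma$; if $1$ is last then $\pi=\mu\ominus1$ for a nonempty avoider $\mu$, with $\comp(\pi)=1$, $\des(\pi)=\des(\mu)+1$ and $\iar(\pi)=\iar(\mu)$. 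These two cases are disjoint and exhaustive, which gives $\S(P)(t,r,p)=1+rpz\,\S(P)(t,r,p)+tpz\bigl(\S(P)(t,r,1)-1\bigr)$; setting $p=1$ yields $\S(P)(t,r,1)=(1-tz)/(1-(r+t)z)$, and substituting this back and dividing out $rpz$ produces~\eqref{eq:213-312}. Alternatively, \eqref{eq:213-312} is immediate from~\eqref{eq:231-312} together with the bijection $\theta$ of Theorem~\ref{thm:triple-two-3}, which swaps $\iar$ and $\comp$, so that $\tilde\S(213,312)(t,r,p)=\tilde\S(231,312)(t,p,r)$.

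The algebraic simplifications are routine. The delicate point throughout is the bookkeeping for $\iar$, which is only partially $\oplus$-compatible: a run of leading $\id_1$ components all contribute to the initial ascending run, so one must single out the first nontrivial component, which is exactly the role played by $w$ and by the split between $I_P({\bf t},{\bf t'})$ and $I_P({\bf t},{\bf 1})$ in~\eqref{relation:comp}. I expect the main (though modest) obstacle to be the $(213,312)$ case: one must first notice that, unlike the other two pairs, $\S(213,312)$ is \emph{not} sum-closed, and then establish the ``$1$ is first or last'' dichotomy that replaces the enumeration of indecomposables.
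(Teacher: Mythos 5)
Your proposal is correct and follows essentially the same route as the paper: for \eqref{eq:231-312} and \eqref{eq:231-321} you identify the indecomposable avoiders (decreasing permutations, resp.\ the permutations $n\,1\,2\cdots(n-1)$) and feed their $(\des,\iar)$ generating functions into Lemma~\ref{general}, exactly as the paper does, and your $I_P$'s agree with the paper's after simplification. The only divergence is \eqref{eq:213-312}: the paper simply swaps $r$ and $p$ in \eqref{eq:231-312} via the conjugacy of Theorem~\ref{thm:triple-two-3}, which you also note as an alternative, while your primary argument (the letter $1$ must be first or last, giving $\S=1+rpz\,\S+tpz(\S(t,r,1)-1)$) is a correct, self-contained substitute that rightly circumvents the failure of Lemma~\ref{general} for the decomposable pattern $213$.
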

\begin{proof}
In view of Theorem~\ref{thm:triple-two-3}, \eqref{eq:213-312} follows from \eqref{eq:231-312} by switching variables $r$ and $p$. To prove \eqref{eq:231-312}, note that both patterns $231$ and $312$ are indecomposable, thus we can apply Lemma~\ref{general} to reduce the calculation to that of the generating function of $(\des,\iar)$ over $\In_n(231,312)$. But the indecomposable permutations in $\S_n(231,312)$ are precisely $\id_n^{\mathrm{r}}=n\:n-1\cdots 1$, thus $I_{231,312}(t,r)=\frac{rz}{1-tz}$. Plugging this back into \eqref{relation:comp} gives us \eqref{eq:231-312}. Finally, every permutation in $\In_n(231,321)$ must be of the form $1\ominus\id_{n-1}=n\:1\:2\cdots n-1$, yeilding the generating function $I_{231,321}(r,t)=rz+\frac{trz^2}{1-z}$. Applying \eqref{relation:comp} from Lemma~\ref{general} again, we derive \eqref{eq:231-321} and complete the proof.
\end{proof}

\medskip
\noindent \underline{\bf Pattern pairs $(132,213)$ and $(132,231)$}
\medskip

For the same reason that the bijection $\theta$ from Theorem~\ref{thm:213-231-conjugate} preserves the $132$-avoidance, we have the following conjugate relation.

\begin{theorem}\label{thm:132-conjugate}
For each $n\ge 1$, the distribution of the triple $(\des,\iar,\comp)$ over $\S_n(132,213)$ is equal to that of $(\des,\comp,\iar)$ over $\S_n(132,231)$.
\end{theorem}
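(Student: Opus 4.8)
The plan is to show that the bijection $\theta\colon\S_n(213)\to\S_n(231)$ constructed in Theorem~\ref{thm:213-231-conjugate} restricts to a bijection from $\S_n(132,213)$ onto $\S_n(132,231)$. Since $\theta$ already carries the triple $(\des,\iar,\comp)$ of $\pi$ to the triple $(\des,\comp,\iar)$ of $\theta(\pi)$, such a restriction gives at once
\begin{equation*}
\sum_{\pi\in\S_n(132,213)}t^{\des(\pi)}r^{\iar(\pi)}p^{\comp(\pi)}=\sum_{\sigma\in\S_n(132,231)}t^{\des(\sigma)}r^{\comp(\sigma)}p^{\iar(\sigma)},
\end{equation*}
which is exactly the asserted equidistribution. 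As $\theta$ is already known to be a bijection $\S_n(213)\to\S_n(231)$, it suffices to prove the single equivalence: for every $\pi\in\S_n(213)$, the permutation $\pi$ avoids $132$ if and only if $\theta(\pi)$ avoids $132$. I would prove this by induction on $n$, the cases $n\le2$ being vacuous (there $\theta$ is the identity map, and in general one reads off from the recursion that $\theta(\id_m)=\id_m$ for all $m$).

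For the inductive step I would invoke the structure used to define $\theta$: a $213$-avoiding $\pi$ has all its entries above $\pi(1)$ appearing before all its entries below $\pi(1)$, so $\pi=\pi(1)\,A\,B$ with $A>\pi(1)>B$; setting $\mu:=red(A)$ and $\nu:=B$, both are $213$-avoiding of length strictly less than $n$, and $\theta(\pi)=\ins_{\pi(1),1}\bigl(\theta(\nu)\oplus\theta(\mu)\bigr)$. Unwinding the insertion shows that $\theta(\pi)=\pi(1)\,C\,D$, where $C$ is a block of the $\pi(1)-1$ small values order-isomorphic to $\theta(\nu)$ and $D$ is a block of the largest $|A|$ values order-isomorphic to $\theta(\mu)$; note that, in contrast to $\pi$, here the small block precedes the large block (which is precisely why $\iar$ and $\comp$ get interchanged). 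The key step is then a case analysis of how an occurrence of the pattern $132$ can be distributed among the three blocks. Using the value inequalities $A>\pi(1)>B$ on one side and $C<\pi(1)<D$ on the other, one checks that the entry playing the role of ``$3$'' forces the remaining two entries of any occurrence into a single block, the only genuinely mixed configuration being that $\pi(1)$ may serve as the ``$1$'' while ``$3$'' and ``$2$'' are both drawn from the block of large values. This yields the criterion
\begin{equation*}
\pi\text{ contains }132\iff\bigl(\nu\text{ contains }132\bigr)\ \text{or}\ \bigl(\mu\neq\id\bigr),
\end{equation*}
together with the identical criterion for $\theta(\pi)$ with $\nu$ and $\mu$ replaced by $\theta(\nu)$ and $\theta(\mu)$.

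To finish, observe that $\mu=\id$ if and only if $\theta(\mu)=\id$ (because $\theta$ is a bijection fixing the identity), and that $\nu$ contains $132$ if and only if $\theta(\nu)$ does by the induction hypothesis applied to the shorter permutation $\nu$; hence the two criteria coincide and the induction closes. This proves the equivalence, so $\theta$ maps $\S_n(132,213)$ bijectively onto $\S_n(132,231)$, and the displayed generating-function identity follows from $(\des,\iar,\comp)\,\pi=(\des,\comp,\iar)\,\theta(\pi)$. The one delicate point is the block case analysis: one must enumerate every way a $132$ occurrence can straddle the boundaries among $\pi(1)$, $A$, $B$ (respectively among $\pi(1)$, $C$, $D$) and verify, from the order constraints, that the exceptional role of the leading entry is the sole surviving mixed pattern — everything else rests on properties of $\theta$ already established in Theorem~\ref{thm:213-231-conjugate}.
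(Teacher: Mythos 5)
Your proposal is correct and takes essentially the same route as the paper, which also deduces this theorem by noting that the bijection $\theta$ of Theorem~\ref{thm:213-231-conjugate} preserves $132$-avoidance and hence restricts to a bijection $\S_n(132,213)\to\S_n(132,231)$ exchanging $\iar$ and $\comp$ while preserving $\des$. (One tiny imprecision: on the image side $\theta(\pi)=\pi(1)\,C\,D$ there is also a mixed occurrence with the ``1'' taken from $C$ and the ``3'',``2'' from $D$, but such an occurrence forces an inversion in $D$, i.e.\ $\theta(\mu)\neq\id$, so your stated criterion and the induction are unaffected.)
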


Next, note that each permutation $\pi\in\S_n(132,231)$ either begins with $\pi(1)=n$, or ends with $\pi(n)=n$. Calculating these two cases separately we have
$$\S(t,r,p)=\frac{1}{1-rpz}+pz(\S(t,r,p)-\frac{1}{1-rpz})+trpz(\S(t,1,1)-1).$$
Solving this and applying Theorem~\ref{thm:132-conjugate}, we can deduce the following theorem.

\begin{theorem}\label{thm:gf-132-conjugate}
We have
\begin{align*}
%\label{eq:132-213}
\tilde\S(132,213)^{\des,\iar,\comp}(t,r,p) &=\dfrac{1}{1-rpz}+\dfrac{tz}{(1-rz)(1-z-tz)},\\
%\label{eq:132-231}
\tilde\S(132,231)^{\des,\iar,\comp}(t,r,p) &=\dfrac{1}{1-rpz}+\dfrac{tz}{(1-pz)(1-z-tz)}.
\end{align*}
\end{theorem}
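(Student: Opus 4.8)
The plan is to establish the $(132,231)$ formula directly by a first-letter/last-letter decomposition, and then to deduce the $(132,213)$ formula from it via Theorem~\ref{thm:132-conjugate}. The first ingredient is a structural dichotomy: every $\pi\in\S_n(132,231)$ with $n\ge 2$ satisfies $\pi(1)=n$ or $\pi(n)=n$. Indeed, if the value $n$ were at a position $j$ with $1<j<n$, then the three values $\pi(j-1),\,n,\,\pi(j+1)$ would form an occurrence of $231$ (when $\pi(j-1)>\pi(j+1)$) or of $132$ (when $\pi(j-1)<\pi(j+1)$), which is impossible.

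Using this, I would split $\S_n(132,231)$ (ranging over all $n\ge 0$) into three disjoint, exhaustive families and read off their generating-function contributions. The identities $\id_n$ contribute $\tfrac{1}{1-rpz}$, since $\des(\id_n)=0$ and $\iar(\id_n)=\comp(\id_n)=n$. The non-identity permutations ending in $\pi(n)=n$ are exactly $\sigma\oplus 1$ with $\sigma\in\S_{n-1}(132,231)$ non-identity; appending the maximum at the end leaves $\des$ unchanged, raises $\comp$ by one, and leaves $\iar$ unchanged --- for this last point one uses that $\sigma$ non-identity forces its first descent to occur before position $n-1$ --- so this family contributes $pz\bigl(\S(t,r,p)-\tfrac{1}{1-rpz}\bigr)$. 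The permutations with $\pi(1)=n$ (necessarily $n\ge 2$) are the words $n\,\rho$ with $\rho$ ranging over all nonempty members of $\S_{n-1}(132,231)$, for which $\des(\pi)=1+\des(\rho)$ and $\iar(\pi)=\comp(\pi)=1$, contributing $trpz\bigl(\S(t,1,1)-1\bigr)$. Summing gives the functional equation
$$\S(t,r,p)=\frac{1}{1-rpz}+pz\Bigl(\S(t,r,p)-\frac{1}{1-rpz}\Bigr)+trpz\bigl(\S(t,1,1)-1\bigr)$$
already quoted in the text.

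From here it is pure algebra. Setting $r=p=1$ gives a linear equation with solution $\S(t,1,1)=\tfrac{1-tz}{1-z-tz}$, so $\S(t,1,1)-1=\tfrac{z}{1-z-tz}$; substituting this back and solving for $\S(t,r,p)$ yields $\S(t,r,p)=\tfrac{1}{1-rpz}+\tfrac{trpz^2}{(1-pz)(1-z-tz)}$, whence $\tilde\S(132,231)^{\des,\iar,\comp}(t,r,p)=\bigl(\S(t,r,p)-1\bigr)/(rpz)=\tfrac{1}{1-rpz}+\tfrac{tz}{(1-pz)(1-z-tz)}$, the second claimed formula. For the first, Theorem~\ref{thm:132-conjugate} gives that $(\des,\iar,\comp)$ on $\S_n(132,213)$ and $(\des,\comp,\iar)$ on $\S_n(132,231)$ have the same distribution, i.e.\ $\S(132,213)^{\des,\iar,\comp}(t,r,p)=\S(132,231)^{\des,\iar,\comp}(t,p,r)$; since the factor $rpz$ in the definition of $\tilde\S$ is symmetric in $r$ and $p$, one simply interchanges $r$ and $p$ in the formula just obtained to get $\tilde\S(132,213)^{\des,\iar,\comp}(t,r,p)=\tfrac{1}{1-rpz}+\tfrac{tz}{(1-rz)(1-z-tz)}$. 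The one point that needs care in the whole argument is the bookkeeping of $\iar$ in the family of permutations ending in $n$: it is exactly what dictates that the identities be peeled off first, and it is the source of the term $\S(t,r,p)-\tfrac{1}{1-rpz}$ in the functional equation; everything else is mechanical.
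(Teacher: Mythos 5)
Your proposal is correct and follows essentially the same route as the paper: the same dichotomy ($\pi(1)=n$ or $\pi(n)=n$ for $(132,231)$-avoiders), the same functional equation $\S(t,r,p)=\frac{1}{1-rpz}+pz\bigl(\S(t,r,p)-\frac{1}{1-rpz}\bigr)+trpz\bigl(\S(t,1,1)-1\bigr)$, and the same use of Theorem~\ref{thm:132-conjugate} to transfer the result to $(132,213)$ by swapping $r$ and $p$. You have merely filled in the combinatorial and algebraic details that the paper leaves implicit, and your bookkeeping of $\iar$ and $\comp$ in each case is accurate.
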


\medskip
\noindent \underline{\bf Pattern pair $(213,321)$}
\medskip

Note that each permutation $\pi\in\S_n(213,321)$ can be decomposed as $\pi=A\:n\:B$, where $A$ and $B$ are both increasing blocks, and $B$ is consisted of consecutive integers. Calculating the two cases $1\in A$ and $1\in B$ separately, we obtain the following theorem.
\begin{theorem}\label{thm:gf-213-321}
We have
\begin{equation*}
\tilde \S(213,321)^{\des,\iar,\comp}(t,r,p)=\dfrac{1}{1-rpz}+\dfrac{tz}{(1-z)(1-rz)(1-rpz)}.
\end{equation*}
\end{theorem}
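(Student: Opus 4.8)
The plan is to follow the block decomposition suggested by the statement itself: every $\pi\in\S_n(213,321)$ with $\pi\neq\id_n$ has a unique maximal letter $n$ and writes as $\pi=A\,n\,B$ where $A$ and $B$ are both increasing (since a descent inside either block together with $n$ would create a $321$, and a $213$ pattern is ruled out similarly). The extra structural fact we need is that $B$, the part after $n$, must consist of \emph{consecutive} integers $\{n-|B|,\ldots,n-1\}$: if some value $v$ smaller than all of $B$ were sitting in $A$, then $v$, $n$, and the smallest letter of $B$ would form a $213$. So $B$ is an increasing run of the top values just below $n$, and $A$ is an increasing permutation of the remaining small values, also $(213,321)$-avoiding (vacuously, being increasing).

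Next I would split on whether the letter $1$ lands in $A$ or in $B$; equivalently, whether $A$ is empty or not. If $A=\emptyset$ then $\pi=n\,B$ with $B$ increasing on $\{1,\ldots,n-1\}$, i.e. $\pi=\id_n^{\mathrm r}$-like only when $n=1$; more precisely $\pi=1\ominus\id_{n-1}$ reversed appropriately — in any case this family is a single permutation for each $n\ge 1$ with $\des=\chi(n\ge2)$, $\iar=1$, $\comp=1$, contributing $\tfrac{1}{1-z}\cdot(\text{small correction})$; I will just track its contribution directly. If $A\neq\emptyset$, then since $A$ is increasing and precedes $n$, we have $\pi(1)=\min A = 1$ forced only when $1\in A$; in general $\iar(\pi)=|A|+1$ when $A$ is increasing and $\pi(|A|+1)=n>\pi(|A|)$, while $\comp(\pi)$ equals the number of leading fixed points, i.e. the length of the maximal prefix $1,2,3,\ldots$ of $A$ plus possibly more — here I would use that $A$ is an \emph{arbitrary} increasing sequence of small values, so $A=\id_a$ exactly forces $\comp$ to grow, otherwise $\comp=1$. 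Because $\des$, $\iar$, $\comp$ are all read off cleanly from $(|A|, |B|, \text{whether }A=\id)$, I can set up the generating function by summing over these finitely-described shapes rather than a recursive functional equation, which keeps the bookkeeping transparent.

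Concretely I would write $\tilde\S = \tilde\S(213,321)^{\des,\iar,\comp}(t,r,p)$ and isolate three contributions: (a) the identity permutations $\id_n$, $n\ge0$, giving $\tfrac{1}{1-rpz}$ (each $\id_n$ has $\iar=\comp=n$, $\des=0$); (b) permutations with $A=\id_a$ for some $a\ge0$ but $B\neq\emptyset$ and $\pi\neq\id_n$ — here $\iar = a + (\text{length of leading run}) $ which, since the block after $n$ is increasing and consecutive at the top, merges into one long initial ascending run only if $B$ immediately continues the values, which it does not (there is the jump to $n$), so $\iar = a+|B|$ or $a+1$ depending — this is the step I must get exactly right; (c) permutations with $A$ increasing but not an initial segment, forcing $\comp=1$. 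Summing the geometric series in $z$ for each shape, weighting by $t^{\des}r^{\iar}p^{\comp}$, and combining should collapse to $\tfrac{1}{1-rpz}+\tfrac{tz}{(1-z)(1-rz)(1-rpz)}$.

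The main obstacle I anticipate is \emph{not} the enumeration of shapes but the precise evaluation of $\iar$ and $\comp$ on the decomposed permutation, because $\iar$ is only partially $\oplus$-compatible and the block $A\,n\,B$ is a skew-type arrangement, not a direct sum; a leading segment of $A$ being $1,2,\ldots$ contributes to $\comp$, but the presence of $n$ right after $A$ truncates any further component growth, and whether the initial ascending run extends past position $|A|$ depends on the relative order of $\pi(|A|)=\max A$, $n$, and $\pi(|A|+2)$, the second element of $B$. I would resolve this by treating the two extreme cases $B=\emptyset$ (then $\pi=A\,n$, pure increasing, handled by (a)) and $B\neq\emptyset$ (then position $|A|+1$ holds $n$ and position $|A|+2$ holds a strictly smaller value, so the run ends at $|A|+1$, giving $\iar=|A|+1$) separately and carefully, after which the generating function sum is a routine geometric computation. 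As the problem statement indicates, the final closed form should follow once these case splits are in place, and the factor $\tfrac{tz}{(1-z)(1-rz)(1-rpz)}$ transparently reflects: $tz$ for the single descent created at $n$, $\tfrac{1}{1-rz}$ for the free increasing tail before $n$ contributing to $\iar$, $\tfrac{1}{1-z}$ for the consecutive block after $n$, and $\tfrac{1}{1-rpz}$ absorbing the identity prefix tracked jointly by $\iar$ and $\comp$.
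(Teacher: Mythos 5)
Your decomposition $\pi=A\,n\,B$ with $A$ and $B$ increasing is correct, but the structural claim on which the whole computation rests --- that $B$ must be the topmost values $\{n-|B|,\ldots,n-1\}$ --- is false, and so is its justification: for $v\in A$ with $v<\min B$, the subsequence $v,\,n,\,\min B$ is an occurrence of $132$, not of $213$, and $132$ is not forbidden here. Concretely, $231\in\S_3(213,321)$ has $B=\{1\}$, and $1\,2\,6\,7\,3\,4\,5\in\S_7(213,321)$ has $B=\{3,4,5\}$; in neither case is $B$ at the top. What avoidance of $213$ really forces is that there is no $a\in A$ and $b_1,b_2\in B$ with $b_1<a<b_2$ (such a triple $a\,b_1\,b_2$ is a $213$), i.e.\ $B$ is a set of consecutive integers, an \emph{arbitrary} interval $[i,j]\subseteq[n-1]$ --- which is exactly the structural statement in the paper's proof. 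Your restricted family contains only $n$ permutations of length $n$ instead of $1+\binom{n}{2}$, and summing your shapes yields $\frac{1}{1-rpz}+\frac{tz}{(1-z)(1-rpz)}$: the factor $\frac{1}{1-rz}$ is lost, so the argument as written cannot produce the stated formula. A second error compounds this: ``$\comp=1$ unless $A=\id_a$'' is also wrong (e.g.\ $\comp(1267345)=3$); leading partial initial segments of $A$ do contribute components.

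The fix keeps your overall plan but uses the correct structure. With $B=[i,j]$ nonempty, $A$ is the increasing word on $[1,i-1]\cup[j+1,n-1]$, and then $\des(\pi)=1$, $\iar(\pi)=|A|+1=n+i-j-1$, and $\comp(\pi)=i$ (the components are the singletons $1,2,\ldots,i-1$ followed by one indecomposable block, so $\comp$ is one plus the length of the maximal prefix $1,2,\ldots$ of $A$, equivalently $\min B$). Setting $a=i\ge1$, $b=j-i\ge0$, $c=n-1-j\ge0$, the non-identity permutations contribute $\sum_{a,b,c} t\,(rpz)^a z^b (rz)^c z=\frac{trpz^2}{(1-rpz)(1-z)(1-rz)}$, which added to $\frac{1}{1-rpz}$ for the identities and divided by $rpz$ gives the theorem. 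This is in substance the paper's computation (it splits the sum according to $1\in A$ versus $1\in B$, i.e.\ $i\ge2$ versus $i=1$).
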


\medskip
\noindent \underline{\bf Pattern pair $(312,321)$}
\medskip

Noting that both $312$ and $321$ are indecomposable patterns, we apply \eqref{relation:comp}
\begin{equation*}
F_P(q)=\frac{1}{1-qw}+\frac{q(I_P({\bf t}, {\bf t'})-w)}{(1-qI_P({\bf t},{\bf 1}))(1-qw)}
\end{equation*}
from Lemma~\ref{general} (1) to reduce the calculation to
\begin{equation*}
I_{312,321}(t,r):=\sum_{n\ge 1}z^n\sum_{\substack{\pi\in\S_n(312,321) \\ \comp(\pi)=1}}t^{\des(\pi)}r^{\iar(\pi)}.
\end{equation*}
Now any indecomposable $\pi\in\S_n(312,321)$ must be of the form $\pi=2\:3\cdots n\:1$. Hence $$I_{312,321}(t,r)=rz+\frac{trz^2}{1-rz},$$
with which we can deduce
\begin{theorem}\label{thm:gf-312-321}
\begin{equation*}
\tilde\S(312,321)^{\des,\iar,\comp}(t,r,p)=\frac{1}{1-rpz}+\frac{(1-z)tz}{(1-rpz)(1-rz)(1-(1+p)z+(1-t)pz^2)}.
\end{equation*}
\end{theorem}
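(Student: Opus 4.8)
The plan is to mimic the strategy already used for the pair $(231,321)$ in Theorem~\ref{thm:gf-triple-two-3}: since both patterns $312$ and $321$ are indecomposable, Lemma~\ref{general}(1) reduces the computation of the trivariate generating function to that of the bivariate generating function of $(\des,\iar)$ over \emph{indecomposable} $(312,321)$-avoiding permutations. So the first step is simply to quote the functional equation~\eqref{relation:comp} with $P=(312,321)$, with the understanding that the only totally $\oplus$-compatible statistic here is $\des$, the only partially $\oplus$-compatible statistic is $\iar$, and $w=rpz$ (since $\des(\id_1)=0$ and $\iar(\id_1)=1$, so the weight of the trivial component $\id_1$ is $rpz$).

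The second step is to determine $I_{312,321}(t,r)$ explicitly. Here I would argue that an indecomposable permutation $\pi\in\S_n(312,321)$ with $n\ge 2$ must satisfy $\pi(1)=2$ — indeed $\pi(1)$ cannot be $1$ (that would force $\pi=1\oplus\sigma$, decomposable, when $n\ge 2$), and $\pi(1)\ge 3$ together with the need to place $1$ somewhere later would create either a $312$ (if some value between $1$ and $\pi(1)$ sits to the right of $\pi(1)$ with $1$ further right) or a $321$. A short case analysis then pins down $\pi = 2\,3\,4\cdots n\,1$ as the only indecomposable $(312,321)$-avoider of each length $n\ge 2$. This permutation has $\des(\pi)=1$ (the single descent is at position $n-1$) and $\iar(\pi)=n-1$, so it contributes $t r^{n-1} z^n$. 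Summing over $n\ge 2$ and adding the $n=1$ term $rz$ gives
\begin{equation*}
I_{312,321}(t,r)=rz+\sum_{n\ge 2} t r^{n-1}z^n = rz+\frac{trz^2}{1-rz}.
\end{equation*}

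The third and final step is pure algebra: substitute this $I_{312,321}(t,r)$ into~\eqref{relation:comp} (taking $q=p$, ${\bf t}=t$, ${\bf t'}=r$, $w=rpz$, and noting $I_{312,321}(t,1)=z+\frac{tz^2}{1-z}=\frac{z-z^2+tz^2}{1-z}$), then pass from $F_P$ to $\tilde\S(t,r,p)=(F_P-1)/(rpz)$ and simplify. Expanding $1-pI_{312,321}(t,1) = 1-\frac{p(z-z^2+tz^2)}{1-z} = \frac{1-(1+p)z+(1-t)pz^2}{1-z}$ produces exactly the denominator factor appearing in the claimed formula, and collecting the numerator terms yields the stated expression
\begin{equation*}
\tilde\S(312,321)^{\des,\iar,\comp}(t,r,p)=\frac{1}{1-rpz}+\frac{(1-z)tz}{(1-rpz)(1-rz)(1-(1+p)z+(1-t)pz^2)}.
\end{equation*}
The main (and really only) obstacle is the second step — correctly identifying that the indecomposable $(312,321)$-avoiding permutations are precisely the permutations $2\,3\cdots n\,1$; once that structural fact is in hand, the rest is a routine substitution into the general lemma followed by simplification, entirely parallel to the $(231,321)$ and $(231,312)$ cases treated just above.
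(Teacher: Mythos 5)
Your proposal is correct and follows essentially the same route as the paper: both patterns are indecomposable, so Lemma~\ref{general}(1) reduces everything to $I_{312,321}(t,r)$, the indecomposable $(312,321)$-avoiders are identified as $2\,3\cdots n\,1$ (your case analysis for this is sound, and the paper simply asserts it), giving $I_{312,321}(t,r)=rz+\frac{trz^2}{1-rz}$, after which substitution into \eqref{relation:comp} yields the stated formula. One bookkeeping slip: in Lemma~\ref{general} the weight $w$ of $\id_1$ does \emph{not} include the $\comp$-variable $q$, so here $w=rz$ and $qw=rpz$ with $q=p$, rather than $w=rpz$ as you wrote; with $w=rz$ your expansion of $1-pI_{312,321}(t,1)$ and the final simplification go through exactly as claimed.
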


\medskip
\noindent \underline{\bf Pattern pairs $(123,132)$, $(123,213)$, $(123,231)$ and $(123,321)$}
\medskip

These four pattern sets all contain the pattern $123$, hence $\iar(\pi)\le 2$ and $\comp(\pi)\le 2$ for each permutation $\pi$ in $\S_n(P)$. We take similar approach as Theorem~\ref{thm:gf-123}, or analyze the position of $1$ or $n$ in $\pi$, to calculate their generating functions. We collect the results in the following theorem but omit the proof.

\begin{theorem}\label{thm:gf-123-four}
We have
\begin{align*}
\tilde\S(123,132)^{\des,\iar,\comp}(t,r,p) &=1+rpz+\frac{tpz^2}{1-tz}+\frac{tz(1+z-tz)(1+(r-t)z+(1-r)tz^2)}{(1-tz)((1-tz)^2-tz^2)},\\
\tilde\S(123,213)^{\des,\iar,\comp}(t,r,p) &=1+\frac{rpz}{1-tz}+\frac{tz(1-tz+rz)(1-tz+z)}{(1-tz)((1-tz)^2-tz^2)},\\
\tilde\S(123,231)^{\des,\iar,\comp}(t,r,p) &=\dfrac{1+rpz}{1-tz}+\dfrac{(1+p-tpz)tz^2}{(1-tz)^3},\\
\tilde\S(123,321)^{\des,\iar,\comp}(t,r,p) &=1+(t+rp)z+(1+r)(1+p)tz^2+(2r+t+pt)tz^3.
\end{align*}
\end{theorem}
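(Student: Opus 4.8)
The common feature of these four pattern sets is that each contains the pattern $123$, and I would exploit this first. A $123$-avoiding permutation has no increasing subsequence of length three, so every $\pi\in\S_n(P)$ satisfies $\iar(\pi)\le 2$; moreover $\comp(\pi)\le 2$, since three or more nonempty summands in a direct-sum decomposition of $\pi$ would exhibit a $123$. Consequently $\tilde\S(P)^{\des,\iar,\comp}(t,r,p)=(\S(P)(t,r,p)-1)/(rpz)$ is a polynomial of degree at most $1$ in each of $r$ and $p$, and proving the theorem reduces to enumerating, refined by the number of descents, the at most four families of permutations in $\S_n(P)$ with a prescribed pair $(\iar,\comp)\in\{1,2\}^2$.

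For each $P$ the plan is to decompose a typical permutation $\pi$ as a block $\pi=A\,n\,B$ around its largest letter, or $\pi=A\,1\,B$ around its smallest letter when that makes avoidance of the second pattern transparent, and to use $123$-avoidance together with avoidance of the second pattern to pin down the shapes of $A$ and $B$. For $(123,231)$, decomposing around $n$, avoidance of $231$ forces every entry of $A$ to be below every entry of $B$, and then avoidance of $123$ forces $A$ to be decreasing and, if $A\neq\emptyset$, $B$ to be decreasing as well; this leaves an explicit family together with the single recursive branch $\pi=n\,B$, and summing contributions and solving for $\S(123,231)(t,1,1)$ produces a rational function whose only denominator factor is a power of $1-tz$. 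For $(123,132)$ (decompose around $n$) and for $(123,213)$ (decompose around $1$) the second pattern is less rigid, so one of the two blocks stays free within the ambient class and the resulting block identity becomes self-referential; after isolating the reduced generating function --- $\S(P)(t,1,1)$ in the first case and $\S(P)(t,r,1)$ in the second --- one solves a linear functional equation in which the factor $(1-tz)^2-tz^2$ appears, and substituting the solution back into the block identity and simplifying yields the stated formulas. For $(123,321)$ the Erd\H{o}s--Szekeres bound gives $\S_n(123,321)=\emptyset$ for $n\ge 5$, so $\tilde\S(123,321)^{\des,\iar,\comp}(t,r,p)$ is merely the explicit finite sum over the permutations of $\S_n(123,321)$ with $n\le 4$, read off by inspection together with their values of $\des$, $\iar$ and $\comp$. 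At every step the descent, $\iar$ and $\comp$ values of a block arrangement are obtained from the elementary rules governing these statistics under concatenation with a separating maximal or minimal letter. The whole argument runs in close parallel to the proof of Theorem~\ref{thm:gf-123}, with the complicated generating function $C^{*}$ of \eqref{def:C^*} replaced by the far simpler descent generating function of the relevant two-pattern class.

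The work here is routine rather than deep. The part most prone to error is the bookkeeping of the descents (and non-descents) created at the seams of each decomposition --- between the last entry of $A$ and the separating letter, and between that letter and the first entry of $B$ --- which must be tracked separately according to whether $A$ and $B$ are empty; and, for $(123,132)$ and $(123,213)$, one should verify that the linear functional equation obtained has a unique formal power series solution, so that the solve-and-substitute step is legitimate. No idea beyond those already used for Theorems~\ref{thm:gf-123}, \ref{thm:gf-123-312} and \ref{thm:gf-213-321} is needed, which is why we have omitted the details.
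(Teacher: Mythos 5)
Your proposal is correct and follows essentially the route the paper itself indicates (and then omits): use $123$-avoidance to bound $\iar,\comp\le 2$, decompose around the position of $n$ or of $1$ as in Theorem~\ref{thm:gf-123}, and treat $(123,321)$ by the Erd\H{o}s--Szekeres vanishing for $n\ge 5$. The block analyses you sketch (for $(123,231)$: $A<B$, $A$ decreasing, $B$ decreasing unless $A=\emptyset$; for $(123,132)$ and $(123,213)$: one rigid block and one block free in the class, leading to a linear equation for $\S(t,1,1)$ resp.\ $\S(t,r,1)$ with the factor $(1-tz)^2-tz^2$) are accurate and reproduce the stated formulas.
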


% %%%%%%%%%%%%%%%%%%%%%%%%%%%%%%%%%%%%%
% \section{A single pattern of length $4$}\label{sec4: one 4-pattern}
% %%%%%%%%%%%%%%%%%%%%%%%%%%%%%%%%%%%%%
% \subsection{On the pattern $2431$}

%%%%%%%%%%%%%%%%%%%%%%%%%%%%%%%%%%%%%
\section{Schr\"oder classes: two patterns of length 4}
%%%%%%%%%%%%%%%%%%%%%%%%%%%%%%%%%%%%%
\label{sec:schroder}

This section aims to characterize the pattern pair $P$ of length $4$ whose distribution matrix $M_n(P)$ equals  $M_n(2413,3142)$. 
The first few values of the symmetric matrices  $M_n(2413,3142)$ are: 
$$
\begin{bmatrix}\label{tri:Sch2}
1 & 0 \\
0 & 1 
\end{bmatrix},
\begin{bmatrix}\label{tri:Sch3}
2 & 1 & 0 \\
1 & 1  & 0\\
0 & 0 & 1
\end{bmatrix},
\begin{bmatrix}\label{tri:Sch4}
7 & 3 & 1&0 \\
3 & 3  & 1&0\\
1 & 1 & 1&0\\
0&0&0&1
\end{bmatrix},
\begin{bmatrix}\label{tri:Sch5}
28 & 12 & 4&1 &0\\
12 & 11  & 4&1&0\\
4 & 4& 3&1&0\\
1&1&1&1&0\\
0&0&0&0&1
\end{bmatrix},
\begin{bmatrix}\label{tri:Sch}
121 & 52 & 18&5 &1&0\\
52 & 46 & 17&5&1&0\\
18 & 17& 12&4&1&0\\
5&5&4&3&1&0\\
1&1&1&1&1&0\\
0&0&0&0&0&1
\end{bmatrix}.
$$
The integer sequence formed by the entries in the upper-left corner of $M_n(2413,3142)$ begins with 
$$
1,1,2,7,28,121,550, 2591,\ldots.
$$
This sequence appears to match A010683 in the OEIS~\cite{oeis}, a sequence that counts, among many combinatorial objects, dissections of a convex polygon with $n+3$ sides having a triangle over a fixed side (the base) of the polygon. 
This coincidence can be proved by comparing $\tilde \S(\SE)(1,0,0)$ from the expression~\eqref{eq:dou-sch} with the generating function supplied in the entry A010683.

The first result in this section is a consequence of Theorems~\ref{thm:sep:sym} and~\ref{thm:sep}. 

\begin{corollary} 
\label{comp:iar}
For $n\geq1$,
\begin{equation}\label{eq:iar:comp}
\sum_{\pi\in\S_n(2413,3142)}t^{\des(\pi)}x^{\comp(\pi)}y^{\iar(\pi)}=\sum_{\pi\in\S_n(2413,4213)}t^{\des(\pi)}x^{\comp(\pi)}y^{\iar(\pi)}.
\end{equation} 
Consequently, 
\begin{equation}\label{sym4213}
\sum_{\pi\in\S_n(2413,4213)}t^{\des(\pi)}x^{\comp(\pi)}y^{\iar(\pi)}=\sum_{\pi\in\S_n(2413,4213)}t^{\des(\pi)}x^{\iar(\pi)}y^{\comp(\pi)}
\end{equation}
\end{corollary}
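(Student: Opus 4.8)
The plan is to derive \eqref{eq:iar:comp} and \eqref{sym4213} purely formally from results already in hand, without any new combinatorial construction. First I would observe that \eqref{eq:iar:comp} is nothing but \eqref{eq:sep} of Theorem~\ref{thm:sep} after the substitution $x \leftrightarrow p$: indeed, $\comp$ is totally $\oplus$-compatible, $\iar$ is partially $\oplus$-compatible with $\iar(\id_1)=1$, and $\des$ is totally $\oplus$-compatible, so I can invoke Lemma~\ref{general}~(2) with the statistic tuple $(\st_1,\st_1') = (\des,\iar)$ (treating $\des$ as a totally-compatible $\st_i$ and $\iar$ as a partially-compatible $\st_i'$). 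The hypothesis of Lemma~\ref{general}~(2) is exactly the equidistribution of $(\des,\iar)$ over $\S_n(2413,3142)$ and $\S_n(2413,4213)$, which is precisely Theorem~\ref{thm:sep} restricted to $x=1$ — or, more directly, Theorem~\ref{thm:sep} already records the refined $(\des,\dd,\iar)$ equidistribution, from which the $(\des,\iar)$ equidistribution follows by setting $x=1$. But wait: Lemma~\ref{general} requires $P$ and $Q$ to be collections of \emph{indecomposable} patterns. Both $2413$, $3142$, $4213$ are indecomposable permutations (none is a nontrivial direct sum), so the hypothesis is met. Hence $P\sim_{(\des,\iar)}Q$ upgrades to $P\sim_{(\comp,\des,\iar)}Q$, which is \eqref{eq:iar:comp}.

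Actually, I realize the cleanest route does not even go through Lemma~\ref{general}~(2) as a black box but re-derives it: by Lemma~\ref{general}~(1), the generating function $F_P(q,t,y;z)$ for $(\comp,\des,\iar)$ over $\S_n(2413,3142)$ is determined by $I_P(t,y;z)$, the generating function for $(\des,\iar)$ over indecomposable $(2413,3142)$-avoiders, via the explicit formula \eqref{relation:comp}. Since Theorem~\ref{thm:sep} (with $x=1$) gives $F_P(1,t,y;z)=F_Q(1,t,y;z)$, the equivalence of statements (i)--(iii) in the proof of Lemma~\ref{general} forces $I_P(t,y;z)=I_Q(t,y;z)$, and then \eqref{relation:comp} gives $F_P(q,t,y;z)=F_Q(q,t,y;z)$ for all $q$. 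Setting $q=x$ yields \eqref{eq:iar:comp} verbatim. I would phrase this as the one-line application of Lemma~\ref{general}~(2).

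For \eqref{sym4213}, the point is to combine \eqref{eq:iar:comp} with the symmetry already established in Theorem~\ref{thm:sep:sym}. Theorem~\ref{thm:sep:sym} asserts the existence of an involution on $\S_n(2413,3142)$ preserving $(\LMAX,\DESB)$ and swapping $(\comp,\iar)$; specializing the displayed generating-function identity there by setting all $x_i = 1$ and $y_i = t$ (so that ${\bf x}^{\LMAX(\pi)} = 1$ and ${\bf y}^{\DESB(\pi)} = t^{|\DESB(\pi)|} = t^{\des(\pi)}$, using $|\DESB(\pi)| = \des(\pi)$) collapses it to
\begin{equation*}
\sum_{\pi\in\S_n(2413,3142)}t^{\des(\pi)}s^{\comp(\pi)}t'^{\,\iar(\pi)} = \sum_{\pi\in\S_n(2413,3142)}t^{\des(\pi)}s^{\iar(\pi)}t'^{\,\comp(\pi)},
\end{equation*}
i.e.\ $(\des,\comp,\iar)$ and $(\des,\iar,\comp)$ are equidistributed over the separable class. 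Now chain this with \eqref{eq:iar:comp}: the left side of \eqref{sym4213} equals the $(\des,\comp,\iar)$ generating function over $\S_n(2413,3142)$ by \eqref{eq:iar:comp}, which equals the $(\des,\iar,\comp)$ generating function over $\S_n(2413,3142)$ by the symmetry just displayed, which equals the $(\des,\iar,\comp)$ generating function over $\S_n(2413,4213)$ by \eqref{eq:iar:comp} again — and that last one is the right side of \eqref{sym4213}. So \eqref{sym4213} is a three-step transport of equalities.

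I do not anticipate a genuine obstacle here, since everything is a formal consequence of Theorems~\ref{thm:sep:sym}, \ref{thm:sep}, and Lemma~\ref{general}. The only point requiring a little care is the bookkeeping of which variable plays which role: one must check that $\comp$ is totally $\oplus$-compatible while $\iar$ is only partially so (both are noted explicitly after Proposition~\ref{desides}), and that $\iar(\id_1)=1$, in order to be entitled to invoke Lemma~\ref{general}~(2) on the pair $(\des,\iar)$; and one must verify the three length-$4$ patterns involved are indecomposable, which is immediate. A secondary subtlety is making sure the specialization of Theorem~\ref{thm:sep:sym} to obtain the $t^{\des}$-weighted symmetry is legitimate — this uses $\des(\pi)=|\DESB(\pi)|$, which holds because $\DESB$ is a set-valued refinement of $\des$ (stated in the introduction). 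Once these are noted, the corollary follows in a few lines.
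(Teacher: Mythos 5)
Your proposal is correct and follows essentially the same route as the paper: \eqref{eq:iar:comp} comes from Theorem~\ref{thm:sep} at $x=1$ combined with Lemma~\ref{general}~(2) (using that $2413$, $3142$, $4213$ are indecomposable, $\des$ is totally and $\iar$ partially $\oplus$-compatible), and \eqref{sym4213} then follows by chaining \eqref{eq:iar:comp} with the specialization of Theorem~\ref{thm:sep:sym}. Your extra check that $\iar(\id_1)=1$ is harmless but not needed for Lemma~\ref{general}~(2) (it is only required in Lemma~\ref{general:sym}).
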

\begin{proof}
Since the patterns $2413,4213$ and $3142$ are indecomposable, the equidistribution~\eqref{eq:iar:comp} is a consequence of Theorem~\ref{thm:sep} (with $x=1$) and Lemma~\ref{general}.

The identity~\eqref{sym4213} follows directly from~\eqref{eq:iar:comp} and Theorem~\ref{thm:sep:sym}.
\end{proof}

\begin{remark}
In view of Corollary~\ref{comp:iar}, one may wonder that if~\eqref{eq:sep} can be further refined by $\comp$. This is not true and  it turns out that even $(\dd,\comp)$ is not equidistributed over $\S_5(2413,3142)$ and $\S_5(2413,4213)$. Can Theorem~\ref{thm:sep} be further refined by other classical  permutation statistics (cf.~\cite{kit})?
\end{remark}

Lin and Kim~\cite{LK} showed that, among all permutation classes avoiding two patterns of length 4, the three classes below  are the only nontrivial classes which are $\des$-Wilf equivalent to the class of separable permutations.

\begin{theorem}[\text{Lin and Kim~\cite[Theorem~5.1]{LK}}]\label{thm:kim}
We have the refined Wilf-equivalences:
$$
(2413,3142)\sim_{\des}(2413,4213)\sim_{\DES}(2314,3214)\sim_{\DES}(3412,4312).
$$
\end{theorem}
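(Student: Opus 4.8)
The plan is not to reproduce the case analysis of~\cite{LK}, but to reduce the chain of equivalences to its essential ingredients. Since $P\sim_{\DES}Q$ trivially forces $P\sim_{\des}Q$, the whole chain follows from the three links
$$
(2413,3142)\sim_{\des}(2413,4213),\qquad (2413,4213)\sim_{\DES}(2314,3214),\qquad (2314,3214)\sim_{\DES}(3412,4312).
$$
The first link is already available inside the present paper: setting $x=y=1$ in Theorem~\ref{thm:sep} yields $\sum_{\pi\in\S_n(2413,3142)}t^{\des(\pi)}=\sum_{\pi\in\S_n(2413,4213)}t^{\des(\pi)}$ (one may also cite Wang~\cite{wan}). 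So the substance lies in the two set-valued links. Here the key observation is that $\DES$ is strictly finer than any Eulerian statistic, so a bookkeeping device that only tracks $\des$ --- a kernel-method computation, say --- cannot detect it; we must instead exhibit honest $\DES$-preserving bijections $\S_n(2413,4213)\to\S_n(2314,3214)$ and $\S_n(2314,3214)\to\S_n(3412,4312)$.

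For each set-valued link I would follow one recursive recipe. First, fix for each class a canonical block decomposition --- most naturally, cut a permutation at the position of its largest letter $n$ (or, dually, of $1$), in the spirit of the block decompositions of Section~\ref{sec:one 3-pattern} --- and then read off from the two forbidden patterns precisely which configurations of the two resulting blocks are allowed; since each of our defining pairs has the shape $\{\rho\oplus1,\rho'\oplus1\}$ or $\{\tau\ominus\sigma,\tau'\ominus\sigma\}$ with all of $\rho,\rho',\tau,\tau',\sigma$ of length at most $3$, these admissibility conditions are short and completely explicit. Such a decomposition writes a class member of size $n$ as an assembly of strictly smaller class members together with a bounded amount of gluing data (the position of $n$, or the length of the initial ascending run, and so on), and one verifies that each descent of $\pi$ either sits inside one of the blocks or at a junction whose descent status is determined by the gluing data alone. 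The bijection $\Phi_P$ is then constructed by induction on $n$: decompose $\pi$ by the $P$-recipe, apply the already-built $Q$-bijection to each block, and reassemble by the $Q$-recipe, matching up the gluing data so that each junction keeps its ascent/descent label. One concludes $\DES(\Phi_P(\pi))=\DES(\pi)$, since descents internal to blocks are preserved by the induction hypothesis and junction descents by the matching of gluing data, and $\Phi_P$ is invertible because each recipe is.

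The hard part will be arranging the decompositions on the two sides of a link so that they are genuinely compatible. The three pairs are pairwise inequivalent under the eight symmetry operations --- a short computation places $(2413,4213)$, $(2314,3214)$ and $(3412,4312)$ in three distinct orbits of $\langle\,\mathrm{r},\mathrm{c},{}^{-1}\rangle$ --- and in any event none of those operations preserves $\DES$; so the bijections cannot be mere relabellings and must actually permute the blocks, after which keeping all junction descents aligned is the delicate point. Should the bijective bookkeeping become unwieldy, a viable substitute is to forgo explicit bijections and prove directly that the multiset $\{\DES(\pi):\pi\in\S_n(P)\}$ satisfies a single recursion valid for all three pairs $P$ --- equivalently, that the fundamental quasisymmetric generating function $\sum_{\pi\in\S_n(P)}F_{n,\DES(\pi)}$ does not depend on $P$ --- which reduces everything to one (still nontrivial) recursive identity for a quasisymmetric function and is likely the cleanest route to a write-up.
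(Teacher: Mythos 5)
The paper does not prove this statement at all: it is quoted verbatim from Lin and Kim~\cite[Theorem~5.1]{LK}, so the only ``proof'' in the paper is the citation. Your proposal therefore has to stand on its own, and as it stands it has a genuine gap. The numerical link $(2413,3142)\sim_{\des}(2413,4213)$ is fine (setting $x=y=1$ in Theorem~\ref{thm:sep}, or citing Wang~\cite{wan}, neither of which circularly depends on Theorem~\ref{thm:kim}), but the two $\DES$-links are never actually established: you describe a recursive recipe and an inductive matching of ``gluing data,'' yet no decomposition is written down, no compatibility is verified, and the fallback quasisymmetric identity is likewise left unproved. A plan of this kind is not a proof, and here the plan itself rests on a false structural premise: you claim every defining pair has the shape $\{\rho\oplus1,\rho'\oplus1\}$ or $\{\tau\ominus\sigma,\tau'\ominus\sigma\}$, but $2413$ and $3142$ are simple permutations -- they are neither $\oplus$- nor $\ominus$-decomposable -- so the pair $(2413,4213)$ has no such uniform shape (only $4213=1\ominus213$ does). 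Precisely because $2413$ is simple, cutting at the position of $n$ does not reduce $(2413,4213)$- or $(2413,3142)$-avoidance to independent conditions on two blocks plus bounded junction data; compare Stankova's Lemma~\ref{dec:stankova}, where the structure around $n$ already involves an unbounded number of interleaved blocks, and note that even there the decomposition is used for an algebraic, not bijective, argument.

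It is also worth saying that the route Lin and Kim actually take is different from the one you sketch: their $\DES$-equivalences are obtained through generating trees and bijections with $021$-avoiding inversion sequences (the growth rule quoted here as Lemma~\ref{insert:2413} is part of that machinery), and the delicate point you flag -- keeping all junction descents aligned across incompatible decompositions -- is exactly where a naive cut-at-$n$ induction breaks down. So to make your argument complete you would either have to carry out the generating-tree/inversion-sequence analysis (essentially reproving \cite{LK}) or genuinely establish the quasisymmetric-function recursion you mention; at present neither is done, and the chain of equivalences beyond the first link remains unproved.
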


It should be noted that $\iar$ is not a Comtet statistic over $\S_n(2314,3214)$. Computer program indicates that, among all permutation classes avoiding two patterns of length 4,  the classes of $(2413,4213)$ and $(3412,4312)$ are the only two that are $(\des,\iar,\comp)$-Wilf equivalent to the class of separable permutations.
\begin{theorem}\label{thm:3412-4312}
We have the refined Wilf-equivalence $(2413,4213)\sim_{(\DES,\comp)}(3412,4312)$. In particular, 
$$(2413,3142)\sim_{(\des,\iar,\comp)}(2413,4213)\sim_{(\des,\iar,\comp)}(3412,4312).$$ Consequently, 
\begin{equation}\label{sym3412}
\sum_{\pi\in\S_n(3412,4312)}t^{\des(\pi)}x^{\comp(\pi)}y^{\iar(\pi)}=\sum_{\pi\in\S_n(3412,4312)}t^{\des(\pi)}x^{\iar(\pi)}y^{\comp(\pi)}.
\end{equation}
\end{theorem}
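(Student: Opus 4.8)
The plan is to obtain Theorem~\ref{thm:3412-4312} from the Lin--Kim $\DES$-equivalence $(2413,4213)\sim_\DES(3412,4312)$ contained in Theorem~\ref{thm:kim}, after promoting it by the statistic $\comp$, and then to combine the result with Theorem~\ref{thm:sep} and Corollary~\ref{comp:iar}. The only genuinely new ingredient is a $\DES$-analogue of Lemma~\ref{general}: \emph{for collections of indecomposable patterns, a $\DES$-Wilf-equivalence automatically upgrades to a joint $(\DES,\comp)$-Wilf-equivalence.}

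First, a direct check shows that each of $2413$, $4213$, $3412$, $4312$ is indecomposable (none has a proper prefix that is a permutation of an initial interval $\{1,\dots,i\}$). For a collection $P$ of indecomposable patterns, encode $\DES(\pi)$ of $\pi\in\S_n$ by the word $W(\pi):=X_{\epsilon_1}X_{\epsilon_2}\cdots X_{\epsilon_{n-1}}$ in two noncommuting variables, where $\epsilon_i=1$ if $i\in\DES(\pi)$ and $\epsilon_i=0$ otherwise (so $W(\pi)$, weighted by $z^n$, records $\DES(\pi)$ precisely). Since a direct sum creates no descent at the junction and only shifts the descents of the second factor, we have the fundamental identity
$$
W(\tau\oplus\mu)=W(\tau)\,X_0\,W(\mu).
$$
Hence, writing $\pi=\tau_1\oplus\cdots\oplus\tau_c$ with the $\tau_i$ indecomposable (which for indecomposable $P$ is $P$-avoiding componentwise, with $\comp(\pi)=c$), the generating functions
$$
\widehat F_P:=1+\sum_{n\ge1}z^n\sum_{\pi\in\S_n(P)}p^{\comp(\pi)}W(\pi),\qquad
\widehat I_P:=\sum_{n\ge1}z^n\sum_{\pi\in\In_n(P)}W(\pi)
$$
in the (completed) free algebra over $\mathbb{Z}[p]$ on $X_0,X_1$ obey
$$
\widehat F_P=1+p\,\widehat I_P\,\bigl(1-p\,X_0\,\widehat I_P\bigr)^{-1},
$$
which is formally the same equation as~\eqref{relation:comp}, with concatenation through $X_0$ replacing the addition of $\oplus$-compatible statistics. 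Solving it shows that $\widehat F_P$ and $\widehat I_P$ determine each other, and moreover that $\widehat F_P|_{p=1}$ alone determines $\widehat I_P$; hence for indecomposable $P,Q$,
$$
P\sim_\DES Q\iff\widehat F_P|_{p=1}=\widehat F_Q|_{p=1}\iff\widehat I_P=\widehat I_Q\iff\widehat F_P=\widehat F_Q\iff P\sim_{(\DES,\comp)}Q.
$$
Applying the equivalence of the first and last items to $(2413,4213)\sim_\DES(3412,4312)$ gives $(2413,4213)\sim_{(\DES,\comp)}(3412,4312)$, and since $\des$ and $\iar$ are functions of $\DES$, also $(2413,4213)\sim_{(\des,\iar,\comp)}(3412,4312)$.

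It remains to chain this with the separable class. Theorem~\ref{thm:sep} with $x=1$ gives $(2413,3142)\sim_{(\des,\iar)}(2413,4213)$; as $\des$ is totally and $\iar$ partially $\oplus$-compatible, and $2413$, $3142$, $4213$ are indecomposable, Lemma~\ref{general}(2) upgrades this to $(2413,3142)\sim_{(\des,\iar,\comp)}(2413,4213)$, whence
$$
(2413,3142)\sim_{(\des,\iar,\comp)}(2413,4213)\sim_{(\des,\iar,\comp)}(3412,4312).
$$
Finally,~\eqref{sym3412} follows by feeding the equidistribution $(3412,4312)\sim_{(\des,\iar,\comp)}(2413,4213)$ into both ends and using the symmetry~\eqref{sym4213} of Corollary~\ref{comp:iar} in the middle:
$$
\sum_{\pi\in\S_n(3412,4312)}t^{\des(\pi)}x^{\comp(\pi)}y^{\iar(\pi)}
=\sum_{\pi\in\S_n(2413,4213)}t^{\des(\pi)}x^{\comp(\pi)}y^{\iar(\pi)}
=\sum_{\pi\in\S_n(2413,4213)}t^{\des(\pi)}x^{\iar(\pi)}y^{\comp(\pi)}
=\sum_{\pi\in\S_n(3412,4312)}t^{\des(\pi)}x^{\iar(\pi)}y^{\comp(\pi)}.
$$

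The one real obstacle is the upgrade lemma of the second paragraph: one must notice that, although $\DES$ is set-valued, it is carried by a word in which $\oplus$ inserts exactly one extra letter $X_0$, and this is precisely what makes the functional equation — and hence the reverse implication $P\sim_\DES Q\Rightarrow P\sim_{(\DES,\comp)}Q$ — go through word for word as in the proof of Lemma~\ref{general}. If one prefers to avoid noncommutative series, an alternative route to $(3412,4312)\sim_{(\des,\iar,\comp)}(2413,4213)$ is to compute $\tilde\S(3412,4312)^{\des,\iar,\comp}(t,r,p)$ directly by the block decomposition with respect to the largest letter and match it against the separable generating function $\tilde\S(\SE)(t,r,p)$ of~\eqref{eq:dou-sch}; I would keep that only as an independent verification.
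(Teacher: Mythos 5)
Your proof is correct, and its overall architecture coincides with the paper's: take the Lin--Kim equivalence $(2413,4213)\sim_{\DES}(3412,4312)$ from Theorem~\ref{thm:kim}, upgrade it by $\comp$ using the indecomposability of the four patterns, deduce the $(\des,\iar,\comp)$-statement because $\des$ and $\iar$ are functions of $\DES$, chain with the separable class exactly as in Corollary~\ref{comp:iar} (your second-to-last paragraph simply reproves \eqref{eq:iar:comp} from Theorem~\ref{thm:sep} and Lemma~\ref{general}(2)), and finally transport the symmetry \eqref{sym4213} across the equivalence to get \eqref{sym3412}. Where you genuinely diverge is in the upgrade step. The paper proves a general Lemma~\ref{gen:setv}: for any sequence of totally $\oplus$-compatible \emph{set-valued} statistics and collections of indecomposable patterns, equidistribution over $\S_n$ forces, by a short induction on $n$, equidistribution over the indecomposables $\In_n$, hence joint equidistribution with $\comp$; this lemma is reused later (e.g.\ for $\LMAXP$ in Theorem~\ref{sch:last}). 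You instead encode $\DES$ as a word in noncommuting letters $X_0,X_1$, use $W(\tau\oplus\mu)=W(\tau)X_0W(\mu)$, and derive a noncommutative analogue of \eqref{relation:comp}, $\widehat F_P=1+p\,\widehat I_P(1-pX_0\widehat I_P)^{-1}$, from which $\widehat I_P=\bigl(1+(\widehat F_P|_{p=1}-1)X_0\bigr)^{-1}(\widehat F_P|_{p=1}-1)$ shows that the $p=1$ specialization already determines the full $(\DES,\comp)$ distribution. Both arguments are valid; the paper's inductive lemma is more general and reusable for other set-valued statistics, while your transfer-matrix-style identity is specific to $\DES$ but gives an explicit functional equation and a determination of the indecomposable generating series that mirrors Lemma~\ref{general} verbatim.
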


%\red{We can try to prove as many conjectures as possible. Time is limited! }$\ell\oplus S:=\{\ell+s_1,\ell+s_2,\ldots\}$For an integer $\ell$ and a set $S=\{s_1,s_2,\ldots\}$, let $S^{\ell}:=$.
% Given two sets $S=\{s_1,s_2,\ldots,s_i\}\in[n]$ and $T=\{t_1,t_2,\ldots,t_j\}\in[m]$, we define their {\em translational union}, denoted as $S\biguplus T\in[n+m]$, to be the union of $S$ and $\{t_1+n,t_2+n,\ldots,t_j+n\}$. This operation on sets is clearly seen to be associative, so expression like $\biguplus_{i=1}^kS_i$ is well-defined. A set-valued statistic $\ST$ is called {\em totally $\oplus$-compatible} if for each $\pi=\tau_1\oplus\tau_2\oplus\cdots\oplus\tau_k$ with every $\tau_i$ indecomposable,
% $$
% \ST(\pi)=\biguplus_{i=1}^k \ST(\tau_i).
% $$

In order to prove Theorem~\ref{thm:3412-4312}, we need a set-valued version of Lemma~\ref{general}. For an integer $\ell$ and a set $S=\{s_1,s_2,\ldots\}$, let $\ell\oplus S:=\{\ell+s_1,\ell+s_2,\ldots\}$. A set-valued statistic $\ST$ is called {\em totally $\oplus$-compatible} if for each $\pi=\tau_1\oplus\tau_2\oplus\cdots\oplus\tau_k$ with each $\tau_i$ an indecomposable permutation of length $\ell_i$, 
$$
\ST(\pi)=\bigcup_{i=1}^k c_i\oplus\ST(\tau_i),
$$
where $c_i=\sum_{j=1}^{i-1}\ell_j$. Note that the set-valued statistics $\DES$, $\DESB$, $\LMAX$ and $\LMAXP$ are all totally $\oplus$-compatible. 

\begin{lemma}\label{gen:setv}
Let $(\ST_1,\ST_2,\ldots)$ be a sequence of  totally  $\oplus$-compatible set-valued statistics.
Let $P$ and $Q$ be two collections of indecomposable patterns. If $(\ST_1,\ST_2,\ldots)$ has the same distribution over $\S_n(P)$ and $\S_n(Q)$ for $n\geq1$, then so does $(\comp,\ST_1,\ST_2,\ldots)$.
\end{lemma}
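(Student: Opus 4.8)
The plan is to mimic the generating-function argument used in Lemma~\ref{general}, but now tracking the set-valued data carefully through the direct sum decomposition. The key observation is that if $\sigma$ is an indecomposable pattern and $\pi=\tau_1\oplus\tau_2\oplus\cdots\oplus\tau_k$ is the direct sum decomposition of $\pi$, then $\pi$ avoids $\sigma$ if and only if each $\tau_i$ avoids $\sigma$; since every pattern in $P$ (resp.~$Q$) is indecomposable, avoidance of $P$ (resp.~$Q$) is inherited componentwise. This lets us encode an element of $\S_n(P)$ uniquely as an ordered sequence of nonempty indecomposable $P$-avoiding permutations, with lengths summing to $n$.

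First I would set up the right notion of a multivariate generating function indexed by the set-valued statistics. Since the $\ST_j$ are only defined on permutations of a fixed ground set, I would use a formal device: for a permutation of length $n$, record the monomial $\prod_j \mathbf{x}_j^{\ST_j(\pi)}$ where $\mathbf{x}_j^{S}:=\prod_{i\in S}x_{j,i}$ in infinitely many commuting variables $x_{j,i}$, $i\geq 1$. The totally $\oplus$-compatibility says precisely that under $\pi=\tau_1\oplus\cdots\oplus\tau_k$ with $\ell_i=|\tau_i|$ and shift $c_i=\sum_{h<i}\ell_h$, the monomial of $\pi$ factors as a product of shifted monomials of the $\tau_i$. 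The shift is the only subtlety: the variable bookkeeping for $\tau_{i+1}$ uses the indices shifted by $c_{i+1}$, not the raw indices. I would handle this by working at the level of coefficients: for fixed $n$ and a fixed tuple $(S_1,S_2,\ldots)$ of subsets of $[n]$, the number of $\pi\in\S_n(P)$ with $\ST_j(\pi)=S_j$ for all $j$ and a prescribed composition of $n$ into component lengths is a product over components of the corresponding counts for indecomposable $P$-avoiders, because the shift map is a bijection between subsets of $[\ell_i]$ and subsets of $\{c_i+1,\ldots,c_i+\ell_i\}$. Summing over compositions and over the choice of how many leading components are the trivial pattern $1$ (which contributes the single-element set $\{c_i+1\}$ to each $\ST_j$ that contains it, or the empty set otherwise — this detail just needs to be carried through uniformly for both $P$ and $Q$, and it is identical in the two cases), one obtains that the joint distribution of $(\comp,\ST_1,\ST_2,\ldots)$ over $\S_n(P)$ is determined by the joint distribution of $(\ST_1,\ST_2,\ldots)$ over the indecomposable $P$-avoiders $\In_m(P)$ for $m\le n$, via a fixed combinatorial rule that does not depend on whether we started with $P$ or $Q$.

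Thus the proof reduces to: the joint distribution of $(\ST_1,\ST_2,\ldots)$ over $\S_n(P)$ and over $\S_n(Q)$ agreeing for all $n$ is equivalent to the corresponding agreement over $\In_n(P)$ and $\In_n(Q)$ for all $n$. This is the exact set-valued analogue of the equivalence of statements (i) and (ii) in the proof of Lemma~\ref{general}, and it follows by the same inversion: knowing the full-class distributions for all $n$ lets one solve recursively (peeling off the first indecomposable component) for the indecomposable-class distributions, and conversely. I would phrase this inductively on $n$ rather than via an explicit functional equation, since the infinitely many $x_{j,i}$ make a clean closed-form generating function awkward; induction on $n$ sidesteps that entirely. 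Given the hypothesis that $(\ST_1,\ST_2,\ldots)$ is equidistributed over $\S_n(P)$ and $\S_n(Q)$ for every $n$, the induction yields equidistribution of $(\ST_1,\ST_2,\ldots)$ over $\In_n(P)$ and $\In_n(Q)$ for every $n$, and then the componentwise product formula above reconstructs equidistribution of $(\comp,\ST_1,\ST_2,\ldots)$ over $\S_n(P)$ and $\S_n(Q)$.

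The main obstacle, and the only place requiring care, is the bookkeeping of the index shifts: one must check that ``totally $\oplus$-compatible'' in the set-valued sense really does make the count of $\pi$ with prescribed $(\comp,\ST_1,\ST_2,\ldots)$-data split as an honest product over indecomposable components, with the trivial-component ($\id_1$) contributions handled consistently. Once that is verified — and it is immediate from the definition of $\ell\oplus S$ together with the fact that the shift $S\mapsto c_i\oplus S$ is a bijection onto subsets of the appropriate interval — the rest is the formal skeleton already present in Lemma~\ref{general}. I would therefore write the proof as: (1) recall the componentwise avoidance equivalence; (2) state the product decomposition of the refined count, citing totally $\oplus$-compatibility; (3) run the induction on $n$ to transfer equidistribution between the full classes and the indecomposable classes in both directions; (4) conclude.
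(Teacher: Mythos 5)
Your proposal is correct and follows essentially the same route as the paper: reduce to showing that equidistribution of $(\ST_1,\ST_2,\ldots)$ over the full classes forces equidistribution over the indecomposable avoiders (by induction on length, using total $\oplus$-compatibility), and then rebuild the $(\comp,\ST_1,\ST_2,\ldots)$-distribution from the direct sum decomposition, which is the same rule for $P$ and $Q$. The paper's write-up is just a terser version of your steps (3)--(4), without the explicit variable bookkeeping.
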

\begin{proof}
Since $P/Q$ is a collection of indecomposable patterns, each $P/Q$-avoiding  permutation is a direct sum of some smaller $P/Q$-avoiding  permutations. Thus, it is sufficient to show that if $(\ST_1,\ST_2,\ldots)$ is equidistributed over $\S_n(P)$ and $\S_n(Q)$ for $n\geq1$, then $(\ST_1,\ST_2,\ldots)$ is equidistributed over $\In_n(P)$ and $\In_n(Q)$. We aim to prove this by induction on $n$. 

Obviously, the assertion is true for $n=1$. Suppose that $(\ST_1,\ST_2,\ldots)$ is equidistributed over $\In_n(P)$ and $\In_n(Q)$ for $n\leq m$. It follows that  $(\ST_1,\ST_2,\ldots)$ is equidistributed over $\S_{m+1}\setminus\In_{m+1}(P)$ and $\S_{m+1}\setminus\In_{m+1}(Q)$, as $(\ST_1,\ST_2,\ldots)$ is a sequence of  totally  $\oplus$-compatible set-valued statistics. Now $(\ST_1,\ST_2,\ldots)$ is also equidistributed over $\S_{m+1}(P)$ and $\S_{m+1}(Q)$ and so $(\ST_1,\ST_2,\ldots)$ is equidistributed over $\In_{m+1}(P)$ and $\In_{m+1}(Q)$. This completes the proof by induction. 
\end{proof}

\begin{proof}[{\bf Proof of Theorem~\ref{thm:3412-4312}}]
The refined Wilf-equivalence $(2413,4213)\sim_{(\DES,\comp)}(3412,4312)$ is a direct consequence of Theorem~\ref{thm:kim} and Lemma~\ref{gen:setv}, as the set-valued statistic $\DES$ is totally $\oplus$-compatible. The other two statements then follow immediately from Corollary~\ref{comp:iar}. 
\end{proof}

Next we compute the generating function $\tilde\S(\SE)(t,r,p)=(\S(\SE)(t,r,p;z)-1)/rpz$ with respect to $(\des,\iar,\comp)$, where $\SE$ is a pattern pair in $\{(2413,3142),(2413,4213),(3412,4312)\}$.

\begin{theorem}Let $S(t):=\S(\SE)(t,1,1;z)-1$. Then,
\begin{align}\label{eq:dou-sch}
% F_{\SE}(t,r,p;z) &=\frac{(1+S(t))(1-rpz+(1+rp-r-p)S(t))}{(1-rpz)(1+S(t)-pS(t))(1+S(t)-rS(t))},\\
\tilde\S(\SE)(t,r,p) &= \frac{(1/z+1-r-p)S(t)+(1-r)(1-p)S(t)^2}{(1-rpz)(1+(1-p)S(t))(1+(1-r)S(t))},
\end{align}
where $S(t)$ satisfies the algebraic functional equation
\begin{equation}\label{eq:sch-des}
S(t)=z+(1+t)zS(t)+tzS(t)^2+tS(t)^3.
\end{equation}
\end{theorem}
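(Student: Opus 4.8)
The plan is to reduce to one member of the family, strip off the direct-sum structure with Lemma~\ref{general}, and then use the recursive $\oplus/\ominus$ description of separable permutations to compute the indecomposable generating function and to pin down $S(t)$. By Theorem~\ref{thm:3412-4312} the triple $(\des,\iar,\comp)$ is equidistributed over $\S_n(\SE)$ for all three choices of $\SE$, so I would fix $\SE=(2413,3142)$ and work with separable permutations. Since $2413$ and $3142$ are indecomposable, $\des$ is totally $\oplus$-compatible, and $\iar$ is partially $\oplus$-compatible with $\iar(\id_1)=1$, Lemma~\ref{general}(1) (Eq.~\eqref{relation:comp}, with $q=p$ tracking $\comp$ and $w=rz$) gives
\[
\S(\SE)(t,r,p;z)=\frac{1}{1-rpz}+\frac{p\bigl(I(t,r)-rz\bigr)}{\bigl(1-pI(t,1)\bigr)(1-rpz)},
\]
where $I(t,r):=I_{\SE}(t,r;z)$ is the $(\des,\iar)$-generating function over indecomposable separable permutations. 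Thus the whole theorem reduces to computing $I(t,r)$ and $I(t,1)$, and verifying~\eqref{eq:sch-des}.

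Next I would exploit Proposition~\ref{desides}: a separable permutation of length $\ge2$ is, exclusively, a direct sum or a skew sum; hence an indecomposable (i.e.\ $\oplus$-indecomposable) separable permutation is $\id_1$ or $\ominus$-decomposable, and a $\ominus$-indecomposable separable permutation is $\id_1$ or $\oplus$-decomposable. Writing $D:=I(t,1)$ and letting $E$ be the $\des$-generating function of $\ominus$-indecomposable separable permutations, the canonical $\ominus$-decomposition (using $\des(\alpha\ominus\beta)=\des(\alpha)+\des(\beta)+1$) together with the $\oplus$-decomposition of an arbitrary separable permutation (using total $\oplus$-compatibility of $\des$) should yield the system
\[
D=z+\frac{tE^{2}}{1-tE},\qquad E=z+S-D,\qquad D=\frac{S}{1+S}.
\]
Eliminating $D$ and $E$ produces a polynomial identity in $S$ that factors as $(1+S)\bigl(tS^{3}+tzS^{2}+((1+t)z-1)S+z\bigr)=0$; since $1+S$ is an invertible power series this is exactly~\eqref{eq:sch-des}. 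Along the way I would record the identities $E+D-z=S$, $D(1+S)=S$, and $E(1-D)-D(D-z)=z$.

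To get $I(t,r)$, I would refine the $\ominus$-decomposition. Since $\iar(\alpha\ominus\beta)=\iar(\alpha)$ always, an indecomposable separable permutation $\pi=\sigma_1\ominus\cdots\ominus\sigma_m$ with each $\sigma_i$ $\ominus$-indecomposable satisfies $\iar(\pi)=\iar(\sigma_1)$, giving $I(t,r)=rz+\dfrac{tE^{(r)}E}{1-tE}$, where $E^{(r)}$ is the $(\des,\iar)$-generating function of $\ominus$-indecomposable separable permutations. For $E^{(r)}$ one uses that such a permutation is $\id_1$ or $\oplus$-decomposable; splitting a $\oplus$-decomposable permutation at its leading run of $\id_1$'s, handling the $\id_k$'s separately, and invoking partial $\oplus$-compatibility of $\iar$, expresses $E^{(r)}$ linearly in $I(t,r)$ and $D$. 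Substituting $\dfrac{tE}{1-tE}=\dfrac{D-z}{E}$ and simplifying with the identities of the previous paragraph collapses everything to a linear equation for $I(t,r)$ whose solution is
\[
I(t,r)=rz+\frac{r\bigl((1-z)S-z\bigr)}{(1+S)\bigl(1+(1-r)S\bigr)},\qquad I(t,1)=\frac{S}{1+S}.
\]

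Finally, plugging these into the displayed formula for $\S(\SE)$, using $1-pI(t,1)=\dfrac{1+(1-p)S}{1+S}$, and dividing by $rpz$ would give~\eqref{eq:dou-sch} after a routine simplification. I expect the only real obstacle to be the $\iar$-bookkeeping in the third paragraph: correctly tracking which factors' $\iar$ "survives'' the alternation between $\oplus$- and $\ominus$-levels, with the increasing blocks $\id_k$ being the delicate case. Everything else — the factoring of the quartic in $S$ and the final assembly — is mechanical algebra.
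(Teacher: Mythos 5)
Your proposal is correct — I checked the system for $D=I(t,1)$, $E$ and $S$, the factorization $(1+S)\bigl(tS^3+tzS^2+((1+t)z-1)S+z\bigr)=0$, the closed form $I(t,r)=rz+\frac{r((1-z)S-z)}{(1+S)(1+(1-r)S)}$, and the final assembly into \eqref{eq:dou-sch}, and all of these hold — but it follows a genuinely different route from the paper. The paper does not compute the bivariate indecomposable series $I_{\SE}(t,r)$ at all: it cites \cite{flz} for the functional equation \eqref{eq:sch-des}, and then invokes formula \eqref{gen:symm} from Lemma~\ref{general:sym}, which expresses the full $(\des,\iar,\comp)$ generating function solely in terms of $I_{\SE}(t;z)=S/(1+S)$, the $\des$-generating function of indecomposable separable permutations; the price is that \eqref{gen:symm} presupposes the equidistribution of $(\des,\iar)$ with $(\des,\comp)$, i.e.\ it leans on the symmetry result (Theorem~\ref{thm:sep:sym}, whose proof is deferred to \cite{flw2}). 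You instead start from the weaker Lemma~\ref{general}(1), Eq.~\eqref{relation:comp}, which requires no symmetry input, and pay for it by having to determine $I(t,r)$ directly through the $\oplus/\ominus$ structure of separable permutations (Proposition~\ref{desides}), alternating between $\oplus$- and $\ominus$-indecomposable pieces; as a by-product you re-derive \eqref{eq:sch-des} rather than quoting \cite{flz}. So your argument is more self-contained and constructive (and incidentally yields the explicit refined indecomposable series $I(t,r)$, which the paper never writes down), while the paper's argument is shorter given its earlier machinery and makes the $r\leftrightarrow p$ symmetry of \eqref{eq:dou-sch} conceptually transparent from the outset. The one step you flagged as delicate — the $\iar$-bookkeeping across the leading run of $\id_1$ summands when expressing the $\ominus$-indecomposable series $E^{(r)}$ linearly in $I(t,r)$ and $D$ — is handled exactly by the partial $\oplus$-compatibility of $\iar$ and does work out: the resulting linear equation gives the stated $I(t,r)$, and substituting it together with $1-pI(t,1)=\frac{1+(1-p)S}{1+S}$ into \eqref{relation:comp} reproduces \eqref{eq:dou-sch}.
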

\begin{proof}
The functional equation~\eqref{eq:sch-des} for the generating function $S(t)$ of the descent polynomials over separable permutations was proved in~\cite{flz}. Since the patterns $2413$ and $3142$ are indecomposable, $\des$ is totally $\oplus$-compatible and $\iar$ is partially $\oplus$-compatible, Eq.~\eqref{gen:symm} gives 
\begin{equation}\label{gen:symmS}
\S(\SE)(t,r,p;z)=\frac{1-rpz+(rpz+rp-r-p)I_{\SE}}{(1-rI_{\SE})(1-pI_{\SE})(1-rpz)},
\end{equation}
where $I_{\SE}:=I_{\SE}(t;z)$ is the generating function with respect to $\des$. Since $S(t)=\frac{I_{\SE}}{1-I_{\SE}}$, we have 
$$
I_{\SE}=\frac{S(t)}{1+S(t)}.
$$
Substitute this into~\eqref{gen:symmS} and simplify, we get~\eqref{eq:dou-sch}.  
\end{proof}

Aided by the computer program, we make the following conjecture, whose validity will complete the characterization of pattern pairs of length $4$ that are  $\iar$-Wilf equivalent to the class of separable permutations.
\begin{conj}\label{schroder:iar}
Let $P\notin\{(2413,3142),(2413,4213),(3412,4312)\}$ be a pair of patterns of length $4$. Then,
$P$ is $\iar$-Wilf equivalent to $(2413,4213)$
  if and only if $P$ is one of the following eleven  pairs:
 \begin{align*}
 &(1324,2134),(1324,3124),(1423,4123),(1432,4132),(2134,2314),(2314,3124) \nonumber\\
 &\,\,\qquad\blue{(2431,4231)},(2431,3241),(3241,3421),(3421,4231),(3421,4321).
 \end{align*}
 Moreover, if $P$ is one of the last five pairs (i.e.,~those in the second line above), then $P$ is $(\iar,\comp)$-Wilf equivalent to $(2413,4213)$. 
\end{conj}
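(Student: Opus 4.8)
We outline a plan for Conjecture~\ref{schroder:iar}; the cases we have verified were handled exactly along these lines. Since $P\sim_\iar Q$ forces $P\sim Q$, and $(2413,4213)$ is enumerated by the large Schr\"oder numbers, the only length-$4$ pattern pairs $P$ that can possibly be $\iar$-Wilf equivalent to $(2413,4213)$ are those with $|\S_n(P)|$ equal to the large Schr\"oder number; these were classified by Kremer, West and others, and there are only finitely many. So the first task is to decide, for each such $P$, whether its $\iar$-distribution --- equivalently the series $\tilde\S(P)(1,r,1)$ --- agrees with that of $(2413,4213)$, and to prove that the pairs for which it does are precisely the eleven listed. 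One cautionary remark: unlike the classical Wilf equivalence or its $\comp$-refinement (Observation~\ref{obs:2}), the statistic $\iar$ is \emph{not} preserved by the three elementary symmetries, so two pairs in a common symmetry class may have different $\iar$-distributions; hence the reduction is to the full finite list of Schr\"oder-enumerated pairs rather than to one representative per symmetry class, even though the standard symmetries still reduce the $\comp$-side bookkeeping.

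Next, specializing \eqref{eq:dou-sch} at $t=p=1$ gives the target series
\[
\tilde\S(2413,4213)(1,r,1)=\frac{(1/z-r)\,S}{(1-rz)\bigl(1+(1-r)S\bigr)},\qquad S=z+2zS+zS^2+S^3,
\]
the last equation being \eqref{eq:sch-des} at $t=1$. For each Schr\"oder-enumerated $P$ from the first step --- the eleven conjectured pairs and the remaining ones --- one would derive a functional equation for $\tilde\S(P)(t,r,p)$ by the decomposition techniques pervading Sections~\ref{sec:one 3-pattern}--\ref{sec3: two 3-patterns} (conditioning on the position or value of $n$, or on the length of the initial ascending run, possibly followed by a kernel-method elimination), solve it for $\tilde\S(P)(1,r,1)$, and compare with the series above; for a pair \emph{not} on the list it suffices to exhibit one $n$ at which the $\iar$-distributions differ. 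I expect several of the eleven pairs to reduce to a common functional equation --- for instance those built on $\{3421,4231\}$, respectively on $\{1324\}$, are likely to share their $\iar$-generating function --- which would shrink the number of genuinely distinct computations.

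For the ``moreover'' clause, note that each of $2431$, $3241$, $3421$, $4231$, $4321$ is indecomposable (in each the first letter exceeds $1$ or equals $n$), so each of the last five pairs --- like $(2413,4213)$ itself --- is a collection of indecomposable patterns. For such a pair, Lemma~\ref{general} shows that the three assertions $P\sim_\iar(2413,4213)$, equality of the $\iar$-generating functions over indecomposable avoiders, and $P\sim_{(\comp,\iar)}(2413,4213)$ are all equivalent; hence once the $\iar$-equivalence has been settled for these five pairs, the $(\comp,\iar)$-equivalence follows immediately, and one may in fact run the computation on $\In_n$ rather than on $\S_n$. This also explains why the $\comp$-refinement is claimed only here: each of the first six pairs contains a decomposable pattern (for example $1324=1\oplus213$ and $2134=21\oplus12$), Lemma~\ref{general} does not apply, and $\comp$ is expected to fail.

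The bottleneck is the second step. One must first pin down the complete list of Schr\"oder-enumerated length-$4$ pattern pairs together with their symmetry classes, and then --- because $\iar$ is not respected by the standard symmetries --- produce the $\iar$-refined generating function essentially class by class, the classical Wilf classification offering no shortcut; several of these functional equations will be of kernel type and require some care in coefficient extraction. By contrast, the negative half of the ``if and only if'' is routine once a handful of coefficients are in hand, so the real labour is confirming the positive half simultaneously for all eleven pairs.
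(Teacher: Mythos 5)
The statement you chose is a conjecture, and the paper does not prove it either; its full strength rests on computer evidence. What the paper actually supplies is (a) the observation --- identical in substance to your last paragraph --- that the ``moreover'' clause follows automatically from the first assertion via Lemma~\ref{general}, because every pattern in the last five pairs is indecomposable (though your parenthetical justification is off: ``first letter exceeds $1$'' does not imply indecomposability, e.g.\ $2134=21\oplus1\oplus1$; the clean reason is that $2431,4231,3241,3421,4321$ all end in $1$, so no proper prefix can be $\{1,\dots,k\}$), and (b) a verification of exactly one case, $P=(2431,4231)$, in Theorem~\ref{sch:last}. For that case the paper computes no generating function at all: it proves the stronger refinement $(2413,4213)\sim_{(\LMAXP,\comp)}(2431,4231)$ by a generating-tree argument, showing in Lemma~\ref{insert:2431} that appending a new letter to a $(2431,4231)$-avoiding permutation produces the same rewriting rule $\Omega_{\mathrm{Sch}}$ as Lin--Kim's Lemma~\ref{insert:2413} does for $(2413,4213)$, and reading $\LMAXP$ (which determines $\iar$) off the star nodes of the common tree, with Lemma~\ref{gen:setv} supplying $\comp$. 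That is a genuinely different, and stronger, route than the case-by-case comparison of $\tilde\S(P)(1,r,1)$ you propose, and it is the kind of argument one would hope extends to the other conjectured pairs.

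As a proof of the conjecture, your proposal has a real gap rather than a stylistic one: the essential content --- deriving the $\iar$-distribution for each Schr\"oder-enumerated pair and matching it against $\tilde\S(2413,4213)(1,r,1)$ --- is only announced, not carried out, for any of the eleven pairs, and the negative half additionally presupposes the complete classification of Schr\"oder-enumerated length-$4$ pairs, which you cite but do not verify. Your reduction steps are sound: $P\sim_{\iar}Q$ does force $P\sim Q$; your specialization of \eqref{eq:dou-sch} at $t=p=1$ (with \eqref{eq:sch-des} at $t=1$) is correct; your caution that $\iar$ is not preserved by the elementary symmetries matches the paper's Observations; and your use of Lemma~\ref{general} for the last five pairs coincides with the paper's own remark following the conjecture. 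So the plan is a reasonable programme, but in its present form it neither proves the statement nor reproduces the one case the paper settles, and for that case the paper's generating-tree/$\LMAXP$ argument delivers more (a set-valued refinement by $\LMAXP$ jointly with $\comp$) than the $\iar$-generating-function comparison you envisage.
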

\begin{remark}
In view of Lemma~\ref{general}, the second assertion for the $(\iar,\comp)$-Wilf equivalences in Conjecture~\ref{schroder:iar} follows automatically from the first assertion, as all the patterns appear in the last five pairs are indecomposable.   
\end{remark}
In the rest of this section, we aim to confirm Conjecture~\ref{schroder:iar} for  the pattern pair $P=(2431,4231)$ using the technique of generating trees, which was originally employed  to study the {\em Baxter permutations} by  Chung, Graham, Hoggatt and Kleiman~\cite{chung}, see also \cite{SS,wes}.

\begin{theorem}\label{sch:last}
We have the refined Wilf equivalence 
$$(2413,4213)\sim_{(\LMAXP,\comp)}(2431,4231).$$
In particular, $(2413,4213)\sim_{(\lmax,\iar,\comp)}(2431,4231)$ and Conjecture~\ref{schroder:iar} is true for  $P=(2431,4231)$. 
\end{theorem}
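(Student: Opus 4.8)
Since $\lmax(\pi)=|\LMAXP(\pi)|$ and $\iar(\pi)=\max\{k\colon\{1,\dots,k\}\subseteq\LMAXP(\pi)\}$, both $\lmax$ and $\iar$ are functions of the set-valued statistic $\LMAXP$. The statistic $\LMAXP$ is totally $\oplus$-compatible and all four patterns $2413,4213,2431,4231$ are indecomposable, so by Lemma~\ref{gen:setv} it suffices to prove that
\[
\LMAXP\ \text{is equidistributed over}\ \S_n(2413,4213)\ \text{and}\ \S_n(2431,4231)\qquad(n\geq1).
\]
Granting this, Lemma~\ref{gen:setv} upgrades the equidistribution to $(\comp,\LMAXP)$, which is exactly $(2413,4213)\sim_{(\LMAXP,\comp)}(2431,4231)$; the ``in particular'' claim follows because $(\lmax,\iar)$ is read off from $\LMAXP$, and Conjecture~\ref{schroder:iar} for $P=(2431,4231)$ follows too (specialize the $\mathbf{x}$-variables to get the $\iar$-Wilf-equivalence; the $(\iar,\comp)$-version is then immediate, since $(\iar,\comp)$ is a function of $(\LMAXP,\comp)$, or via Lemma~\ref{general} as in the remark after the conjecture).

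\textbf{A generating-tree bijection.} We build each class by inserting the successive maxima: from $\pi\in\S_m(P)$ one passes to $\ins_{m+1,j}(\pi)$, for $j$ ranging over the \emph{active sites} --- those positions at which the result still avoids $P$. The decisive feature is the transformation law of $\LMAXP$: if $p$ is the position of $m$ in $\pi$ and $\sigma=\ins_{m+1,j}(\pi)$, then
\[
\LMAXP(\sigma)=\bigl(\LMAXP(\pi)\cap\{1,\dots,j-1\}\bigr)\cup\{j\},
\]
because $m+1$ becomes a left-to-right maximum and every position to its right ceases to be one; since $\LMAXP(\pi)\subseteq\{1,\dots,p\}$, inserting at a site $j\geq p+1$ merely appends $j$ to $\LMAXP$, whereas inserting at a site $j\leq p$ truncates $\LMAXP(\pi)$ below $j$ and then adjoins $j$. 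In particular the last position is always active, so $\pi\mapsto\pi\oplus 1$ is always permitted, consistent with closure under $\oplus$. Crucially, \emph{the new value of $\LMAXP$ depends only on the old value of $\LMAXP$ and on the position $j$}. Hence, if we can build a bijection $\S_n(2413,4213)\to\S_n(2431,4231)$ recursively, sending $\ins_{m+1,j}(\pi)$ to $\ins_{m+1,j}(\pi')$ where $\pi\mapsto\pi'$ is the bijection one level down and $j$ is an active site for \emph{both} $\pi$ and $\pi'$, then this bijection automatically preserves $\LMAXP$. What is needed is a generating tree in which (i) each label determines the set of active sites by their positions, (ii) for both classes the labels evolve under one and the same finite succession rule, so the two trees are isomorphic as labelled trees, and (iii) corresponding nodes in the two trees carry the same active-site set. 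The induced bijection then respects $\LMAXP$, and the reduced statement follows.

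\textbf{Main obstacle.} The substance lies in the active-site analysis. For $P=(2413,4213)$ one must pin down precisely which positions $j$ keep $\ins_{m+1,j}(\pi)$ in the class --- a short case check on how $m+1$, as one of the two largest entries, could complete a copy of $2413$ or $4213$ --- and isolate the bounded data about $\pi$ (its active-site set, organized relative to the left-to-right-maxima skeleton) that propagates under insertion; then carry out the same analysis for $P=(2431,4231)$; and finally verify that the two succession rules, both as rules on labels \emph{and} as rules on active-site positions, literally coincide. I expect (a) checking that this data really is finite-state, rather than accumulating unbounded information about the skeleton, and (b) matching the two rules position-for-position, to be where the work is; the reduction and the tree formalism are routine by comparison. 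Since $(2431,4231)$ arises from $(2413,4213)$ by transposing the last two letters of each pattern, the natural strategy is to organize the active-site analysis so that this transposition manifests as a symmetry of the rule fixing both the label and the active-site positions. A useful consistency check on any candidate rule is that it must reproduce, for small $n$, the matrices $M_n(2413,3142)$ displayed at the start of this section, since $M_n(2413,3142)=M_n(2413,4213)$ by Theorem~\ref{thm:3412-4312}.
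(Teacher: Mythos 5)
Your reduction is sound and is exactly the paper's: $\lmax$ and $\iar$ are functions of $\LMAXP$, and since $\LMAXP$ is totally $\oplus$-compatible and all four patterns are indecomposable, Lemma~\ref{gen:setv} upgrades $\LMAXP$-equidistribution to $(\LMAXP,\comp)$, from which all the stated consequences follow. The gap is in the main step, and it is not merely unfinished work: the generating tree you propose --- inserting the new maximum $m+1$ at the active positions --- cannot satisfy your requirement (iii), already at $n=4$. In $\S_3(2413,4213)$ the unique permutation with a deficient active-site set is $213$ (inserting $4$ at positions $1$ or $2$ creates $4213$ or $2413$, so its active sites are $\{3,4\}$), whereas in $\S_3(2431,4231)$ the deficient permutation is $231$ (active sites $\{3,4\}$), while $213$ has all four sites active there. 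Since $213$ is the only length-$3$ permutation with $\LMAXP=\{1,3\}$ and $\LMAXP(231)=\{1,2\}$, no correspondence can simultaneously preserve $\LMAXP$ and match active sites position-for-position; and the correspondence your recursion actually forces (it must send $213\mapsto231$, these being the unique nodes with two-element active sets) maps $2143$, with $\LMAXP=\{1,3\}$, to $2341$, with $\LMAXP=\{1,2,3\}$, so the induced bijection already fails to preserve $\LMAXP$ at $n=4$. Note also that your own observation that the new $\LMAXP$ has maximum $j$ forces matched children to use the \emph{same} position $j$, so there is no room to rescue the scheme by re-pairing sites.

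The paper's proof uses a different growth rule: it appends a new \emph{last} letter, the label being the set of available \emph{values} $\AVA(\pi)=\{k:\ins_{k,n}(\pi)\in\S_n(P)\}$. For $(2431,4231)$ this set is an interval $[m,n]$ and evolves by the succession rule $\Omega_{\mathrm{Sch}}$ in~\eqref{rule} (Lemma~\ref{insert:2431}); Lin--Kim's Lemma~\ref{insert:2413} gives literally the same rule for $(2413,4213)$. In that tree, position $i$ belongs to $\LMAXP(\pi)$ exactly when the $i$-th step appends a new maximum (a ``star'' node); every node has exactly one star child, whose label is the same function of the parent's label in both classes, so the label isomorphism of the two trees automatically matches stars and hence preserves $\LMAXP$. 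That structural feature is what your max-insertion tree lacks: tracking insertion \emph{values} at the last position transfers between the two classes, while tracking insertion \emph{positions} of the maximum does not. To repair your argument you would need to switch to (or rediscover) this append-at-the-end tree and verify the two growth rules coincide, which is precisely the content of Lemmas~\ref{insert:2431} and~\ref{insert:2413}.
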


In view of Lemma~\ref{gen:setv}, to prove Theorem~\ref{sch:last}, it is sufficient to prove the refined Wilf-equivalence $(2413,4213)\sim_{\LMAXP}(2431,4231)$. We will prove this by showing a growth  rule for $(2431,4231)$-avoiding permutations and then comparing  it with that of $(2413,4213)$-avoiding permutations. 

For $\pi\in\S_{n-1}$ and $i\in[n]$, let $\ins_i(\pi):=\ins_{i,n}(\pi)\in\S_{n}$. For example, $\ins_3(14532)=156423$. If $\pi\in\S_{n-1}(2431,4231)$, then introduce the set of {\em{\bf\em ava}ilable inserting values} of $\pi$ as
$$
\AVA(\pi):=\{k\in[n]: \ins_k(\pi)\in\S_{n}(2431,4231)\}=\{k_1>k_2>\cdots\}.
$$
 Clearly, if $i\in\AVA(\pi)$, then $k\in\AVA(\pi)$ for any $i\leq k\leq n$, since the newly inserted letter, which appears at the end, can only play the role of `1' in a pattern $2431$ or $4231$. Thus, $\AVA(\pi)=[m,n]:=\{m,m+1,\ldots,n\}$ for some $m<n$. We will call $m$ the {\em critical value} of $\pi$ in the sequel.
For example, we have $\AVA(14523)=[3,6]$.

We have the following growth rule for $(2431,4231)$-avoiding permutations. 
\begin{lemma}\label{insert:2431}Suppose $\pi\in\S_{n-1}(2431,4231)$ with 
$\AVA(\pi)=[m,n]$.
Then, 
$$
\AVA(\ins_j(\pi))=
\begin{cases}
[j,n+1], & \text{if $m\leq j\leq n-1$;}\\
[m,n+1], & \text{if $j=n$.} 
\end{cases}
$$
\end{lemma}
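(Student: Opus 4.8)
The plan is to analyze directly how inserting a new letter at the end of $\pi$ interacts with the patterns $2431$ and $4231$, exploiting the key structural observation already recorded before the statement: since the inserted letter always lands in the last position, in any forbidden pattern it can only play the role of the final ``1''. Consequently, for a word $w$ of length $n$ (here $w=\ins_j(\pi)$), we have $w\in\S_n(2431,4231)$ if and only if the prefix $w(1)\cdots w(n-1)$ avoids both patterns \emph{and} there is no occurrence of the length-3 pattern ``$231$'' or ``$321$'' among the letters of the prefix that are larger than $w(n)$; equivalently, writing $v$ for the prefix, $w$ is avoiding iff $v$ avoids $2431,4231$ and the subsequence of $v$ consisting of entries exceeding $w(n)$ avoids $231$ and $321$. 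Since $\pi$ itself is already avoiding, the only new constraint created by $\ins_j$ is the second one.

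First I would reformulate $\AVA$ in these terms: for $\pi\in\S_{n-1}(2431,4231)$, a value $k\in[n]$ lies in $\AVA(\pi)$ iff the subword of $\ins_k(\pi)$ formed by entries $>k$ avoids $231$ and $321$ — call such a $k$ ``good for $\pi$''. The already-noted upward-closedness of $\AVA(\pi)$ then just says: if $k$ is good then every larger value is good, which is immediate since enlarging $k$ only removes letters from the relevant subword. So $\AVA(\pi)=[m,n]$ with $m$ the critical value. Next I would fix $\pi\in\S_{n-1}(2431,4231)$ with $\AVA(\pi)=[m,n]$ and a choice $j$ with $m\le j\le n$, set $\rho:=\ins_j(\pi)\in\S_n(2431,4231)$ (legal precisely because $j\ge m$), and compute $\AVA(\rho)=[m',n+1]$. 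The claim is $m'=j$ when $j\le n-1$ and $m'=m$ when $j=n$.

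The computation of $m'$ splits according to where $j$ sits. After relabelling, $\rho$ has its last letter equal to $j$ (the entries of $\pi$ that were $\ge j$ have been shifted up by one). To decide whether a value $k\in[n+1]$ is good for $\rho$, we look at the subword $S_k$ of $\rho(1)\cdots\rho(n)$ consisting of entries $>k$ and ask whether $S_k$ avoids $231$ and $321$, i.e. whether $S_k$ has no descent followed later by a smaller-or-intermediate value — concretely $S_k$ must be such that it avoids both $231$ and $321$, which (for a permutation word) means $S_k$ has at most one descent and... more precisely it means every entry after the first descent is larger than everything before it is \emph{not} required; the cleanest equivalent is: $S_k$ avoids $\{231,321\}$ iff $S_k$ can be written as an increasing sequence followed by a single ``drop'' to a value, i.e. $S_k$ has the form $a_1<a_2<\cdots<a_t$ possibly with the very last letter smaller — I would pin this down as the shape ``increasing, then at most one final smaller letter'', equivalently $\mathrm{des}(S_k)\le 1$ and the unique descent, if present, occurs at the penultimate position. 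With that normal form in hand the analysis is: for $k\ge j$ the last letter $\rho(n)=j$ is excluded from $S_k$, so $S_k$ is just the corresponding subword of $\pi$ with relabelled values, and goodness for $\rho$ reduces to goodness for $\pi$, giving exactly the values $[\max(m,k),\dots]$ — hence all $k\ge m$ with $k\ge j$... I would track this carefully to see that when $j=n$ nothing new is forbidden and $m'=m$, while for $k<j$ the letter $j$ \emph{does} appear at the end of $S_k$ as a new trailing large value that can complete a forbidden pattern, and one checks this kills exactly the values $k<j$, forcing $m'=j$ when $j\le n-1$.

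The main obstacle I expect is the bookkeeping in the case $m\le j\le n-1$, $k<j$: I must show that inserting $j$ at the end genuinely destroys goodness for every such $k$, i.e. produce an actual $231$ or $321$ in $S_k$. This needs a witness inside $\pi$: because $j\ge m$ and $\AVA(\pi)=[m,n]$, there is some structure forcing a descent among entries of $\pi$ exceeding $k$ that lies to the left of where $j$ now sits, and together with the trailing $j$ this yields a $231$ (if the descent-bottom is $>j$... ) or a $321$. I would isolate this as a short sublemma: if $k<j\le n$ and $\pi\in\S_{n-1}(2431,4231)$ with critical value $m\le j$, then among the entries of $\pi$ that are $>k$, the last two (in position order) are \emph{not} both greater than $j$ with the later one larger — forcing, after appending $j$, an occurrence of $231$ or $321$. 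Verifying this sublemma by a direct pattern argument, using that $\pi$ is $\{2431,4231\}$-avoiding and that $j$ was a legal insertion point for $\pi$ (so $S_j^\pi$ was already in normal form), is the technical heart; the rest is routine once it is established. The upward-closedness half, and the $j=n$ case, are straightforward from the observations above.
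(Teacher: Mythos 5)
Your central reformulation is incorrect, and the error is not cosmetic. When the newly appended letter plays the role of the ``1'' in an occurrence of $2431$ or $4231$, the three earlier, larger letters form $243$ or $423$, which reduce to the patterns $132$ and $312$ --- not $231$ and $321$ as you claim. Concretely, take $\pi=132$: then $\ins_1(\pi)=2431$ contains $2431$, so $1\notin\AVA(132)$; yet the letters of the prefix $243$ exceeding the appended $1$ avoid both $231$ and $321$, so your criterion would declare $1$ available. The error then propagates into your ``normal form'': avoiding $\{231,321\}$ is \emph{not} equivalent to ``increasing, then at most one final smaller letter'' (e.g.\ $1324$ avoids both), and the correct class, $\{132,312\}$-avoidance, is characterized instead by every letter being a left-to-right maximum or a left-to-right minimum. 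On top of this, the step you yourself identify as ``the technical heart'' --- exhibiting, for every $k<j$, an actual occurrence of a forbidden pattern after inserting $k$ into $\ins_j(\pi)$, and verifying that $j$ itself remains available --- is only promised via a sublemma whose statement is garbled, not proved. So as written the proposal is both wrong in its key reduction and incomplete where the real work lies.

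For comparison, once the reduction is stated with the correct patterns the lemma is short, and this is essentially what the paper does: in $\ins_j(\pi)$ the letters $j-1$ and $j+1$ precede the final $j$, so these three letters form a $132$ or $312$, and inserting $j-1$ (indeed any $k<j$, using some value in $[k,j-1]$ together with some value $>j$) completes a $2431$ or $4231$, showing $k\notin\AVA(\ins_j(\pi))$; while $j\in\AVA(\ins_j(\pi))$ because any forbidden pattern ending at the newly inserted $j$ would also occur with the penultimate letter $j+1$ in place of $j$, contradicting that $\ins_j(\pi)$ is avoiding; the interval structure of $\AVA$ and the easy case $j=n$ then finish the proof. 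Your plan can be repaired along exactly these lines, but you must fix the $132/312$ versus $231/321$ mix-up, discard the faulty normal form, and actually carry out the witness construction for all $k<j$ and the verification for $k=j$.
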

\begin{proof}
For  $m\leq j\leq n-1$, the letters $j-1$ (if $j\ge 2$) and $j+1$ appear before $j$ in $\ins_j(\pi)$ and these three letters form a pattern $132$ or $312$. Thus, $j-1\notin\AVA(\ins_j(\pi))$. On the other hand, suppose $\hat\pi:=\ins_j(\ins_j(\pi))=\hat\pi(1)\cdots\hat\pi(n)\hat\pi(n+1)$, then we see $\hat\pi(a),\hat\pi(b),\hat\pi(c)$ and $\hat\pi(n)=j+1$ form a pattern $2431$ or $4231$, if and only if $\hat\pi(a),\hat\pi(b),\hat\pi(c)$ and $\hat\pi(n+1)=j$ do. This means we have $j\in\AVA(\ins_j(\pi))$. Therefore $j$ is the critical value of $\ins_j(\pi)$ and $\AVA(\ins_j(\pi))=[j,n+1]$. Clearly, $\AVA(\ins_n(\pi))=[m,n+1]$. This completes the proof of the lemma. 
\end{proof}

%For otherwise, $j$ plays the role `$1$' in a pattern $2431/4231$ of $\ins_j(\ins_j(\pi))$, which could be replaced by the letter $j+1$.  This indicates that $\ins_j(\pi)$ contains a pattern $2431/4231$, a contradiction. In conclusion

The definition of $\AVA(\pi)$ for a $(2413,4213)$-avoiding permutation $\pi$ was introduced similarly in~\cite{LK}, where they proved the following growth rule. Note that for any $\pi\in\S_{n-1}(2413,4213)$, $\AVA(\pi)$ always contains $1$ and $n$.
\begin{lemma}[\text{Lin and Kim~\cite[Lemma~5.3]{LK}}]
\label{insert:2413}Suppose $\pi\in\S_{n-1}(2413,4213)$ with 
$$
\AVA(\pi)=\{n=k_1>k_2>\cdots>k_m=1\}.
$$
Then, for  $1\leq j\leq m$,
$$
\AVA(\ins_{k_j}(\pi))=\{n+1\geq k_j+1>k_j>k_{j+1}>\cdots>k_m=1\}.
$$
\end{lemma}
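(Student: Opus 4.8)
The plan is to reduce the determination of $\AVA(\sigma)$, where $\sigma:=\ins_{k_j}(\pi)$, to a clean forbidden-value criterion and then to track how that criterion transforms under the insertion. First I would record the following fact, valid for every $\tau\in\S_N(2413,4213)$: both forbidden patterns carry their second-largest entry in the \emph{last} position, and $\tau$ itself is already avoiding, so a value $x\in[N+1]$ lies outside $\AVA(\tau)$ exactly when $\ins_x(\tau)$ has a new occurrence whose terminal letter is the appended $x$. Since inserting $x$ raises every entry $\geq x$ by one, such an occurrence exists precisely when there are positions $a<b<c$ with $\tau(c)<\min(\tau(a),\tau(b))<x\leq\max(\tau(a),\tau(b))$ (the inequality on the right is non-strict because an original value $\geq x$ is bumped above $x$). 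Hence the forbidden values form a union of half-open intervals $(\min(\tau(a),\tau(b)),\max(\tau(a),\tau(b))]$ over the \emph{active} pairs $a<b$, a pair being active when some later position carries a value smaller than both $\tau(a)$ and $\tau(b)$.

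Next I would analyze $\sigma$ directly. On positions $1,\dots,n-1$ it is $\pi$ with every value $\geq k_j$ raised by one (an order-isomorphism I will call $\phi$), while position $n$ carries the new letter $k_j$. The active pairs of $\sigma$ split according to whether their witness lies in the old part ($c\leq n-1$) or is the appended letter ($c=n$). The old-witness pairs are exactly the active pairs of $\pi$, and here I would invoke $k_j\in\AVA(\pi)$: this forces every active pair of $\pi$ to have both values below $k_j$ or both values at least $k_j$, since no interval $(\min,\max]$ may contain $k_j$. A ``low'' pair therefore keeps its interval, a ``high'' pair has its interval shifted up by exactly one under $\phi$, so the old part of $\sigma$ forbids precisely the forbidden values of $\pi$ lying below $k_j$ together with the forbidden values of $\pi$ lying above $k_j$ each raised by one. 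The pairs with witness $c=n$ (the appended $k_j$ playing the smallest role) contribute the further intervals coming from position-pairs both of whose $\pi$-values are at least $k_j$; since those values are each raised by $\phi$, these new intervals all begin at $k_j+2$.

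Assembling these pieces, I would verify region by region that $\AVA(\sigma)=\{n+1,\,k_j+1,\,k_j,\,k_{j+1},\dots,k_m\}$, the top entry $n+1$ coinciding with $k_j+1$ only when $j=1$. The values below $k_j$ keep their status, returning $\{k_{j+1},\dots,k_m\}=\AVA(\pi)\cap[1,k_j-1]$; both $k_j$ and $k_j+1$ become available because every shifted high interval and every new interval starts at $k_j+2$; and $n+1$ is available because the maximum inserted at the end can never serve as the ``$3$''. The one substantive point is to show that every $x\in\{k_j+2,\dots,n\}$ is forbidden. The shifted high intervals already cover this range except for the values $k_i+1$ with $2\leq i\leq j-1$, and for each such $x=k_i+1$ I would exhibit a new-witness pair outright: because $\pi$ is a permutation of $[n-1]$ it contains some value in $[k_j,k_i-1]$ and some value in $[k_i,n-1]$, and those two positions together with the appended $k_j$ constitute the required active pair.

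The step I expect to be the crux is the bookkeeping of the shift $\phi$ combined with the half-open nature of the forbidden intervals: it is exactly the one-unit gap opened at $k_j$ and $k_j+1$ that must be matched against the claimed chain, and one must check carefully that the high intervals and the new intervals both begin precisely at $k_j+2$ so that nothing below $k_j$ leaks in and nothing in $\{k_j+2,\dots,n\}$ is missed. Everything else is routine once the forbidden-value criterion is established; in particular the witnesses for $x=k_i+1$ are immediate from $\pi$ being a full permutation. Should one prefer to sidestep the interval bookkeeping, an alternative is a direct generating-tree verification in the spirit of the proof of Lemma~\ref{insert:2431}, confirming separately that $k_j,k_j+1\in\AVA(\sigma)$, that $\{k_j+2,\dots,n\}\cap\AVA(\sigma)=\emptyset$, and that $\AVA(\sigma)\cap[1,k_j-1]=\AVA(\pi)\cap[1,k_j-1]$.
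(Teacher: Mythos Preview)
The paper does not supply a proof of this lemma at all: it is quoted verbatim from Lin and Kim~\cite[Lemma~5.3]{LK} and used as an external ingredient in the proof of Theorem~\ref{sch:last}. So there is no in-paper argument to compare against.

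That said, your proposed argument is correct. The forbidden-value criterion---that $x\notin\AVA(\tau)$ for $\tau\in\S_N(2413,4213)$ if and only if there exist $a<b<c$ with $\tau(c)<\min(\tau(a),\tau(b))<x\le\max(\tau(a),\tau(b))$---is exactly right, since in both patterns $2413$ and $4213$ the last letter is the second largest. Your case split into old-witness and new-witness active pairs is clean, and the key observation that $k_j\in\AVA(\pi)$ forces every active pair of $\pi$ to be entirely below $k_j$ or entirely at least $k_j$ (no straddling) is precisely what makes the $\phi$-shift bookkeeping work out: low pairs keep their intervals and reproduce $\AVA(\pi)\cap[1,k_j-1]$, while high pairs shift by one into $[k_j+2,n]$, leaving $k_j$ and $k_j+1$ available. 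The only remaining values $k_i+1$ for $2\le i\le j-1$ are indeed handled by the new-witness pairs you exhibit, since $\pi$ being a permutation of $[n-1]$ guarantees values in both $[k_j,k_i-1]$ and $[k_i,n-1]$. One implicit point worth making explicit is that $\sigma=\ins_{k_j}(\pi)$ is itself $(2413,4213)$-avoiding (this is the definition of $k_j\in\AVA(\pi)$), so the forbidden-value criterion applies to $\sigma$.
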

 
 We are ready to prove Theorem~\ref{sch:last} by constructing the generating trees for both classes. 
\begin{proof}[{\bf Proof of Theorem~\ref{sch:last}}] Label each $\pi\in\S_n(2431,4231)$ by $|\AVA(\pi)|$, then Lemma~\ref{insert:2431} produces the rewriting rule:
\begin{align}\label{rule}
\Omega_{\mathrm{Sch}}=\begin{cases}
(2) \\
(k) \leadsto (k+1), (k+1), (k), (k-1), \ldots, (3).
\end{cases}
\end{align}
This means that the initial permutation $\id_1$ has label $(2)$ and all the $(2431,4231)$-avoiding permutations derived from inserting a letter at the end of a $(2431,4231)$-avoiding permutation labeled by $(k)$, are exactly those with labels $(k+1), (k+1), (k), (k-1), \ldots, (3)$.

\begin{figure}
\setlength{\unitlength}{1mm}
\begin{picture}(120,30)\setlength{\unitlength}{1mm}
\thinlines
\put(50,25){$(2)^*$}
\put(52,23.5){\line(-3,-1){15}}\put(53,23.5){\line(3,-1){15}}
\put(34,14){$(3)^*$}    \put(66,14){$(3)$}
\put(36,12.5){\line(-1,-1){8}}\put(36.5,12.5){\line(0,-1){8}}\put(37,12.5){\line(1,-1){8}}
\put(25,1){$(4)^*$}\put(34,1){$(4)$}\put(43,1){$(3)$}

\put(68,12.5){\line(-1,-1){8}}\put(68.5,12.5){\line(0,-1){8}}\put(69,12.5){\line(1,-1){8}}
\put(57,1){$(4)^*$}\put(66,1){$(4)$}\put(75,1){$(3)$}
\end{picture}
\caption{First three levels of the generating tree for $\cup_{n\geq1}\S_n(2431,4231)$.}
\label{tree}
\end{figure}
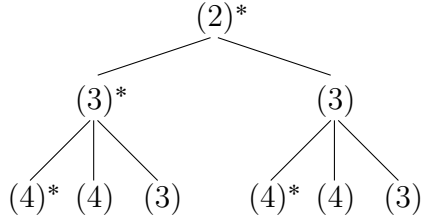
%In order to distinguish a permutation whose greatest letter appears at the end, we label it as $(k)^*$ if its parameter is $k$.  

We can construct a {\em generating tree} (an infinite rooted and labeled tree) for $(2431,4231)$-avoiding permutations by representing each permutation as a node on the tree using its label. More precisely, the root is labeled $(2)$, and the children of a node labeled $(k)$ are those generated according to the rewriting rule $\Omega_{\mathrm{Sch}}$ in~\eqref{rule}. In addition, the labels for those permutations ending with their greatest letter will have an extra `$*$', and we will call the corresponding nodes the {\em star nodes}. So in this generating tree, every node has precisely one child being a star node. See Fig.~\ref{tree} for the first few levels of this generating tree. Note that the nodes at the $n$-th level of this tree are in one-to-one correspondence with elements of $\S_n(2431,4231)$. Moreover, if a permutation $\pi\in\S_n(2431,4231)$ is labeled by $\ell(\pi)$, and the unique path from the root $(2)^*$ to $\ell(\pi)$ goes through $p_1=(2)^*, p_2,\ldots,p_n=\ell(\pi)$, then 
$$
\LMAXP(\pi)=\{i: p_i \text{ is a star node}\}.
$$
For instance, the second $(4)^*$ appearing in level $3$ corresponds to $213$ and $\LMAXP(213)=\{1,3\}$. In other words, the distribution of $\LMAXP$ over $(2431,4231)$-avoiding permutations is completely  determined by this generating tree. 

It can be readily checked that Lemma~\ref{insert:2413} gives the same rewriting rule $\Omega_{\mathrm{Sch}}$ for $(2413,4213)$-avoiding permutations, which in turn, produces for $(2413,4213)$-avoiding permutations the identical generating tree as $(2431,4231)$-avoiding permutations. This proves $(2413,4213)\sim_{\LMAXP}(2431,4231)$, as desired. 
\end{proof}

%%%%%%%%%%%%%%%%%%%%%%%%%%%%%%%%%%%%%%%
\section{Revisiting separable and (2413,4213)-avoiding permutations}
\label{sec:revisit}
%%%%%%%%%%%%%%%%%%%%%%%%%%%%%%%%%%%%%%%

The main purpose of this section is to prove Theorem~\ref{thm:sep}. We begin with the motivation that leads to the discovery of Theorem~\ref{thm:sep}.

Recall that a sequence $e=(e_1,e_2,\ldots,e_n)\in\N^n$ is an {\em inversion sequence} of length $n$ if $e_i<i$ for each $i\in[n]$. An inversion sequence is {\em$021$-avoiding} if its positive entries are weakly increasing. Denote by $\I_n(021)$ the set of $021$-avoiding inversion sequences of length $n$.  Kim and Lin~\cite{LK} 
\begin{itemize}
\item constructed a bijection $\Psi:\I_n(021)\rightarrow\S_n(2413,4213)$ which transforms the set-valued statistic $\ASC$ to $\DES$, where $\ASC(e):=\{i\in[n-1]: e_i<e_{i+1}\}$ is the set of ascents of $e$. In particular,  together with the works in~\cite{cor,flz,lin} we know  
\begin{equation}\label{int:sch}
S_n(t)=\sum_{e\in\I_n(021)}t^{\asc(e)}=\sum_{\pi\in\S_n(2413,4213)}t^{\des(\pi)},
\end{equation}
where $\asc(e):=|\ASC(e)|$;
\item proved combinatorially via the so-called {\em modified Foata--Strehl action} that
\begin{equation}\label{gam:act}
\sum_{\pi\in\S_n(2413,4213)}t^{\des(\pi)}=\sum_{k=0}^{\lfloor\frac{n-1}{2}\rfloor}|\Gamma_{n,k}(2413,4213)|t^k(1 + t)^{n-1-2k}.
\end{equation}
\end{itemize}
Recall that $\Gamma_{n,k}(2413,4213)$ is the set of permutations in $\S_n(2413,4213)$ with $k$ descents and without double descents. Combining~\eqref{gam:sch},~\eqref{int:sch} and~\eqref{gam:act} yields 
\begin{equation}\label{eq:gam}
|\Gamma_{n,k}(2413,3142)|=|\Gamma_{n,k}(2413,4213)|
\end{equation} 
for all $0\leq k\leq n-1$. This identity was refined recently by Wang~\cite{wan} as
\begin{equation}\label{eq:wang}
\sum_{\pi\in\S_n(2413,3142)}t^{\des(\pi)}x^{\dd(\pi)}=\sum_{\pi\in\S_n(2413,4213)}t^{\des(\pi)}x^{\dd(\pi)},
\end{equation} 
where $\dd(\pi)$ denotes the number of double descents of $\pi$. Setting $x=0$ in~\eqref{eq:wang} we recover~\eqref{eq:gam}. 

Theorem~\ref{thm:sep} is a refinement of 
Wang's equidistribution~\eqref{eq:wang} by the Comtet statistic $\iar$. 
The three numerical statistics $\des$, $\dd$ and $\iar$ are all determined by the set-valued statistic $\DES$, but unfortunately $(2413,4213)$ is not $\DES$-Wilf equivalent to $(2413,3142)$. In spite of that, we still have the refined Wilf-equivalence $(2413,4213)\sim_{(\des,\dd,\iar)}(2413,3142)$, to our surprise. 
Our proof of Theorem~\ref{thm:sep} is purely algebraic, basing on Kim--Lin's bijection $\Psi$, a decomposition of $021$-avoiding inversion sequences and Stankova's block decomposition of separable permutations~\cite{sk}. It would be interesting to construct a bijective proof of this equidistribution. 

As we will see, some easy combinatorial arguments on $021$-avoiding inversion sequences (see Theorem~\ref{rec:021}) together with Theorem~\ref{thm:sep} provide an alternative approach to a recent result of the first and third authors~\cite[Theorem~3.2]{FW}.

\subsection{A recurrence for 021-avoiding inversion sequences}
\label{sec:021}
For each inversion sequence $e=(e_1,\ldots,e_n)$, let $\izero(e):=\min(\ASC(e)\cup\{n\})$ be the number of {\em initial zeros} of $e$. It follows from the aforementioned bijection $\Psi$ that for $1\leq k\leq n$,
\begin{equation}
I_{n,k}:=|\{e\in\I_n(021):\izero(e)=k\}|=|\{\pi\in\S_n(2413,4213):\iar(\pi)=k\}|.
\end{equation}
Thus, 
$$
I_{n,k}=|\{\pi\in\S_n(2413,3142):\iar(\pi)=k\}|
$$
by Theorem~\ref{thm:sep}. 
We have the following recurrence relation for $I_{n,k}$. 
\begin{theorem} \label{rec:021}We have $I_{1,1}=1$ and 
\begin{align}
\label{k1}I_{n,1}&=\sum_{k=1}^{n-1}2^{k-1}I_{n-1,k}\quad\text{for $n\geq2$},\\
\label{k2}I_{n,i}&=I_{n-1,i-1}+\sum_{k=i}^{n-1}2^{k-i}I_{n-1,k}\quad\text{for $2\leq i \leq n$}.
\end{align}
\end{theorem}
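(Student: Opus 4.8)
The plan is to establish \eqref{k1}--\eqref{k2} by a direct combinatorial decomposition of $\I_n(021)$ organized by the number of leading zeros. The easy inputs come first: $I_{1,1}=1$ because $(0)$ is the only inversion sequence of length one, and the all-zero sequence $0^n$ handles the boundary value $i=n$, which is the empty-sum ($k=n-1$) instance of \eqref{k2} once one checks it against the inductive fact $I_{n-1,n-1}=1$. For $1\le i<n$, every $e\in\I_n(021)$ with $\izero(e)=i$ has the shape $e=(0^i,e_{i+1},e_{i+2},\dots,e_n)$ with $e_{i+1}\ge 1$; the goal is to reconstruct such $e$ from a $021$-avoiding inversion sequence $e'$ of length $n-1$ together with a bounded amount of extra data, while controlling how $\izero$ moves.

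The two terms on the right of \eqref{k2} are meant to reflect two sources of such $e$. The summand $I_{n-1,i-1}$ comes from the map $e'\mapsto(0,e')$ prepending a zero: this injects $\{e'\in\I_{n-1}(021):\izero(e')=i-1\}$ into $\{e\in\I_n(021):\izero(e)=i\}$, its image being exactly the $e$'s whose leading $0$ can be deleted (with reindexing) to leave a legal inversion sequence. The weighted sum $\sum_{k=i}^{n-1}2^{k-i}I_{n-1,k}$ should absorb the remaining $e$'s: given $e'\in\I_{n-1}(021)$ with $\izero(e')=k\ge i$, one pushes a positive value into the leading-zero block so that the first positive entry of the output lands in position $i+1$, and the weight $2^{k-i}$ records an independent binary choice (``remain zero'' or ``become positive'') at each of the positions $i+2,\dots,k+1$; assembling these pieces and letting $k$ range reproduces \eqref{k2}, with \eqref{k1} being the $i=1$ specialization under the convention $I_{n-1,0}=0$. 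A cleaner way to organize the verification is to first prove the three-term recurrence
\[
2I_{n,i}=I_{n,i-1}+I_{n-1,i-1}-I_{n-1,i-2}\qquad(i\ge 2),
\]
equivalently $I_{n,i}=2I_{n,i+1}+I_{n-1,i-1}-I_{n-1,i}$ after reindexing, and then to telescope this down from large $i$ (where both sides vanish) to recover \eqref{k2} and \eqref{k1}; the three-term identity, once rewritten with all terms positive as $2I_{n,i}+I_{n-1,i-2}=I_{n,i-1}+I_{n-1,i-1}$, is the natural thing to prove by an explicit bijection, and the telescoping is routine.

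The main obstacle is pinning down the $2^{k-i}$-to-one correspondence rigorously: one must check that inserting entries into the leading-zero block preserves $021$-avoidance (so the inserted value may not exceed the smallest positive entry of $e'$) \emph{and} preserves the defining inequalities $e_j<j$, and that the resulting assignment is simultaneously well defined, injective, and onto; these two constraints point in opposite directions, and reconciling the bookkeeping is where the care lies. If one wishes to bypass this entirely, the recurrence also follows from generating functions already available here: setting $t=p=1$ in the Schr\"oder generating function \eqref{eq:dou-sch} (legitimate for the Schr\"oder pair $\SE=(2413,4213)$, hence for $\I_n(021)$ via Kim--Lin's bijection $\Psi$) collapses it to $\sum_{n\ge1}\sum_{k\ge1}I_{n,k}r^kz^n=rS/(1+(1-r)S)$ with $S=S(z)$ satisfying $S=z(1+S)^2+S^3$; comparing coefficients of $r^iz^n$ in the corresponding functional equation, using $z=S(1-S)/(1+S)$, delivers \eqref{k1} and the three-term recurrence above outright, and then \eqref{k2} by the same telescoping.
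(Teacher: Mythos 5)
Your combinatorial route is, in spirit, the paper's proof: the paper defines a reduction $\delta:\I_n(021)\rightarrow\I_{n-1}(021)$ by deleting the first entry \emph{and subtracting $1$ from every positive entry}, and then simply counts the fibers of $\delta$ intersected with $\{\izero=i\}$: a sequence $e'$ with $\izero(e')=k$ has exactly one such preimage when $k=i-1$ (all initial zeros stay zero) and exactly $2^{k-i}$ when $k\ge i$ (the $i$-th initial zero becomes a $1$, the zeros in positions $i+1,\dots,k$ are free, all later zeros are forced). The uniform shift of the positive entries is precisely the ingredient your sketch omits, and it is not optional: without it the insertion map you describe is not injective. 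Concretely, for $n=3$, $i=1$: from $e'=(0,1)$ (with $k=1$, so no free choices) your recipe can only produce $(0,1,1)$, since the inserted value must not exceed the smallest positive entry of $e'$; but $(0,1,1)$ is also produced from $e'=(0,0)$ (with $k=2$ and the free position chosen positive), while $(0,1,2)$ is never produced, so the count $I_{3,1}=3=1\cdot I_{2,1}+2\cdot I_{2,2}$ cannot be realized by your correspondence. The paper's shift resolves exactly the tension you flag ("inserted values must be small" versus "the map must be onto"): after incrementing, the inserted entries are $1$'s, $021$-avoidance and $e_j<j$ are automatic, and the fibers are disjoint by construction. A second, smaller mismatch: your first term uses the prepend-a-zero injection (image: those $e$ with $e_j<j-1$ for all $j\ge2$), which is a different set from the paper's "$k=i-1$" fiber contribution; your second-piece map would then have to target the complement of \emph{your} image, which your sketch does not do. So as written, the combinatorial argument has a genuine gap, which you yourself identify as "where the care lies."

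Your generating-function fallback does work and is not circular: \eqref{eq:dou-sch} for $\SE=(2413,4213)$ rests on Theorem~\ref{thm:sep}, which is proved in Section~\ref{sec:revisit} without using the present recurrences (alternatively you could get the same specialization from \eqref{funeq:021} at $t=x=1$, staying entirely inside inversion sequences). Your identities check out: at $t=p=1$ one gets $\sum_{n,k}I_{n,k}r^kz^n=rS/(1+(1-r)S)$ with $S=z(1+S)^2+S^3$, hence $\sum_n I_{n,i}z^n=I^i$ with $I=S/(1+S)$, and both \eqref{k1} and \eqref{k2} reduce to the single relation $I=z(1-I)/(1-2I)$, equivalent to $z=S(1-S)/(1+S)$; the three-term identity and the telescoping are fine provided you fix the boundary convention (it fails at $n=1$, $i=2$ unless you set $I_{0,0}=1$ or restrict to $n\ge2$). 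But note that this route inverts the paper's economy: Theorem~\ref{rec:021} is meant as a two-line self-contained combinatorial fact, used \emph{alongside} Theorem~\ref{thm:sep}, whereas your fallback derives it from the heaviest results in the paper.
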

\begin{proof}
Let $\I_{n,k}:=\{e\in\I_n(021):\izero(e)=k\}$.
For each inversion sequence $e\in\I_n(021)$, let $\delta(e)=(\bar e_2,\bar e_3,\ldots,\bar e_n)\in\I_{n-1}(021)$ with $\bar e_i=e_i-\chi(e_i>0)$ for $2\leq i\leq n$. The mapping $\delta:\I_n(021)\rightarrow\I_{n-1}(021)$ is surjective. To see~\eqref{k1}, for any $e\in\I_{n-1}(021)$ with $\izero(e)=k$, there are exactly $2^{k-1}$ preimages of $e$ in $\I_{n,1}$ under $\delta$, because 
\begin{itemize}
\item each of the $k$ initial zeros of $e$, except for the first zero, can be either $0$ or $1$ in its preimages;
\item but all zeros after the first positive entry of $e$, must remain zeros in its preimages, to guarantee that they are $021$-avoiding. 
\end{itemize}
Recursion~\eqref{k2} follows from similar reasoning. 
\end{proof}

\subsection{Proof of Theorem~\ref{thm:sep}} We will prove Theorem~\ref{thm:sep} by showing that the generating functions for both sides of~\eqref{eq:sep} satisfy the same algebraic functional equation. We begin with the calculation of the generating function for the right-hand side of~\eqref{eq:sep}:
\begin{align*}
G(t,x,y;z)&:=\sum_{n\geq1}z^n\sum_{\pi\in\S_n(2413,4213)}t^{\des(\pi)}x^{\dd(\pi)}y^{\iar(\pi)}\\
&=yz+(y^2+txy)z^2+(y^3+2txy^2+2ty+t^2x^2y)z^3+\cdots.
\end{align*}
%where we use the convention $e_{n+1}=+\infty$. 
For any $e=(e_1,e_2,\ldots,e_n)\in\I_n(021)$, we always attach $e_{n+1}=n$ to the end of $e$. Let $\da(e):=|\{1< i\leq n: e_{i-1}<e_i<e_{i+1}\}|$ be the number of {\em double ascents} of $e$. Since the bijection $\Psi:\I_n(021)\rightarrow\S_n(2413,4213)$ transforms the set-valued statistics $\ASC$ to $\DES$, we have 
$$
G(t,x,y;z)=\sum_{n\geq1}z^n\sum_{e\in\I_n(021)}t^{\asc(e)}x^{\da(e)}y^{\izero(e)}. 
$$

\begin{lemma}\label{lem:eq021}
We have the algebraic functional equation for $G:=G(t,x,y;z)$:
\begin{equation}\label{funeq:021}
y^3z+(txy^2z+3y^3z-2y^2z-y^2)G+c_2G^2+c_3G^3=0,
\end{equation}
where 
\begin{align*}
c_2&:=2txy^2z-2txyz+3y^3z+tyz-4y^2z-2y^2+yz+2y\quad\text{and}\\
c_3&:=txy^2z-2txyz+y^3z+txz+tyz-2y^2z-tz-y^2+yz+t+2y-1.
\end{align*}
\end{lemma}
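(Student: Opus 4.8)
The plan is to prove \eqref{funeq:021} by a structural decomposition of $021$-avoiding inversion sequences that refines, through the parameters $x$ (tracking $\da$) and $y$ (tracking $\izero$), the cubic functional equation \eqref{eq:sch-des} satisfied by $S(t)$. Recall from the discussion preceding the lemma that $G=G(t,x,y;z)=\sum_{n\geq1}z^n\sum_{e\in\I_n(021)}t^{\asc(e)}x^{\da(e)}y^{\izero(e)}$, where the convention $e_{n+1}=n$ is in force; it is convenient to observe that $\izero(e)=\min(\ASC(e)\cup\{n\})$, that $\da(e)$ equals the number of $i\in\{2,\dots,n\}$ with $\{i-1,i\}\subseteq\ASC(e)\cup\{n\}$, and that $\izero(e)=n$ precisely when $e=0^{n}$. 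Setting $x=y=1$ we have $G(t,1,1;z)=S(t)$ by \eqref{int:sch}, which provides the decisive consistency check at the end.

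The heart of the proof is to fix, for each nonempty $e\in\I_n(021)$, a recursive decomposition into at most three blocks, each of which reduces to a $021$-avoiding inversion sequence, paralleling the decomposition used in~\cite{flz} to derive \eqref{eq:sch-des}: a single block (the singleton $(0)$), one block with a prepended letter, and two- or three-block gluings along forced ascents. I would specify the block boundaries explicitly in terms of the positions of the zeros of $e$ and of the first occurrence of its maximal positive entry, and check that the blocks range independently so that their generating functions multiply. Along the way I anticipate needing one auxiliary series --- for instance, the same generating function with the leftmost block constrained, or with the value of the last entry recorded by a catalytic variable --- to be eliminated at the end.

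Granting such a decomposition, the refinement is read off mechanically. The statistic $\izero$ is localized in the leftmost block: it equals the whole length only for $0^{n}$, and otherwise it is inherited from the leftmost block (prepending a $0$ raises it by one), so $y$ decorates only the leftmost factor. Every seam between two consecutive blocks contributes to $\asc(e)$ and to $\da(e)$ an amount determined solely by the \emph{type} of the seam, so the powers of $t$ and $x$ attached to each case are constants. Collecting all contributions yields a polynomial system in $G$ and the auxiliary series; eliminating the latter and simplifying should produce \eqref{funeq:021}. As checks, setting $x=y=1$ must collapse the outcome to \eqref{eq:sch-des}, and the expansion must begin $G=yz+(y^{2}+txy)z^{2}+(y^{3}+2txy^{2}+2ty+t^{2}x^{2}y)z^{3}+\cdots$.

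The main obstacle, I expect, is the bookkeeping of $\da$ at the block boundaries. A double ascent lives on a triple of consecutive entries, so whether a seam --- in particular the final seam involving $e_{n-1}$, $e_{n}$ and the phantom $e_{n+1}=n$ --- creates a double ascent a priori depends on the last entry of the left block and the first two entries of the right one, information not recorded by the block generating functions. The real work is to show that, for the chosen decomposition, every seam is of one of finitely many types, in each of which a double ascent (and an ascent) is either always present or always absent; this is exactly what keeps the equation free of a catalytic variable. Should a catalytic variable recording the last entry prove unavoidable, the same plan still applies: one derives a kernel equation that is linear in $G$, applies the kernel method, and then extracts \eqref{funeq:021} by cancelling the kernel and clearing denominators.
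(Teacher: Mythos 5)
Your proposal is a plan rather than a proof, and the step you yourself flag as ``the real work'' is exactly the part that is missing and that the paper has to solve with a nontrivial device. You never specify the decomposition: you do not say where the blocks are cut, what the auxiliary series is, or why the blocks range independently. In the paper's argument the cut is made at $k=\min\{i\in[n]:e_{i+1}=i\}$, and the right-hand block is \emph{not} just another $021$-avoiding inversion sequence: after subtracting $k$ from its positive entries, one must remember which of its initial zeros came from entries that were originally positive. This forces the introduction of $2$-colored inversion sequences $\tilde\I_n^{(0)}(021)$ and $\tilde\I_n^{(1)}(021)$ (initial zeros colored by a function $\phi$), together with modified statistics $\asc(\e)$ and $\da(\e)$ whose definitions involve terms such as $\chi(\phi(r-1)<\phi(r))$ and $\chi(e_2=1)$. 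That last term shows concretely that your hoped-for dichotomy --- ``every seam is of one of finitely many types in which a double ascent is always present or always absent'' --- fails in the naive sense: the contribution to $\da$ at the seam depends on the first entry of the right block and on the coloring, and the paper absorbs this dependence through the correction term $tz(1-x)(1+\tilde G_1)$ in the equation for $\tilde G_1$. Without identifying this (or an equivalent catalytic mechanism and actually running the kernel method), there is no route from your outline to the specific cubic \eqref{funeq:021}.

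Concretely, the paper derives $G=yz(1+G)(1+\tilde G_1)$ from the decomposition, plus the system $\tilde G_0=z(1+\tilde G_0+\tilde G_1)(1+\tilde G_1)$ and $\tilde G_1=z\bigl(tx+t\tilde G_0+tz(1-x)(1+\tilde G_1)+\tilde G_1\bigr)(1+\tilde G_1)$ for the colored objects, eliminates $\tilde G_0$ to get a cubic for $\bar G_1=1+\tilde G_1$, and then substitutes $\bar G_1=G/\bigl(yz(1+G)\bigr)$ to reach \eqref{funeq:021}. None of these equations, nor any substitute for them, appears in your proposal; the consistency checks you list (specializing $x=y=1$ to recover \eqref{eq:sch-des}, matching the first few coefficients) are necessary but nowhere near sufficient to pin down the coefficients $c_2$ and $c_3$. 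So the gap is genuine: the central combinatorial construction and the entire algebraic derivation remain to be supplied.
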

\begin{proof}
Let $\tilde \I_n(021)$ be the set of pairs $\e=(e,\phi)$, where $e\in\I_n(021)$ and  $\phi$ is an arbitrary function from $[r]$ to $\{0,1\}$ when $\izero(e)=r$. So $\tilde \I_n(021)$ can be viewed as $021$-avoiding inversion sequences of length $n$ whose initial zeros are $2$-colored. Let 
$$\tilde\I_n^{(0)}(021):=\{(e,\phi)\in\tilde\I_n(021): \phi(1)=0\}\quad \text{and} \quad \tilde\I_n^{(1)}(021):=\tilde\I_n(021)\setminus\tilde\I_n^{(0)}(021).$$ 
For each $\e=(e,\phi)\in\tilde \I_n(021)$ with $\izero(e)=r$, if $\e\in\tilde\I_n^{(0)}(021)$, then  define 
$$
\asc(\e):=\asc(e)+|\{i\in[r-1]: \phi(i)<\phi(i+1)\}|
$$
 and 
 $$\da(\e):=\da(e)+\chi(\phi(r-1)<\phi(r));
 $$
 otherwise, $\e\in\tilde\I_n^{(1)}(021)$ and we  define 
 $$
\asc(\e):=\asc(e)+|\{i\in[r-1]: \phi(i)<\phi(i+1)\}|+1
$$
 and 
 $$\da(\e):=\da(e)+\chi(\phi(r-1)<\phi(r))+\chi(e_2=1).
 $$
 The reason of defining these two statistics in this way will become transparent when we decompose $021$-avoiding inversion sequences. 
Let us introduce  two generating functions 
 $$
 \tilde G_0(t,x;z):=\sum_{n\geq1}z^n\sum_{\e\in\tilde\I_n^{(0)}(021)}t^{\asc(\e)}x^{\da(\e)}\quad \text{and}\quad \tilde G_1(t,x;z):=\sum_{n\geq1}z^n\sum_{\e\in\tilde\I_n^{(1)}(021)}t^{\asc(\e)}x^{\da(\e)}.
$$ 
 For convenience, we use the convention that $\tilde \I_0^{(0)}(021)$, $\tilde \I_0^{(1)}(021)$ and $\I_0(021)$ contain only the empty  inversion sequence. 
 
 Each $e=(e_1,\ldots,e_n)\in\I_n(021)$ with $k=\min\{i\in[n]:e_{i+1}=i\}$ can be decomposed into a pair $(\hat e,\e)$, where $\hat e:=(e_2,e_3,\ldots,e_k)\in\I_{k-1}(021)$ and $\e:=(\tilde e,\phi)\in\tilde\I_{n-k}^{(1)}(021)$ 
 such that 
 \begin{itemize}
 \item $\tilde e=(\tilde e_1,\tilde e_2,\ldots \tilde e_{n-k})$ with $\tilde e_{\ell}=e_{k+\ell}-k\cdot\chi(e_{k+\ell}>0)$ for $1\leq\ell\leq n-k$; 
 \item and $\phi(i)=\chi(e_{k+i}>0)$ for $1\leq i\leq \izero(\tilde e)$.
 \end{itemize}
 This decomposition is reversible and satisfies 
 \begin{align*}
 \asc(e)&=\asc(\hat e)+\asc(\e),\\
 \da(e)&=\da(\hat e)+\da(\e), \quad\text{and}\\
 \izero(e)&=\izero(\hat e)+1.
 \end{align*}
 Turning the above decomposition into generating function yields 
 \begin{equation}\label{eq:F021}
 G=yz(1+G)(1+\tilde G_1).
 \end{equation}

Similar decomposition as above for $2$-colored $021$-avoiding inversion sequences  gives the system of functional equations
 $$
\begin{cases}
\,\,\tilde  G_0=z(1+\tilde G_0+\tilde G_1)(1+\tilde G_1),
\\
\,\,\tilde G_1=z(tx+t\tilde G_0+tz(1-x)(1+\tilde G_1)+\tilde G_1)(1+\tilde G_1).
\end{cases}
$$
Eliminating $\tilde G_0$ gives the functional equation for $\bar G_1:=1+\tilde G_1$:
\begin{equation*}
 \bar G_1=1+(txz-2z)\bar G_1+(tz^2-2txz^2+z^2+2z)\bar G_1^2+(txz^3-tz^3+tz^2-z^2)\bar G_1^3.
\end{equation*}
On the other hand, solving~\eqref{eq:F021} gives 
$\bar G_1=\frac{G}{yz(1+G)}$. Substituting this expression into the above equation for $\bar G_1$ results in~\eqref{funeq:021}. 
\end{proof}

Next, we continue to compute the generating function for the left-hand side of~\eqref{eq:sep}:
\begin{align*}
S=S(t,x,y;z)&:=\sum_{n\geq1}z^n\sum_{\pi\in\S_n(2413,3142)}t^{\des(\pi)}x^{\dd(\pi)}y^{\iar(\pi)}\\
&=yz+(y^2+txy)z^2+(y^3+2txy^2+2ty+t^2x^2y)z^3+\cdots.
\end{align*}
This will be accomplished by applying Stankova's block decomposition~\cite{sk} (see also~\cite{lin}) of separable permutations that we now recall. 

\begin{lemma}[Stankova~\cite{sk}]\label{dec:stankova}
A permutation $\sigma\in\S_n$ is a separable permutation (i.e. avoids $2413$ and $3142$) if and only if:
\begin{itemize}
\item[(i)] $\sigma$ is of the form (positions of the blocks)
$$
A_1, A_2, \ldots, A_k,n, B_1, B_2,\ldots, B_l,\quad (|k-l|\leq1),
$$
where $A_1<A_2<\cdots<A_k$ and $B_1>B_2>\cdots >B_l$ are blocks with respect to $n$.
\item[(ii)] The elements in any block form a permutation that avoids both $2413$ and $3142$.
\end{itemize}
\end{lemma}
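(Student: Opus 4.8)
The statement is Stankova's classical block decomposition, and I would prove it by treating the two implications separately. The forward implication is a short induction resting on Proposition~\ref{desides}, while the reverse implication carries the real work.

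\textbf{Sufficiency.} Suppose $\sigma$ has the positional form in (i) with each block avoiding $2413$ and $3142$. The key preliminary observation is that, since the $A_i$ and $B_j$ are genuine blocks with respect to $n$, two blocks adjacent in value cannot lie on the same side of $n$---they would merge into one block. Hence, listing $A_1,\dots,A_k,B_1,\dots,B_l$ in increasing order of their values, left and right blocks strictly alternate; in particular $|k-l|\le 1$, and the value-smallest block is either $A_1$ or $B_l$. If it is $A_1$, then $A_1$ occupies the first $|A_1|$ positions together with the values $1,\dots,|A_1|$, so $\sigma=\mathrm{red}(A_1)\oplus\sigma'$ with $\sigma'$ obtained from $\sigma$ by deleting $A_1$; one checks directly that $\sigma'$ again satisfies (i), with top element $n-|A_1|$. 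If $B_l$ is value-smallest, then symmetrically $\sigma=\sigma''\ominus\mathrm{red}(B_l)$ with $\sigma''$ of the form (i). Iterating this peeling writes $\sigma$ as an iterated direct/skew sum of its blocks; since each block, being separable, is built from $1$ by $\oplus$ and $\ominus$ (Proposition~\ref{desides}), so is $\sigma$, and hence $\sigma$ is separable.

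\textbf{Necessity.} Now assume $\sigma$ avoids $2413$ and $3142$. Part (ii) is immediate, since each block induces a pattern of $\sigma$ and avoidance of $\{2413,3142\}$ is inherited by patterns. In (i), the alternation of left and right blocks in value order---hence $|k-l|\le 1$, and the fact that every left block precedes $n$ which precedes every right block---holds for any permutation by the definition of blocks; so the real content is that, read left to right, the left blocks appear \emph{consecutively in position and in increasing order of value}, and likewise the right blocks appear consecutively in decreasing order of value. I would prove the two left-block assertions for every separable permutation, and then deduce the right-block assertions by applying them to the reversal $\sigma^{\mathrm{r}}$, which is again separable (the reversal of $2413$ is $3142$ and that of $3142$ is $2413$) and whose left blocks correspond, in reversed order, to the right blocks of $\sigma$.

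The heart of the argument---and the step I expect to be the main obstacle---is the left-block claim. Suppose toward a contradiction it fails. Then, using that the left blocks cover exactly the positions preceding the position $m$ of $n$, a failure must come either from two interleaved left blocks or from two left blocks appearing out of value order, and in either case one can locate positions $\mathrm{pos}(a)<\mathrm{pos}(b)$, both smaller than $m$, such that $a$ lies in one left block, $b$ lies in a \emph{distinct} left block of strictly smaller values, and $\sigma(a)>\sigma(b)$. The naive witness $\sigma(a),\sigma(b),n$ is merely a pattern $213$, which separable permutations are free to contain, so one more entry is needed: since $a$ and $b$ sit in distinct left blocks, those two blocks are not value-adjacent (else they would merge), so some value $z$ lies strictly between their value intervals, and any such $z$ is \emph{forced to lie to the right of $n$}. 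The four entries $\sigma(a),\sigma(b),n,z$, in their left-to-right order (the first three before position $m$, and $z$ after), then have value pattern $3142$, contradicting avoidance. This simultaneously shows the left blocks are position-consecutive and value-ordered, which completes the proof. The delicate point, worth emphasizing, is precisely that passing to a single representative of each block plus $n$ only ever yields patterns like $1324$, $2314$, $2134$ or $3124$ that separable permutations may contain; one must invoke the ``intermediate value'' $z$ forced onto the opposite side of $n$ to produce the fourth entry of a genuine $3142$ (or, on the right, $2413$).
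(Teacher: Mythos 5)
The paper offers no proof of this lemma: it is quoted from Stankova~\cite{sk}, accompanied only by the remark that condition (ii) is clear and that condition (i) amounts to saying $n$ belongs to no occurrence of $2413$ or $3142$. Your self-contained argument therefore goes beyond what the paper records, and its architecture is sound: for sufficiency, the value-smallest block is $A_1$ or $B_l$ and splits off as a direct or skew summand, so iterating and invoking Proposition~\ref{desides} works; for necessity, condition (ii) is inherited by subsequences, the alternation giving $|k-l|\le 1$ is automatic from maximality of blocks, the reversal symmetry legitimately transfers the left-block statement to the right blocks (reversal swaps $2413$ and $3142$), and the key idea of supplementing an inversion across two left blocks by an intermediate value forced to the other side of $n$ is exactly the right mechanism.

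However, one assertion in the necessity step is false as written: if the two witnesses lie in distinct left blocks $X<Y$ (earlier entry $y\in Y$, later entry $x\in X$), it is not true that \emph{any} value $z$ strictly between the value intervals of $X$ and $Y$ must lie to the right of $n$. When $X$ and $Y$ are not adjacent among the left blocks, a third left block $Z$ with $X<Z<Y$ supplies such intermediate values sitting to the left of $n$, and then $y,x,n,z$ is no contradiction. The repair is short and should be made explicit: either first reduce to an offending pair that is adjacent in the value order of the left blocks (if every adjacent pair of left blocks were positionally ordered, transitivity would order all pairs, contradicting the assumed failure), or simply take $z=\max(X)+1$. This $z$ is not in $Y$ (otherwise $X$ and $Y$ would be value-adjacent left blocks and would merge), hence $z<\min(Y)\le n-1$, so $z\neq n$; and $z$ cannot lie to the left of $n$, since then $X\cup\{z\}$ would violate the maximality of $X$. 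With this choice the entries $y,x,n,z$, read left to right, have values $x<z<y<n$ and form a genuine $3142$, completing the argument. Everything else in your proposal is correct.
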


\begin{figure}
\begin{tikzpicture}[scale=0.8]
\draw (0,0.6) rectangle (1.5,1.2);
\draw (1.5,1.8) rectangle (3,2.4);
\draw (3,3) rectangle (4.5,3.6);
\draw (4.5,4.2) rectangle (6,4.8);

\draw (0.75,0.9) node{$A_1$};
\draw (2.25,2.1) node{$A_2$};
\draw (3.75,3.3) node{$\cdots$};
\draw (5.25,4.5) node{$A_k$};
\draw (6.7,5.2) node{$n$};

\draw (8.2,3.9) node{$B_1$};
\draw (9.7,2.7) node{$B_2$};
\draw (11.2,1.5) node{$\cdots$};
\draw (12.7,0.3) node{$B_l$};

\draw (7.4,4.2) rectangle (8.9,3.6);
\draw (8.9,3) rectangle (10.4,2.4);
\draw (10.4,1.8) rectangle (11.9,1.2);
\draw (11.9,0.6) rectangle (13.4,0);

\end{tikzpicture}
\caption{The block decomposition of separable permutations}
\label{block:sta}
\end{figure}
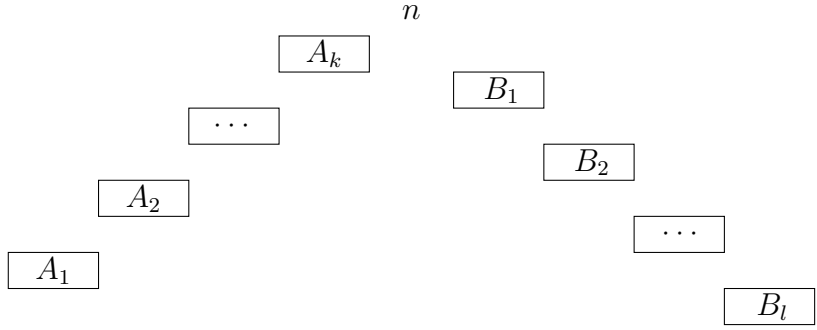

See Fig.~\ref{block:sta} for a transparent illustration of this lemma. Condition (ii) is clear, while condition (i) is equivalent to saying that $n$ is not an element of any subsequence of $\sigma$ that is order isomorphic to $2413$ or $3142$. Note that in the block decomposition, the minimal block can appear on either side of $n$. For example, compare the block decompositions of $259867431$ and $143867952$.

For convenience, we need to introduce  two variants of the double descents. Let 
$$
\dd_0(\pi):=|\{i\in[n]: \pi(i-1)>\pi(i)>\pi(i+1)\}|,
$$
where $\pi(0)=0$ and $\pi(n+1)=+\infty$, and 
$$
\dd_{\infty}(\pi):=|\{i\in[n]: \pi(i-1)>\pi(i)>\pi(i+1)\}|,
$$
where $\pi(0)=+\infty$ and $\pi(n+1)=0$. Let us introduce 
$$
L=L(t,x,y;z):=\sum_{n\geq1}z^n\sum_{\pi\in\S_n(2413,3142)}t^{\des(\pi)}x^{\dd_0(\pi)}y^{\iar(\pi)}=yz+(ty+y^2)z^2+\cdots
$$
and 
$$
R=R(t,x;z):=\sum_{n\geq1}z^n\sum_{\pi\in\S_n(2413,3142)}t^{\des(\pi)}x^{\dd_{\infty}(\pi)}=xz+(1+tx^2)z^2+\cdots.
$$
Set $B=B(y;z):=\frac{yz}{1-yz}$ and $\tilde L:=L-B$, where $B(y;z)$ enumerates identity permutations by length and $\iar$. 

\begin{lemma}
Let $S_1=S|_{y=1}$ and $L_1=L|_{y=1}$. We have the system of functional equations
\begin{equation}\label{sym:sepa}
\begin{cases}
\,\, L_1=\frac{S_1(1+tS_1)}{1+txS_1},\\
\,\,  R=\frac{S_1(S_1+x)}{1+S_1},\\
\,\, S_1=tS_1^3+tzS_1^2+(z+txz)S_1+z,\\
\,\, (1-z)\tilde L=\frac{tS_1B(1+B)}{1-tRB}+\frac{tzS_1\tilde L(2+L_1+B+tR-tRL_1B)}{(1-tRB)(1-tRL_1)},\\
\,\, S=\frac{B(1+tR)}{1-tRB}+\frac{z\tilde{L}(1+tR)^2}{(1-tRB)(1-tRL_1)}.
\end{cases}
\end{equation}
\end{lemma}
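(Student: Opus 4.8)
The plan is to derive each of the five equations by applying Stankova's block decomposition (Lemma~\ref{dec:stankova}) in two slightly different guises: once to track $\iar$ directly (yielding $S$, $L$, $\tilde L$) and once with the block structure ``folded'' so that a block is treated as a single ornament sitting to the right of $n$ (yielding $R$, $S_1$, $L_1$). First I would set up bookkeeping: in the decomposition $A_1,A_2,\ldots,A_k,n,B_1,\ldots,B_l$, the statistic $\iar$ of the whole permutation is governed entirely by the first block $A_1$ (since $n$ and everything before it is an ascending run until the first descent inside $A_1$, unless $A_1=\emptyset$ in which case $\iar$ counts into $n$ and beyond). This is exactly why $B=yz/(1-yz)$, the identity-permutation generating function by $\iar$, is isolated and why one writes $\tilde L = L-B$. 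The statistics $\des$ and $\dd$ are \emph{almost} additive over blocks, the only corrections coming at the block boundaries and at the position of $n$; the whole point of introducing $\dd_0$ and $\dd_\infty$ (with the conventions $\pi(0)=0,\pi(n+1)=\infty$ resp.\ $\pi(0)=\infty,\pi(n+1)=0$) is to make the boundary contributions clean: a descending block $B_j$ feeding into the next descending block $B_{j+1}$ creates a double descent at the junction, whereas an ascending block $A_i$ feeding into $A_{i+1}$ does not, and $n$ itself is a descent top and a descent bottom only relative to its neighbours.

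Next I would establish the three purely algebraic relations among $S_1$, $L_1$, $R$. The equation $S_1 = tS_1^3+tzS_1^2+(z+txz)S_1+z$ is just \eqref{eq:sch-des} (the descent generating function of separable permutations) rewritten with the extra variable $x$ marking $\dd_\infty$ tucked into the correct coefficient — I would verify it by the same ``cut at the root block'' argument used in \cite{flz}, checking that $\dd_\infty$ behaves additively under $\ominus$-type concatenation with exactly one correction per junction. Then $L_1 = S_1(1+tS_1)/(1+txS_1)$ and $R = S_1(S_1+x)/(1+S_1)$ are obtained by comparing the two conventions for the boundary double descent: passing from $\dd$ (no convention, i.e.\ $\pi(0)=\pi(n+1)=0$, as in $S_1$) to $\dd_0$ (with $\pi(n+1)=\infty$) versus $\dd_\infty$ (with $\pi(0)=\infty$) changes only whether the \emph{last} descending run or the \emph{first} descending run contributes an extra double descent; this is a one-block adjustment, and each such adjustment multiplies or divides by a factor of the form $(1+tS_1)$ or $(1+txS_1)$ depending on whether that terminal run is marked by $x$. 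Writing out the bijective correspondence between the relevant run and a factor of $S_1$ gives these two closed forms directly.

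Finally I would assemble the two ``hard'' equations, for $(1-z)\tilde L$ and for $S$, by reading the full block decomposition left to right. A nonempty separable permutation is: an initial block $A_1$ (contributing $B$ when it is an identity, or $z\tilde L$ when its first letter is a descent top — hence the split $L = B + \tilde L$); then a possibly empty alternating tail of ascending blocks $A_2,\ldots,A_k$ and the letter $n$ and descending blocks $B_1,\ldots,B_l$, where consecutive descending blocks produce the geometric factor $1/(1-tRB)$ (each junction: a factor $t$ for the descent, $R$ for the block, $B$ for the identity-part of $\iar$-neutral continuation) and the ascending-side blocks together with $n$ produce $1/(1-tRL_1)$; the numerators $(1+tR)$, $(2+L_1+B+tR-tRL_1B)$, etc.\ collect the finitely many boundary cases (whether $k=l$, $k=l+1$, or $k+1=l$, and whether the minimal block sits left or right of $n$, as in the $259867431$ vs.\ $143867952$ example). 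The factor $(1-z)$ in front of $\tilde L$ arises because removing the letter $1$ — or the relevant terminal fixed point — from a permutation counted by $\tilde L$ is a bijection onto a disjoint union indexed by length, a standard ``peel off an identity point'' move. The main obstacle, and the step I expect to consume the most care, is precisely this last bookkeeping: getting the numerator polynomials $2+L_1+B+tR-tRL_1B$ and $(1+tR)^2$ exactly right requires an exhaustive but finite case analysis of the three length-parity configurations of the block decomposition and the two placements of the minimal block, and making sure no double descent at the $n$-junction is double-counted or missed under the $\dd_0$ versus $\dd_\infty$ conventions. Once the system \eqref{sym:sepa} is in hand, eliminating $S_1,L_1,R,\tilde L,B$ (routine resultant computation) will be shown in the next step to yield the \emph{same} algebraic equation \eqref{funeq:021} that $G$ satisfies, which combined with matching initial terms proves $S=G$ and hence Theorem~\ref{thm:sep}.
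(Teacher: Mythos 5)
Your plan follows essentially the same route as the paper for the two substantive equations: the fourth and fifth equations of \eqref{sym:sepa} are obtained there exactly as you propose, via Stankova's decomposition $A_1,\ldots,A_k,n,B_1,\ldots,B_l$, a three-way case analysis $(k,l)\in\{(j,j),(j+1,j),(j,j+1)\}$, and summation of the resulting geometric series; the only structural difference is that the paper does not re-derive the first three equations at all but quotes them from Wang~\cite{wan}, so your sketch of proving the $S_1$, $L_1$, $R$ relations from scratch is extra work (and as written contains a slip: in $S_1$ the variable $x$ marks $\dd$, with convention $\pi(0)=\pi(n+1)=0$, not $\dd_\infty$). A few points of your bookkeeping would need correcting in execution. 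First, $\iar$ is not governed by $A_1$ alone: when the initial $A$-blocks are increasing, the run continues through them (and possibly through $n$), which is precisely why $1/(1-tRB)$ arises from pairing each initial identity $A$-block (weight $B$, carrying $y$) with one $B$-block (weight $tR$), the pairing being forced by $|k-l|\le 1$, while $1/(1-tRL_1)$ pairs the later $A$-blocks with $B$-blocks; the numerators such as $(1+tR)^2$ then fall out of the three $(k,l)$ cases, together with the permutation $\pi=1$, which none of the three cases covers. Second, the factor $(1-z)$ in the fourth equation is not a generic ``peel off an identity point'' move: it appears because the case $l=0$ (i.e., $n$ in the last position) contributes $z\tilde L$ to $\tilde L$ itself, and that term is simply moved to the left-hand side. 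Third, in deriving the fourth equation note that under the $\dd_0$ convention the last $B$-block is followed by $+\infty$, so it is not weighted by $R$; its correct weight is again $S_1$ (reverse-complement exchanges the $(0,0)$ and $(\infty,\infty)$ boundary conventions while preserving descents and separability), which is where the $tS_1$ factors in the numerators come from. None of these issues invalidates your approach--they are exactly the finite case analysis you defer--but they are the places where the computation would go wrong if done as described.
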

\begin{proof}
The first three equations of~\eqref{sym:sepa}  were proved by Wang~\cite{wan}. We begin with the proof of the fifth  equation in~\eqref{sym:sepa} by writing $S$ as an expression in $L$ and $R$. By Lemma~\ref{dec:stankova}, every  permutation $\pi\in\S_n(2413,3142)$  has block decomposition
$$
A_1, A_2, \ldots, A_k,n, B_1, B_2,\ldots, B_l,\quad (|k-l|\leq1),
$$
 where $A_1<A_2<\cdots<A_k$ and $B_1>B_2>\cdots >B_l$ are blocks with respect to $n$. 
 We distinguish three cases according to the pair $(k,l)$:
 \begin{itemize}
 \item[1)] $(k,l)=(j,j)$ ($j\geq1$). Permutations in this case contribute to $S$ the generating function 
 $$
 2yzB^j(tR)^j+2\sum_{i=1}^jzB^{i-1}\tilde L L_1^{j-i}(tR)^j.
 $$

 \item[2)] $(k,l)=(j+1,j)$ ($j\geq0$), and thus $1\in A_1$. Permutations in this case contribute to $S$ the generating function 
 $$
 yzB^{j+1}(tR)^j+\sum_{i=1}^{j+1}zB^{i-1}\tilde L L_1^{j+1-i}(tR)^j.
 $$

 \item[3)] $(k,l)=(j,j+1)$ ($j\geq0$), and thus $1\in B_l$.
 Permutations in this case contribute to $S$ the generating function 
 $$
 yzB^{j}(tR)^{j+1}+\sum_{i=1}^{j}zB^{i-1}\tilde L L_1^{j-i}(tR)^{j+1}.
 $$
 \end{itemize}

 % \begin{itemize}
 % \item[1)] Separable permutations whose block decompositions have $2j$ ($j\geq1$) blocks. These contribute to $S$ the generating function 
 % $$
 % 2yzB^j(tR)^j+2\sum_{i=1}^jzB^{i-1}\tilde L L_1^{j-i}(tR)^j.
 % $$
 % \item[2)] Separable permutations whose block decompositions have $2j+1$ ($j\geq0$) blocks. We further consider two cases:
 % \begin{itemize}
 % \item[2a)] $1\in A_1$ and thus $k=j+1$, $l=j$. These contribute to $S$ the generating function 
 % $$
 % yzB^{j+1}(tR)^j+\sum_{i=1}^{j+1}zB^{i-1}\tilde L L_1^{j+1-i}(tR)^j.
 % $$
 % \item[2b)] $1\in B_1$ and thus $k=j$, $l=j+1$.
 % These contribute  to $S$ the generating function 
 %  $$
 % yzB^{j}(tR)^{j+1}+\sum_{i=1}^{j}zB^{i-1}\tilde L L_1^{j-i}(tR)^{j+1}.
 % $$

 % \end{itemize}
 % \end{itemize}  
 Summing over all the above cases gives the fifth equation of~\eqref{sym:sepa}. The fourth equation of~\eqref{sym:sepa} is obtained by writing $L$ as an expression of $L$, $R$ and $S_1$ via the same block decomposition, the details of which are omitted due to the similarity. 
\end{proof}

We are ready to verify  Theorem~\ref{thm:sep}. 

\begin{proof}[{\bf Proof of  Theorem~\ref{thm:sep}}] We aim to verify that $S$ satisfies the same functional equation as $G$ in~\eqref{funeq:021}. From the first two equations of~\eqref{sym:sepa} we see that $L_1$ and $R$ are rational fractions in $S_1$. Thus, in view of the fourth equation of~\eqref{sym:sepa},  $\tilde L$ is also a rational fraction in $S_1$. Consequently, by the fifth equation of~\eqref{sym:sepa}, $S$ is a rational fraction in $S_1$ as well. Plugging the expressions for $L_1$, $R$ and $\tilde L$ into the fifth equation of~\eqref{sym:sepa} for $S$ and factoring out (using Maple) the rational fraction 
$$
y^3z+(txy^2z+3y^3z-2y^2z-y^2)S+c_2S^2+c_3S^3,
$$
where $c_2$ and $c_3$ are defined in Lemma~\ref{lem:eq021}, we see the factor $tS_1^3+tzS_1^2+(z+txz-1)S_1+z$ appears in the denominator (the resulting rational fraction is too long to be included here). This factor is zero due to the third equation of ~\eqref{sym:sepa}, which proves that $S$ satisfies the same functional equation as $G$ in~\eqref{funeq:021}. This completes the proof of Theorem~\ref{thm:sep}.
\end{proof}

\section{Conclusion}\label{sec:conclusion}

In this paper, we launch a systematic study of the Wilf-equivalence refined by two permutation statistics, namely $\comp$, the number of components, and $\iar$, the length of the initial ascending run, for all patterns (resp.~pairs of patterns) of length $3$. The results are summarized in Table~\ref{one 3-avoider} (resp.~Table~\ref{two 3-avoiders}), where the trivariate generating functions $\S(P)^{\des,\iar,\comp}(t,r,p;z)$ are supplied as well. In the cases where the pair $(\iar,\comp)$, together with other set-valued statistics, is symmetric over certain class of pattern-avoiding permutations, we construct various bijections to prove them (see e.g.~Theorems~\ref{thm:321=312}, \ref{thm:132-symm}, \ref{thm:213-231-conjugate}, and \ref{thm:132-312 and 132-321 symm}). On the other hand, our proof of the result concerning separable permutations (see Theorem~\ref{thm:sep}) is algebraic, and can hardly be called simple. Therefore, a direct bijection from $\S_n(2413,3142)$ to $\S_n(2413,4213)$ that preserves the statistics $\des$, $\dd$ and $\iar$ is much desired. 

In view of Lemmas~\ref{general:sym} and~\ref{gen:setv}, we pose the following open problem about a set-valued extension of Lemma~\ref{general:sym} for further investigation. 
\begin{?}\label{open:pro}
Let $\ST$ be a totally $\oplus$-compatible set-valued statistic. Let $P$ be a set of indecomposable patterns. Is it true that 
\begin{center} 
$|\S_n(P)^{\ST,\iar}|=|\S_n(P)^{\ST,\comp}|\Longleftrightarrow|\S_n(P)^{\ST,\iar,\comp}|=|\S_n(P)^{\ST,\comp,\iar}|$?
\end{center}
\end{?}
In particular, we suspect that Problem~\ref{open:pro} is true when $\ST$ is the statistic $\LMAX$. 
\begin{conj}
Let $P$ be a set of indecomposable patterns. Then
$$|\S_n(P)^{\LMAX,\iar}|=|\S_n(P)^{\LMAX,\comp}|\Longleftrightarrow|\S_n(P)^{\LMAX,\iar,\comp}|=|\S_n(P)^{\LMAX,\comp,\iar}|.
$$
\end{conj}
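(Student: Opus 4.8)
The backward implication is immediate, since the distributions of $(\LMAX,\iar)$ and of $(\LMAX,\comp)$ over $\S_n(P)$ are obtained from those of $(\LMAX,\iar,\comp)$ and of $(\LMAX,\comp,\iar)$, respectively, by summing out the last coordinate. The content of the conjecture is the forward implication, and the plan is to lift the generating-function argument behind Lemma~\ref{general:sym} so that it also records the set-valued companion statistic $\LMAX$. The obstruction to applying Lemma~\ref{general:sym} verbatim is that encoding $\LMAX(\pi)$ by the monomial $\mathbf{x}^{\LMAX(\pi)}$ is not multiplicative under $\oplus$, because $\LMAX(\sigma\oplus\tau)=\LMAX(\sigma)\cup(|\sigma|\oplus\LMAX(\tau))$ carries a length-dependent shift. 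The remedy is to encode $\LMAX(\pi)$, for $\pi\in\S_n$, by its indicator \emph{word} $b(\pi)=b_1\cdots b_n\in\{0,1\}^n$ with $b_i=\chi(i\in\LMAX(\pi))$: since $\LMAX$ is totally $\oplus$-compatible one has $b(\sigma\oplus\tau)=b(\sigma)\,b(\tau)$ (concatenation), so multiplicativity is restored once we move to a noncommutative setting.

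Concretely, I would work in the ring $R:=\mathbb{Q}[[r,s,z]]\langle\langle y_0,y_1\rangle\rangle$ of noncommutative formal power series in $y_0,y_1$ over the central coefficient ring $\mathbb{Q}[[r,s,z]]$, writing $\mathbf{y}^{b}:=y_{b_1}\cdots y_{b_n}$ for a binary word $b$ of length $n$, and set
\[
F_P:=\sum_{\pi\in\bigcup_n\S_n(P)}r^{\comp(\pi)}s^{\iar(\pi)}z^{|\pi|}\mathbf{y}^{b(\pi)},\qquad I_P(s):=\sum_{\tau\in\bigcup_n\In_n(P)}s^{\iar(\tau)}z^{|\tau|}\mathbf{y}^{b(\tau)},
\]
with $I_P(1):=I_P(s)|_{s=1}$. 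Since $P$ consists of indecomposable patterns, every $\pi\in\S_n(P)$ is a unique $\oplus$-sum of indecomposable $P$-avoiders (as in the proof of Lemma~\ref{general}); splitting off its leading $\id_1$-components, then its first non-$\id_1$ component, then the remaining components, and using that $\comp$ is additive, that $\iar$ is partially $\oplus$-compatible with $\iar(\id_1)=1$, and that $b$ concatenates, gives the noncommutative avatar of \eqref{relation:comp} (with $q=r$ and $w=szy_1$):
\[
F_P \;=\; \frac{1}{1-rszy_1}\;+\;\frac{1}{1-rszy_1}\cdot r\bigl(I_P(s)-szy_1\bigr)\cdot\frac{1}{1-rI_P(1)},
\]
where the two ``fractions'' denote the geometric series $\sum_{i\ge0}(rszy_1)^i$ and $\sum_{m\ge0}(rI_P(1))^m$ in $R$ (well defined because $\In_0(P)=\emptyset$) and the three factors are read from left to right.

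Extracting coefficients shows that the hypothesis ``$|\S_n(P)^{\LMAX,\iar}|=|\S_n(P)^{\LMAX,\comp}|$ for all $n$'' is precisely the identity $F_P(1,s)=F_P(s,1)$ in $R$, while the conclusion ``$|\S_n(P)^{\LMAX,\iar,\comp}|=|\S_n(P)^{\LMAX,\comp,\iar}|$ for all $n$'' is precisely $F_P(r,s)=F_P(s,r)$. From here the algebra in the proof of Lemma~\ref{general:sym} carries over. From $F_P(1,s)=F_P(s,1)$, cancelling the common left factor $\frac{1}{1-szy_1}$ gives $\bigl(I_P(s)-szy_1\bigr)\frac{1}{1-I_P(1)}=s\bigl(I_P(1)-zy_1\bigr)\frac{1}{1-sI_P(1)}$, hence $I_P(s)-szy_1=s\bigl(I_P(1)-zy_1\bigr)\frac{1}{1-sI_P(1)}\bigl(1-I_P(1)\bigr)$; substituting this back into the functional equation yields the analogue of \eqref{gen:symm},
\[
F_P(r,s)=\frac{1}{1-rszy_1}+\frac{rs}{1-rszy_1}\,\bigl(I_P(1)-zy_1\bigr)\,\frac{1}{1-sI_P(1)}\,\bigl(1-I_P(1)\bigr)\,\frac{1}{1-rI_P(1)}.
\]
This expression is invariant under interchanging $r$ and $s$: the prefix $\frac{rs}{1-rszy_1}\bigl(I_P(1)-zy_1\bigr)$ involves $r$ and $s$ only through the central symmetric combinations $rs$ and $rszy_1$, while the tail $\frac{1}{1-sI_P(1)}\bigl(1-I_P(1)\bigr)\frac{1}{1-rI_P(1)}$ is symmetric because $1-I_P(1)$, $1-rI_P(1)$ and $1-sI_P(1)$ are polynomials in the single element $I_P(1)$ with central coefficients, hence commute pairwise, so that the tail equals $\bigl(1-I_P(1)\bigr)\frac{1}{1-rI_P(1)}\frac{1}{1-sI_P(1)}$. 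Therefore $F_P(r,s)=F_P(s,r)$, as required.

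The step that will demand the most care — and the most likely location of a hidden gap, given that the statement is only conjectured — is the passage through the noncommutative ring $R$: one must verify that the coefficient-by-coefficient translations of both the hypothesis and the conclusion into identities in $R$ are faithful, and, crucially, that no step in the derivation covertly invokes commutativity of the $y_i$'s (the argument above uses only that $r,s,z$ are central and that three particular binomials are polynomials in one element). Should the plan go through as expected, the same argument settles Problem~\ref{open:pro} for every totally $\oplus$-compatible set-valued statistic $\ST$ whose values satisfy $\ST(\tau)\subseteq[\,|\tau|\,]$, so that the indicator-word encoding is available; the fully general form of Problem~\ref{open:pro} may require a different idea.
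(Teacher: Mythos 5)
You should know at the outset that the paper does not prove this statement: it appears only as a conjecture (the set-valued analogue of Lemma~\ref{general:sym}, cf.\ Problem~\ref{open:pro}), the authors having established just the numerical version and the weaker transfer principle of Lemma~\ref{gen:setv}, so there is no proof of record to compare against. Having checked your argument carefully, I believe it is correct and settles the conjecture, read (as in Lemma~\ref{general:sym}, whose proof is also a generating-function identity in $z$) with $n$ universally quantified on both sides of the equivalence. The two load-bearing steps both hold. First, your noncommutative functional equation is the faithful analogue of \eqref{relation:comp}: since $P$ consists of indecomposable patterns, every $\pi\in\S_n(P)$ factors uniquely as a block of leading $\id_1$-components, followed by its first nontrivial indecomposable component $\tau$, followed by arbitrary indecomposable $P$-avoiding components; $\comp$ is additive, $\iar(\pi)$ equals the number of leading $\id_1$'s plus $\iar(\tau)$ (a nontrivial indecomposable component necessarily contains a descent), and total $\oplus$-compatibility of $\LMAX$ together with $\LMAX(\tau)\subseteq[\,|\tau|\,]$ gives the concatenation rule $b(\sigma\oplus\tau)=b(\sigma)b(\tau)$, so the three factors occur in exactly the left-to-right order you wrote; I confirmed the equation through order $z^3$ both for unrestricted permutations and for $P=\{312\}$, and the derived identity $I_P(s)-szy_1=s\bigl(I_P(1)-zy_1\bigr)\frac{1}{1-sI_P(1)}\bigl(1-I_P(1)\bigr)$ checks out there as well. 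Second, the symmetry of the final expression is sound: $1-I_P(1)$, $\frac{1}{1-rI_P(1)}$ and $\frac{1}{1-sI_P(1)}$ are power series in the single element $I_P(1)$ with central coefficients, hence commute pairwise, while the prefix involves $r,s$ only through the central product $rs$; applying the swap $r\leftrightarrow s$ to $F_P$ is precisely the conclusion. What you should write out explicitly --- you flagged it yourself --- is the routine bookkeeping that makes the ring manipulations legitimate: for each word $\mathbf{y}^w$ the coefficient in $F_P$, $I_P(s)$, the two geometric series and all products thereof is a polynomial in $r,s,z$ (of $z$-degree equal to the word length), so the specializations $r\mapsto1$, $s\mapsto1$ and the involution $r\leftrightarrow s$ are well-defined ring homomorphisms on the subring in which you work, they commute with the two-sided inverses you take (constant terms are $1$), and coefficient extraction identifies $F_P(1,s)=F_P(s,1)$ with the hypothesis and $F_P(r,s)=F_P(s,r)$ with the conclusion, monomial by monomial. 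Your closing remark is also accurate: nothing in the argument uses any property of $\LMAX$ beyond total $\oplus$-compatibility and $\ST(\tau)\subseteq[\,|\tau|\,]$, so the identical proof gives the nontrivial direction of Problem~\ref{open:pro} for every such set-valued statistic (e.g.\ $\DES$, $\DESB$, $\LMAXP$), which is strictly more than the conjecture asks.
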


It is our hope, that the results presented and conceived (see also Conjecture~\ref{schroder:iar}) here, would attract more people to work on Wilf-equivalences refined by Comtet statistics, or to unearth and study new Comtet statistics in general.

%%%%%%%%%%%%%%%%%%%%
\section*{Acknowledgement}%%%%%%
%%%%%%%%%%%%%%%%%%%

The second author was supported
by the National Science Foundation of China grants 11871247 and the project of Qilu Young Scholars of Shandong University.


\begin{thebibliography}{99}

\bibitem{adin} R.~Adin, E.~Bagno and Y.~Roichman, Block decomposition of permutations and Schur-positivity, J. Algebraic Combin. {\bf47} (2018): 603--622.

\bibitem{ath} C.~A.~Athanasiadis, Gamma-positivity in combinatorics and geometry, S\'em. Lothar. Combin., 77 (2018), Article B77i, 64pp (electronic).

\bibitem{BBS} M.~Barnabei, F.~Bonetti and M.~Silimbani, The descent statistic on $123$-avoiding permutations, S\'em. Lothar. Combin., {\bf 63} (2010): B63a, 8 pp.

\bibitem{BW} S.~C.~Billey and G.~S.~Warrington, Kazhdan-Lusztig polynomials for $321$-Hexagon-avoiding permutations, J. Algebraic Combin. {\bf13} (2001): 111--136.

%\bibitem{BWX} J.~Backelin, J.~West, G.~Xin, Wilf-equivalence for singleton classes, Adv. Appl. Math. {\bf38} (2007): 133-148.



%\bibitem{blo} J.~Bloom, A refinement of Wilf-equivalence for patterns of length $4$, J. Combin. Theory Ser. A {\bf124} (2014): 166--177.

\bibitem{bon} M.~B\'ona, {\em Combinatorics of Permutations}, Chapman \& Hall/CRC, Boca Raton, 2004.

\bibitem{chung} F.~R.~K.~Chung, R.~L.~Graham, V.~E.~Jr.~Hoggatt, and M.~Kleiman,
The number of Baxter permutations, J. Combin. Theory Ser. A {\bf24} (1978): 382--394. 

\bibitem{CK} A.~Claesson and S.~Kitaev, Classification of bijections between $321$-and $132$-avoiding permutations, S\'em. Lothar. Combin., {\bf 60} (2008): B60d, 30 pp.

\bibitem{CKS} A.~Claesson, S.~Kitaev and E.~Steingr\'imsson,  Decompositions and statistics for $\beta(1,0)$-trees and nonseparable permutations. Adv. in Appl. Math. {\bf42} (2009): 313--328. 

\bibitem{com} L.~Comtet, {\em Advanced Combinatorics: The Art of Finite and Infinite Expansions}, D. Reidel Publishing Co., Dordrecht, 1974.

\bibitem{cor} S.~Corteel, M.~Martinez, C.~D.~Savage and M.~Weselcouch, Patterns in inversion sequences I,  Discrete Math. Theor. Comput. Sci., {\bf18} (2016),  $\#2$.

\bibitem{DDJSS} T.~Dokos, T.~Dwyer, B.~P.~Johnson, B.~E.~Sagan and K.~Selsor, Permutation patterns and statistics, Discrete Math., {\bf312} (2012): 2760--2775.

\bibitem{doy} P.~G.~Doyle, Stackable and queueable permutations, \href{https://arxiv.org/abs/1201.6580}{arXiv:math/1201.6580}. 

\bibitem{EP} S.~Elizalde and I.~Pak, Bijections for refined restricted permutations, J. Combin. Theory Ser. A {\bf105} (2004): 207--219.

\bibitem{FS} D.~Foata, M.-P.~Sch\"utzenberger, Th\'eorie g\'eom\'etrique des polyn\^omes eul\'eriens, Lecture Notes in
Math., vol. 138, Springer-Verlag, Berlin, 1970.

\bibitem{FHL} S.~Fu, G.-N.~Han and Z.~Lin, $k$-arrangements, statistics and patterns, \href{https://arxiv.org/abs/2005.06354}{arXiv:2005.06354v1}.

\bibitem{flw2} S.~Fu, Z.~Lin and Y.~Wang, A combinatorial bijection on di-sk trees, in preparation.

\bibitem{flz} S.~Fu, Z.~Lin and J.~Zeng, Two new unimodal descent polynomials, Discrete Math., {\bf341} (2018),  2616--2626.

\bibitem{FW} S.~Fu and Y.~Wang, Bijective proofs of recurrences involving two Schr\"oder triangles, European J. Combin. {\bf86} (2020): 103077.

\bibitem{GKZ} A.~L.~L.~Gao, S.~Kitaev and P.~B.~Zhang, On pattern avoiding indecomposable permutations, Integers, {\bf18} (2018), A2, 23pp.

\bibitem{kit} S.~Kitaev, {\em Patterns in permutations and words}, Springer Science \& Business Media, 2011. 

\bibitem{knu} D.~E.~Knuth, {\em The art of computer programming. Volume 1, Fundamental Algorithms.} Addison-Wesley, Reading, second edition, 1975. Addison-Wesley Series in Computer Science and Information Processing.

\bibitem{kra} C.~Krattenthaler, Permutations with restricted patterns and Dyck paths, 
Special issue in honor of Dominique Foata's 65th birthday, 
Adv. in Appl. Math., {\bf27} (2001), 510--530.

\bibitem{lin} Z.~Lin, On $\gamma$-positive polynomials arising in pattern avoidance, Adv. in Appl. Math. {\bf82} (2017), 1--22.

\bibitem{LK} Z.~Lin and D.~Kim, A sextuple equidistribution arising in pattern avoidance, J. Combin. Theory Ser. A {\bf 155} (2018): 267--286.

\bibitem{oeis} OEIS Foundation Inc., The On-Line Encyclopedia of Integer Sequences,  \href{http://oeis.org}{http://oeis.org}, 2020.

\bibitem{pet} T.~K.~Petersen, {\em Eulerian numbers.} With a foreword by Richard Stanley. Birkh\"auser Advanced Texts: Basler Lehrb\"ucher, Birkh\"auser/Springer, New York, 2015.

\bibitem{rub} M.~Rubey, An involution on Dyck paths that preserves the rise composition and interchanges the number of returns and the position of the first double fall,
S\'em. Lothar. Combin., Art. B77f, 4 pp.

\bibitem{SS} R.~Simion, F.~Schmidt, Restricted permutations, European J. Combin. {\bf6} (1985): 383--406.

\bibitem{sk} Z.~Stankova, Forbidden subsequences, Discrete Math. {\bf132} (1994): 291--316.

%\bibitem{sta2} Z.~Stankova, Classification of forbidden subsequences of length $4$, European J. Combin. {\bf17} (1996): 501--517.
%
%\bibitem{SW} Z.~Stankova, J.~West, A new class of Wilf-equivalent permutations, J. Algebraic Combin. {\bf15} (2002): 271--290.

\bibitem{wan} D.~Wang, The Eulerian distribution on involutions is indeed $\gamma$-positive, J. Combin. Theory Ser. A {\bf165} (2019): 139--151.

\bibitem{wes} J.~West, Generating trees and the Catalan and Schr\"oder numbers, Discrete Math. {\bf146} (1995): 247--262.

\end{thebibliography}
\end{document}